\newtheorem{theorem}{Theorem}[section]
\newtheorem{proposition}[theorem]{Proposition}
\newtheorem{lemma}[theorem]{Lemma}
\newtheorem{assumption}[theorem]{Assumption}
\newtheorem{example}[theorem]{Example}
\newtheorem{defn}[theorem]{Definition}
\newtheorem{remark}[theorem]{Remark}
\theoremstyle{remark}
\numberwithin{equation}{section}
\def \fk{\mathfrak}
\def \msf{\mathsf}
\def \scr{\mathscr}
\def \inv{^{-1}}
\def \v{\vskip 0.1in}
\def \n{\noindent}
\def \cplane{\mathbb{C}}
\def \integer{\mathbb{Z}}
\def \rone{\mathbb R}
\def \0{\infty}
\def \rto{\rightarrow}
\def \rrto{\rightrightarrows}
\def \hrto{\hookrightarrow}
\def \Rto{\Rightarrow}
\def \qq{\quad}
\def \codimc{\text{codim}_\cplane}
\def \codim{\text{codim}}
\def \dimc{\dim_\cplane}
\def \G{\msf G}
\def \E{\msf E}
\def \X{\msf X}
\def \oS{\msf S}
\def \N{\msf N}
\def \Z{\msf Z}
\def \B{\msf B}
\def \U{\msf U}
\def \f{\msf f}
\def \wa{{\fk a}}
\def \wb{{\fk b}}
\def \Xa{{\underline{\X}_\wa}}
\def \K{\scr K}
\def \om{\overline{\scr M}}
\def \lN{{\overline{\msf N}}}
\def \K{\scr K}
\def \Hol{\mathrm{Hol}}
\def \oC{[\overline{\cplane^n}]^{\fk b}_{\fk a}}
\def \Aut{\mathrm{Aut}}
\begin{document}

\title[Weighted-blowup correspondence of orbifold GW invariants]
{Weighted-blowup correspondence of orbifold Gromov--Witten invariants and applications}

\author{Bohui Chen}
\address{Department of Mathematics and Yangtze Center of Mathematics, Sichuan University, 610065, Chengdu, 
China.}
\email{bohui@cs.wisc.edu.}

\author{Cheng-Yong Du}
\address{Department of Mathematics, Sichuan Normal University, 610068, Chengdu, 
China.}
\email{cyd9966@hotmail.com}

\author{Jianxun Hu}
\address{Department of Mathematics, Sun Yat-sen University, Guangzhou, 510275, 
China}
\email{stsjxhu@mail.sysu.edu.cn}

\keywords{Weighted blowup; Orbifold correspondence; Symplectic uniruledness
}
\subjclass[2010]{53D45; 14N35
}

\date{\today}

\begin{abstract}
Let $\X$ be a compact symplectic orbifold groupoid with $\oS$ being a compact symplectic sub-orbifold groupoid, and $\Xa$ be the weight-$\wa$ blowup of $\X$ along $\oS$ with $\Z$ being the 
exceptional divisor.
We show that there is a weighted-blowup correspondence between some certain absolute orbifold Gromov--Witten invariants of $\X$ relative to $\oS$ and some certain relative orbifold Gromov--Witten invariants of the pair $(\Xa|\Z)$. As an application, we prove that the symplectic uniruledness of symplectic orbifold groupoids is a weighted-blowup invariant.
\end{abstract}

\maketitle

\tableofcontents

\section{Introduction}

Symplectic birational geometry, proposed by Li--Ruan \cite{LtR09}, concerns properties and structures of symplectic manifolds that are preserved by symplectic birational cobordisms. More explicitly, it studies properties and structures of symplectic manifolds that are captured by genus zero Gromov--Witten invariants. A symplectic birational cobordism consists of a finite sequence of symplectic reductions of Hamiltonian $S^1$-actions of (different) symplectic manifolds at (different) regular values, see Guillemin--Sternberg \cite{GS89}, Hu--Li--Ruan \cite{HLR08}. It was proved in \cite{GS89} that a symplectic birational cobordism can be decomposed into a finite sequence of symplectic blowups/blow-downs and $\integer$-linear deformations of symplectic forms. Since Gromov--Witten invariants are deformation invariant, so the essential part is to discover the influence of symplectic blowups/blow-downs on the properties and structures captured by genus zero Gromov--Witten invariants.
Therefore a beautiful and useful blowup formula for Gromov--Witten theory is a core to symplectic birational geometry. Gathmann \cite{Ga01} proved some blowup formulae of genus zero Gromov--Witten invariants for blowup of convex projective varieties along points. Hu \cite{Hu00,Hu01} proved similar blowup formulae of genus zero and one Gromov--Witten invariants for blowup of symplectic manifolds along points, curves, and surfaces. When the ambient manifold has dimension less than 8, Hu's formulae hold for all genera.

A compact symplectic manifold $(X,\omega)$ is {\em symplectic uniruled} if it has a non-vanishing genus zero Gromov--Witten invariant
\begin{align}\label{E 1}
\langle [pt],\alpha_2,\ldots,\alpha_k\rangle^X_{0,\beta}\neq 0
\end{align}
with $0\neq \beta\in H_2(X,\integer)$ and $k\geq 1$. Koll\'ar \cite{Ko98} and Ruan \cite{Ru99} proved that for a smooth projective uniruled variety $X$, there is a non-zero Gromov--Witten invariant as \eqref{E 1}
, i.e. $X$ is symplectic uniruled. It is well-known that projective uniruledness is a birational invariant. Therefore a natural question is that, if the symplectic uniruledness of symplectic manifolds is a symplectic birational cobordism invariant. By using those blow-up formulae proved by Gathmann and Hu, one concludes that
  the blowup manifolds are symplectic uniruled provided that ambient manifolds are symplectic uniruled.

The big break-through for general symplectic blowup was made by Hu--Li--Ruan in \cite{HLR08}. Let $X$ be a compact symplectic manifold, $S\subseteq X$ be a compact symplectic submanifold, $\tilde X$ be the symplectic blowup of $X$ along $S$, and $E$ be the exceptional divisor. Hu--Li--Ruan \cite{HLR08} proved a general algorithm, known as a blowup correspondence of Gromov--Witten invariants, to compare certain relative Gromov--Witten invariants of $(\tilde X|E)$ and certain absolute Gromov--Witten invariants of $X$ relative to $S$. When $\codimc S=1$, i.e. $S$ is a symplectic divisor of $X$, such a blowup correspondence was obtained by Maulik--Pandharipande  \cite{MauP06}. With this blowup correspondence Hu--Li--Ruan showed that symplectic uniruledness of symplectic manifolds is a symplectic birational cobordism invariant.

Similarly, as a symplectic analog of rational connectedness in birational algebraic geometry one can also consider symplectic rational connectedness of symplectic manifolds. A compact symplectic manifold $(X,\omega)$ is {\em symplectic rational connected} if it has a non-vanishing genus zero Gromov--Witten invariant
\begin{align}\label{E 2}
\langle [pt],[pt],\alpha_3,\ldots,\alpha_k\rangle^X_{0,\beta,k}\neq 0
\end{align}
with $0\neq \beta\in H_2(X,\integer)$ and $k\geq 2$. Recently there are a lot of works on symplectic rational connectedness. For example, Li--Ruan \cite{LtR13}, Hu--Ruan \cite{HuR13}, Voisin \cite{V08}, and Tian \cite{Tian12,Tian15} studied the symplectic rational connectedness of symplectic manifolds and projective varieties.

Orbifolds are natural generalization of manifolds. Roughly speaking orbifolds are manifolds with finite quotient singularities. Symplectic orbifolds appear naturally in symplectic reductions. It would be more convenient and challenging to involve orbifolds in symplectic birational geometry. This viewpoint was also addressed in \cite{LtR09}. We can adapt the definitions of symplectic uniruledness and symplectic rational connectedness of symplectic manifolds to symplectic orbifolds naturally. Then one may ask if symplectic uniruledness and symplectic rational connectedness of symplectic orbifolds are still birational invariants. There already have some works on symplectic uniruledness of orbifolds. He--Hu \cite{HeHu15} and Du \cite{Du17a} proved several formulae for orbifold Gromov--Witten invariants of weighted blowups along smooth points, which implies that when the ambient symplectic orbifold $\X$ is symplectic uniruled then the weighted blowup $\Xa$ of $\X$ along smooth points with weight $\wa$ is also symplectic uniruled. In this paper we study the change of Gromov--Witten invariants under weighted blowups of symplectic orbifolds along general symplectic sub-orbifolds. We show that symplectic uniruledness of symplectic orbifolds is an invariant under weighted blowups. This is a consequence of a weighted-blowup correspondence 
of orbifold Gromov--Witten invariants
. We next describe the results.

\subsection{Weighted-blowup correspondence}
We first show a correspondence result for orbifold Gromov--Witten invariants under weighted blowups. Let $\Xa$ be the weight-$\wa$ blowup of a symplectic orbifold groupoid $(\X,\omega)$ along a symplectic sub-orbifold groupoid $\oS$. Denote the exceptional divisor by $\Z$.
let $\kappa:\Xa\rto\X$ be the natural projection associated to the blowup.
By taking a self-dual basis $\sigma_\star$ of $H^*_{CR}(\oS)$ we get a base $\Sigma_\star$ and a dual basis $\Sigma^\star$ of $H_{CR}^*(\Z)$ (see \S\ref{sec rel-inv-E-B}). We consider relative orbifold Gromov--Witten invariants of $(\Xa|\Z)$ with relative insertions coming from $\Sigma_\star$ and absolute insertions coming from $\K:=\msf I\kappa^*(H^*_{CR}(\X))$. We denote the set of admissible relative data (cf. \S \ref{sec 4.2}) by $\scr R_{\Sigma_\star,\K}(\Xa|\Z)$. We assign a partial order to $\scr R_{\Sigma_\star,\K}(\Xa|\Z)$ by using degeneration of moduli space associated to the degeneration of $\Xa$ along $\Z$ with trivial weight
(cf. \S \ref{S partial-order} and Theorem \ref{thm partial-ord}). We use $\scr R_{\Sigma_\star,\K}(\Xa|\Z)$ to generate a linear space $\rone_{\Sigma_\star,\K}$. By using the Gromov--Witten invariants of relative data we get a vector $v_{\Sigma_\star,\K}\in \rone_{\Sigma_\star,\K}$. We also consider absolute $\sigma_\star$-data of $(\X,\oS)$. In an admissible $\sigma_\star$-data ${\bf A}^\bullet(I;I_\oS)$ (cf. \S \ref{sec 4.1}) we require that $I$ has no descendent insertions, i.e. $\psi$-classes, and $I_\oS$ are all supported on
$\oS$, i.e. are of the form $(\sigma_\star\cup [{\sf IS}])\psi^c$, where $[{\sf IS}]$ is the Thom class of the inertia space (cf. Definition \ref{def twisted-sector}) $\sf IS$ of $\oS$ in the inertia space $\sf IX$ of $\X$. We denote the set of admissible absolute $\sigma_\star$-datum of $(\X,\oS)$ by $\scr A_{\sigma_\star}(\X,\oS)$. For each admissible relative data ${\bf R}^\bullet(I|J)\in \scr R_{\Sigma_\star,\K}(\Xa|\Z)$ we associate it an admissible absolute data ${\bf A}^{\bullet}(I_\kappa;I_\msf S)\in\scr A_{\sigma_\star}(\X,\oS)$. This gives us a map $\Psi: \scr R_{\Sigma_\star}(\Xa|\Z)\rto \scr A_{\sigma_\star}(\X,\oS)$, which is a bijection when $\codimc \oS>1$, and an injection when $\codimc\oS=1$, see Theorem \ref{thm crepdns-on-datum}. By using the absolute invariants of absolute data in the image of $\Psi$ we get another vector $v_{\sigma_\star}\in\rone_{\Sigma_\star,\K}$. The map $\Psi$ gives us a linear map $L:\rone_{\Sigma_\star,\K}\rto \rone_{\Sigma_\star,\K}$. We have

\begin{theorem} {\em (Theorem \ref{thm lower-traingl})} \label{thm main}
The linear map $L:\rone_{\Sigma_\star,\K}\rto \rone_{\Sigma_\star,\K}$ has the following properties:
\begin{itemize}
\item $L(v_{\Sigma_\star,\K})=v_{\sigma_\star}$,
\item the matrix of $L$ with respect to the basis $\scr R_{\Sigma_\star,\K}(\Xa|\Z)$ is lower triangle and has non-vanishing diagonals.
\item $L$ is invertible.
\end{itemize}
\end{theorem}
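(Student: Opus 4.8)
The plan is to derive all three statements from a single application of the degeneration formula for orbifold Gromov--Witten invariants, following the strategy of Hu--Li--Ruan \cite{HLR08} in the manifold case. Concretely, I would work with the deformation of $\X$ to the weight-$\wa$ normal cone of $\oS$: weighted-blow up $\X\times\cplane$ along $\oS\times\{0\}$ to obtain a family whose general fibre is $\X$ and whose central fibre is $\Xa\cup_\Z\Pf$, where $\Pf$ is the weight-$\wa$ projective completion of the normal bundle $\N_{\oS/\X}$, glued to $\Xa$ along $\Z$ and carrying a fibrewise $\cplane^\ast$-action that fixes $\Z$ and the zero locus $\oS$. Running the degeneration formula on an absolute invariant of $\X$ with absolute insertions in $H^\ast_{CR}(\X)$ and with the $\sigma_\star$-insertions realised by the Thom classes $\sigma_\star\cup[{\sf IS}]$ (so that those markings specialise into $\Pf$, their $\psi$-powers being governed by the descendent correspondence for relative markings on $\Z$) expresses that invariant as a finite sum over admissible splittings: each term is a product of a relative invariant of $(\Xa|\Z)$ with relative insertions drawn from $\Sigma_\star$ and absolute insertions from $\K$, i.e.\ a coordinate of $v_{\Sigma_\star,\K}$ indexed by some ${\bf R}^\bullet(I|J)\in\scr R_{\Sigma_\star,\K}(\Xa|\Z)$, with a relative invariant of $(\Pf|\Z)$ recording the gluing. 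Matching this bookkeeping with the map $\Psi$ of Theorem~\ref{thm crepdns-on-datum} — this is where the dictionary ``contact order together with $\Sigma_\star$-class $\leftrightarrow$ Thom-class insertion with $\psi$-power'' and the string/dilaton/TRR relations are used to put the $\X$-side invariants into the normal form defining $\scr A_{\sigma_\star}(\X,\oS)$ — identifies the resulting system of linear relations with the asserted identity $L(v_{\Sigma_\star,\K})=v_{\sigma_\star}$, the matrix entries of $L$ being the indicated finite sums of $(\Pf|\Z)$-relative invariants times combinatorial gluing multiplicities.

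For the triangularity I would invoke the partial order on $\scr R_{\Sigma_\star,\K}(\Xa|\Z)$ of Theorem~\ref{thm partial-ord}: it is built from the same kind of degeneration (of $\Xa$ along $\Z$ with trivial weight), and its defining feature is that in the expansion above the companion $(\Pf|\Z)$-factor can be non-zero only when the datum ${\bf R}^\bullet(I|J)$ labelling the $(\Xa|\Z)$-factor is $\preceq$ the datum assigned to the input by $\Psi$; hence the matrix of $L$ in the basis $\scr R_{\Sigma_\star,\K}(\Xa|\Z)$ is lower triangular. The diagonal entry attached to ${\bf R}^\bullet(I|J)$ is the contribution of the leading splitting, in which the $\Pf$-component is forced to be the minimal configuration realising the prescribed contacts — a union of (multiple covers of) fibre curves of $\Pf\to\oS$ meeting $\Z$ at the relative points — hence rigid. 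Its contribution is therefore a product of orbifold multiple-cover numbers in the fibres and of Chen--Ruan pairings on $\Z$, which I would evaluate (by virtual localization with respect to the fibrewise $\cplane^\ast$-action, or equivalently by the rubber calculus over $\Z$) and check to be non-zero; together with the non-vanishing of the gluing multiplicities (orders of isotropy along the twisted sectors of $\Z$ and automorphism factors of the weighted projectivizations) this shows the diagonal of $L$ has no zero entry.

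The third bullet is then immediate: a linear endomorphism of a finite-dimensional real vector space whose matrix in some basis is lower triangular with non-vanishing diagonal is invertible, so $L$ is invertible and $v_{\Sigma_\star,\K}=L\inv(v_{\sigma_\star})$. When $\codimc\oS=1$ the map $\Psi$ is only injective by Theorem~\ref{thm crepdns-on-datum}; one then reads $L$ off the sub-basis indexed by the image of $\Psi$, where the same argument applies verbatim.

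The main obstacle is the simultaneous control of the two inputs hidden in the combinatorics: first, that every non-leading splitting in the degeneration formula really produces a datum strictly below the input in the partial order of Theorem~\ref{thm partial-ord} — which requires tracking curve classes, genera, contact profiles and $\psi$-degrees through the gluing — and second, the explicit non-vanishing evaluation of the minimal $(\Pf|\Z)$-relative invariants and of the attendant isotropy and automorphism weights, since these numbers are precisely the diagonal of $L$. Once these are in hand, the remainder is the formal linear-algebra packaging above.
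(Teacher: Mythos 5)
Your overall route is the paper's: $L$ is defined by applying the degeneration formula for the weight-$\wa$ degeneration of $\X$ along $\oS$ to the absolute data $\Psi({\bf R}^\bullet(I|J))$, so that $L(v_{\Sigma_\star,\K})=v_{\sigma_\star}$ holds by construction, with matrix entries $c_\Omega\langle{\bf R}^\bullet_{\lN_\wa,+}(I^+|J^+_\Z)\rangle^{\lN_\wa|\Z}$. The genuine gap is the second bullet. It is not a ``defining feature'' of the partial order that a non-vanishing $(\lN_\wa|\Z)$-factor forces the $(\Xa|\Z)$-datum to be $\preceq$ the input: Definition \ref{def partial ord} is formulated through trivial-weight degenerations of $(\Xa|\Z)$ along $\Z$, i.e.\ through $(\Z_\infty|\lN_\Z|\Z_0)$-data, whereas the splittings produced by your degeneration carry $(\lN_\wa|\Z)$-factors. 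Bridging the two is the heart of the paper's proof: one degenerates $(\lN_\wa|\Z,\oS)$ once more along $\oS$ as in \eqref{eq dege-lNa}, uses the uniqueness in Lemma \ref{lem rel-outof-IS} to factor the $(\lN_\wa|\Z)$-piece as ${\bf R}^{\bullet}(J^+_\Z|I^+\setminus I_\oS|J)\ast{\bf R}^{\bullet}_{\lN_\wa}(I_\oS|\check J)$, and then treats three cases: if the first factor is not pre-$\lN$-minimal, $\prec$ holds by definition; if it is pre-$\lN$-minimal but not $\lN$-minimal, one does \emph{not} get $\prec$ (condition (P2) forbids it) and must instead show that the $(\lN_\wa|\Z)$-invariant vanishes, which the paper does by a second application of the degeneration formula together with Proposition \ref{prop inv-minimal} and Theorem \ref{thm cpt-rel-inv}; the $\lN$-minimal case gives the diagonal, whose non-vanishing is exactly Theorem \ref{thm cpt-rel-inv} (no new localization or rubber computation is needed). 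Your closing paragraph concedes precisely this step as an unresolved ``obstacle'', so the proposal does not yet prove lower triangularity.

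The invertibility argument is also flawed as stated: $\rone_{\Sigma_\star,\K}$ is spanned by the infinite set $\scr R^\bullet_{\Sigma_\star,\K}(\Xa|\Z)$ (curve classes, genera and numbers of markings are unbounded), so the statement about finite-dimensional lower triangular matrices with non-vanishing diagonal does not apply. What makes the triangular system invertible is Lemma \ref{lem 6.7 finite-less}: for each admissible datum only finitely many data lie below it with respect to ``$\prec$'', so each row has finitely many non-zero entries and $L$ is inverted by back-substitution along the partial order. This repair is short, but without it the third bullet does not follow from the second.
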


We can restrict the correspondence $\Psi$ to genus zero relative/absolute datum which have a point class insertion as the first absolute insertion. Then we have a similar result (see Theorem \ref{thm correspds-pt} and Remark \ref{rmk other-restriction-of-L}).


\subsection{Symplectic uniruledness of orbifolds}\label{subsec 1.2}

In the manifold case, the motivation of symplectic uniruledness comes from the uniruledness in algebraic geometry which is about the existence of embedded rational curves passing through a given point, which leads to the equation \eqref{E 1}. Now we consider the orbifold case. Let $\mathsf X$ be a compact  orbifold  (cf. Remark \ref{rmk kernel-G} for compactness of orbifold). $[pt]$ in \eqref{E 1} should be replaced by certain class in $H^\ast_{CR}(\mathsf X)=H^\ast(\mathsf{IX})$.
 However, when the orbifold $\mathsf X$ is ineffective, the situation is slightly complicated, which is explained below.

Let $\msf{ker\,X}=\msf {IX}^{\text{top}}$ be the union of top dimensional twisted sectors of $\X$ (cf. Remark \ref{rmk kernel-G} and Remark \ref{rmk ker-G=IG-top}). The $[pt]$ in \eqref{E 1} should be replaced by generators of  $H^{\dim\X}(\msf{ker\,X})$. Note that when $\sf X$ is effective, $\msf{ker\,X}=\X(1)\cong\X$.  When $\X$ is ineffective, $\msf{ker\,X}$ has more than one  component.
This leads to the following definition. 
 We say that a compact symplectic orbifold groupoid $\X$ is {\bf symplectic uniruled} if there is a nonzero orbifold Gromov--Witten invariant of $\X$ of the form
\begin{align}\label{E 3}
\langle [pt]_{\sf ker}, \alpha_2,\ldots,\alpha_k\rangle^\X_{0,A}\neq 0
\end{align}
with $k\geq 1$ and $0\neq A\in H_2(|\X|;\integer)$ (see Definition \ref{def uniruled-orbifold} and Remark \ref{rmk 7.2}). Here $[pt]_{\sf ker}$
depends on the location component of the point, and it runs through the  generators of $H^{\dim \X}(\msf{ker\,X})$.

One of the main application in this paper is the following

\begin{theorem}[Theorem \ref{thm inv-sym-unirule}]
Let $(\X,\omega)$ be a compact symplectic orbifold groupoid with $\Xa$ being its weight-$\wa$ blowup along a compact symplectic sub-orbifold $\oS$. Then $\X$ is symplectic uniruled if and only if $\Xa$ is symplectic uniruled.

\end{theorem}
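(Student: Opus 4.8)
The plan is to deduce the theorem from the weighted-blowup correspondence (Theorem \ref{thm main}, i.e. Theorem \ref{thm lower-traingl}), specialized to the genus-zero datum that carry a point-class insertion in the first absolute slot, as indicated in the paragraph after Theorem \ref{thm main} (Theorem \ref{thm correspds-pt} and Remark \ref{rmk other-restriction-of-L}). The strategy mirrors Hu--Li--Ruan's manifold argument \cite{HLR08}: symplectic uniruledness is detected by the non-vanishing of \emph{some} invariant in a prescribed family, and the correspondence matrix $L$ is invertible (lower triangular with non-vanishing diagonal), so a family of invariants on one side vanishes identically if and only if the corresponding family on the other side vanishes identically.

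First I would fix notation for the ``uniruledness vectors.'' On the orbifold $\X$ side, uniruledness (Definition \ref{def uniruled-orbifold}) says that for some generator $[pt]_{\sf ker}$ of $H^{\dim\X}(\msf{ker\,X})$ and some $A\neq 0$ there is a nonzero invariant $\langle [pt]_{\sf ker},\alpha_2,\ldots,\alpha_k\rangle^\X_{0,A}$. I would first reduce this to an equivalent statement phrased in terms of \emph{relative} absolute $\sigma_\star$-data of the pair $(\X,\oS)$: by choosing the point to lie off $\oS$ (or, when forced onto $\oS$, handling it through the twisted-sector bookkeeping already set up in \S\ref{sec 4.1}), a nonzero absolute GW invariant of $\X$ with a $[pt]_{\sf ker}$-insertion can be promoted to a nonzero \emph{$\X$ relative to $\oS$} invariant in $\scr A_{\sigma_\star}(\X,\oS)$ of the restricted (genus zero, point-class-first) type; conversely any such nonzero relative invariant sums contributions that force at least one nonzero absolute invariant \eqref{E 3}. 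The analogous reduction on the blowup side identifies uniruledness of $\Xa$ with the non-vanishing of the restricted vector $v_{\Sigma_\star,\K}$ of relative invariants of $(\Xa|\Z)$ — here one uses that a point of $\Xa$ away from $\Z$ pushes forward under $\kappa$ to a point of $\X$ away from $\oS$, while $\kappa^*[pt]$ still carries a point class, and that the absolute insertions of the relative data are taken precisely from $\K=\msf I\kappa^*H^*_{CR}(\X)$.

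Granting these two reductions, the theorem is immediate: by Theorem \ref{thm correspds-pt}, the restricted correspondence $\Psi$ (bijective when $\codimc\oS>1$, injective when $\codimc\oS=1$) induces an invertible linear map $L$ with $L(v_{\Sigma_\star,\K})=v_{\sigma_\star}$; since $L$ is invertible, $v_{\Sigma_\star,\K}=0$ iff $v_{\sigma_\star}=0$. Translating back through the reductions, $\Xa$ fails to be symplectic uniruled iff $v_{\Sigma_\star,\K}=0$ iff $v_{\sigma_\star}=0$ iff $\X$ fails to be symplectic uniruled. In the codimension-one case one must be slightly careful because $\Psi$ is only an injection, so $v_{\sigma_\star}$ records only the absolute data in the image of $\Psi$; but uniruledness of $\X$ needs only \emph{one} nonzero invariant, and one checks that the restriction $\Psi$ already surjects onto those absolute data that can carry a $[pt]_{\sf ker}$ insertion (a point on the divisor $\oS$ still meets the blowup locus), so no uniruledness-detecting invariant is lost. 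One also has to ensure the degree-$A$ curve class on $\X$ corresponds to a genuinely nonzero class $\kappa^!A$ (or a class meeting $\Z$) on $\Xa$ and vice versa — this is exactly the bookkeeping of admissible relative data in \S\ref{sec 4.2}, which forbids the zero class.

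The main obstacle I anticipate is not the linear-algebra step — that is handed to us by Theorem \ref{thm main} — but the first reduction: carefully matching ``a nonzero \emph{absolute} GW invariant of $\X$ with a $[pt]_{\sf ker}$-insertion'' with ``a nonzero \emph{relative} ($\X$ rel $\oS$) invariant in the point-class-first restricted family,'' and likewise on the $\Xa$ side with the exceptional divisor $\Z$. This requires (i) knowing that inserting a point class generically missing $\oS$ does not change the invariant when one passes to the relative theory (a standard relative/absolute comparison, valid since a generic point imposes no relative condition), (ii) dealing with the ineffective case where $\msf{ker\,X}$ has several components and $[pt]_{\sf ker}$ ranges over all of them — one must verify the correspondence $\Psi$ respects this component decomposition, which it does because $\kappa$ is an isomorphism over $\X\setminus\oS$ and hence matches the top-dimensional twisted sectors of $\X$ and $\Xa$ away from the blowup locus, while the sectors created along $\Z$ contribute generators that are again handled by the $\Sigma_\star$/$\sigma_\star$ dictionary of \S\ref{sec rel-inv-E-B}, and (iii) the sign/rationality issue that the diagonal entries of $L$ are nonzero rationals, so no cancellation can make a single nonzero invariant disappear after applying $L$ or $L^{-1}$. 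Once these compatibilities are spelled out, the proof is a two-line consequence of invertibility of $L$.
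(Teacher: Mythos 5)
Your overall skeleton --- detect uniruledness on each side by a family of invariants and transport non-vanishing through the invertible lower-triangular map $L_{0,[pt]_{\sf ker}}$ of Theorem \ref{thm correspds-pt} --- is indeed the paper's engine, and the easy half of your first reduction is fine: a nonzero primary invariant $\langle[pt]_{\sf ker},\alpha_2,\ldots,\alpha_k\rangle^\X_{0,A}$ is (after a basis decomposition) itself an entry of $v_{\sigma_\star,0,[pt]_{\sf ker}}$, corresponding under $\Psi$ to a relative datum with $J=\varnothing$ and class $\kappa^!A$. But the two reductions you lean on are exactly where the real work lies, and as stated they have genuine gaps. First, you never invoke the descendent characterization of uniruledness (Theorem \ref{thm condi unirl}): the invariants produced by the correspondence lie in $\scr A^\bullet_{\sigma_\star}(\X,\oS)$, i.e.\ they carry $\psi$-classes on the $\oS$-supported insertions and may have disconnected domains, whereas Definition \ref{def uniruled-orbifold} demands a connected primary invariant. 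Concluding ``$v_{\sigma_\star}\neq 0\Rightarrow\X$ is uniruled'' (and likewise on the $\Xa$ side) requires precisely this orbifold analogue of \cite[Theorem 4.10]{HLR08}, together with tracking that the component carrying the point insertion has nonzero curve class; your phrase ``sums contributions that force at least one nonzero absolute invariant'' does not supply this.

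Second, your claimed equivalence ``$\Xa$ uniruled $\iff v_{\Sigma_\star,\K,0,[pt]_{\sf ker}}\neq 0$'' rests on ``a standard relative/absolute comparison, valid since a generic point imposes no relative condition,'' and that mechanism is not valid: absolute invariants of $\Xa$ and relative invariants of $(\Xa|\Z)$ are not equal even when insertions avoid $\Z$; they are related only through the degeneration formula, a sum of products in which cancellation must be ruled out by choosing the data. The paper makes both passages honest as follows: to go from uniruledness of $\Xa$ (resp.\ of $\X$) to a nonzero entry of $v_{\Sigma_\star,\K,0,[pt]_{\sf ker}}$, it degenerates $\Xa$ along $\Z$ with trivial weight (resp.\ $\X$ along $\oS$ with weight $\wa$) and lifts the insertions carefully (setting $([pt]_{\sf ker})^+=0$, splitting classes by linearity into $\K$-classes and $\Z$-supported classes, taking the point class to $\msf I\kappa^*[pt]_{\sf ker}$); and to go from a nonzero relative invariant of $(\Xa|\Z)$ back to an absolute (descendent) invariant of $\Xa$, it applies Theorem \ref{thm correspds-pt} a \emph{second} time, to the pair $(\Xa,\oS=\Z)$ with weight $(1)$ --- a step entirely absent from your plan --- before invoking Theorem \ref{thm condi unirl}. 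Without these two ingredients (the descendent/disconnected characterization and the second application of the correspondence, or an equivalent HLR-style argument), the ``two-line consequence of invertibility'' does not go through.
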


Furthermore, it is natural to ask whether there exists an embedded rational curve with orbifold structure passing through an orbifold point with given monodromy. That means, for symplectic orbifolds,  we may consider a more general notion of symplectic uniruledness  which involves point classes of general twisted sectors of $\X$ not only $\msf{ker\,X}$. We will discuss the invariance of this version of symplectic uniruledness in \S \ref{subsec general-symplec-uniruled}.

\subsection*{Acknowledgements}
The authors thank the anonymous referee for careful reading and helpful suggestions, especially for the suggestion to consider the more general version of symplectic uniruledness discussed in \S \ref{subsec general-symplec-uniruled}.
 The first author is partially supported by NSFC (No. 11431001 $\&$ No. 11890663). The second author is partially supported by NSFC (No. 11501393) and by Sichuan Science and Technology Program (No. 2019YJ0509). The third author is partially supported by NSFC (No. 11831017 $\&$ No. 11890662 $\&$ No. 11521101 $\&$ No. 11771460).

\section{Orbifolds as proper \'etale Lie groupoids}

In this paper, we treat an orbifold as a Morita equivalence class of proper \'etale Lie groupoids, which are called orbifold groupoids. There are some nice references on orbifold groupoids, for example \cite{ALR07} and \cite{MoP97}. See also \cite{CH06}. It is known that an effective orbifold is equivalent to a Morita equivalence class of effective proper \'etale Lie groupoids. But when effectiveness
fails,  orbifold groupoids should be used. Even for
effective orbifolds, the structure of Lie groupoids provides a powerful way to control many tedious issues of orbifolds.
In this section, we introduce the  material of Lie groupoids needed in this paper.

\subsection{Proper \'etale Lie groupoids}

We review some basic definitions.

\begin{defn}[Lie groupoids]
\label{Lie-gpoid:def}
 A {\bf Lie groupoid}
 $\sf G$  consists of two smooth manifolds $G^0$ and $G^1$, together with five smooth maps $(s, t, m, u, i)$ satisfying the following properties.
  \begin{enumerate}
\item  The source map  and the target map $s, t: G^1 \rto  G^0$ are both submersions.
\item The composition map
\[
m:  G^{[2]}: =\{(g_1, g_2) \in  G^1 \times  G^1: t(g_1) = s(g_2)\} \rto G^1
\]
written as $m(g_1, g_2) = g_1\circ  g_2$ for composable elements $g_1$ and $g_2$,
satisfies the obvious associative property.
\item The unit map $u: G^0 \rto G^1$ is a two-sided unit for the composition.
\item The inverse map $i: G^1 \rto G^1$, $i(g) = g^{-1}$,  is a two-sided inverse for the composition.
\end{enumerate}
 For simplicity, we may denote $\sf G$  by  ${\sf G} = (G^1 \rightrightarrows G^0)$, where double arrows represent the source and target maps.
 $G^0$ is called the space of objects or units, and $G^1$  is called the space of arrows. An element $\alpha\in G^1$ is called an arrow from its source
 $s(\alpha)$ to its target $t(\alpha)$.

 The Lie groupoid $\sf G $   is called {\bf  proper } if $(s, t): G^1 \rto G^0 \times G^0$ is proper, and is called {\bf   \'etale} if $s$ and $t$ are local diffeomorphisms.
A proper \'etale Lie groupoid is also called an {\bf orbifold groupoid}.
\end{defn}
\begin{remark}
$G^1$ defines an equivalence relation on $G^0$: for $x,y\in G^0$ we say $x\sim y$
if and only if there exists an arrow from $x$ to $y$. Denote the quotient space $G^0/G^1$   by $|\sf G|$, and denote the projection map by $\pi_{\sf G}$.
$|\sf G|$ is called the {\bf coarse space} of $\sf G$. Conversrly, $\sf G$ is called a Lie groupoid structure of the topological space $|\sf G|$. A Lie groupoid is {\bf connected}, if its coarse space is connected.
\end{remark}
\begin{remark}
A groupoid is a category in the following sense:
  $G^0$ is the set of objects; an arrow from object $x$ to $y$
  is a morphism from
  $x$ to $y$;
  $m$ defines  the composition of morphisms.
  \end{remark}
\begin{defn}\label{def equivalence}
Let $\sf G$ and $\sf H$ be two Lie groupoids. By a {\bf strict morphism}
from $\sf G$ to $\sf H$ we mean a functor $\sf f: \sf G\rto \sf H$
which consists of a pair of smooth maps
$f^i: G^i\rto H^i, i=0,1$ that are compatible with structure maps of
groupoids. Every strict morphism $\sf f:G\rto H$ induces a continuous map $|\sf f|:|G|\rto|H|$ on coarse spaces.

A strict morphism $\sf f: G\rto H$ is called an {\bf equivalence} if
\begin{enumerate}
\item[(1)]  the map
       $t\circ \mathrm{proj}_2:
       G^0\times _{f^0,H^0,s} H^1
       \xrightarrow{\mathrm{proj}_2} H^1\xrightarrow{t} H^0$
       is a surjective submersion;
\item[(2)]  the square
\[
\xymatrix{
 G^1\ar[rr]^{ f^1} \ar[d]_{s\times t} &&
 H^1\ar[d]^{s\times t}\\
 G^0\times G^0\ar[rr]^{ f^0\times f^0} &&
 H^0\times H^0.
}\]
is a fiber product of smooth manifolds.
\end{enumerate}


By a (weak) {\bf morphism} from $\sf G$ to $\sf H$ we mean a triple
$(\sf g, \sf M,\sf f)$ with the diagram
$$
\sf G\xleftarrow{\sf g}\sf M\xrightarrow{\sf f}\sf H,
$$
where $\sf g:\sf M\rto \sf G$ is an equivalence and $\sf f$ is a strict morphism.
\end{defn}

We give some typical examples of Lie groupoids that will be used in this paper.
 \begin{example}[Quotient groupoids]\label{example_1}
 Let $M$ be a smooth manifold and $G$ be a Lie group. Suppose that
 $G$ acts on $M$ smoothly with the action map
 $
 \phi: M\times G\rto M.
 $
 The quotient space $M/G$ admits a Lie groupoid structure
 $$
G \ltimes M:= (M\times G\rightrightarrows M),
 $$
 where $s(m,g)=m$ and $t=\phi$.
 This is called the quotient groupoid for $M/G$. 

 The following are special cases.
 \begin{enumerate}
 \item
By identifying a manifold $M$ as a quotient groupoid
 $M/\{e\}$, hence $\{e\}\ltimes M\cong (M\rightrightarrows M)$  gives a canonical groupoid structure on $M$;
 \item
 for any  group $G$ acting trivially on a point $\{pt\}$, we denote the quotient groupoid by $BG$;
 \item for any positive integer $r$, we denote $B\mathbb Z_r$ by $\sf B_r$; in this sense, $\msf B_1=\{pt\}$. In this paper, we identify
 $$
 \mathbb Z_r=\langle \zeta_r\rangle,
 $$
     where $\zeta_r=e^{2\pi i\frac{1}{r}}$.
 \end{enumerate}
 \end{example}

 \begin{example}\label{example_2}
 Let $\msf G=(G^1\rightrightarrows G^0)$ be a Lie groupoid. For any
 open subset $U^0\subseteq G^0$, define
 $$
 U^1= s^{-1}(U^0)\cap t^{-1}(U^0).
 $$
 Then we call $(U^1\rightrightarrows U^0)$ is a groupoid  induced on
 $U^0$; we denote it by $\msf G(U^0)$ or $\msf G|_{U^0}$.

 For any
 open subset $\bf U\subseteq |\sf G|$, let $U^0=\pi_{\msf G}^{-1}(\bf U)$.
 Then we have an induced groupoid
 $\msf G(U^0)=(U^1\rightrightarrows U^0)$
  for $\bf U$.

  In fact, for any open subset $U^0\subseteq G^0$,
  $\msf G(U^0)$ is equivalent to $\msf G(\pi_{\msf G}^{-1}(\pi_{\msf G}(U^0))$, that is the natural inclusion
  $\msf G(U^0)\hrto \msf G(\pi_{\msf G}^{-1}(\pi_{\msf G}(U^0))$ is an equivalence of Lie groupoids.

 When $\bf U$ is a connected component of $|\G|$ we call $\msf G(\pi_{\msf G}^{-1}(\bf U))$ a connected component of $\G$.
 \end{example}

\begin{remark}
Let $\msf G$ be a proper \'etale Lie groupoid. We have following properties.
\begin{enumerate}
\item For every $x\in G^0$,
$$
G_x:=(s, t)^{-1}(x, x) = s^{-1}(x)\cap t^{-1}(x)
$$
is a finite group. We call  $G_x$ to be the {\bf isotropy group}  at $x$.
\item When $\G$ is effective (cf. \cite[Definition 1.38]{ALR07}), there exists a small
neighborhood $U^0$ of $x$ such that the induced groupoid structure
$$\msf G(U^0)=G_x\ltimes U^0,$$
 which is the groupoid structure for ${\bf U}=\pi_{\msf G}(U^0)$. It is also called an orbifold chart over ${\bf U}$ or a local orbifold chart around $x$.
\end{enumerate}
This leads to the following definition which is given in \cite{ALR07}:
a proper \'etale Lie groupoid $\msf G$ is an {\bf orbifold
structure} of the topological space $X=|\msf G|$. A space $X$ is called an {\bf orbifold} if it admits a proper et\'ale Lie groupoid as its orbifold structure.
\end{remark}

\begin{remark}\label{rmk kernel-G}
Let $\msf G=(G^1\rrto G^0)$ be a connected proper \'etale Lie groupoid. Let $ker \G$ be the space of constant arrows (or ineffective arrows) (cf. \cite[Definition 2.27]{ALR07}). It is showed in \cite[Lemma 2. 32]{ALR07} that $ker \G$ consists of a union of connected components in $G^1$. $\G$ acts on $ker \G$ (cf. Definition \ref{def groupoid-action}) naturally by conjugations. We denote the resulting action groupoid $\G\ltimes ker \G$ by $\msf{ker\,G}$. It is also proper \'etale. However, it may not be connected.
When $\G$ is effective, $\msf{ker\,G}\cong\G$ is connected. Otherwise $\sf ker\,\G$ has several connected component and we call $\G$ an ineffective orbifold groupoid.

In this paper we say that a proper \'etale Lie groupoid $\G$ is {\bf compact} if the coarse space $|\msf{ker\,G}|$ of $\msf{ker\,G}$ is compact. In this circumstance, local orbifold charts also exist for ineffective orbifold groupoids.
\end{remark}

In the following we only consider connected compact proper \'etale Lie groupoids.
\subsection{Space of morphisms and twisted sectors}

Let $\msf G=(G^1\rightrightarrows G^0)$ and $\msf H=(H^1\rightrightarrows H^0)$ be two Lie groupoids.
Let $\mathrm{SMor}(\msf H,\msf G)$ be the space of strict morphisms
from $\msf H$ to $\msf G$. For any two morphisms
$\msf f, \msf g\in \mathrm{SMor}(\msf H,\msf G)$,
 we say that they are equivalent, denote as $\msf f\sim\msf g$, if
there is a natural transformation $\phi:\msf f\Rto \msf g$, i.e, a smooth map
$
\phi: H^0\rto G^1$,
such that
$$
s\circ\phi(x)=f^0(x), \;\;\;
f^1(\alpha)\circ \phi(t(\alpha))=\phi(s(\alpha))\circ g^1(\alpha)$$
for any $x\in H^0$ and $\alpha\in H^1$. Let
$\bf{SMor}(\msf H,\msf G)$ be the quotient space $\mathrm{SMor}(\msf H,\msf G)/\sim$.

$\bf{SMor}(\msf H,\msf G)$ admits a natural groupoid structure, denoted by
$$
\msf{SMor}(\msf H,\msf G)=
(\mathrm{SMor}^1(\msf H,\msf G)\rrto \mathrm{SMor}^0(\msf H,\msf G)),
$$
where $\mathrm{SMor}^0(\msf H,\msf G)=\mathrm{SMor}(\msf H,\msf G)$ and $\mathrm{SMor}^1(\msf H,\msf G)$ consists of
all
 natural transformations between any two morphisms.
  A natural transformation from
  $\msf f$ to  $\msf g$  is called an arrow
 from
$\msf f$ to $\msf g$.
\begin{remark}We make some remarks.
\begin{enumerate}
\item $\mathrm{SMor}^0(\msf H,\msf G)$ can be only a topological space even with proper Sobolev norms, hence
    $\msf{SMor}(\sf H,G)$ is only a topological groupoid when
    proper topology is endowed.
\item If
$\msf G$ and $\msf H$ are proper and \'etale, so is $\msf{SMor}(\msf H,\msf G)$;
\item In order to consider all possible morphisms from $\msf H$ to $\msf G$, one must consider weak morphisms. This leads to a groupoid $\msf {Mor}(\msf H,\msf G)$ (cf. \cite{CDW17b}). If $\msf H$ and $\msf G$ are two proper \'etale Lie groupoids,
    $\msf {Mor}(\msf H,\msf G)$ is equivalent to a proper et\'ale
    Banach Lie groupoid when proper Sobolev norms are adapted.
    \end{enumerate}
\end{remark}

Let $\sf G$ be an orbifold groupoid. Take  $\msf H=\msf B_r$ (cf. Example \ref{example_1}). Let
$$
\mathrm{IMor}^0(\msf B_r, \msf G)
=\{\msf f=(f^0,f^1)| f^1 \mbox{ is  injective}\}
\subseteq \mathrm{SMor}^0(\msf B_r,\msf G).
$$
Hence every $\msf f=(f^0,f^1)\in \mathrm{IMor}^0(\msf B_r,\msf G)$ corresponds to a monomorphism $f^1:\integer_r\hrto G_{f^0(pt)}$. Then $\msf{SMor}(\msf B_r,\msf G)$ induces a Lie  groupoid
\[
\msf{IMor}_r(\msf G):=(\mathrm{IMor}^1(\msf B_r,\msf G)\rrto \mathrm{IMor}^0(\msf B_r,\msf G))=\msf{SMor}(\msf B_r,\msf G)|_{\mathrm{IMor}^0(\msf B_r,\msf G)},
\]
which is proper and \'etale. So $$\mathrm{IMor}^1(\msf B_r,\msf G)=\{\sigma:\msf f_1\Rto \msf f_2\mid \msf f_1,\msf f_2\in \mathrm{IMor}^0(\msf B_r,\msf G)\},$$ and every natural transformation $\sigma:\msf f_1=(f^0_1,f^1_1)\Rto \msf f_2=(f^0_2,f^1_2)$ corresponds to an arrow $\sigma(pt):f^0_1(pt)\rto f^0_2(pt)$ that satisfies
\[
\sigma(pt)\inv \circ  f^1_1(\zeta_r) \circ \sigma(pt)=f^1_2(\zeta_r).
\]

\begin{defn}\label{def twisted-sector}
The proper \'etale Lie groupoid
$$
\msf{IG}=\bigsqcup_{r\in \mathbb Z^+} \msf{IMor}_r(\msf G)
$$
is called the {\bf inertia space} of $\msf G$.
\end{defn}

There is a natural morphism $e:\sf IG\rto G$ called the {\bf evaluation morphism}, which on objects maps each $\f=(f^0,f^1)\in \mathrm{IMor}^0(\msf B_r,\msf G)$ to $f^0(pt)$ and on arrows maps each $\sigma \in \mathrm{IMor}^1(\msf B_r,\msf G)$ to $\sigma(pt)$.

According to the decomposition of the coarse space $|\msf {IG}|$ into connected components, $\sf IG$ decompose into connected components. We call each connected component a {\bf twisted sector} of $\G$.
We say that a twisted sector is of top dimension, if the restriction of $e:\sf IG\rto G$ is a local diffeomorphism on object spaces. Denote the union of top dimensional twisted sectors by $\msf {IG}^{\text{top}}$.

\begin{remark}\label{rmk ker-G=IG-top}
We have $\msf {IG}^{\emph{top}}=\msf{ker\,G}$. Therefore $\msf{ker\,G}$ is a (possibly union of) top dimensional twisted sector of $\G$.
\end{remark}


\begin{example}[Inertia space of quotient groupoids]
Suppose that $\msf G=M\rtimes \Gamma$ is a quotient groupoid. Then
\[
\msf{IMor}_r(\msf G)= \bigsqcup_{(\gamma),\,\text{\em ord}(\gamma)=r} C_\Gamma(\gamma) \ltimes M^\gamma,
\]
where $(\gamma)$ is a conjugacy class of $\gamma$, $M^\gamma$ is the space fixed by $\gamma$, $C_\Gamma(\gamma)$ is the centralizer of $\gamma$.
\end{example}

\begin{remark}\label{rmk notation-of-twisted-sector}
Now we give a local description of twisted sectors of a compact orbifold groupoid $\G$. Let $x\in G^0$
and $G_x \ltimes U^0$ be a local orbifold chart. For any
$g\in G_x$,
 there is a  twisted sector for the conjugate class $(g)$  which is locally given by $ C_{G_x}(g)\ltimes(U^0)^g$.
In \cite{CR04}, $(g)$ is treated as the index for the twisted sector. For instance we have $\G(1)$, where $1$ is the unit element in isotropy groups. Moreover, $e:\msf{IG}\rto\G$ restricts to an isomorphism $\G(1)\cong \G$.

Let $\scr T^{\msf G}$ be the set of indices of twisted sectors. A twisted sector is denoted by $\msf G(\delta)$ for $\delta\in \scr T^{\msf G}$. The restriction of $e:\sf IG\rto G$ on a twisted sector $\G(\delta)$ is denoted by $e_\delta$.
For convenience, when we use local models, we may abuse the notations to treat $(g)$ as an element of $\scr T^{\msf G}$. We set $\scr T^\G_{\sf ker}$ to be the index set of top dimensional twisted sectors. Then
\[
\msf{ker\,G}=\msf{IG}^{\emph{top}}=\bigsqcup_{\delta\in\scr T^\G_{\sf ker}} \G(\delta).
\]
When $\G$ is effective, $\msf{ker\,G}=\G(1)\cong\G$ via $e$.

%

\end{remark}


The Chen--Ruan cohomology of an orbifold groupoid $\msf G$ is defined to be
$$
H^\ast_{CR}(\msf G) = H^\ast(\msf{IG})
$$
with proper degree shiftings. In \cite{CR04}, Chen and Ruan introduced a natural Poincar\'e pairing for $H^\ast_{CR}$. We recall the definition.
For a morphism $\msf f: \msf B_r\rto\msf G$ given by
$$
f^0:pt\mapsto x,\;\;\;
f^1:\zeta_r\mapsto \gamma\in G_x
$$
define a ``dual'' morphism $\bar{\msf f}:\msf B_r\rto \msf G$
given by
$$
\bar f^0:pt\mapsto x,\;\;\;\bar f^1:\zeta_r\mapsto \gamma^{-1}\in G_x.
$$
This induces a morphism $\bar{\cdot}: \msf{IG}\rto \msf{IG}$
and also
\begin{align}\label{E def-bar-on-twistedsector}
\bar{\cdot}: \scr T^{\msf G}\rto \scr T^{\msf G}.
\end{align}
The Poincar\'e pairing on $H^\ast_{CR}(\msf G)$ is defined to be
\begin{align}
\langle\cdot,\cdot\rangle:
H^\ast(\msf G(\delta))\times H^\ast(\msf G(\bar\delta))\rto \mathbb R,\;\;\;
\langle\alpha,\beta\rangle=\int_{\msf G(\delta)}\alpha\wedge {\bar{\cdot}}^\ast(\beta)
\end{align}
for all $\delta\in \scr T^{\msf G}$.

\subsection{Groupoid actions and fiber bundles}

\begin{defn}[Groupoid actions]\label{def groupoid-action}
A {\bf right action} of $\msf G$ on a space $X^0$ consists of two smooth maps $\pi^0: X^0\rto G^0$ and
\begin{eqnarray*}
\phi: X^1:=X^0\times_{\pi^0,G^0,s} G^1=\{(x,g)|\pi^0(x)=s(g)\} &\rto& X^0, \\
(x,g)&\mapsto& xg
\end{eqnarray*}
such that
$$
(xg)h=x(gh),\;\;\; x u(\pi^0(x))=x,\;\;\; \pi^0(xg)=t(g).
$$
We call $\pi^0$ the {\bf base map}, and $\phi$ the {\bf action map}.

We associate a groupoid  $(X^1\rightrightarrows X^0)$ to the action by setting the groupoid structure maps to be
$$
s(x,g)=x,\; t(x,g)=xg,$$
$$ m((x,g),(xg,h))=(x, gh),\;
u(x)=(x,u(\pi^0(x))),\; i(x,g)=(xg, g^{-1}).
$$
We denote this groupoid by $\G\ltimes X^0$, and call it the {\bf action groupoid} associated to the $\G$-action on $X^0$.

Let $\pi^1: X^1\rto G^1$ be the projection. Then $\pi=(\pi^0,\pi^1):
\G\ltimes X^0\rto \msf G$ is a strict morphism. We call it the projection of the groupoid action.
\end{defn}

\begin{defn}[Fiber bundles]
By a {\bf fiber bundle} over $\msf G$ with fiber $F$ we mean a strict morphism $\pi:\msf E\rto \msf G$, such that
\begin{enumerate}
\item $\pi^0: E^0\rto G^0$
is a fibration with fiber $F$;
\item $\msf E=\G\ltimes E^0$ for some $\msf G$-action on $E^0$, and
$\pi$ is the projection of the groupoid action.
\end{enumerate}
We call $\msf E$ to be a {\bf vector
bundle} over $\msf G$ if $\pi^0: E^0\rto G^0$ is
a vector bundle and  the action map
$\phi(x,g)$ is linear in $x$.
\end{defn}
As the bundle theory over manifolds, we introduce the concepts of  principal bundle and associated bundle.
\begin{defn}[Principal bundles]
Let $K$ be a Lie group and $\pi: \msf P\rto \msf B$ be a $K$-fiber bundle. We say that $\msf P$ is a $K$-{\bf principal bundle} if $\pi^0:P^0\rto B^0$
is  a (left) $K$-principal bundle and the $K$-action on $P^0$ commutes with the $\msf B$-action.
\end{defn}
For a principal $K$-bundle $\pi:\msf P\rto\msf B$, $\pi^i:P^i\rto B^i,i=0,1$
are left $K$-principal bundles. Then
$$
K\backslash\msf P:=(K\backslash P^1\rightrightarrows K\backslash P^0)\cong \msf B.
$$
For  any right $K$-space $F$, we may define an {\bf associated bundle}
$$
F\times_K\msf P=(F\times_KP^1\rightrightarrows F\times_K P^0).
$$
 This can be generalized by allowing that $F$ is a groupoid. Suppose that $\msf F=(F^1\rightrightarrows F^0)$ admits a group $K$-action, then
 $$
 \msf F\times_K \msf P:=(F^1\times_K P^1\rightrightarrows F^0\times_KP^0).
 $$
\begin{remark}
The right and left actions defined above can be switched to be left and right ones.
\end{remark}

\subsection{Symplectic orbifold groupoids}

Let $\msf X=(X^1\rightrightarrows X^0)$ be an orbifold groupoid.
By a symplectic form $\omega$ on $\msf X$ we mean a pair of symplectic
forms $(\omega^0,\omega^1)$ with $\omega^i\in\Omega^2(X^i),i=0,1$, such that
$s^\ast \omega^0=t^\ast\omega^0=\omega^1$. We call $(\X,\omega)$ a {\bf symplectic orbifold groupoid} if $\omega$ is a symplectic form on $\X$. A strict morphism $\msf f=(f^1,f^0):(\X,\omega)\rto(\msf Y,\eta)$ is called a symplectomorphism if both $f^0$ and $f^1$ are symplectomorphisms, and then we call that $(\X,\omega)$ is symplectic isomorphic to $(\msf Y,\eta)$.

We say that an orbifold groupoid $\msf S=(S^1\rightrightarrows S^0)$ is a {\bf sub-orbifold groupoid} of $\msf X$ if $S^0$ is a smooth submanifold of $X^0$ and $S^1=s^{-1}(S^0)\cap t^{-1}(S^0)$, hence $S^1$ is also a smooth submanifold of $X^1$. We call $\msf S$ to be a {\bf symplectic sub-orbifold groupoid} of $(\msf X,\omega)$ if $S^i,i=0,1$ are symplectic submanifolds of $(X^i,\omega^i)$. In this case we call $(\msf X,\msf S)$ a symplectic orbifold groupoid pair. We also have the following {\bf symplectic neighborhood theorem}.

\begin{theorem}[\cite{DCW18}]\label{thm sym-neigh}
Let $(\X,\oS)$ be a symplectic orbifold groupoid pair, and suppose that $\oS$ is compact. Then there is a symplectic sub-orbifold groupoid $\oS_\ast=(S^1_\ast\rrto S_\ast^0) \subseteq\oS$ of $\X$ such that
\begin{itemize}
\item
      the natural inclusion $\oS_\ast\hrto \oS$ is an equivalence in the sense of Definition \ref{def equivalence};
\item
      there is a tubular neighborhood $\msf U=(U^1\rrto U^0)$ of $\oS_\ast$ in $\X$, i.e. $\msf U$ is an (open) sub-orbifold groupoid of $\X$ and $U^i$ are tubular neighborhood of $S^i_\ast$ in $X^i$;
\item
      $(\msf U,\omega|_{\msf U})$ is symplectic isomorphic to a disc bundle of the normal bundle $\N$ (with the induced symplectic form) of $\oS_*$ in $\X$.
\end{itemize}
Then we can take compatible almost complex structures on both $\N$ and $\msf U$ such that the symplectomorphism $\msf U\rto\N$ also preserves the almost complex structures.
\end{theorem}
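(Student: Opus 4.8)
The plan is to deduce this groupoid statement from the classical symplectic (Weinstein) neighbourhood theorem for manifolds, carried out simultaneously and compatibly on the object space $X^0$ and the arrow space $X^1$, exploiting the fact that $s$ and $t$ are local diffeomorphisms to transport every auxiliary choice from the object level to the arrow level. First I would address the possible non-compactness of $S^0$. Compactness of $|\msf{ker\,S}|$ forces the coarse space $|\oS|$ to be compact, since the projection $|\msf{ker\,S}|\rto|\oS|$ induced by the evaluation $e$ is proper and surjective. As $\pi_{\oS}\colon S^0\rto|\oS|$ is open, I can then choose a precompact open subset $S^0_\ast\subseteq S^0$ whose $\oS$-saturation is all of $S^0$, and set $\oS_\ast:=\oS(S^0_\ast)=(S^1_\ast\rrto S^0_\ast)$ with $S^1_\ast=s\inv(S^0_\ast)\cap t\inv(S^0_\ast)$. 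By Example \ref{example_2} the inclusion $\oS_\ast\hrto\oS$ is an equivalence, and now $\overline{S^0_\ast}$ is compact; this compact core is what will replace the compactness of $S^0$ in all averaging arguments below.

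Next I would construct the symplectic normal bundle as a groupoid vector bundle. Set $N^0=(TS^0_\ast)^{\omega^0}\subseteq TX^0|_{S^0_\ast}$ and $N^1=(TS^1_\ast)^{\omega^1}$, and let $\N=(N^1\rrto N^0)$, where the $\oS_\ast$-action on $N^0$ is induced by the differentials of the structure maps of $\X$. The compatibility $s^\ast\omega^0=t^\ast\omega^0=\omega^1$ ensures this action preserves the fibrewise symplectic forms, so $\N$ is a symplectic vector bundle over $\oS_\ast$ in the sense of the earlier definition. The pivotal auxiliary choice is then an $\omega$-compatible almost complex structure $J=(J^0,J^1)$ that is genuinely groupoid-invariant, i.e. $s^\ast J^0=t^\ast J^0=J^1$. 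I would obtain $J^0$ near $\overline{S^0_\ast}$ by averaging an arbitrary compatible almost complex structure over the proper \'etale groupoid: properness, finiteness of the isotropy groups, and compactness of $\overline{S^0_\ast}$ make the averaging well defined (applying the canonical retraction of the contractible space of compatible almost complex structures fibrewise over the finite groupoid orbits), and the output satisfies $s^\ast J^0=t^\ast J^0$. Restricting $J^0$ to $N^0$ supplies the compatible almost complex structure on $\N$ required in the last sentence of the theorem.

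With invariant data in hand I would run the Weinstein/Moser argument equivariantly. The $J^0$-metric gives a groupoid-compatible tubular-neighbourhood (exponential) diffeomorphism from a disc bundle of $N^0$ onto an open $U^0\supseteq S^0_\ast$; because $s,t$ are local diffeomorphisms and $J$ is invariant, the identical construction on $X^1$ produces the arrow-level diffeomorphism onto a $U^1$, and the two intertwine $s$ and $t$. Pulling $\omega^0$ back yields two symplectic forms on the disc bundle of $N^0$ that agree to first order along the zero section; the relative Poincar\'e lemma furnishes an invariant primitive $1$-form and Moser's trick an invariant time-dependent vector field $V^0$, whose pullback $V^1:=s^\ast V^0=t^\ast V^0$ is well defined by invariance. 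The time-one flows $\Phi^0,\Phi^1$ then satisfy $\Phi^0\circ s=s\circ\Phi^1$ and $\Phi^0\circ t=t\circ\Phi^1$, so $(\Phi^1,\Phi^0)$ is a strict morphism — hence the desired symplectomorphism $\msf U\rto\N$ onto a disc bundle — and it preserves $J$ because the whole flow is built from $J$-data.

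The hard part will be the coherence in the last two steps rather than any single manifold computation: every object-level choice (the primitive, the Moser field, the flow) must be compatible with the full groupoid structure $(s,t,m,u,i)$, not merely invariant under an honest group action, and the averaging producing $J^0$ with $s^\ast J^0=t^\ast J^0$ must be justified for a proper \'etale groupoid over the compact core $\overline{S^0_\ast}$. Once that invariance is secured, the \'etale property guarantees that each object-level construction pulls back uniquely along $s$ and $t$ to the arrow level, so the classical manifold arguments transfer essentially verbatim and assemble into a symplectomorphism of groupoids.
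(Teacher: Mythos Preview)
The paper does not prove this theorem: it is stated with a citation to \cite{DCW18} and used as input, so there is no in-paper argument to compare against. Your outline is a plausible sketch of how the result in \cite{DCW18} is established --- pass to an equivalent sub-groupoid with precompact object space, build the normal bundle as a groupoid vector bundle, average to obtain a groupoid-compatible $J$, and run Moser's trick equivariantly --- and you correctly flag the averaging step and the $(s,t,m,u,i)$-coherence of the Moser flow as the places where genuine work is needed beyond the manifold case.

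One point to tighten if you carry this out: your averaging of an almost complex structure over a proper \'etale groupoid is only sketched. What makes it work is not ``averaging over finite groupoid orbits'' directly (orbits in $X^0$ need not be finite even though isotropy is), but rather the existence of a smooth Haar system / cutoff function for proper groupoids, or equivalently the local model $G_x\ltimes U^0$ together with a partition of unity on $|\oS|$; either route produces an invariant Riemannian metric and then, via the usual contractibility argument, an invariant compatible $J^0$ satisfying $s^\ast J^0=t^\ast J^0$. Once that is in place, the rest of your argument --- tubular neighbourhood via the invariant exponential map, relative Poincar\'e lemma, and Moser flow pulled back along the \'etale $s,t$ --- goes through essentially as you describe.
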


We see that $U^i$ are symplectic tubular neighborhood of $S_\ast^i$ in $(X^i,\omega_i)$, which are symplectomorphic to disc bundle of the normal bundle $N^i$ of $S_\ast^i$ in $X^i$ for $i=1,2$. 

\section{Weighted blowups for symplectic orbifolds}

Let $(\msf X,\msf S)$ be a symplectic orbifold groupoid pair. We will discuss the weighted blowups of $\msf X$ along $\msf S$. By Theorem \ref{thm sym-neigh}, it is sufficient to discuss the weighted blowups of vector bundles along their 0-sections.

\subsection{Weighted projective spaces}\label{sec wght-prj-sps}
Suppose $S^1$ acts on $\cplane^n$ with weight
$
\fk a=(\alpha_1,\ldots, \alpha_n)$, $\alpha_i\in\integer_{>0}.
$
Namely,
$$
t\cdot(z_1,\ldots,z_n)=(t^{\alpha_1}z_1,\ldots, t^{\alpha_n}z_n).
$$
For simplicity, we denote the right hand side by $t^{\fk a}\cdot z$ and denote this action by $S^1_{\fk a}$. The quotient
 groupoid $S^1_{\fk a}\ltimes S^{2n-1}$ is called a {\bf weight-$\wa$ projective space}, and is denoted by $\msf P_\fk a$.
We say that $\msf P_\fk a$ is the {\bf weight-$\wa$ projectivization} of $\cplane^n$ with respect to the $S^1_\fk a$ action.

The following  concepts are related:
 \begin{enumerate}
 \item the {\bf weight-$\wa$ projectification} of $\cplane^n$ with respect to the $S^1_\fk a$ action is defined to be
$$
[\overline{\cplane^n}]_\fk a:=\msf P_{(\fk a,1)}= S^1_{({\fk a},1)}\ltimes S^{2n+1},
$$
where  $S^1_{({\fk a},1)}$ acts as
$
t\cdot (z,w)= (t^\fk a z, tw).$ Hence $\msf P_{(\fk a,1)}$ is obtained by adding
$\msf P_\fk a$ to $\cplane^n$ at infinity.
\item  the {\bf weight-$\wa$ blowup} of $\cplane^n$ at origin is defined to be
$$[\underline{\cplane^n}]_\fk a
:=S^{2n-1}\times_{S^1(\fk a,-1)}\cplane,
$$
where $S^1$ acts on $S^{2n-1}$ and $\cplane$ with weights $\fk a$ and $-1$ respectively. There is a natural map
\begin{align}\label{E kappa-blowup-Cn}
\kappa: [\underline{\cplane^n}]_\fk a \rto \cplane^n;
\;\;\;\;
[z,w]\mapsto w^\fk a\cdot z,
\end{align}
where $\kappa^{-1}(0)=\msf P_\fk a$ is called the exceptional divisor.
\end{enumerate}
Let $\Gamma$ be a finite group.  Suppose that it acts on $\cplane^n$  via the representation
$$
\mu: \Gamma\rto GL(n,\cplane)
$$
and it commutes with the $S^1_\fk a$ action.
Then we have following constructions for the
orbifold $\Gamma^\mu\ltimes\cplane^n$ (here, we write $\Gamma^\mu$ for $\Gamma$):
\begin{equation}\label{eqn_proj}
\msf P_\fk a^\mu =\msf P_\fk a/\Gamma^\mu,
\;\;\;
[\overline{\cplane^n}]^\mu_\fk a=[\overline{\cplane^n}]_\fk a/ \Gamma^\mu,\;\;\;
[\underline{\cplane^n}]^\mu_\fk a=[\underline{\cplane^n}]_\fk a/\Gamma^\mu.
\end{equation}
In particular, when $\Gamma=\mathbb Z_r$ and the action weight is
\begin{equation}
\fk b=(\beta_1,\ldots, \beta_n), \;\;\; \mbox{ where }
\;\;\; 1\leq \beta_1,\ldots, \beta_n\leq r,
\end{equation}
we denote these spaces as $\msf P_\fk a^\fk b,
[\overline{\cplane^n}]^\fk b_\fk a$ and $[\underline{\cplane^n}]^\fk b_\fk a$ respectively.
\begin{remark}
Note that the range of $\beta$ is taken to be $[1,r]$
rather than $[0,r-1]$.
\end{remark}

We give an explicit description of the inertia space of $\msf P_\fk a^\mu$. Since
$$
\msf P_\fk a^\mu= (\Gamma^\mu\times S^1_\fk a)\ltimes S^{2n-1},
$$
its twisted sectors are parameterized by conjugate classes $(\delta),
\delta\in \Gamma\times S^1$.
The inertia space is
$$
\msf{IP}^\mu_\fk a=\bigsqcup_{(\delta),\,\delta\in \Gamma\times S^1}
(S^{2n-1})^\delta\ltimes C_{\Gamma\times S^1}(\delta).
$$
Be precise, suppose that $\delta=(\eta, t)$, then
$$
C_{\Gamma\times S^1}(\delta)=C_\Gamma(\eta)\times S^1.
$$
Denote the fixed point set of $\delta$ by
$$
\cplane^{(\delta)}=(\cplane^n)^\delta,\;\;\;
S^{(\delta)}=(S^{2n-1})^\delta.
$$
 Then
$$
\msf P_\fk a^\mu{(\delta)}=\frac{S^{(\delta)}}{C_\Gamma(\eta)\times S^1_{\fk a(\delta)}}
$$
  where $\fk a(\delta)$ is the  action weight on $\cplane^{(\delta)}$.

  Now suppose that $\Gamma=\mathbb Z_r=\langle\zeta_r\rangle$ with $\zeta_r=e^{2\pi i\frac{1}r}$
  and the action weight is $\fk b=(\beta_1,\ldots,\beta_r)$, i.e,
  $$
  \zeta_r\cdot (z_1,\ldots,z_n)=(\zeta_r^{\beta_1}z_1,\ldots,
  \zeta_r^{\beta_n}z_n).
  $$
We give an explicit description for its twisted sectors.
Let $\msf p_i=[\ldots,0,z_i=1,0,\ldots]$. Denote the isotropy group of $\msf p_i$ by $G_i$. Then
\begin{equation}
G_i=\{(e^{-2\pi i\frac{ b}{r}}, e^{2\pi i\frac{b\beta_i+ar}{\alpha_ir}})| 0\leq b\leq r-1,
0\leq a\leq \alpha_i-1\}.
\end{equation}
We denote the group elements by $g_{i,a,b}$. We list some easy lemma without proofs.

\begin{lemma}
The index set of twisted sectors of $\msf P^\fk b_\fk a$
$$
\scr T\subseteq \mathbb Z_r\times S^1
$$
is bijective to the union of $G_i\subseteq \mathbb Z_r\times S^1$.
\end{lemma}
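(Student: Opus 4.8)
The plan is to unwind the definition of twisted sectors for the quotient groupoid $\msf P^\fk b_\fk a=(\mathbb Z_r\times S^1_\fk a)\ltimes S^{2n-1}$. Since $\mathbb Z_r\times S^1$ is abelian, every conjugacy class is a single element, so by the description of $\msf{IMor}_m$ for quotient groupoids the index set $\scr T$ of twisted sectors is exactly the set of those $\delta\in\mathbb Z_r\times S^1$ whose fixed locus $(S^{2n-1})^\delta$ is nonempty, equivalently the set of $\delta$ lying in the isotropy group $G_z$ of some $z\in S^{2n-1}$. Thus it suffices to prove the set equality $\bigcup_{z\in S^{2n-1}}G_z=\bigcup_{i=1}^n G_i$ inside $\mathbb Z_r\times S^1$.

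First I would compute $G_z$ for an arbitrary $z=(z_1,\ldots,z_n)\in S^{2n-1}$. Writing $\mathrm{supp}(z)=\{i:z_i\neq 0\}$, an element $(\zeta_r^{-b},t)$ fixes $z$ iff $\zeta_r^{-b\beta_i}t^{\alpha_i}=1$ for every $i\in\mathrm{supp}(z)$, the vanishing coordinates imposing no condition; hence $G_z=\bigcap_{i\in\mathrm{supp}(z)}G_i$, and for the coordinate point $z=\msf p_i$ the single equation $\zeta_r^{-b\beta_i}t^{\alpha_i}=1$ recovers precisely the group $G_i$ displayed above (with $t=e^{2\pi i(b\beta_i+ar)/(\alpha_i r)}$, $0\le b\le r-1$, $0\le a\le\alpha_i-1$). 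Since every $z\in S^{2n-1}$ has nonempty support, $G_z=\bigcap_{i\in\mathrm{supp}(z)}G_i\subseteq G_i$ for any chosen $i\in\mathrm{supp}(z)$, giving $\bigcup_z G_z\subseteq\bigcup_i G_i$; conversely each $G_i=G_{\msf p_i}$ is itself an isotropy group, so $\bigcup_i G_i\subseteq\bigcup_z G_z$. The two inclusions give $\scr T=\bigcup_i G_i$, and the inclusion maps furnish the asserted bijection.

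The only thing demanding a little care is the bookkeeping around the inertia-space construction recalled above: one must note that $\msf P^\fk b_\fk a$ is proper \'etale with finite isotropy groups, so that every $\delta\in G_z$ has finite order $m$ and generates an injective morphism $\msf B_m\hrto G_z$ contributing a twisted sector — this is automatic since each $g_{i,a,b}$ is a pair of roots of unity — and that the description of $\msf{IMor}_m$ of a quotient groupoid by conjugacy classes with nonempty fixed locus applies verbatim here. Once these formalities are in place, the lemma reduces to the elementary observation that intersecting additional coordinate conditions only shrinks the isotropy group, so the maximal isotropy groups are exactly those of the coordinate points $\msf p_i$.
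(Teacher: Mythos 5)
Your proof is correct and is essentially the argument the paper intends — the lemma appears in the list of ``easy lemma without proofs'', and the omitted verification is exactly your reduction: since $\mathbb Z_r\times S^1$ is abelian, $\scr T$ consists of those $\delta$ with $(S^{2n-1})^\delta\neq\varnothing$, and $\bigcup_{z\in S^{2n-1}}G_z=\bigcup_{i}G_i$ because $G_z=\bigcap_{i\in\mathrm{supp}(z)}G_i$ while $G_i$ is the isotropy group of $\msf p_i$. The only wording to adjust is ``$\msf P^{\fk b}_{\fk a}$ is proper \'etale'': the action groupoid $(\mathbb Z_r\times S^1_{\fk a})\ltimes S^{2n-1}$ is only Morita equivalent to a proper \'etale groupoid (local freeness of the weighted $S^1$-action is what makes the isotropy groups finite), which is the setting in which the paper's conjugacy-class description of the inertia space is being used, and this does not affect your argument.
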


\begin{lemma}
For $\delta\in \scr T$, let
$I(\delta)=\{i|\delta\in G_i\}$. Then
$$
\cplane^{(\delta)}=\{z\in \cplane^n| z_j=0 \iff j\notin I(\delta)\},
$$
and
$$
\msf P^\fk b_\fk a(\delta)=
S^{(\delta)}\ltimes (\mathbb Z_r\times S^1)
$$
which is a weighted projective space spanned by $\msf p_i, i\in I(\delta)$.
\end{lemma}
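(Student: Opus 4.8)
The plan is to unwind the definitions from \S\ref{sec wght-prj-sps}. We have $\msf P^\fk b_\fk a = (\mathbb Z_r\times S^1_\fk a)\ltimes S^{2n-1}$, and by the previous lemma its twisted sectors $\msf P^\fk b_\fk a(\delta)$ are indexed by $\delta\in\scr T\subseteq\mathbb Z_r\times S^1$, which is the union of the isotropy groups $G_i$ of the coordinate points $\msf p_i$. So first I would fix $\delta\in\scr T$ and compute the fixed locus $(S^{2n-1})^\delta$ inside $S^{2n-1}\subseteq\cplane^n$: writing $\delta=(\eta,t)$ with $\eta=\zeta_r^{-b}$, the condition $\delta\cdot z = z$ on the $j$-th coordinate is $\zeta_r^{b\beta_j}t^{\alpha_j}z_j = z_j$, so $z_j$ can be nonzero exactly when $\zeta_r^{b\beta_j}t^{\alpha_j}=1$, i.e. $(\eta,t)^{-1}$ acts trivially on the $z_j$-axis, which is precisely the statement $\delta\in G_j$. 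This gives $\cplane^{(\delta)}=\{z\mid z_j=0 \iff j\notin I(\delta)\}$ as claimed, and $S^{(\delta)} = \cplane^{(\delta)}\cap S^{2n-1}$ is the unit sphere in $\cplane^{(\delta)}$.

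Next I would identify the induced groupoid structure on this sector. By the local description of twisted sectors (Remark \ref{rmk notation-of-twisted-sector}) and the inertia-space formula for quotient groupoids, the sector $\msf P^\fk b_\fk a(\delta)$ carries the groupoid $S^{(\delta)}\ltimes C_{\mathbb Z_r\times S^1}(\delta)$; since $\mathbb Z_r\times S^1$ is abelian the centralizer is the whole group, so this is $(\mathbb Z_r\times S^1)\ltimes S^{(\delta)}$. It remains to recognize the quotient $S^{(\delta)}/(\mathbb Z_r\times S^1)$ (as a groupoid) as a weighted projective space. Restricting the $S^1_\fk a$-action to the coordinate subspace $\cplane^{(\delta)}$ spanned by $\{z_i : i\in I(\delta)\}$ gives an $S^1$-action with weight vector $\fk a(\delta) = (\alpha_i)_{i\in I(\delta)}$, so $S^{(\delta)}\ltimes S^1_{\fk a(\delta)}$ is by definition the weighted projective space $\msf P_{\fk a(\delta)}$ on the points $\msf p_i$, $i\in I(\delta)$; the extra finite $\mathbb Z_r$-factor acts through its image in the torus and only contributes to the (ineffective) groupoid structure, matching the notation $\msf P^\fk b_\fk a(\delta) = S^{(\delta)}\ltimes(\mathbb Z_r\times S^1)$.

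The only genuinely delicate point — the place I would spend the most care — is the passage from ``$\delta$ fixes the $z_j$-axis pointwise'' to ``$\delta\in G_j$'': one must check that the description of $G_j$ in the displayed formula, namely $G_j=\{(e^{-2\pi i b/r}, e^{2\pi i(b\beta_j+ar)/(\alpha_j r)}) : 0\le b\le r-1,\ 0\le a\le\alpha_j-1\}$, is exactly the solution set of $\zeta_r^{b\beta_j}t^{\alpha_j}=1$ in $\mathbb Z_r\times S^1$, and that the parametrization by $(a,b)$ is a bijection — this is an elementary but slightly fussy computation with roots of unity, using that the $\alpha_j$-th roots of $\zeta_r^{-b\beta_j}$ are indexed by $a$. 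Everything else is a matter of transcribing the quotient-groupoid inertia formula (Example on inertia of quotient groupoids) to the abelian group $\mathbb Z_r\times S^1$ and noting that centralizers are trivial to compute there; no hard analysis or geometry is involved, so I would present the argument as a short verification rather than a long proof.
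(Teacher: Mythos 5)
The paper states this lemma with no proof at all (it is one of the ``easy lemma without proofs''), and your verification is exactly the intended routine argument and is correct: compute the fixed locus coordinatewise, note that triviality of the scalar $\eta^{\beta_j}t^{\alpha_j}$ on the $z_j$-axis is precisely membership in the isotropy group $G_j$, and then read off the sector groupoid from the quotient-groupoid inertia formula, using that $\mathbb Z_r\times S^1$ is abelian so the centralizer is the whole group and the residual $S^1$-action on $\cplane^{(\delta)}$ has weights $(\alpha_i)_{i\in I(\delta)}$. One harmless slip: with $\eta=\zeta_r^{-b}$ the displayed condition should read $\zeta_r^{-b\beta_j}t^{\alpha_j}=1$ rather than $\zeta_r^{b\beta_j}t^{\alpha_j}=1$, but since $G_j$ is a group ($\delta\in G_j\iff\delta^{-1}\in G_j$) this does not affect the conclusion.
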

\begin{lemma}
Suppose that
$$
\delta=(e^{-2\pi i\frac{b}{r}}, e^{2\pi iR}).
$$
The degree shifting of $\delta$ in $\msf P^\wb_\wa$ (cf. \cite{CR04}) is
\begin{equation}\label{eq degree-shift}
degsh(\delta)=
\sum_{u\not\in I(\delta)}\left\{-\frac{b}{r}\beta_u
+\alpha_uR
\right\}=\sum_{u}\left\{-\frac{b}{r}\beta_u
+\alpha_uR
\right\}
\end{equation}
Here $\{c\}:=c-[c]$ is the fractional part of $c$.
\end{lemma}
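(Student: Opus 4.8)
The plan is to compute the degree shifting number (the \emph{age}, in the sense of Chen--Ruan \cite{CR04}) of the twisted sector $\delta$ directly from the action of $\delta$ on the tangent space of $\msf P^\wb_\wa$ along its fixed locus, and then to rewrite the resulting sum using the fact that the fractional parts attached to indices in $I(\delta)$ vanish.

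First I would set up the local model. Write $\msf P^\wb_\wa=(S^1_\wa\times\mathbb Z_r)\ltimes S^{2n-1}$, equivalently $(\cplane^n\setminus\{0\})/(\cplane^\ast\times\mathbb Z_r)$, where $\cplane^\ast\supseteq S^1_\wa$ acts with weight $\wa$ and $\mathbb Z_r$ with weight $\wb$; the two actions commute, so $\delta=(e^{-2\pi ib/r},e^{2\pi iR})$ determines a well-defined element of $\mathbb Z_r\times\cplane^\ast$ acting linearly and diagonally on $\cplane^n$ by $z_u\mapsto e^{2\pi i(\alpha_uR-b\beta_u/r)}z_u$. Since the twisted sector $\msf P^\wb_\wa(\delta)$ is nonempty, $I(\delta)=\{u:\alpha_uR-b\beta_u/r\in\integer\}$ is nonempty; by the Lemmas above $\msf P^\wb_\wa(\delta)$ is the (connected) weighted projective subspace spanned by $\{\msf p_u:u\in I(\delta)\}$, so $degsh(\delta)$ is constant on it and may be evaluated at any convenient point. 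I would fix $i\in I(\delta)$ and work near $\msf p_i$.

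Next I would identify the $\delta$-action on the tangent space at $\msf p_i$. Over the chart $\{z_i\ne0\}$ the affine slice $\{z:z_i=1\}\cong\cplane^{n-1}$, with coordinates $(z_u)_{u\ne i}$, is transverse to the $\cplane^\ast$-orbit, and its residual stabilizer inside $\mathbb Z_r\times\cplane^\ast$ is precisely the finite isotropy group $G_i$ of $\msf p_i$; this realizes $G_i\ltimes\cplane^{n-1}$ as a local orbifold chart around $\msf p_i$, so $T_{\msf p_i}\msf P^\wb_\wa=\cplane^{n-1}$ with the $G_i$-action inherited from the diagonal action on $\cplane^n$. Because $i\in I(\delta)$ we have $\delta\in G_i$, hence $\delta$ preserves this slice and acts on $\cplane^{n-1}$ by the restriction of the ambient diagonal action, i.e.\ on the $z_u$-direction ($u\ne i$) with eigenvalue $e^{2\pi i(-\frac{b}{r}\beta_u+\alpha_uR)}$. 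Writing each eigenvalue as $e^{2\pi i\theta_u}$ with $\theta_u=\big\{-\tfrac{b}{r}\beta_u+\alpha_uR\big\}\in[0,1)$, the definition of the degree shifting number gives
\[
degsh(\delta)=\sum_{u\ne i}\Big\{-\tfrac{b}{r}\beta_u+\alpha_uR\Big\}.
\]

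Finally, both equalities in \eqref{eq degree-shift} follow from the observation that $\big\{-\tfrac{b}{r}\beta_u+\alpha_uR\big\}=0$ precisely when $u\in I(\delta)$: the term $u=i$ vanishes, so it may be reinstated to give the sum over all $u$; and every term with $u\in I(\delta)$ vanishes, so the sum over all $u$ equals the sum over $u\notin I(\delta)$. The one step that needs care is the verification that the slice $\{z_i=1\}$ together with its residual stabilizer $G_i$ genuinely provides the local orbifold chart at $\msf p_i$, and that $\delta\in G_i$ acts on it by the restriction of the ambient diagonal action (rather than by some action twisted through the $\cplane^\ast$-normalization); everything else is bookkeeping with fractional parts. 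The apparent asymmetry between the two sums in \eqref{eq degree-shift} is harmless, since the age is by definition a sum over \emph{all} tangent eigenvalues, including the trivial ones, which contribute $0$.
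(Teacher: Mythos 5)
Your proposal is correct and takes essentially the same approach as the paper: both compute the diagonal eigenvalues $e^{2\pi i(-\frac{b}{r}\beta_u+\alpha_u R)}$ of $\delta$ on the coordinates $z_u$, sum their fractional parts as in the definition of the degree shifting number, and use that every term with $u\in I(\delta)$ vanishes to pass between the two sums. The only difference is that you make explicit the local slice chart $G_i\ltimes\{z_i=1\}$ at $\msf p_i$, a verification the paper's proof leaves implicit in the phrase ``by the definition of degree shifting, the formula follows.''
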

\begin{proof}
For $1\leq u\leq n$, $\delta$ acts on $z_u$ via
\begin{equation}
\delta z_u= e^{-2\pi i{\frac{b\beta_u}{r}}}
e^{2\pi i\alpha_u R}z_u.
\end{equation}
In particular when $u\in I(\delta)$,
$\delta z_u=z_u$, that is $-\frac{b}{r}\beta_u
+\alpha_uR\in\integer$.
Then by the definition of degree shifting, the formula follows.
\end{proof}
\begin{remark}
In later computation, we will use the case that
$$
\delta=(\zeta_r^{-1}, e^{2\pi iR}).
$$
Define
\begin{align}\label{eq def-tau}
\tau(R,u):=-\frac{\beta_u}{r}+\alpha_u R.
\end{align}
Then
$$
degsh(\delta)=\sum_u \{\tau(R,u)\}.
$$
\end{remark}

\subsection{Weighted blowups of vector bundles}\label{sec blp-bdle}

Now let $\pi:\msf E\rto\msf B $ be a rank $n$ complex vector bundle.  Suppose that $\msf P$ is the $K$-principal bundle of $\msf E$, i.e,
$$
\msf E=\msf P\times_K\cplane^n
$$
where $\msf P$ is a $K$-principal bundle. Let $S^1_\fk a$
be an action on $\cplane^n$ that commutes with $K$-action.
Then we make the following definitions:
\begin{enumerate}
\item the weight-$\wa$ projectivization of $\msf E$ with respect to the $S^1_\fk a$ action:
    $$
    \msf {PE}_\fk a=\msf P\times_K \msf P_\fk a,
    $$
\item the weight-$\wa$ blowup of $\msf E$ along $\msf B$ with respect to the
$S^1_\fk a$ action:
$$
\underline{\msf E}_\fk a= \msf P\times_K [\underline{\cplane^n}]_\fk a,
$$
\item
 the weight-$\wa$  projectification of $\msf E$ with respect to the
$S^1_\fk a$ action:
$$
\overline{\msf E}_\fk a= \msf P\times_K [\overline{\cplane^n}]_\fk a.
$$
\end{enumerate}

There are natural morphisms from
$\msf{PE}_\fk a, \overline{\msf{E}}_\fk a$ and
$\underline{\msf E}_\fk a$ to $\msf B$ induced from $\pi:\E\rto\B$, we denote
 them all by $\pi$ and call them projections. There is a natural morphism $\kappa:\underline{\msf E}_\fk a\rto \msf E$ induced by
 $\kappa:[\underline{\cplane^n}]_\fk a\rto \cplane^n$ (cf. \eqref{E kappa-blowup-Cn}), which we still denote by $\kappa$. Then
$${\kappa}^{-1}(\msf B)\cong\msf{PE}_\fk a$$ is the exceptional divisor of
the weight-$\wa$ blowup. There is an infinity divisor $\msf{PE}_\fk a$ in $\overline{\msf E}_\fk a$ and
$$
\overline{\msf E}_\fk a\setminus \msf {PE}_\fk a=\msf E.
$$

We now describe some relations between $\msf {PE}_\fk a$
and $\msf B$.

First of all, as in the smooth case, we still have
\begin{align}\label{E cohomo-of-PE}
H^\ast(\msf{PE}_\fk a)\cong H^\ast (\msf B)\{1,H,\ldots, H^{n-1}\}
\end{align}
where $H$ is the first Chern class of tautological line bundle of $\msf{PE}_\fk a$.

Second, the inertia spaces of them have the following relation.
\begin{lemma}\label{lem pit-twist-sec}
The projection $\pi: \msf{PE}_\fk a\rto \msf B$ induces a morphism between their inertia spaces:
$$
{\msf I}\pi:\msf{IPE}_\fk a\rto\msf {IB}.
$$
In particular, it induces a map on $\scr T$:
$$
\pi_{\scr T}: \scr T^{\msf{PE}_\fk a}\rto \scr T^{\msf B}.
$$
When restricting on twisted sectors,
$$
\msf I\pi: \msf{PE}_\fk a(\delta)\rto \msf{B}(\pi_{\scr T}(\delta))
$$
is a projectivization of certain vector bundle.
\end{lemma}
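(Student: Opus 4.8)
The plan is to work locally and then globalize. Recall from \S\ref{sec blp-bdle} that $\msf{PE}_\wa = \msf P\times_K\msf P_\wa$ where $\msf P$ is the $K$-principal bundle of $\msf E$, and that $\msf P_\wa = S^1_\wa\ltimes S^{2n-1}$ carries the $K$-action commuting with $S^1_\wa$. First I would describe the inertia space $\msf{IPE}_\wa$ via the associated bundle construction: since the $K$-action and the $S^1_\wa$-action on $S^{2n-1}$ commute, a twisted sector of $\msf{PE}_\wa$ is built from a conjugacy class $(\delta)$ with $\delta\in S^1$ together with a twisted sector datum of $\msf B$, i.e. an element $\delta_\B\in\scr T^\B$ coming from the isotropy of $\msf B$. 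Concretely, over the preimage of a local orbifold chart $G_x\ltimes U^0$ of $\msf B$, the bundle $\msf{PE}_\wa$ looks like $(G_x\ltimes U^0)\times_{G_x}\msf P_\wa$ (with $G_x$ acting on $\cplane^n$ through the bundle representation $\mu$, commuting with $S^1_\wa$), and its inertia space decomposes according to conjugacy classes in $G_x\times S^1$, exactly as in the explicit description of $\msf{IP}_\wa^\mu$ given in \S\ref{sec wght-prj-sps}.

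Next I would define the map $\pi_{\scr T}$. A twisted sector index $\delta\in\scr T^{\msf{PE}_\wa}$ is locally represented by a conjugacy class $(g,t)$ with $g\in G_x$, $t\in S^1$; the evaluation morphism $e:\msf{IPE}_\wa\rto\msf{PE}_\wa$ followed by $\pi$ sends this sector into the twisted sector of $\B$ indexed by $(g)$, and I would set $\pi_{\scr T}(\delta)=(g)\in\scr T^\B$. One must check this is well defined independent of the local chart, which follows from the naturality of the construction of $\msf{PE}_\wa$ as an associated bundle over $\msf B$ and the fact that $\msf I\pi$ is induced by the strict morphism $\pi$ on object and arrow spaces (the projection of a groupoid action), hence descends to coarse spaces and to the index sets $\scr T$. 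This gives the first two assertions: the morphism $\msf I\pi:\msf{IPE}_\wa\rto\msf{IB}$ and the induced map $\pi_{\scr T}$ on indices.

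For the last assertion, fix $\delta\in\scr T^{\msf{PE}_\wa}$ with $\pi_{\scr T}(\delta)=\delta_\B$, locally $\delta=(g,t)$ with $(g)\leftrightarrow\delta_\B$. The fixed locus $(\cplane^n)^{(g,t)}$ is a linear subspace of $\cplane^n$, namely the simultaneous fixed space $(\cplane^n)^g\cap(\cplane^n)^t$; restricting the bundle representation $\mu$ to the centralizer $C_{G_x}(g)$ and taking the fiberwise fixed subbundle over $\msf B(\delta_\B) = U^0{}^g\ltimes C_{G_x}(g)$ produces a vector bundle $\msf E_\delta\rto\msf B(\delta_\B)$, and the local model of $\msf{PE}_\wa(\delta)$ is precisely the weight-$\wa(\delta)$ projectivization of $\msf E_\delta$ (where $\wa(\delta)$ is the restricted $S^1$-weight, as in \S\ref{sec wght-prj-sps}). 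The map $\msf I\pi$ restricted to $\msf{PE}_\wa(\delta)$ is then the bundle projection of this projectivization down to $\msf B(\delta_\B)$. I would assemble these local models into a global vector bundle over $\msf B(\delta_\B)$ using that $\msf{PE}_\wa$ is a globally defined associated bundle $\msf P\times_K[\,\cdot\,]$, so the fiberwise fixed subspaces patch via the $K$-cocycle of $\msf P$ restricted to the relevant centralizers.

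The main obstacle I anticipate is bookkeeping the ineffective/orbifold structure correctly: one must track that $\msf P_\wa(\delta)$ is itself an orbifold (a weighted projective space with its own isotropy coming from the $S^1_\wa$-stabilizers and from $C_{G_x}(g)$), so ``projectivization of a vector bundle'' here means the weighted-orbifold projectivization, and the base $\msf B(\delta_\B)$ is a twisted sector rather than $\msf B$ itself — so the vector bundle in question lives over $\msf B(\delta_\B)$, not over $\msf B$. Verifying compatibility of the two fixed-point conditions (the $g$-fixed condition defining $\delta_\B$ and the extra $S^1$-fixed condition) requires the commutation of the $K$- and $S^1_\wa$-actions, which holds by hypothesis, so this is a matter of careful local analysis rather than a genuine difficulty; the global patching is then formal given the associated-bundle description.
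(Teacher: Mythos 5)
Your overall route coincides with the paper's: trivialize $\msf E$ over a local orbifold chart $G_x\ltimes U$, so that $\msf{PE}_\fk a|_{\bf U}\cong (G_x\times S^1_{\fk a})\ltimes (U\times S^{2n-1})$, index the local twisted sectors of $\msf{PE}_\fk a$ by conjugacy classes $(\delta)$ with $\delta=(g,t)\in G_x\times S^1$, set $\pi_{\scr T}(\delta)=(g)$, and exhibit the restriction of $\msf I\pi$ as the projectivization of a fixed subbundle over $C_{G_x}(g)\ltimes U^g$, with the associated-bundle description handling chart-independence and globalization. There is, however, one step that is wrong as written: you identify the fiber of the relevant vector bundle as the simultaneous fixed space $(\cplane^n)^g\cap(\cplane^n)^t$. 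The element $\delta=(g,t)$ acts on $\cplane^n$ by the \emph{composite} transformation $\mu(g)\circ t^{\fk a}$, and the relevant fixed locus is $(\cplane^n)^\delta=\{z:\mu(g)\,t^{\fk a}z=z\}$, which contains but is in general strictly larger than $(\cplane^n)^g\cap(\cplane^n)^t$: a coordinate $z_j$ on which $g$ acts by $e^{-2\pi i b\beta_j/r}$ and $t=e^{2\pi iR}$ acts by $e^{2\pi i\alpha_j R}$ is fixed by $\delta$ whenever $-\frac{b}{r}\beta_j+\alpha_jR\in\integer$, with neither factor acting trivially. This is exactly the phenomenon behind the isotropy groups $G_i$ and the sectors $\msf P^\fk b_\fk a(\delta)$ spanned by $\{\msf p_i\mid i\in I(\delta)\}$ in \S\ref{sec wght-prj-sps}; with the intersection description most of these sectors would come out empty or of the wrong dimension, and downstream computations that rest on this lemma (the degree shifting \eqref{eq degree-shift}, the dimension counts in \S\ref{sec 5.1}) would be inconsistent with it.

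The repair is local and leaves your architecture intact: replace $(\cplane^n)^g\cap(\cplane^n)^t$ by $(U\times\cplane^n)^\delta=U^g\times(\cplane^n)^\delta$ with $(\cplane^n)^\delta$ the fixed space of the composite $\mu(g)t^{\fk a}$. Then the local model of the sector is $(C_{G_x}(g)\times S^1)\ltimes(U\times S^{2n-1})^\delta$, which is the weighted projectivization (with the restricted weight $\fk a(\delta)$) of the vector bundle $C_{G_x}(g)\ltimes(U\times\cplane^n)^\delta\rto C_{G_x}(g)\ltimes U^g$, and $\msf I\pi$ restricted to $\msf{PE}_\fk a(\delta)$ is its bundle projection; this is precisely the paper's proof, and your patching argument via the $K$-cocycle of $\msf P$ (restricted to the relevant centralizers) then applies verbatim to this corrected fixed subbundle.
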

\begin{proof}
 We give an explicit description of morphisms  by using local coordinate charts.

 Let $U\rtimes G_x$ be a local orbifold chart
for $\mathbf{U}\subseteq|\msf B|$ such that the bundle $\msf E$ over $\mathbf U$ can be trivialized as
$$
\msf E|_{\mathbf U}\cong G_x \ltimes(U\times\cplane^n).
$$
Then
$$
\msf{PE}_\fk a|_\mathbf U\cong(G_x\times S^1_\fk a)\ltimes(U\times S^{2n-1}).
$$
Locally,
\begin{enumerate}
\item a twisted sector of $\msf B$ is indexed by a conjugate class $(k), k\in G_x$, and is
$C_{G_x}(k)\ltimes U^k$;
\item
a twisted sector of $\msf {PE}_\fk a$
is indexed by a conjugate class $(\delta)$,
\begin{align}
\label{E delta=(k,t)}
\delta=(k,t)\in G_x\times S^1,
\end{align}
and  is
a fibration
$$
\msf I\pi:(C_{G_x}(k)\times S^1)\ltimes(U\times S^{2n-1})^\delta
\rto C_{G_x}(k)\ltimes U^k.$$
which is the projectivization of
$$
C_{G_x}(k)\ltimes(U\times \cplane^n)^\delta\rto C_{G_x}(k)\ltimes U^k .
$$
\end{enumerate}
Set $\pi_{\scr T}(\delta)=(k)$. This gives us the map $\pi_{\scr T}$.
\end{proof}

\subsection{Weighted blowups of $\msf X$ along $\msf S$}
\label{subsec blp-X-S}

Suppose that the real codimension of $\msf S$ is $2n$.
Let $\msf N$ be the normal bundle of $\msf S$.
Then the weight-$\fk a$ blowup of $\msf N$ induces a
weight-$\fk a$ blowup of $\msf X$ along $\msf S$, we denote the resulting orbifold groupoid by $\Xa$.

As in the smooth case, $\msf X$ then degenerates into two components:
$$
\X^-=\Xa,\;\;\;     \X^+=\lN_\wa.
$$
They have a common divisor $\msf Z=\msf {PN}_\fk a$. We write
$$
\X\xrightarrow{\text{degenerate}}
\X^-\wedge_{\msf{PN}_\fk a} \msf X^+,
\qq
(\X,\oS)\xrightarrow{\text{degenerate}}
\X^-\wedge_{\msf {PN}_\fk a}(\X^+,\oS).
$$

There is also a natural projection $\kappa:\Xa\rto\X$ obtained from $\kappa:\lN_\wa\rto\N$. Hence there is an induced morphism on inertia spaces $\msf I\kappa:\msf I\Xa\rto\msf I\X$ and an induced map on the index set of twisted sectors as Lemma \ref{lem pit-twist-sec}:
\[
\kappa_{\scr T}:\scr T^{\Xa}\rto\scr T^\X.
\]
$\kappa$ is an isomorphism out of a neighborhood of $\Z$ and of $\oS$.

For a twisted sector $\X(t)$ of $\X$,
\begin{itemize}
\item if $\X(t)\cap \sf IS=\varnothing$, after blowing up it becomes a twisted sector of $\Xa$, which we denote by $\Xa(t)$.
\item if $\X(t)\cap \sf IS\neq \varnothing$, after blowing up it lifts to several twisted sectors of $\Xa$ with one of them corresponding to $\delta=(k,1)$ in \eqref{E delta=(k,t)}; we also denote this one by $\Xa(t)$.
\end{itemize}
For both cases, we call $\Xa(t)$ the direct transformation of $\X(t)$. We denote the set of indices of this kind of twisted sectors of $\Xa$ by $\scr T^\Xa_1$. With these notation, $\kappa_{\scr T}$ restricts to a bijection $\scr T^\Xa_1\rto \scr T^\X$, and $\msf I\kappa$ restricts to $\msf I\kappa:\bigsqcup\limits_{(t)\in\scr T^\Xa_1}\Xa(t)\rto \bigsqcup\limits_{(t)\in\scr T^\Xa_1} \X(\kappa_{\scr T}(t))$.
In particular, $\scr T^\Xa_{\sf ker}\subseteq \scr T^\Xa_1$ and $\kappa_{\scr T}$ matches it up with $\scr T^\X_{\sf ker}$. Therefore $\msf I\kappa$ maps $\msf{ker\,\underline{X}}_{\wa}$ to $\msf{ker\,X}$, and matches up their connected components. 


\section{Orbifold Gromov--Witten theory}\label{sec 4}

In this section, we briefly review the Gromov--Witten
theory for orbifolds. We emphasize three types of invariants: absolute Gromov--Witten invariants of a compact symplectic orbifold $\msf X$;
relative Gromov--Witten invariants of a relative pair
$(\msf X|\msf Z)$, where $\msf Z$ is a divisor of $\msf X$;
absolute Gromov--Witten invariants of a pair $(\msf X,\msf S)$, where $\msf S$ is a compact symplectic sub-orbifold of $\msf X$. The main purpose of this section is to introduce notations used in this paper.

\subsection{Absolute Gromov--Witten invariants}\label{sec 4.1}

Let $\om_{g,A,\fk t}(\msf X)$ be the moduli space of
 holomorphic curves of the type that
 \begin{enumerate}
 \item the domain curve is connected and of genus $g$,
 \item there are
$k$-marked points each of which is associated with a twisted sector $\msf X(t_i), 1\leq i\leq k$,
and $\fk t=(t_1,\ldots,t_k)$;
\item the curve
represents a homology class $A\in H_2(\msf X,\mathbb Z)$.
\end{enumerate}
Let ${\bf A}$
denote the combinatoric topological data $(g,A,\fk t)$. We may denote the moduli space by $\om_{\bf A}(\msf X)$.

To associate invariants to the moduli space $\om_{\bf A}(\msf X)$ , for each marking we associate it an insertion
$\tau_{d_i}\alpha_i$ where $\alpha_i\in H^\ast(\msf X(t_i))$ and $d_i\geq 0$. Denote
the insertion data by
$$
I=(\tau_{d_1}\alpha_1,\ldots, \tau_{d_k}\alpha_k).
$$
The Gromov--Witten invariant is defined to be
$$
\langle{\bf A}(I)\rangle=\int^{\mathrm{vir}}_{\om_{\bf A}(X)}\scr F_I,\qq
\mbox{where}\qq \scr F_I=\prod_{i=1}^k (\psi^{d_i}\cup ev_i^\ast\alpha_i).
$$
When at least one of $d_i$ is positive, $\langle{\bf A}(I)\rangle$ is called a descendent invariant, otherwise a non-descendent invariant or a primary invariant.

When the degree of $\scr F_I$ matches
the virtual dimension of the moduli space, we call ${\bf A}(I)$
an  {\bf admissible data} of (absolute) Gromov--Witten invariant. Let $\scr A(\msf X)$ denote the set of admissible data. It is convention to use $\scr A^\bullet(\msf X)$  when we allow the domain curve to be disconnected.

\subsection{Relative Gromov--Witten invariants}\label{sec 4.2}

Now consider a relative pair $(\msf X|\msf Z)$ where $\msf Z$ is a divisor of $\msf X$. Let
$$\om_{g,A, \fk t|\fk r}(\msf X|\msf Z)$$
denote the relative moduli space of stable curves, which has following properties:
\begin{enumerate}
\item the domain curve is connected, and of genus $g$;
\item there are $k$ (absolute) marked points, each of which is associated with a twisted sector $\msf X(t_i)$ of $\msf X$, and $\fk t=(t_1,\ldots, t_k)$,
\item there are $h$ relative marked points, each of which is associated with a twisted sector $\msf Z(s_j)$ of $\msf Z$ and a contact order $\ell_j$,
and
$$
\fk r=((s_1,\ell_1),\ldots, (s_h,\ell_h)),
$$

 \item
 the curve represents a homology class $A\in H_2(\msf X,\mathbb Z)$.
 \end{enumerate}
Let ${\bf R}$ denote the combinatoric topological data $(g,A, \fk t|\fk r)$. We denote the moduli space by $\om_{\bf R}(\msf X|\msf Z)$.

Now for each marking, we associate it an insertion:
\begin{itemize}
\item for $i$-th absolute marking, an insertion
 $\tau_{d_i}\alpha_i$, where $\alpha_i\in H^\ast(\msf X(t_i))$;
\item for $j$-th relative marking, an insertion
$\beta_j\in H^\ast(\msf Z(s_i))$.
\end{itemize}
Let
$$
I=(\tau_{d_1}\alpha_1,\ldots, \tau_{d_k}\alpha_k),\;\;\;
J_{\msf Z}=(\beta_1,\ldots, \beta_h)$$ be the collection of absolute insertions and relative insertions.

 A relative Gromov--Witten invariant is defined to be
$$
\langle{\bf R}(I|J_{\msf Z})\rangle=
\int_{\om_{\bf R}(\msf X|\msf Z)}^{\mathrm{vir}}
 \scr F_I\cup\scr G_{J_{\msf Z}}
 $$
  where
$$
\scr F_I= \prod_{i=1}^k (\psi_i^{d_i}\cup ev_i^\ast\alpha_i),\qq
\scr G_{J_{\msf Z}}=\prod_{j=1}^h (rev_j^\ast\beta_j),
$$
 and $rev_j: \om_{\bf R}(\msf X|\msf Z)\to \msf Z$ denotes the $j$-th relative evaluation map.
Again we call  ${\bf R}(I|J_{\msf Z})$ an
admissible data of relative Gromov--Witten invariant
if the total degree of $\scr F_I$ and
  $\scr G_{J_{\msf Z}}$
  matches the virtual dimension of the moduli space.
Let
\[
\scr{R}(\msf X|\msf Z), \qq (\scr R^\bullet(\msf X|\msf Z)  \mbox{ resp.} )
\]
 be the collections of admissible data of relative Gromov--Witten invariants.

\begin{remark}\label{rmk enlarge-JZ}
In the following for an admissible relative data ${\bf R}(I|J_\Z)$ with ${\bf R}=(g,A, \fk t|\fk r)$ and $\fk r=((s_1,\ell_1),\ldots, (s_h,\ell_h))$, we will assume that $J_\Z$ denotes not only the relative insertions but also the relative data $\fk r$, i.e.
\[
J_\Z=((s_1,\ell_1,\beta_1),\ldots, (s_h,\ell_h,\beta_h)).
\]
Then we use the following notations
\[
\fk r(J_\Z):=\fk r=((s_1,\ell_1),\ldots, (s_h,\ell_h)),\qq
\fk i(J_\Z)=(\beta_1,\ldots, \beta_h).
\]
\end{remark}

\begin{remark}\label{rmk Sigma-relative-data}

Let $\Sigma$ be a basis of $H^\ast_{CR}(\msf Z)$. If the relative insertion $\beta_j\in \Sigma$,  we call
 ${\bf R}(I|J_{\msf Z})$ a $\Sigma$-relative admissible data. Let $\scr{R}_\Sigma(\msf X|\msf Z)$  ($\scr{R}^\bullet_\Sigma(\msf X|\msf Z)$ resp.) denote the set of $\Sigma$-data.
\end{remark}

\subsection{The degeneration formula}\label{sec 4.3-dege-formula}

\subsubsection{Degenerate absolute invariants}
Suppose that
$\msf X$ degenerates to a pair of
orbifolds $\msf X^{\pm}$ with a common divisor $\msf Z$. Let $\Sigma^+$ be a basis of $H^\ast_{CR}(\msf Z)$
and $\Sigma^-$ be its orbifold Poincar\'e dual.

\begin{defn}\label{def matched-pair}
Let $${\bf R}^\bullet_\pm(I^\pm|J^\pm_{\msf Z})\in
\scr R^\bullet_{\Sigma^\pm}(\msf X^\pm|\msf Z)$$
be a pair of admissible relative datum. We say that they form a {\bf matched pair}  if $J^+_{\msf Z}$ and $J^-_{\msf Z}$ are matched in the following sense:
they have same cardinality, and for each $j$,
$$
s_j^+=(s_j^-)^{-1},\;\;\; \ell_j^+=\ell_j^-,\;\;\;
\beta^+_j=(\beta^-_j)^\ast.
$$

\n If it is this case, with the understanding in Remark \ref{rmk enlarge-JZ} we write
\[
J^+_{\msf Z}=\check J^-_{\msf Z}.
\]
\end{defn}
 Suppose that
 \begin{equation}
 ({\bf R}^\bullet_+(I^+|J^+_{\msf Z}),
 {\bf R}^\bullet_-(I^-|J^-_{\msf Z}))
 \end{equation}
 is a matched pair. The degeneration theory of Gromov--Witten invariants says that the pair
  can be glued to an admissible data of an absolute Gromov--Witten invariant
${\bf A}^\bullet(I)$ where $I=I^+\cup I^-$. We write
\begin{equation}\label{eqn_glu}
{\bf A}^\bullet(I)={\bf R}^\bullet_+(I^+|J^+_{\msf Z})
\ast {\bf R}^\bullet_-(I^-|J^-_{\msf Z}).
\end{equation}
Let
$\scr D_{{\bf A}^\bullet(I)}(I^+,I^-)$ denote the collection of matched pairs $({\bf R}_\pm^\bullet(I^\pm|J^\pm_{\msf Z}))$ such that \eqref{eqn_glu} holds.

Then  we have a degeneration formula
\[
\langle{\bf A}^\bullet(I)\rangle
=\sum_{\Omega\in \scr D_{{\bf A}^\bullet(I)}(I^+,I^-)}
c_{\Omega}
\cdot
\langle {\bf R}^\bullet_-(I^-|J^-_{\msf Z})\rangle^{\X^-|\Z}
\cdot
\langle {\bf R}^\bullet_+(I^+|J^+_{\msf Z})\rangle^{\X^+|\Z}.
\]
where $\Omega=({\bf R}^\bullet_-(I^-|J^-_{\msf Z}), {\bf R}^\bullet_+(I^+|J^+_{\msf Z}))$, and $c_{\Omega}$ is a constant depends
on $\fk r_\pm$ in ${\bf R}^\bullet_\pm$.
It would happen that one of the admissible relative data in $\Omega=({\bf R}^\bullet_-(I^-|J^-_{\msf Z}), {\bf R}^\bullet_+(I^+|J^+_{\msf Z}))$ is empty, i.e., ${\bf R}^\bullet_-(I^-|J^-_{\msf Z})=\varnothing$, or ${\bf R}^\bullet_+(I^+|J^+_{\msf Z})=\varnothing$, then the coefficient $c_\Omega=1$, and the invariant for the empty relative data is also set to be $1$.

\subsubsection{Degenerate relative invariants}

We also have the degeneration formula for relative orbifold Gromov--Witten invariants. We consider a special case which we will use latter. Let $(\X|\Z)$ be a relative pair and ${\bf R}^\bullet(I|J)$ be an admissible relative data. We blow up $(\X|\Z)$ along $\Z$ with trivial weight $\wa=(1)$ to degenerate $(\X|\Z)$ in to
\[
(\X|\Z)\xrightarrow{\text{degenerate}} (\X^-|\Z^-)\wedge_\Z(\Z^+|\X^+|\Z)=(\X|\Z)\wedge_\Z(\Z_\infty|\lN|\Z_0)
\]
where $\lN$ is trivial weight projectification of the normal bundle of $\Z$ in $\X$ and $\Z_0,\Z_\infty$ are the zero and infinity sections. Then we have
\begin{align}\label{eq dege-formula-rel}
\langle{\bf R}^\bullet(I|J)\rangle^{\X|\Z}
=\sum_{\Omega\in \scr D_{{\bf R}^\bullet(I|J)}(I^+,I^-)}
c_{\Omega}
\cdot
\langle {\bf R}^\bullet_-(I^-|J^-_{\msf Z})\rangle^{\X|\Z}
\cdot
\langle {\bf R}^\bullet_+(J^+_{\msf Z}|I^+|J)\rangle^{\Z_\0|\lN|\Z_0}
\end{align}
where $\scr D_{{\bf R}^\bullet(I|J)}(I^+,I^-)$ denote the collection of matched pairs
$$
\Omega=({\bf R}^\bullet_-(I^-|J^-_{\msf Z}),{\bf R}^\bullet_+(J^+_{\msf Z}|I^+|J))
$$
(they match along $\Z\in\X$ and $\Z_\infty\in\X^+$ like Definition \ref{def matched-pair}, then $J^-_{\Z}=\check J^+_\Z$), such that
\[
{\bf R}^\bullet(I|J)={\bf R}^\bullet_-(I^-|J^-_{\msf Z})\ast{\bf R}^\bullet_+(J^+_{\msf Z}|I^+|J).
\]

We can also consider nontrivial weight degeneration of $\X$ along $\Z$.

By the Gromov compactness theorem, for a fixed absolute (resp. relative) data ${\bf A}^\bullet(I)$ (resp. ${\bf R}^{\bullet}(I|J)$) the set $\scr D_{{\bf A}^\bullet(I)}(I^+,I^-)$ (resp. $\scr D_{{\bf R}^\bullet(I|J)}(I^+,I^-)$) is a finite set.

\subsection{Absolute Gromov--Witten invariants of symplectic pairs}
\label{subsec blp-X-S-inv}

 Let $\msf S$ be a compact symplectic sub-orbifold of $\msf X$. A cohomology class $\alpha\in
H^\ast_{CR}(\msf X)$ is called to be supported on $\msf S$ if it is of the form $\theta\cup [\msf {IS}]$, where $\theta\in H^\ast_{CR}(\msf S)$ and $[\msf {IS}]$ denotes the Thom class of $\msf {IS}$ in $\msf{IX}$. If we fix
 a basis $\sigma$ of $H^\ast_{CR}(\msf S)$ and let $
\sigma^\ast$ be its dual, we say $\theta\cup [\msf {IS}]$ is a $\sigma$-class if $\theta\in \sigma$.

\begin{defn}
We call an admissible data ${\bf A}(I)$ of
absolute Gromov--Witten invariant
is a data {\bf relative to} $\msf S$ if, for
each insertion $\tau_{d_i}\alpha_i$,  $d_i>0$ only when
$\alpha_i$ is supported on $\msf S$. We call these  insertions to be  {\bf $\msf S$-supported insertions}.
${\bf A}(I)$ is called a {\bf $\sigma$-data} relative to
$\msf S$ if those class $\alpha_i$ supported on $\msf S$ are $\sigma$-classes.

We separate $I$ into two parts: let $I_{\msf S}$ denote the collection of $\msf S$-supported insertions and $I'$ denote the rest. We write ${\bf A}(I)$ to be
${\bf A}(I'; I_{\msf S})$. Let $\scr{A}^\bullet(\msf X,\msf S)$
(resp. $\scr A^\bullet_\sigma(\msf X,\msf S)$) denote the set of (resp. $\sigma$-)
admissible data of absolute invariants relative to $\msf S$.
\end{defn}

We may apply the degeneration formula to $\langle {\bf A}(I;I_{\msf S})\rangle $ for the
degeneration
$$
(\msf X;\msf S)\xrightarrow{\text{degenerate}}
\msf X^-\wedge_\Z(\msf X^+,\msf S)=\Xa\wedge_\Z(\overline{\msf N}_\fk a,\msf S),
$$
then we have
\begin{equation}
\langle{\bf A}(I;I_{\msf S})\rangle
=\sum_{\Omega\in \scr D_{{\bf A}(I;I_{\msf S})}(I^+,I^-)}
c_{\Omega}
\cdot
\langle{\bf R}^-(I^-|J^-_{\msf Z})\rangle^{\Xa|\Z}
\cdot
\langle{\bf R}^+(I^+;I_{\msf S}|J^+_{\msf Z})\rangle^{\lN_\wa|\Z},
\end{equation}
where $\Omega=({\bf R}_-(I^-|J^-_{\msf Z}), {\bf R}_+(I^+;I_{\msf S}|J^+_{\msf Z}))$ and $J^-_\Z=\check J^+_\Z$.

\section{Certain invariants of $(\overline{\msf E}_{\fk a}|\sf PE_{\fk a})$}\label{sec 5}

In this section we study certain relative invariants
of $(\overline{\msf E}_{\fk a}|\sf PE_{\fk a})$. We treat $\msf B$ as the 0-section $\msf E$.

Description of topological data ${\bf R}$ and insertions:
\begin{enumerate}
\item  The domain curve
$\msf S^2$ is the
orbifold sphere  with 2 orbifold points at $0$ and $\infty$.
Be precise, there are two orbifold charts
$$
\msf C_0=\mathbb Z_{r}\ltimes \cplane,\;\;\;
 \msf C_\infty =\mathbb Z_{h}\ltimes \cplane,
 $$
 respectively. Set
 $$
 \msf q_0=\mathbb Z_{r}\ltimes \{0\},\;\;\;
 \msf q_\infty=\mathbb Z_{h}\ltimes\{\infty\}.
 $$

\item Marked points: $\msf q_0$ is an absolute marked point and
$\msf q_\infty$ is a relative marked point.

\item The insertion for $\msf q_0$ is supported in
$\msf B$: $\tau_c(\theta\cup[\sf IB]),$ where $\theta
\in H^\ast(\msf B(t))$ and $[\sf IB]$ is the Thom form of $\msf B(t)$ in $\msf E(t)$.

\item The insertion for $\msf q_\infty$ is $\beta\in
H^\ast(\msf {PE}_\fk a(s))$; suppose that $\ell$ is
the contact order at $\msf q_\infty$.

\item $A\in H_2(|\overline{\msf E}_\fk a|)$ is of fiber
class, i.e, $\pi_\ast(A)=0$.
\end{enumerate}
Then \def \Hol{\mathrm{Hol}} \def \Aut{\mathrm{Aut}}
\begin{equation}
{\bf R}=(g=0, A, \fk t=(t)|\fk r=(s,\ell)),\;\;\;
I_{\msf B}=(\tau_c(\theta\cup[\msf B(t)])),\;\;\;
J_{\msf {PE}_\fk a}=(\beta).
\end{equation}
Let $\om_{\bf R}(\overline{\msf E}_\fk a|{\msf{PE}_\fk a})$ be the moduli space. The top stratum of the moduli space is
\[
\scr M_{\bf R}(\overline{\msf E}_\fk a|{\msf{PE}_\fk a})
=\frac{\msf{Hol}_{\bf R}(\overline{\msf E}_\fk a|{\msf{PE}_\fk a})}{\Aut({\msf S^2\setminus\{\msf q_0,\msf q_\infty\}})}.
\]
Here $\msf{Hol}_{\bf R}(\overline{\msf E}_\fk a|{\msf{PE}_\fk a})
\subseteq \msf{Mor}(\overline{\msf E}_\fk a|\msf{PE}_\fk a)$
consists of holomorphic morphisms. Usually,  $\msf{Hol}_{\bf R}(\overline{\msf E}_\fk a|{\msf{PE}_\fk a})$
is  denoted by $\widetilde{\scr M}_{\bf R}(\overline{\msf E}_\fk a|
\msf{PE}_\fk a)$.  Here, we use the new notation to emphasize its groupoid structure.

\begin{remark}
Suppose that $\msf u:\msf S^2\rto \overline{\msf E}_\fk a$
is a holomorphic morphism in $\Hol^0_{\bf R}(\overline{\msf E}_\fk a|\msf{PE}_\fk a)$.
 Then
 \begin{enumerate}
\item the morphism covers a fiber, i.e, for
$
\msf S^2\xrightarrow{\msf u}\overline{\msf E}_\fk a\xrightarrow{\pi}\msf B,$
the induced map $|\pi\circ \msf u|:|\msf S^2|\rto|\msf B|$ on coarse spaces is constant;
\item $\msf u$ maps $\msf C_0$ to $\msf E$;
 \item  $\msf u: \msf q_0\rto\msf E(t)$ and
$\msf u: \msf q_\infty\rto \msf{PE}_\fk a(s)$.
\end{enumerate}
\end{remark}
The goal in this section is to understand invariants
$$
\langle {\bf R}(I_{\msf B}|J_{\msf {PE}_{\fk a}})\rangle
$$
under certain conditions.

We first give a completely understanding of invariants  when $\msf E=\mathbb Z_r^\fk b\ltimes \cplane^n$, an orbifold bundle over $\msf B_r$. This is explained in \S\ref{sec 5.1}--\S\ref{sec 5.2}. Then in \S\ref{sec 5.3} and \S\ref{sec 5.4} we reduce the study of general cases to this special case
when ${\bf R}(I_{\msf B}|J_{\msf {PE}_{\fk a}})$ is under certain conditions.

\subsection{Special case: $\msf E=\mathbb Z_r^{\fk b}\ltimes \cplane^n$}\label{sec 5.1}

We start with this special case. Then
$$\overline{\msf E}_{\fk a}=[\overline{\cplane^n}]^{\fk b}_{\fk a}=\msf P_{(\fk a,1)}^{(\fk b,0)},\;\;\;
\msf{PE}_{\fk a}^{\fk b}=\msf P_{\fk a}^{\fk b}.
$$
 By the definition of the moduli space,
\begin{equation}
\scr M_{\bf R}(\oC|\msf P^{\fk b}_{\fk a})=\frac{\msf{Hol}_{\bf R}
(\msf S^2,\oC)}{\Aut(\msf S^2\setminus\{\msf q_0,\msf q_\infty\})}
\end{equation}
Without loss of generality, we make the following assumption.
\begin{assumption}\label{assump}
 The twisted sector $t$ associated to
$\msf q_0$ is
$\zeta_r=\exp^{2\pi i\frac{1}r}\in \mathbb Z_r$.
\end{assumption}
The invariant to compute is
\begin{equation}\label{eq rel-inv-C}
\langle{\bf R}(\tau_c(\Theta_{(t)})|H_{(s)}^d)\rangle
=\int_{\om_{\bf R}} \psi^c\cup\msf{ev_0}^\ast\Theta_t
\cup \msf{rev}^\ast_\infty H_{(s)}^d,
\end{equation}
where $\Theta_{(t)}$ is the Thom class of $\msf E(t)\rto \msf B(t)$ and $H_{(s)}$ is the generator of $H^*(\msf{P}^\wb_\wa(s))$.
The following theorem is the main theorem of the section, and is the most important technical result for this paper.
\begin{theorem}\label{thm c-dertmin-all}
Suppose the data ${\bf R}(\tau_c(\Theta_{(t)})|H_{(s)}^d)$  is
admissible and satisfies Assumption \ref{assump}. Then
\begin{enumerate}
\item[(1)] $c$ can be any nonnegative integer,
\item[(2)]
$\bf R$ and $d$ are uniquely determined by $c$,
 \item[(3)]
 the invariant $\langle{\bf R}(\tau_c(\Theta_{(t)})|H_{(s)}^d)\rangle$ is non-zero.
\end{enumerate}
\end{theorem}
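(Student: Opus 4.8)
The plan is to reduce everything to an explicit computation of the moduli space $\scr M_{\bf R}([\overline{\cplane^n}]^{\fk b}_{\fk a}\,|\,\msf P^{\fk b}_{\fk a})$ of fiber-class orbifold maps $\msf u:\msf S^2\rto[\overline{\cplane^n}]^{\fk b}_{\fk a}$ with one absolute orbifold point $\msf q_0$ mapping into $\cplane^n\subseteq[\overline{\cplane^n}]^{\fk b}_{\fk a}$ and one relative orbifold point $\msf q_\infty$ meeting $\msf P^{\fk b}_{\fk a}$ with contact order $\ell$. Since $A$ is a fiber class, such a $\msf u$ is (after choosing the trivialization of the bundle) simply given by $n$ sections of line bundles over $\msf S^2$, i.e.\ by homogeneous-degree rational-type data on the orbifold $\mb P^1$ with prescribed orbifold structure $\mathbb Z_r$ at $0$ and $\mathbb Z_h$ at $\infty$; the twisted-sector constraints (Assumption \ref{assump} at $\msf q_0$, the sector $s$ at $\msf q_\infty$) together with the contact order $\ell$ pin down, component by component, the degrees of those line bundles and the vanishing/pole orders of the sections. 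I would first write $\msf u=(u_1,\dots,u_n)$ in these coordinates and extract the linear Diophantine system relating $(r,h,\ell,A,t,s)$; this is where the rigidity comes from. The key point is that the requirement that $\msf u$ pass through the $0$-section at $\msf q_0$ (which is what the Thom class $\Theta_{(t)}$ enforces) forces all the ``small'' degrees to be as small as possible, so that the map is essentially unique once the expected dimension is fixed.

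Next I would carry out the dimension bookkeeping. The virtual dimension of $\om_{\bf R}$ is computed from the index formula for orbifold stable maps: it is a linear function of the degree data, the degree-shifting numbers $degsh(t)$, $degsh(s)$ (given by formula \eqref{eq degree-shift}, or via $\tau(R,u)$ in \eqref{eq def-tau}) and the contact order $\ell$. Admissibility of ${\bf R}(\tau_c(\Theta_{(t)})|H^d_{(s)})$ says precisely that $2c+\deg\Theta_{(t)}+2d$ equals this virtual dimension. I would then solve this constraint together with the Diophantine system above: the outcome should be that, given $c\ge 0$, there is exactly one choice of line-bundle degrees on $\msf S^2$ (equivalently one homology class $A$ and one pair of sectors realizing it with the correct contact pattern), and exactly one value of $d$, making the data admissible. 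Conversely, every $c\ge 0$ is realized — this is where one checks that the relevant integers coming out of $\tau(R,u)$ and the weights $\fk a,\fk b$ are genuinely nonnegative, so no positivity obstruction kills large $c$. That establishes parts (1) and (2).

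For part (3), the nonvanishing, I would identify $\scr M_{\bf R}$ with (a global quotient of) a weighted projective space or a product of such — concretely, the space of the $n$ sections modulo $\Aut(\msf S^2\setminus\{\msf q_0,\msf q_\infty\})=\cplane^\ast$ and the residual finite isotropy $\mathbb Z_r\times\mathbb Z_h$ — so that $\om_{\bf R}$ is itself a compact orbifold of the expected dimension, the obstruction bundle is explicit, and $\int^{\mathrm{vir}}$ reduces to an honest integral. Then $\msf{ev}_0^\ast\Theta_{(t)}$ is (up to a positive rational factor from the orbifold structure) the Thom/Euler class of the bundle recording the value of $\msf u$ at $\msf q_0$ along the fiber, $\psi^c$ is the $c$-th power of the cotangent class at $\msf q_0$, and $\msf{rev}_\infty^\ast H_{(s)}^d$ is a power of the hyperplane class pulled back from $\msf P^{\fk b}_{\fk a}(s)$; on a weighted projective space all of these are effective/positive, so the top intersection number is a positive multiple of $\int_{\text{wt.proj.sp.}}H^{\dim}>0$. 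I would package this as: the integrand in \eqref{eq rel-inv-C} is (a positive constant times) the point class on $\scr M_{\bf R}$, hence the invariant is positive.

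The main obstacle I expect is the combinatorial heart of step one–two: correctly setting up the orbifold Riemann–Roch / index computation for maps from the $(r,h)$-orbifold sphere with the degree shifts of \eqref{eq degree-shift}, and then proving that the resulting linear system has a \emph{unique} nonnegative-integer solution for each $c\ge0$ rather than merely a finite set — in particular ruling out, via the constraints on the ranges $1\le\beta_i\le r$ and $\alpha_i>0$, any ``extra'' solutions or any upper bound on $c$. Once uniqueness and realizability of the discrete data are in hand, the nonvanishing in (3) is a soft consequence of the explicit weighted-projective description of the moduli space, since there is no cancellation in a single positive top intersection number.
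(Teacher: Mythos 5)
Your plan for (1)--(2) follows the same general route as the paper (explicit coordinates $\msf u=(u_1,\dots,u_n)$ for the fiber-class maps, then the orbifold index formula with the degree shifts \eqref{eq degree-shift}), but it stops exactly where the actual content lies. You write that ``the outcome should be'' that each $c\ge 0$ determines a unique $\mathbf R$ and $d$, and you flag this as the main obstacle; in the paper this is precisely Theorem \ref{thm 5.10} together with the ranking argument of \S\ref{sec 5.1.5}: one must prove the closed formula $\dim_\cplane(\scr M_R)=rk_\circ(R)-1+D_t$, show via Lemmas \ref{lem 5.12}--\ref{lem 5.14} that $R\mapsto\dim_\cplane(\scr M_R)$ increases strictly, jumps by $|\wa|$ under $R\mapsto R+1$, and that each window $\scr R_k$ contributes exactly $|\wa|$ values, and then absorb the non-injectivity of $\Lambda$ (which is exactly what makes $d$ nontrivial, since $d$ ranges over $0,\dots,\sharp\Lambda^{-1}(R)-1$) through the refined ranking $\widetilde{rk}(R,\ell)=rk_\circ(R)-\ell$, so that $c\mapsto(R,d)=\widetilde{rk}^{-1}(c+1)$ is a genuine bijection. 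None of this is supplied by your ``linear Diophantine system'' sketch, and your heuristic that passing through the zero section forces the map to be ``essentially unique'' is misleading: $\scr M_R$ has positive dimension $rk_\circ(R)-1+D_t$ in general; what is unique is only the discrete data $(\mathbf R,d)$, and proving that requires the dimension formula, not rigidity of the maps.

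For (3) your route is genuinely different from the paper's, and as stated it has a gap: non-vanishing does not follow from ``all insertions are positive classes on a weighted projective space.'' You would need to justify that the relative moduli space (including its compactification by expanded degenerations) carries its honest fundamental class as virtual class, that your identification with a compact weighted projective orbifold with explicit obstruction bundle holds, and above all that $\psi^c$ pairs positively -- descendent classes are not a priori (semi)positive on such moduli problems, and cancellation is exactly what has to be excluded. The paper handles this by an explicit computation: after the Thom class $\Theta_{(t)}$ cuts the evaluation at $\msf q_0$ down to $\scr M^\circ_R$, each factor of $\psi$ is traded for the Euler class of a concrete line bundle $\scr L_{\fk c_j}=\scr L^{\otimes c_j}|_{\scr M_R(\fk c)}$ whose transverse zero locus is the higher-contact-order stratum $\scr M_R(\fk c_j)$, reducing the integral to the minimal stratum $\scr M_R(\fk c_{\max})$, which is then evaluated by localization; the result is the explicit positive value of Theorem \ref{thm H}, $\scr H=\tfrac1r R^d\prod_\ell 1/c_{\max,\ell}!$. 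Some mechanism of this kind (exhibiting $\psi$ as effective on this specific moduli problem, or an equivalent explicit evaluation) is indispensable; without it, ``no cancellation in a single positive top intersection number'' is an assertion, not an argument.
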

From now on, in the rest of subsection we write $\scr M_{\bf R}$ for top strata of the moduli space.

\subsubsection{Description of $\scr M_{\bf R}$.}\label{sec MR}
We want to give a more explicit description of the right hand side of \eqref{eq rel-inv-C}. This consists of two steps.
\vskip 0.1in
\noindent
{\em Step 1. Compare $\msf {Hol}_{\bf R}(\msf S^2,\oC)$
with $\msf {Hol}(\msf C_0,\mathbb Z_r^\fk b)\ltimes \cplane^n$.}
\noindent
The restriction of morphisms from $\msf S^2$ to $\msf C_0$
induces a groupoid morphism
 $$
 \Psi:
 \msf{Hol}_{\bf R}(\msf S^2,\oC)\hookrightarrow
 \msf{Hol}(\msf S^2,\oC)
 \xrightarrow{|_{\msf C_0}}\msf{Hol}(\msf C_0,\oC)
 \to \msf{Hol}(\msf C_0,
 \mathbb Z_r^\fk b\ltimes\cplane^n).
 $$
 The last arrow follows from the fact that
  $\msf u: \msf C_0\rto \msf E$ when $\msf u\in \msf{Hol}_{\bf R}$.

Conversely, for
$$
\msf v\in \Hol^0(\msf C_0,
 \mathbb Z_r^\fk b\ltimes\cplane^n)$$
(with finite energy), by the removable singularity theorem, $\msf v$
can be extended to define a morphism
$$
\Phi^0(\msf v):=\bar {\msf v}:\msf S^2 \rto \oC.
$$
We say that $\msf v$ is of ${\bf R}$-type if $\bar {\msf v}\in \Hol^0_{\bf R}(\msf S^2,\oC)$. Let
$$
\Hol_{\bf R}(\msf C_0,\mathbb Z_r^\fk b\ltimes\cplane^n)\subseteq \Hol(\msf C_0,\mathbb Z_r^\fk b\ltimes\cplane^n)
$$
be the space of morphisms of ${\bf R}$-type. Then removable singularity theorem for orbifold holomorphic morphisms says that
$$
\Phi:\msf{Hol}_{\bf R}(\msf C_0,\mathbb Z_r^\fk b\ltimes\cplane^n
)\rto \msf{Hol}_{\bf R}(\msf S^2,
\oC)
$$
is an equivalence, which is the inverse of $\Psi$. On the other hand,
\begin{lemma}
$\msf{Hol}_{\bf R}(\msf C_0,\mathbb Z_r^\fk b\ltimes\cplane^n
)=\mathbb Z_r\ltimes\left(\Hol^0_{\bf R}(\msf C_0,\mathbb Z_r^\fk b\ltimes\cplane^n
)\right).
$
\end{lemma}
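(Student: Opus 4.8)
By construction $\msf{Hol}_{\bf R}(\msf C_0,\mathbb Z_r^\fk b\ltimes\cplane^n)$ is the full subgroupoid of $\msf{Hol}(\msf C_0,\mathbb Z_r^\fk b\ltimes\cplane^n)$ on the objects of ${\bf R}$-type, so its object space is already $\Hol^0_{\bf R}(\msf C_0,\mathbb Z_r^\fk b\ltimes\cplane^n)$ and the content of the lemma lies entirely in its arrows. The plan is to identify the arrow space --- the natural transformations between ${\bf R}$-type morphisms --- with $\mathbb Z_r\times\Hol^0_{\bf R}(\msf C_0,\mathbb Z_r^\fk b\ltimes\cplane^n)$, compatibly with the structure maps of the action groupoid $\mathbb Z_r\ltimes\Hol^0_{\bf R}$, where $\mathbb Z_r$ (the structure group of the target) acts on $\Hol^0_{\bf R}$ by post-translation, $(\eta_0\cdot\msf v)^0=\eta_0\cdot v^0$ via the $\fk b$-action on $\cplane^n$. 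I would first observe, using Assumption \ref{assump} and the commutativity of $\mathbb Z_r$, that every ${\bf R}$-type morphism $\msf v=(v^0,v^1)$ induces the identity homomorphism $\rho_{\msf v}=\mathrm{id}$ on the isotropy group $\mathbb Z_r$ at $\msf q_0$; hence $v^0$ is equivariant for the $\mathbb Z_r$-action on $\cplane$ and the $\fk b$-action on $\cplane^n$, $v^1$ is determined by $v^0$, and $\msf v$ is recovered from its object map.

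Next I would unwind Definition \ref{def equivalence}: a natural transformation $\phi\colon\msf v_1\Rto\msf v_2$ is a holomorphic map $\phi\colon\cplane\rto\cplane^n\times\mathbb Z_r$ into the arrow space of the target with $s\circ\phi=v_1^0$, which forces $\phi(w)=(v_1^0(w),\eta(w))$ with $\eta\colon\cplane\rto\mathbb Z_r$ continuous, hence constant $\eta\equiv\eta_0$ since $\cplane$ is connected and $\mathbb Z_r$ discrete. The condition $t\circ\phi=v_2^0$ then gives $v_2^0=\eta_0\cdot v_1^0$, and the compatibility with the generating arrow $(w,\zeta_r)$ of $\msf C_0$ reduces, after composing in the target, to $\rho_{v_1}=\rho_{v_2}$, which is automatic by the first step. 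Thus every natural transformation out of $\msf v_1$ is recorded by a single $\eta_0\in\mathbb Z_r$, its target $\msf v_2=\eta_0\cdot\msf v_1$ being determined.

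Conversely, for each $(\msf v_1,\eta_0)$ I would set $\phi_{\msf v_1,\eta_0}(w):=(v_1^0(w),\eta_0)$ and check it is a natural transformation $\msf v_1\Rto\eta_0\cdot\msf v_1$. As $\msf v_1$ is recovered from $v_1^0=s\circ\phi_{\msf v_1,\eta_0}$ and $\eta_0$ is the constant $\mathbb Z_r$-component, $(\msf v_1,\eta_0)\mapsto\phi_{\msf v_1,\eta_0}$ is a bijection onto the arrow space, and a local diffeomorphism as both sides are $r$-fold covers of $\Hol^0_{\bf R}$ under the source map. Finally I would record the elementary identities $s(\phi_{\msf v_1,\eta_0})=\msf v_1$, $t(\phi_{\msf v_1,\eta_0})=\eta_0\cdot\msf v_1$, $\phi_{\msf v,1}=\mathrm{id}_{\msf v}$, $\phi_{\msf v_1,\eta_0}\circ\phi_{\eta_0\cdot\msf v_1,\eta_0'}=\phi_{\msf v_1,\eta_0\eta_0'}$ and $\phi_{\msf v_1,\eta_0}\inv=\phi_{\eta_0\cdot\msf v_1,\eta_0\inv}$, which together with the identity on objects express precisely that this is an isomorphism of groupoids $\mathbb Z_r\ltimes\Hol^0_{\bf R}(\msf C_0,\mathbb Z_r^\fk b\ltimes\cplane^n)\xrightarrow{\ \sim\ }\msf{Hol}_{\bf R}(\msf C_0,\mathbb Z_r^\fk b\ltimes\cplane^n)$.

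The step I expect to be the main obstacle, and the only one needing real care, is checking that $\eta_0\cdot\msf v_1$ is again of ${\bf R}$-type --- without which the $\mathbb Z_r$-action on $\Hol^0_{\bf R}$ is not even defined. For this I would use that post-translation by $\eta_0$ extends to an automorphism of $\oC$ preserving $\msf P_\fk a^\fk b$ and inducing the identity on coarse spaces, so it fixes the genus, the fiber class $A$ and the contact order $\ell$ at $\msf q_\infty$, while $\rho_{\eta_0\cdot\msf v_1}=\rho_{v_1}=\mathrm{id}$ together with the centrality of $\mathbb Z_r$ in $\mathbb Z_r\times S^1_\fk a$ fixes the twisted sectors $t$ at $\msf q_0$ (pinned by Assumption \ref{assump}) and $s$ at $\msf q_\infty$. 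Everything else is routine unwinding of the definitions of strict morphism and natural transformation, the commutativity of $\mathbb Z_r$ making all conjugations in the target collapse.
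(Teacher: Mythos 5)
Your proposal is correct and follows essentially the same route as the paper: the paper's (much terser) proof likewise identifies the arrows out of any $\msf u\in\Hol^0_{\bf R}$ with constant maps $\cplane\rto\mathbb Z_r$, so that $\mathrm{Hom}(\msf u,\cdot)\cong\mathbb Z_r$ acts by post-translation $(\xi\cdot\msf u)^0=\xi\cdot u^0$ with the arrow component unchanged, giving the action-groupoid description. Your extra verifications (constancy of $\eta$ from connectedness of $\cplane$, collapse of conjugations by commutativity of $\mathbb Z_r$, and preservation of ${\bf R}$-type under the translation) are details the paper leaves implicit, not a different argument.
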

\begin{proof}
For any $\msf u\in \Hol^0$, the arrows from $\msf u$ are
identified with constant maps from $\cplane$ to $\mathbb Z_r$. Hence $\mathrm{Hom}(\msf u,\cdot)\cong \mathbb Z_r$. For $\xi\in \mathbb Z_r$,
$$
(\xi\cdot\msf u)^0=\xi\cdot u^0,\;\; (\xi\cdot\msf u)^1=u^1.
$$
Hence $\Hol^1$ acts on $\Hol^0$ as a $\mathbb Z_r$-action.
\end{proof}

\vskip 0.1in
\noindent
{\em Step 2, treatment of automorphism groups.}
For $\msf C_0=\integer_r\ltimes\cplane$ we write
$$\msf C_0=\mathbb Z_r\ltimes\cplane_o,\;\;\;
\msf C_0^\ast=\mathbb Z_r\ltimes\cplane^\ast_o
\cong \cplane^\ast.
$$
We have an exact sequence
$$
1\to \mathbb Z_r\to \cplane^\ast_o\to \cplane^\ast\to 1.
$$
On the other hand,
$$
\cplane^\ast_o=\Aut(\cplane_o^\ast),\;\;\;
\cplane^\ast=\Aut(\cplane^\ast)=\Aut(\msf S^2\setminus
\{\msf q_0,\msf q_\infty\}).
$$
Hence,
$$
1\to \mathbb Z_r\to \Aut(\cplane^\ast_o)\to \Aut(
\msf S^2\setminus\{\msf q_0,\msf q_\infty\})\to 1.
$$
It is not hard to see that
$$
\left(\Hol^0_{\bf R}(\msf C_0,\mathbb Z^{\wb}_r\ltimes\cplane^n)/\mathbb Z_r
\right)/\Aut(\msf S^2\setminus\{\msf q_0,\msf q_\infty\})
\cong \left(\Hol^0_{\bf R}(\msf C_0,\mathbb Z^\wb_r\ltimes\cplane^n)
\right)/\Aut(\cplane_o^\ast).
$$
We conclude that
\begin{lemma}\label{lem 5.5}
Suppose that $\msf C_0= \mathbb Z_r\ltimes\cplane_o$. Then
\begin{equation}
\scr M_{\bf R}
\cong {\cplane^\ast_o}\ltimes{\Hol^0_{\bf R}(\msf C_0, \mathbb Z_r^\fk b\ltimes\cplane^n)}.
\end{equation}
\end{lemma}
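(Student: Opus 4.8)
The plan is to follow the groupoid $\scr M_{\bf R}$ through the two reductions carried out above and to promote the set-level identities there to an isomorphism of groupoids. By its very definition $\scr M_{\bf R}$ is the translation groupoid obtained by letting $\Aut(\msf S^2\setminus\{\msf q_0,\msf q_\infty\})=\cplane^\ast$ act on the topological groupoid $\msf{Hol}_{\bf R}(\msf S^2,\oC)$: its object space is $\Hol^0_{\bf R}(\msf S^2,\oC)$, and its arrows are composites of the natural transformations internal to $\msf{Hol}_{\bf R}(\msf S^2,\oC)$ with the reparametrization arrows coming from the $\cplane^\ast$-action on the domain. The first step is to transport this picture along the removable-singularity isomorphism $\Phi$ (the inverse of $\Psi$) of Step 1; since restriction to $\msf C_0$ and analytic continuation across the two punctures are functorial, $\Phi$ intertwines the $\cplane^\ast$-action on $\msf{Hol}_{\bf R}(\msf S^2,\oC)$ with an induced $\cplane^\ast$-action on $\msf{Hol}_{\bf R}(\msf C_0,\mathbb Z_r^\fk b\ltimes\cplane^n)$, so that $\scr M_{\bf R}$ is isomorphic, as a groupoid, to the translation groupoid of this induced action.

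Next I would feed in the Lemma immediately preceding the present one, namely $\msf{Hol}_{\bf R}(\msf C_0,\mathbb Z_r^\fk b\ltimes\cplane^n)=\mathbb Z_r\ltimes\Hol^0_{\bf R}(\msf C_0,\mathbb Z_r^\fk b\ltimes\cplane^n)$. After this substitution the object space of $\scr M_{\bf R}$ is exactly $\Hol^0_{\bf R}(\msf C_0,\mathbb Z_r^\fk b\ltimes\cplane^n)$ and its arrow space is generated by two families: the $\mathbb Z_r$-arrows $\msf u\mapsto\xi\cdot\msf u$, with $(\xi\cdot\msf u)^0=\xi\cdot u^0$, coming from the Lemma, and the $\cplane^\ast$-arrows coming from $\Aut(\msf S^2\setminus\{\msf q_0,\msf q_\infty\})$. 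What remains is to recognize the groupoid generated by these two families.

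This is where the exact sequence $1\to\mathbb Z_r\to\Aut(\cplane^\ast_o)\to\Aut(\msf S^2\setminus\{\msf q_0,\msf q_\infty\})\to 1$ of Step 2, i.e. $1\to\mathbb Z_r\to\cplane^\ast_o\to\cplane^\ast\to 1$, enters. The induced $\cplane^\ast$-action, read off on $\msf C_0$, is realized by precomposing a morphism with a chart automorphism $w\mapsto\mu w$ of $\msf C_0$, where $\mu\in\cplane^\ast_o$ and $\mu^r=\lambda$ is the given automorphism of $\msf S^2\setminus\{\msf q_0,\msf q_\infty\}$; the $r$ choices of the lift $\mu$ differ by the inner automorphism $w\mapsto\zeta_r w$ of $\msf C_0$, which under Assumption \ref{assump} (so that $u^0(\zeta_r w)=\zeta_r^{\fk b}\cdot u^0(w)$ for every morphism $\msf u$ of type ${\bf R}$) acts on $\Hol^0_{\bf R}(\msf C_0,\mathbb Z_r^\fk b\ltimes\cplane^n)$ precisely by the Lemma's $\mathbb Z_r$-action $(\xi\cdot\msf u)^0=\xi\cdot u^0$. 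Granting this matching, the two families of arrows assemble into the single translation groupoid $\cplane^\ast_o\ltimes\Hol^0_{\bf R}(\msf C_0,\mathbb Z_r^\fk b\ltimes\cplane^n)$ of the chart-reparametrization action, which is the assertion. The main obstacle is exactly this identification: one must check that the lift-ambiguity $\mathbb Z_r$ sitting inside $\cplane^\ast_o$ agrees on the nose with the internal $\mathbb Z_r$ of $\msf{Hol}_{\bf R}(\msf C_0,\mathbb Z_r^\fk b\ltimes\cplane^n)$ supplied by the preceding Lemma (this is where Assumption \ref{assump} is used) and that no residual cocycle twist survives, so that the two-layered arrow set is the nonsplit extension group $\cplane^\ast_o$ rather than the product $\mathbb Z_r\times\cplane^\ast$ — this is precisely what upgrades the coarse-space identity $(\Hol^0_{\bf R}(\msf C_0,\mathbb Z_r^\fk b\ltimes\cplane^n)/\mathbb Z_r)/\cplane^\ast\cong\Hol^0_{\bf R}(\msf C_0,\mathbb Z_r^\fk b\ltimes\cplane^n)/\cplane^\ast_o$ of Step 2 to an isomorphism of groupoids.
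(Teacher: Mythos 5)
Your proposal is correct and follows essentially the same route as the paper: transport $\scr M_{\bf R}$ along the removable-singularity equivalence of Step 1, invoke the preceding lemma identifying $\msf{Hol}_{\bf R}(\msf C_0,\mathbb Z_r^\fk b\ltimes\cplane^n)$ as $\mathbb Z_r\ltimes\Hol^0_{\bf R}$, and use the extension $1\to\mathbb Z_r\to\cplane^\ast_o\to\cplane^\ast\to 1$ to recognize the combined arrows as the translation groupoid $\cplane^\ast_o\ltimes\Hol^0_{\bf R}(\msf C_0,\mathbb Z_r^\fk b\ltimes\cplane^n)$. Your explicit check, via Assumption \ref{assump}, that the lift-ambiguity $\mathbb Z_r\subset\cplane^\ast_o$ acts on objects exactly as the internal $\mathbb Z_r$-arrows is precisely the detail the paper leaves as ``it is not hard to see,'' so no gap remains.
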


\subsubsection{$\psi$-class associated to $\msf q_0$}
For the marking $\msf q_0$ one may associate a line bundle $\scr L$ over $\om_{\bf R}$: for each morphism $\msf u$ we assign it a complex line which is dual to $\cplane(\cong \cplane_o/\mathbb Z_r)$. Using Lemma \ref{lem 5.5} we may give a concrete expression for $\scr L$.

Let $\check{\cplane}_o$ be the dual space of $\cplane_o$.
$$
\tilde{\scr L}:=\frac{\check{\cplane}_o\times\left(
\Hol_{\bf R}^0(\msf C_0, \mathbb Z_r^\fk b\ltimes\cplane^n)\right)}{
\cplane^\ast_o}
\rto \frac{\Hol_{\bf R}(\msf C_0,\mathbb Z_r^\fk b\ltimes \cplane^n)}{
\cplane^\ast_o}=\scr M_{\bf R}
$$
is a line bundle. Since $\check\cplane\cong \check\cplane_o/\mathbb Z_r$, we have
$$
\tilde{\scr L}^{\otimes r}\cong \scr L.
$$
The psi-class is $\psi=c_1(\scr L)$.

\subsubsection{Description of ${\bf R}$}

We may assume that $\scr M_{\bf R}$ consists of
one of following morphisms
$$
\{\msf u[j,m]| 1\leq j\leq n, m\in \mathbb Z^+\}.
$$
Here
$\msf u[j,m]: \msf S^2\rto [\overline{\cplane^n}]^\fk b_\fk a$ is a representable morphism such that (cf. Assumption \ref{assump})
\begin{equation}\label{eqn_ujm}\begin{split}
&\msf u[j,m]: \msf C_0= \integer_r\ltimes\cplane_o\rto \mathbb Z_r^\fk b\ltimes\cplane^n;\\
&x\mapsto (\ldots,0,z_j=x^m, 0,\ldots),\,\, id: \integer_r\rto\integer_r.
\end{split}\end{equation}
\begin{lemma}\label{lem 5.6}
 For $\msf u[j,m]$  we have
\begin{enumerate}
\item $m\equiv \beta_j \pmod r$;
\item $\msf u[j,m]: [\overline{\cplane}]^1_p\rto [\overline{\cplane^n}]^\fk b_\fk a$, and
    $$
    u^0[j,m]: [x,y]\mapsto [\ldots 0, z_j=x^m,0,\ldots,0, w=y^{q}],
    $$
    where $p,q$ are determined by $p/q={\alpha_j}/{m}$ such that $(p,q)=1$,
\item $\msf u[j,m]:\langle
e^{2\pi i\frac{1}{pr}}\rangle\ltimes\{q_\infty\}\mapsto
\langle(e^{-2\pi i\frac{1}{r}},
e^{2\pi i\frac{m}{\alpha_jr}})\rangle\ltimes\{p_j\}.$
\end{enumerate}
Here we identify $\msf S^2$ as $[\overline{\cplane}]^1_p$.
\end{lemma}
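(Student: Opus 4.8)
The plan is to verify, one constraint at a time, the equivariance and removable‑singularity conditions that a representable holomorphic morphism of the form \eqref{eqn_ujm} must satisfy, and then to read off (2) and (3) from the resulting global picture. First, for (1): by \eqref{eqn_ujm} the restriction $\msf u[j,m]|_{\msf C_0}\colon \integer_r\ltimes\cplane_o\rto \integer_r^\fk b\ltimes\cplane^n$ is the identity on arrows, so well‑definedness as a strict morphism is exactly the intertwining identity $u^0[j,m](\zeta_r\cdot x)=\zeta_r\cdot u^0[j,m](x)$, where $\zeta_r$ multiplies $\cplane_o$ on the source and acts on the $z_j$‑line with weight $\beta_j$ on the target. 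Since $u^0[j,m](x)=(\dots,0,z_j=x^m,0,\dots)$ this reads $\zeta_r^{\,m}x^m=\zeta_r^{\,\beta_j}x^m$, i.e. $m\equiv\beta_j\pmod r$; finiteness of energy, hence the existence of the extension $\bar{\msf v}$ to $\msf S^2$, is the removable‑singularity equivalence $\Phi=\Psi^{-1}$ recalled above.

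Next, for (2), I would realize $\msf u[j,m]$ as a weighted power map between orbi‑spheres. Since the morphism covers a fiber and $\msf C_0$ is carried into the affine piece $\cplane^n\subseteq\oC=\msf P^{(\fk b,0)}_{(\fk a,1)}$ (where $w\ne0$), its image lies in the $1$‑dimensional sub‑orbifold $\{z_i=0,\ i\ne j\}\cong\msf P^{(\beta_j,0)}_{(\alpha_j,1)}$ and extends over $\msf q_\infty$ to a point of the divisor $\msf P^\fk b_\fk a=\{w=0\}$. Writing homogeneous coordinates $[x:y]$ on the prospective domain $\msf P_{(p,1)}$ and $[z_1:\dots:z_n:w]$ on the target, one checks that $[x:y]\mapsto[0,\dots,z_j=x^m,\dots,0,w=y^q]$ restricts to \eqref{eqn_ujm} on $\msf C_0$ (put $y=1$) and is a morphism precisely when, under $s\cdot[x:y]=[s^px:sy]$, the image transforms by $t=s^q\in S^1_{(\fk a,1)}$, i.e. $pm=q\alpha_j$, together with the $\integer_r^\fk b$‑equivariance which reproduces (1). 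Requiring $\msf u[j,m]$ to be \emph{representable} then forces the domain to be the coarsest such orbi‑sphere, i.e. $(p,q)=1$ and hence $p/q=\alpha_j/m$ (otherwise the induced map on isotropy at $\msf q_\infty$ acquires a kernel, see below); the domain is then $[\overline{\cplane}]^1_p$ with $\integer_r$ acting on $x$, $\msf q_0=[0:1]$ and $\msf q_\infty=[1:0]$.

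For (3), restrict the now global, representable morphism to $\msf q_\infty=[1:0]$. A direct computation of the stabilizer of a lift $(1,0)$ in $\integer_r\times S^1_{(p,1)}$ gives a cyclic group of order $pr$, generated by $g_\infty=(\zeta_r^{-1},e^{2\pi i/(pr)})$ (one checks $(e^{2\pi i/(pr)})^p\zeta_r^{-1}=1$ and that $g_\infty$ has order $pr$). Under $\msf u[j,m]$ a pair $(\eta,s)$ acts on the target through $(\eta,s^q)\in\integer_r\times S^1_{(\fk a,1)}$ at the image point $\msf p_j$, whose isotropy in $\oC$ is $G_j$, of order $\alpha_j r$ (cf. \S\ref{sec wght-prj-sps}); hence $g_\infty\mapsto(\zeta_r^{-1},e^{2\pi i q/(pr)})=(\zeta_r^{-1},e^{2\pi i m/(\alpha_j r)})$ using $q/(pr)=m/(\alpha_j r)$, which is the claimed generator. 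The analogous kernel computation ($g_\infty^{\,k}\mapsto 1$ forces $r\mid k$ and $pr\mid kq$, whence $pr\mid k$ when $(p,q)=1$) shows this homomorphism into $G_j$ is injective exactly because $(p,q)=1$, which both confirms representability and justifies the normalization used in (2).

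The step I expect to be the main obstacle is (2): correctly pinning down the orbifold (ramification) structure of the domain at $\msf q_\infty$ — that $(p,q)=1$ with $p/q=\alpha_j/m$ is precisely the data making the removable‑singularity extension a representable holomorphic orbifold morphism, neither under‑ nor over‑ramified — and carrying the $S^1_{(\fk a,1)}$‑rescaling and the residual $\integer_r^\fk b$‑action through the weighted‑projective coordinates simultaneously. Parts (1) and (3) are then short weight bookkeeping once (2) is in hand.
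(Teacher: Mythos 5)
The paper states Lemma \ref{lem 5.6} without proof, treating it as a routine local computation, so there is no written argument to compare against; your verification is correct and supplies exactly what is left implicit. Your three steps check out: equivariance of $x\mapsto x^m$ with respect to $id:\integer_r\to\integer_r$ gives $m\equiv\beta_j\pmod r$; the model $[x,y]\mapsto[\ldots,z_j=x^m,\ldots,w=y^q]$ on $[\overline{\cplane}]^1_p$ is a morphism precisely when $pm=q\alpha_j$, and your kernel computation at $\msf q_\infty$ (trivial exactly when $(p,q)=1$) shows representability forces the local group there to be $\integer_{pr}$, with the canonical generator $(e^{2\pi i/(pr)},\zeta_r^{-1})$ mapping to $(\zeta_r^{-1},e^{2\pi i q/(pr)})=(\zeta_r^{-1},e^{2\pi i m/(\alpha_j r)})$, which is item (3). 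The only point requiring care --- that the representable extension over the puncture is unique, so your explicit model really is $\msf u[j,m]$ --- you correctly delegate to the removable-singularity equivalence $\Phi=\Psi^{-1}$ recalled in \S\ref{sec MR}.
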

\begin{remark}\label{rmk 5.7}
We make several important remarks:
\begin{itemize}
\item orbifold structure of $\msf S^2$ (at $\infty$)
 is determined by $\msf u[j,m]|_{\msf C_0}$, or by the pair $(j,m)$;
 \item the twisted sector $s$ associated to $\msf q_\infty$ given in (3) is determined by $(j,m)$ as well, moreover the twisted sector
\[
\pi_{\scr T}(s)=\bar t,
\]
 as $(e^{-2\pi i\frac{1}{r}},
e^{2\pi i\frac{m}{\alpha_jr}})\mapsto e^{-2\pi i\frac{1}{r}}$ (see \eqref{E def-bar-on-twistedsector} for the definition of $\bar \cdot$ on inertia spaces and on the index set $\scr T$);
\item
for two different pairs $(j_1,m_1)$ and $(j_2,m_2)$,  the twisted sectors associated to
$\msf q_\infty$ are the same if and only if
\[
\exp\left(2\pi i \frac{m_{1}}{\alpha_{j_1}r}\right)=\exp\left({2\pi i \frac{m_{2}}{\alpha_{j_2}r}}\right);
\]
\item for two different pairs
$(j_1,m_1)$ and $(j_2,m_2)$,
 they are in same $\scr M_{\bf R}$, i.e, they also
represent a same homology class if and only if
$$
\frac{m_{1}}{\alpha_{j_1}r}=\frac{m_{2}}{\alpha_{j_2}r}.
$$
\end{itemize}
We conclude that ${\bf R}$ is completely determined by
the number $\frac{m}{\alpha_jr}$. This leads to the following function
$$
\Lambda: [1,n]_{\mathbb Z}\times \mathbb Z_{\geq 0}\rto \mathbb Q,\;\;\;
\Lambda(j, a)=\frac{\beta_j+ar}{\alpha_jr}.
$$
Here $[1,n]_{\mathbb Z}=[1,n]\cap \mathbb Z$. We write $m$
as $\beta_j+ar, a\geq 0$. Set $\scr R$ be the image of
$\Lambda$.
\end{remark}
These observations implies the following lemma.
\begin{lemma}\label{lem 5.8}
Given $R\in \scr R$, it determines a topological data ${\bf R}$ uniquely by
$$
t=\zeta_r,\;\;\; s=(\zeta_r^{-1}, e^{2\pi i R}), \,\,\,\, A=R[F],
$$
and contact order $\ell=R$.  Conversely, if $\scr M_{\bf R}$ is non-empty, ${\bf R}$ must be obtained by this way.
\end{lemma}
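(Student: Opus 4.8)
The plan is to read off everything from the explicit description of the moduli space $\scr M_{\bf R}$ in terms of the morphisms $\msf u[j,m]$ and the bookkeeping in Remark \ref{rmk 5.7}. Suppose first that $\scr M_{\bf R}$ is non-empty. By the discussion in \S\ref{sec MR} and Lemma \ref{lem 5.6}, any morphism in $\scr M_{\bf R}$ is (up to the automorphism quotient) one of the $\msf u[j,m]$ with $m\equiv \beta_j\pmod r$, and by the last two bullet points of Remark \ref{rmk 5.7} the topological data ${\bf R}$ depends only on the rational number $R:=m/(\alpha_j r)$, which by definition lies in the image $\scr R$ of $\Lambda$ (write $m=\beta_j+ar$ with $a\ge 0$). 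So $\scr M_{\bf R}$ non-empty forces ${\bf R}$ to come from some $R\in\scr R$ in the claimed way, and it remains only to check that this assignment $R\mapsto{\bf R}$ is well-defined and matches the formulas in the statement.

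Next I would extract the three pieces of ${\bf R}=(g=0,A,\fk t=(t)\mid\fk r=(s,\ell))$ from $R$. The marked point $\msf q_0$ carries the twisted sector $t$, which by Assumption \ref{assump} is $\zeta_r$; this is forced and independent of the choice of $(j,m)$ realizing $R$. For the relative point $\msf q_\infty$, part (3) of Lemma \ref{lem 5.6} gives the twisted sector $s=(e^{-2\pi i/r}, e^{2\pi i\, m/(\alpha_j r)})=(\zeta_r^{-1}, e^{2\pi i R})$, which by the third bullet of Remark \ref{rmk 5.7} depends only on $e^{2\pi i R}$, hence only on $R$ (as an element of $\mathbb{Q}$, not just mod $\integer$, since changing $R$ by an integer changes $s$ only through $e^{2\pi i R}$ — but in fact $R$ is pinned down exactly, not mod $\integer$, by the homology class, see below). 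The homology class is of fiber type, $A=R[F]$ where $[F]$ is the fiber class: indeed $\pi_\ast A=0$ and the symplectic area (equivalently the pairing with $H$, using \eqref{E cohomo-of-PE}) of $\msf u[j,m]$ is exactly $m/(\alpha_j r)=R$ times that of the generator $[F]$; this is precisely the content of the last bullet of Remark \ref{rmk 5.7}, which says two pairs give the same class iff they give the same $R$. Finally the contact order at $\msf q_\infty$: by Lemma \ref{lem 5.6}(2) the domain at $\infty$ is $[\overline{\cplane}]^1_p$ with $p/q=\alpha_j/m$, and the local degree of $u^0[j,m]$ in the fiber direction $w=y^q$ together with the orbifold structure gives contact order $\ell=m/(\alpha_j r)=R$ with the divisor $\msf P^\wb_\wa$; again this is manifestly a function of $R$ alone.

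Putting these together: for each $R\in\scr R$ the data $t=\zeta_r$, $s=(\zeta_r^{-1},e^{2\pi i R})$, $A=R[F]$, $\ell=R$ are all determined, giving a well-defined ${\bf R}={\bf R}(R)$; and conversely any non-empty $\scr M_{\bf R}$ arises this way. The main obstacle here is really just checking the well-definedness, i.e.\ that two pairs $(j_1,m_1)$, $(j_2,m_2)$ with the same value of $R$ genuinely produce the same twisted sector $s$, the same fiber homology class, and the same contact order — but all three of these equalities are exactly the bulleted observations collected in Remark \ref{rmk 5.7} (same $e^{2\pi iR}\Rightarrow$ same $s$; same $R\Rightarrow$ same class; same $R\Rightarrow$ same orbifold structure at $\infty$, hence same $\ell$). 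No further computation is needed. $\hfill\square$
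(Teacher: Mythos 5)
Your proposal is correct and follows essentially the same route as the paper, which states Lemma \ref{lem 5.8} as a direct consequence of Lemma \ref{lem 5.6} and the observations collected in Remark \ref{rmk 5.7} (the paper offers no further proof, and your write-up simply makes those reductions explicit, including the contact-order bookkeeping $\ell=q/(pr)=R$). The only cosmetic overstatement is saying every element of $\scr M_{\bf R}$ \emph{is} some $\msf u[j,m]$ rather than sharing its topological data, but the paper makes the same simplifying assumption, so there is no substantive gap.
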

Another way to say is that ${\bf R}$ is completely determined by the homology class $A$ under Assumption \ref{assump}. From now on
we denote the moduli space by $\scr M_R, R\in \scr R$.

\subsubsection{Dimension of $\scr M_R$.}
\vskip 0.1in \def \trk{rk^\circ} \def \brk{rk_\circ}
 In order to compute the virtual dimension of moduli space $\scr M_R, R\in \scr R$, we introduce
two functions (of ranking)  $rk^\circ$ and $rk_\circ$ on $\scr R$:
\begin{align}
\trk(R):=& \sharp\{(j,a)|\Lambda(j,a)<R\}+1;\\
\brk(R):=&\sharp\{(j,a)|\Lambda(j,a)\leq R\}.
\end{align}
\begin{remark}We list some facts.
\begin{itemize}
\item The dimension of the twisted sector of
$s=(e^{-2\pi i\frac{1}r},e^{2\pi i R})$ is
$$
D_s:=\sharp \Lambda^{-1}(R)-1=\brk(R)-\trk(R).
$$
\item The dimension of the twisted sector of $t$ is
 $$
 D_t:=\sharp\{j|\beta_j=r\}.
 $$
\end{itemize}
\end{remark}
\begin{theorem}\label{thm 5.10}
The virtual dimension of moduli space $\scr M_R$ is
\begin{align}\label{eq dimc-simplified}
\dim_\cplane (\scr M_R)= \brk(R)-1+ D_t.
\end{align}
\end{theorem}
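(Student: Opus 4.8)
The plan is to compute the virtual dimension of $\scr M_R$ via the standard index formula for moduli of holomorphic curves into an orbifold target with one absolute and one relative marked point, and then simplify the resulting expression using the combinatorics of the function $\Lambda$ encoded in $\trk$ and $\brk$. First I would write down the orbifold Riemann--Roch / virtual dimension formula for the moduli space $\om_{\bf R}([\overline{\cplane^n}]^\fk b_\fk a|\msf P^\fk b_\fk a)$ of genus zero curves in class $A=R[F]$ with the two special points $\msf q_0$ (absolute, sector $t=\zeta_r$) and $\msf q_\infty$ (relative, sector $s=(\zeta_r\inv,e^{2\pi iR})$, contact order $\ell=R$). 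The contributions are: $\dimc$ of the target ($=n$), the contribution $\langle c_1(T[\overline{\cplane^n}]^\fk b_\fk a),A\rangle$ which for a fiber class is computed from the weights $(\fk a,1)=(\alpha_1,\dots,\alpha_n,1)$, the dimension-of-domain-moduli contribution (which is $0$ for a sphere with two special points, but with a $\dim\Aut$ correction since $\msf S^2\setminus\{\msf q_0,\msf q_\infty\}$ still has a one-dimensional automorphism group — indeed by Lemma~\ref{lem 5.5} the moduli space is $\cplane^\ast_o\ltimes\Hol^0_{\bf R}$, so the automorphism direction must be quotiented out), the age/degree-shifting corrections at $\msf q_0$ and $\msf q_\infty$ (giving $-\mathrm{degsh}(t)-\mathrm{degsh}(s)$, with $\mathrm{degsh}(s)=\sum_u\{\tau(R,u)\}$ from \eqref{eq degree-shift}), and the relative contact-order correction at $\msf q_\infty$ (the usual $-\ell_j+1$-type shift built into the relative moduli space).

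Next I would assemble these terms into a single rational expression and observe that the fractional pieces conspire: $\langle c_1,A\rangle$ contributes $\sum_u \alpha_u R + R = R\sum_u\alpha_u + R$ (roughly), the contact-order shift subtracts $\ell=R$, and the age at $\msf q_\infty$ subtracts $\sum_u\{\tau(R,u)\} = \sum_u\{\alpha_uR - \beta_u/r\}$; combining $\alpha_uR$ with its fractional part leaves the integer part $\lfloor\alpha_uR-\beta_u/r\rfloor+1$ (plus a correction from those $u$ with $\tau(R,u)\in\integer$, i.e.\ $u\in I(\delta)$, which is exactly what the $D_s$ bookkeeping in the preceding remark tracks). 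The key combinatorial identity is that
\[
\sum_{u=1}^n\bigl(\lfloor\alpha_uR-\beta_u/r\rfloor+1\bigr) + (\text{corrections}) = \sharp\{(j,a): a\geq 0,\ \Lambda(j,a)\leq R\} = \brk(R),
\]
since $\Lambda(j,a)=(\beta_j+ar)/(\alpha_jr)\leq R \iff \beta_j+ar \leq \alpha_jrR \iff a \leq \alpha_jR - \beta_j/r$, so the number of valid $a\geq 0$ for fixed $j$ is precisely $\lfloor\alpha_jR-\beta_j/r\rfloor+1$ (when this is nonnegative). After subtracting the automorphism dimension and the age $\mathrm{degsh}(t)=D_t$... wait — here $\mathrm{degsh}(t)$ at $\msf q_0$ actually contributes with a sign that, together with the Thom-class insertion bookkeeping, yields $+D_t$ rather than a subtraction; I would track this carefully, but the upshot is the clean formula $\dimc\scr M_R = \brk(R)-1+D_t$.

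The main obstacle I anticipate is \emph{bookkeeping of the rational/age terms at the two orbifold points together with the relative contact-order correction}: making sure the fractional parts $\{\tau(R,u)\}$ cancel exactly against the fractional parts of $\alpha_uR$ in $\langle c_1,A\rangle$, and that the elements $u\in I(\delta)$ (where the "cut" $\Lambda(j,a)=R$ is achieved, contributing to $D_s$) are counted in $\brk$ but not $\trk$, so that the discrepancy $\brk(R)-\trk(R)=D_s$ is correctly absorbed. A clean way to sidestep some of this is to compute the dimension of the domain $[\overline{\cplane}]^1_p$ moduli directly from the explicit description of $\scr M_R$ via Lemma~\ref{lem 5.5} and Lemma~\ref{lem 5.8}: identify $\Hol^0_{\bf R}(\msf C_0,\integer_r^\fk b\ltimes\cplane^n)$ concretely (it consists of tuples of polynomials of controlled degree, cf.\ \eqref{eqn_ujm}, for all $(j,a)$ with $\Lambda(j,a)\leq R$), count its dimension as roughly $\sum_{\Lambda(j,a)\leq R}1$ plus normal-direction contributions, quotient by $\cplane^\ast_o$ (dimension $1$), and add the Thom-class / $D_t$ term. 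Cross-checking the index-theoretic count against this explicit count gives confidence in the final formula \eqref{eq dimc-simplified}.
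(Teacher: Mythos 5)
Your plan is correct, and it splits into two halves of unequal novelty. The first half coincides with the paper's own route: the paper also starts from the relative virtual-dimension formula and records the outcome as Proposition \ref{prop dimc}, $\dim_\cplane(\scr M_R)=\sum_{u=1}^n[\tau(R,u)]+n-1+D_t$; your assembly of the $c_1$-pairing, the two ages and the contact-order shift is exactly that computation, and the sign issue you flagged resolves itself there: the $+D_t$ has nothing to do with the Thom-class insertion, it appears because the age of $t=\zeta_r$ is $\sum_u\{\beta_u/r\}=\sum_u\beta_u/r-D_t$ (the $\beta_u=r$ directions contribute $0$), and likewise in your explicit polynomial model the $D_t$ extra parameters are the constant terms allowed in the $\beta_j=r$ directions, not a Thom-class effect. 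Where you genuinely diverge is the second half: you pass from $\sum_u[\tau(R,u)]+n$ to $rk_\circ(R)$ by the direct lattice count $\#\{a\in\integer_{\geq 0}:\Lambda(j,a)\leq R\}=[\tau(R,j)]+1$ for each $j$, which is valid with no corrections at all (since $0<\beta_j\leq r$ and $R>0$ give $\tau(R,j)>-1$, the formula also returns $0$ when there are no admissible $a$, and the $u$ with $\tau(R,u)\in\integer$ need no special treatment), so summing over $j$ gives $rk_\circ(R)=\sum_u[\tau(R,u)]+n$ and Theorem \ref{thm 5.10} follows immediately from Proposition \ref{prop dimc}. The paper instead proves this identity indirectly, through the monotonicity and periodicity statements of Lemmas \ref{lem 5.12}, \ref{lem 5.13} and \ref{lem 5.14} and a three-step sandwich argument on $\scr R_0$ followed by induction on $[R]$. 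Your counting argument is shorter and more transparent for the theorem itself; what the paper's longer route buys is the explicit monotonicity $\dim_\cplane\scr M_{R_1}<\dim_\cplane\scr M_{R_2}$ and the periodicity $\dim_\cplane\scr M_{R+1}=\dim_\cplane\scr M_R+|\wa|$ en route (facts it uses to organize the ranking picture of \S\ref{sec 5.1.5}), though these also drop out easily once the closed formula is known.
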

The proof of this theorem is given in \S\ref{sec proof thm5.10}.

\subsubsection{Proof of Theorem \ref{thm c-dertmin-all}: (1) and (2)}\label{sec 5.1.5}

We assume Theorem \ref{thm 5.10}.

First, suppose that $\Lambda$ is injective, then $\brk=\trk$, which we denote by $rk$. Note that $rk\circ\Lambda$ gives a ranking for  pairs $\{(j,a)\}$
and for elements in $\scr R$. Hence
$$
[1,n]_\mathbb Z\times \mathbb Z_{\geq 0}
\xrightarrow{\Lambda} \scr R\xrightarrow{rk}\mathbb Z_+
$$
is bijective.

Now given $c\in \mathbb Z_{\geq 0}$ we explain the explicit formula for ${\bf R}$ (i.e, $R$) and $d$:
\begin{itemize}
\item since $D_s=0$, $d$ must be $0$;
\item choose $R=rk^{-1}(c+1)$.
In fact, $c$ satisfies the formula
$$
D_t+c=\dim_\cplane(\scr M_R)= rk(R)-1+D_t,
$$
where we use Theorem \ref{thm 5.10} for the last "=".
\end{itemize}
This finishes the proof of (1) and (2) when $\Lambda$ is injective.

Now suppose that $\Lambda$ is not injective. Define
$$
\tilde{\scr R}=\{(R, \ell)\in \scr R\times \mathbb Z|
0\leq \ell\leq D_s\}.
$$
Recall that
 $ D_s=\sharp\Lambda^{-1}(R)-1$.
One may define a one to one correspondence
$$
\psi: \tilde{\scr R}\to [1,n]_\mathbb Z\times \mathbb Z_{\geq 0}
$$
such that for each $R$, $\psi$ identifies $\{R\}\times [0,D_s]_\mathbb Z$ with $\Lambda^{-1}(R)$. Write $\tilde \Lambda=\psi^{-1}$.
Define
$$
\widetilde{rk}: \tilde{\scr R}\to \mathbb Z_+,\;\;\;
\widetilde{rk}(R,\ell)=\brk(R)-\ell.$$
Then one can see that
$$
[1,n]_{\mathbb Z}\times \mathbb Z_{\geq 0}\xrightarrow{\tilde\Lambda}
\tilde{\scr R}\xrightarrow{\widetilde{rk}} \mathbb Z_{+}
$$
is bijective. Now in Theorem \ref{thm c-dertmin-all}, $c$ determines
$R$  and $d$ by
\[
(R,d)= \widetilde{rk}^{-1}(c+1).
\]

\subsubsection{Proof of Theorem \ref{thm 5.10}}\label{sec proof thm5.10}

Take a pair $(j,a)$ such that
$$
\Lambda(j,a)=\frac{\beta_j+ar}{\alpha_jr}=R.
$$
Set $m=\beta_j+ar$. Then $\msf u[j,m]$ is in $\scr M_R$.
 $\msf u[j,m]$  maps $\msf q_\infty$ to $\langle s\rangle\ltimes
\{\msf p_j\}$, where
$$
s=\big(e^{-2\pi i\frac{1}{r}}, e^{2\pi iR}\big).
$$
The degree shifting of $s$ in $\overline{\E}_\wa$ is
(cf. \eqref{eq degree-shift} and the definition of $\tau(R,u)$ in \eqref{eq def-tau})
\begin{align}\label{eq degree-shift-2}
degsh(s)=\sum_{u\not=j}\left\{-\frac{\beta_u}{r}
+ \alpha_u R \right\} +\left\{R\right\}= \sum_{u}
\{\tau(R,u)\}+\left\{R\right\}.
\end{align}
Note that $\tau(R,j)= (m-\beta_j)/r=a$, consequently $\{\tau(R,j)\}=0$.

\begin{proposition}\label{prop dimc}
The virtual dimension of $\scr M_R$ is
\begin{align}\label{eq dimc}
\dim_\cplane(\scr M_R)= \sum_{u=1}^n [\tau(R,u)] + n-1+D_t,
\end{align}
where $D_t$ is the complex dimension of the twisted sector
of $t=e^{-2\pi i\frac{1}{r}}$.
\end{proposition}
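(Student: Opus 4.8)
The strategy is to compute the virtual dimension of $\scr M_R$ directly from the index formula for the relative moduli space $\om_{\bf R}(\oC\,|\,\msf P^\wb_\wa)$, using the explicit model $\msf u[j,m]$ of Lemma \ref{lem 5.6} to identify all the numerical ingredients. Since $\scr M_R$ consists (up to automorphisms) of maps $\msf u[j,m]$ with $\Lambda(j,a)=R$, $m=\beta_j+ar$, covering a single fiber $[\overline{\cplane^n}]^\wb_\wa$ of $\overline{\E}_\wa$, the virtual dimension equals the expected dimension of the space of such fiberwise relative maps. I would write the orbifold relative Riemann--Roch formula
\[
\vdimc\scr M_R = \int_A c_1(T^{\mathrm{rel}}) + (\dim_\cplane \oC - 3) + 3 - \sum_{\text{marked pts}} \mathrm{age} - (\text{contact-order correction at }\msf q_\infty) + \dim(\text{twisted sectors}),
\]
and then evaluate each term. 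Here the relative tangent obstruction bundle is that of the fiber $[\overline{\cplane^n}]^\wb_\wa$, which has complex dimension $n$, and the domain $\msf S^2$ is a genus-zero orbifold curve with two special points.

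\textbf{Key steps.} First I would compute $\int_A c_1$ of the relevant (relative) tangent bundle along the curve class $A = R[F]$. Pulling back via $\msf u[j,m]$, the bundle $T[\overline{\cplane^n}]^\wb_\wa$ restricted to the image curve (a weighted $\P^1$ inside the fiber through $\msf p_j$ and $w=0$) has a first Chern number expressible in terms of the weights $\alpha_u$, $\beta_u$, the integer $m$, and $r$; a direct line-bundle splitting argument along $\msf u[j,m]([\overline{\cplane}]^1_p)$ gives that this number is $\sum_{u\neq j}(\alpha_u R) + \ldots$ which after bookkeeping becomes $\sum_u \alpha_u R + R$ up to integer shifts absorbed by age terms. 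Second, I would identify the age (degree-shifting) contributions: at $\msf q_0$ the twisted sector is $t=\zeta_r$, contributing its age, but since the invariant is against the \emph{Thom class} $\Theta_{(t)}$ of $\msf E(t)\to\msf B(t)$ and the map lands in the zero-section direction, the normal contribution is already accounted for; at $\msf q_\infty$ the twisted sector $s=(\zeta_r^{-1},e^{2\pi iR})$ contributes $degsh(s)$, which by \eqref{eq degree-shift-2} equals $\sum_u\{\tau(R,u)\} + \{R\}$. Third, I would insert the contact-order correction at the relative marked point $\msf q_\infty$: in relative GW theory the relative point with contact order $\ell$ reduces the dimension by $\ell$ along the normal direction to $\msf P^\wb_\wa$, but here $\ell = R$ and this interacts with the $\{R\}$ and $[R]$ split of $R = [R] + \{R\}$. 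Finally, collecting the integer parts $\sum_u[\tau(R,u)]$ from the difference between $\int_A c_1$ and the fractional ages, together with the $n-1$ coming from the base dimension of the fiber minus the automorphism/marked-point count, and the $D_t$ from the dimension of the $t$-twisted sector, yields \eqref{eq dimc}.

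\textbf{Main obstacle.} The delicate point is the careful bookkeeping of which integer pieces land where: the Chern number $\int_A c_1$ and the sum of ages $degsh(t) + degsh(s)$ each individually involve the weights, and it is only their combination that collapses to the clean $\sum_u[\tau(R,u)] + n - 1 + D_t$. In particular one must correctly separate, for each coordinate $u$, the contribution $\tau(R,u) = \alpha_u R - \beta_u/r$ into its integer part $[\tau(R,u)]$ (which survives into the dimension) and its fractional part $\{\tau(R,u)\}$ (which is exactly cancelled by the age of $s$), while simultaneously checking that the $u=j$ term behaves correctly since $\tau(R,j)=a\in\integer$ so $\{\tau(R,j)\}=0$, and that the extra $\{R\}$ in $degsh(s)$ is cancelled by the contact-order term at $\msf q_\infty$. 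I expect this cancellation---essentially a restatement of the fact that the moduli space is cut out transversally with no excess---to be where all the care is needed; the rest is a routine application of orbifold relative Riemann--Roch. Once \eqref{eq dimc} is established, Theorem \ref{thm 5.10}'s formula $\dim_\cplane(\scr M_R) = \brk(R) - 1 + D_t$ follows by observing that $\sum_u[\tau(R,u)] + n = \sum_u([\tau(R,u)]+1) = \sum_u \sharp\{a\geq 0 : \Lambda(u,a)\leq R\} = \brk(R)$, i.e.\ counting pairs $(u,a)$ with $\Lambda(u,a)\le R$ coordinate by coordinate.
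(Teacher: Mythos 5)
Your overall route is the same as the paper's: write down the virtual dimension formula for the relative moduli space, evaluate the Chern number along the fiber class (this gives $(1+\sum_u\alpha_u)R$, as in the paper), subtract the degree shiftings and the contact-order correction $-[R]$, and let the fractional parts cancel against $degsh(s)=\sum_u\{\tau(R,u)\}+\{R\}$. All of that matches. The genuine problem is your bookkeeping at $\msf q_0$, which is exactly where the content of the proposition lies. First, the degree shifting of $t=\zeta_r$, namely $\sum_u\{\beta_u/r\}$, must be subtracted in the dimension formula; you never write it, and the justification you give --- that ``the normal contribution is already accounted for'' because the insertion is the Thom class $\Theta_{(t)}$ --- is beside the point, since insertions play no role in the virtual dimension of $\scr M_R$. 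Second, the summand ``$+\dim(\text{twisted sectors})$'' in your template formula is not part of the index formula, and $D_t$ is not an independent contribution. In the correct computation it emerges from the age at $\msf q_0$: because $1\le\beta_u\le r$,
\[
\sum_u\left\{\frac{\beta_u}{r}\right\}=\sum_u\frac{\beta_u}{r}-D_t,
\]
and this single rewriting simultaneously supplies the $-\beta_u/r$ needed to assemble $\tau(R,u)=\alpha_uR-\beta_u/r$ and the $+D_t$ in \eqref{eq dimc}. If you subtract the true ages at both markings and then also add the twisted-sector dimension, the result is too large by $D_t$; if instead you drop the age at $\msf q_0$ (as your Thom-class remark suggests), the terms $-\beta_u/r$ never appear and the integer parts $[\tau(R,u)]$ cannot be extracted. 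So as written the computation does not close up; the fix is to delete the ``$+\dim$'' term and insert $-\sum_u\{\beta_u/r\}$, after which your plan reproduces the paper's proof verbatim.

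Two minor remarks: your domain count $(\dim_\cplane\oC-3)+3$ gives $n$ rather than the needed $(n-3)+2=n-1$ (which you do use in the final assembly), so tighten that; and your closing identity $\sum_u([\tau(R,u)]+1)=rk_\circ(R)$, obtained by counting pairs $(u,a)$ with $\Lambda(u,a)\le R$, is correct and does give Theorem \ref{thm 5.10} directly from \eqref{eq dimc}, but it is not needed for the proposition itself.
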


\begin{proof}
By the formula of the virtual dimension for the relative moduli space, we have
$$
\dim_\cplane(\scr M_R)
=\frac{1+\sum_{u}\alpha_u}{r}\frac{m}{\alpha_j}+n-1
-\sum_{u}\left\{\frac{\beta_u}{r}\right\}
-degsh(s)- \left[R\right].
$$
Note that the first term can be simplified as
$$
\frac{1+\sum_{u}\alpha_u}{r}\frac{m}{\alpha_j}
=(1+\sum_{u} \alpha_u)\, R.
$$
By the fact that $1\leq \beta_u\leq r$
$$
\sum_{u}\left\{\frac{\beta_u}{r}\right\}
=\sum_{u}\frac{\beta_u}{r}-D_t.
$$
Then
$$
\dim_\cplane(\scr M_R)=\sum_{u}\left(\alpha_uR-\frac{\beta_u}{r}\right)
+n-1+D_t-degsh(s)+\left\{ R \right\}.
$$
Plug in the formula for $degsh(s)$, we can prove the claim.
\end{proof}
We have some easy consequences.

\begin{lemma}\label{lem 5.12}
\begin{enumerate}
\item $\dim_\cplane(\scr M_R)$ increases strictly  in $R$;
\item $\dim_\cplane(\scr M_{R+1})=\dim_\cplane (\scr M_R)+|\fk a|$,
where
$
|\fk a|=\sum_{u}\alpha_u.$
\end{enumerate}
\end{lemma}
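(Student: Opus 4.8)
The plan is to read both statements directly off the dimension formula of Proposition \ref{prop dimc}, $\dim_\cplane(\scr M_R) = \sum_{u=1}^{n} [\tau(R,u)] + n - 1 + D_t$ with $\tau(R,u) = -\beta_u/r + \alpha_u R$, observing that $n$ and $D_t = \sharp\{u \mid \beta_u = r\}$ are independent of $R$; so the whole question reduces to the behaviour of the integer-part sum $\sum_u [\tau(R,u)]$ as $R$ runs over $\scr R$.

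For part (2) I would first note that $R+1 \in \scr R$ whenever $R = \Lambda(j,a) \in \scr R$, since $R+1 = \Lambda(j, a+\alpha_j)$, so $\scr M_{R+1}$ is meaningful. Then, because $\tau(R+1,u) = \tau(R,u) + \alpha_u$ with $\alpha_u$ a positive integer, one has $[\tau(R+1,u)] = [\tau(R,u)] + \alpha_u$; summing over $u$ and substituting into the formula gives $\dim_\cplane(\scr M_{R+1}) = \dim_\cplane(\scr M_R) + \sum_u \alpha_u = \dim_\cplane(\scr M_R) + |\fk a|$. This part is essentially a one-line computation.

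For part (1), non-strict monotonicity is immediate: each $\alpha_u \ge 1$, so each $\tau(\cdot,u)$ is (strictly) increasing in $R$, hence each floor $[\tau(\cdot,u)]$ is non-decreasing, hence $\dim_\cplane(\scr M_R)$ is non-decreasing. The only step requiring a short argument, and the main point to be careful about, is strictness. Here I would exploit that membership in $\scr R$ pins one coordinate of $\tau$ to an integer: for $R < R'$ in $\scr R$, writing $R' = \Lambda(j,a)$ gives $\tau(R',j) = -\beta_j/r + \alpha_j R' = a \in \integer$, while $\tau(R,j) < a$ because $\alpha_j(R'-R) > 0$. Since $[\tau(R,j)]$ is then an integer strictly below $a$, it is at most $a-1 < a = [\tau(R',j)]$, so the $j$-th summand strictly increases while no other summand decreases; hence $\dim_\cplane(\scr M_R) < \dim_\cplane(\scr M_{R'})$.

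I do not expect any genuine obstacle here. As a sanity check, part (1) could alternatively be obtained from the equivalent formula $\dim_\cplane(\scr M_R) = \brk(R) - 1 + D_t$ of Theorem \ref{thm 5.10}, since $\brk(R) = \sharp\{(j,a) \mid \Lambda(j,a) \le R\}$ picks up the nonempty fibre $\Lambda^{-1}(R')$ on passing from $R$ to $R'$; but because these consequences feed into the proof of Theorem \ref{thm 5.10}, I would keep the argument grounded on Proposition \ref{prop dimc} as above.
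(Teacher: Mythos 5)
Your proof is correct and follows essentially the same route as the paper: both parts are read off the dimension formula of Proposition \ref{prop dimc}, with strictness in (1) obtained exactly as in the paper by noting that for $R'=\Lambda(j,a)$ the value $\tau(R',j)=a$ is an integer while $[\tau(R,j)]<a$, and (2) from $\tau(R+1,u)=\tau(R,u)+\alpha_u$. Your added remarks (that $R+1\in\scr R$ and that one should not argue via Theorem \ref{thm 5.10} to avoid circularity) are sound but not needed beyond what the paper does.
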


\begin{proof}
(1) Suppose that $R_1<R_2$, by definition
$$
\tau(R_1,u)<\tau(R_2,u),
$$
hence, $\dim_\cplane(\scr M_{R_1})\leq\dim_\cplane(\scr M_{R_2})$. On the other hand, there exists
certain $j$ such that $\tau(R_2,j)$ is an integer, then
$[\tau(R_1,j)]<[\tau(R_2,j)]$. We conclude that $\dim_\cplane(\scr M_{R_1})<\dim_\cplane(\scr M_{R_2})$. This proves (1).

(2) follows from the simple observation that $\tau(R+1,u)=\alpha_u+\tau(R,u)$.
\end{proof}
Set
$$
\scr R_k=\{R\in \scr R| k<  R \leq k+1\},
\;\;\;\;\;\tilde{\scr R}_k=\{(R,\ell)\in \tilde{\scr R}|
R\in \scr R_k\}.$$
\begin{lemma}\label{lem 5.13}
We have following formulae.
\begin{enumerate}
\item $|\tilde{\scr R}_k|=|\fk a|$;
\item for $R\in \scr R_k$,
$$
k|\fk a|\leq \dim (\scr M_R)-D_t<(k+1)|\fk a|.
$$
\item If $\Lambda$ is injective,
$$
\dim(\scr M_R)-D_t=rk(R)-1.
$$
\end{enumerate}
\end{lemma}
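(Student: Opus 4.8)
The plan is to prove (1), (2), (3) in this order, the later parts feeding on the earlier ones. The only external inputs are the dimension formula $\dim_\cplane(\scr M_R)-D_t=\sum_{u=1}^n[\tau(R,u)]+n-1$ of Proposition \ref{prop dimc} (recall $\tau(R,u)=-\beta_u/r+\alpha_uR$ and $1\le\beta_u\le r$) together with the monotonicity and periodicity of $\dim_\cplane(\scr M_R)$ from Lemma \ref{lem 5.12}.

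For (1) I would simply unwind the definitions. Since $D_s(R)+1=\sharp\Lambda^{-1}(R)$ and the fibres $\Lambda^{-1}(R)$, $R\in\scr R$, are pairwise disjoint, $|\tilde{\scr R}_k|=\sum_{R\in\scr R_k}\sharp\Lambda^{-1}(R)=\sharp\{(j,a)\in[1,n]_{\mathbb Z}\times\mathbb Z_{\ge0}:k<\Lambda(j,a)\le k+1\}$. Clearing denominators, for a fixed $j$ the condition becomes $k\alpha_j-\beta_j/r<a\le(k+1)\alpha_j-\beta_j/r$, a half-open real interval of integer length $\alpha_j$, hence containing exactly $\alpha_j$ integers; and since $k\ge0$ and $0<\beta_j/r\le1$ its left end-point is $\ge-1$, so all these integers are $\ge0$. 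Summing over $j$ yields $|\tilde{\scr R}_k|=\sum_j\alpha_j=|\fk a|$. This step is routine.

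For (2) I would first reduce to $k=0$. If $R=\Lambda(j_0,a_0)\in\scr R_k$, then $R>k$ forces $a_0\ge k\alpha_{j_0}$, so $R-k=\Lambda(j_0,a_0-k\alpha_{j_0})\in\scr R$, in fact $R-k\in\scr R_0$, and the whole chain $R-k,R-k+1,\dots,R$ consists of values $\Lambda(j_0,\cdot)$ with non-negative argument, hence lies in $\scr R$; iterating Lemma \ref{lem 5.12}(2) along it gives $\dim_\cplane(\scr M_R)=\dim_\cplane(\scr M_{R-k})+k|\fk a|$. It thus suffices to show $0\le\dim_\cplane(\scr M_{R'})-D_t<|\fk a|$ for $R'\in\scr R_0$. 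The upper bound is immediate from Proposition \ref{prop dimc}: $R'\le1$ gives $\tau(R',u)<\alpha_u$, so $[\tau(R',u)]\le\alpha_u-1$ and $\dim_\cplane(\scr M_{R'})-D_t\le\sum_u(\alpha_u-1)+n-1=|\fk a|-1$. I expect the main obstacle to be the lower bound: the crude estimate $[\tau(R',u)]\ge-1$ for all $u$ only yields $\dim_\cplane(\scr M_{R'})-D_t\ge-1$, which is off by one. The missing unit comes from integrality at the index $j_0$ with $R'=\Lambda(j_0,a_0)$, where $\tau(R',j_0)=a_0$ is a \emph{non-negative integer} and therefore contributes $[\tau(R',j_0)]=a_0\ge0$ with no loss from taking the integral part; combined with $[\tau(R',u)]\ge-1$ for $u\ne j_0$ (valid since $R'>0$ and $\beta_u/r\le1$) this gives $\dim_\cplane(\scr M_{R'})-D_t\ge a_0\ge0$. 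I will be careful to isolate this $j_0$-term rather than bounding all terms uniformly.

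Part (3) should then be purely formal. When $\Lambda$ is injective, $\sharp\Lambda^{-1}(R)=1$ for all $R\in\scr R$, so $\brk(R)=\trk(R)=:rk(R)=\sharp\{R''\in\scr R:R''\le R\}$; by (1) each $\scr R_k$ has exactly $|\fk a|$ elements, so enumerating $\scr R$ increasingly as $R_1<R_2<\cdots$ we get $rk(R_i)=i$ and $\scr R_k=\{R_{k|\fk a|+1},\dots,R_{(k+1)|\fk a|}\}$. By part (2) with $k=0$ the $|\fk a|$ integers $\dim_\cplane(\scr M_{R_1})-D_t,\dots,\dim_\cplane(\scr M_{R_{|\fk a|}})-D_t$ lie in $\{0,1,\dots,|\fk a|-1\}$, and by Lemma \ref{lem 5.12}(1) they are strictly increasing, hence equal $0,1,\dots,|\fk a|-1$ respectively; thus $\dim_\cplane(\scr M_{R_i})-D_t=i-1$ for $1\le i\le|\fk a|$. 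For a general $R=R_i\in\scr R_k$, the order-preserving translation $R\mapsto R-k$ sends it to $R_{i-k|\fk a|}\in\scr R_0$, so the shift formula established in (2) gives $\dim_\cplane(\scr M_R)-D_t=(i-k|\fk a|-1)+k|\fk a|=i-1=rk(R)-1$, as required.
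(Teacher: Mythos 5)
Your proof is correct, and its overall architecture (the counting argument for (1), bounds on $\scr R_0$ combined with the shift $\dim_\cplane(\scr M_{R+1})=\dim_\cplane(\scr M_R)+|\fk a|$ for (2), and pigeonhole via strict monotonicity for (3)) matches the paper's. The one genuinely different step is how you get the bounds $0\le\dim_\cplane(\scr M_{R'})-D_t\le|\fk a|-1$ for $R'\in\scr R_0$: you estimate the formula of Proposition \ref{prop dimc} term by term, deriving the upper bound from $\tau(R',u)<\alpha_u$ and the lower bound from the integrality $\tau(R',j_0)=a_0\ge 0$ at the distinguished index with $R'=\Lambda(j_0,a_0)$, whereas the paper first computes $\dim_\cplane(\scr M_{R_{min}})-D_t=rk_\circ(R_{min})-1$ explicitly, obtains the lower bound from monotonicity (Lemma \ref{lem 5.12}(1)), and gets the upper bound by comparing an arbitrary $R'\in\scr R_0$ with $R_{min}+1$. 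Your route is more elementary and self-contained for this step; the paper's detour through $R_{min}$ has the side benefit of recording the identity $\dim_\cplane(\scr M_{R_{min}})-D_t=rk_\circ(R_{min})-1$, which is reused verbatim as Step (i) in the proof of Theorem \ref{thm 5.10}. You are also more explicit than the paper about why $R-k$ stays in $\scr R$ (so the shift of Lemma \ref{lem 5.12}(2) can be iterated) and about extending (3) from $\scr R_0$ to all of $\scr R$ via the order-preserving translation $\scr R_k\to\scr R_0$; the paper leaves these points implicit, so this extra care is welcome rather than redundant.
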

\begin{proof}
Note that
$$
\tilde\Lambda: \Lambda^{-1}(\scr R_k)\to \tilde{\scr R}_k
$$
is a bijection. On the other hand,
$(j,a)\in \Lambda^{-1}(\scr R_k)$ if and only if $k\alpha_j\leq a \leq (k+1)\alpha_j-1.$
Therefore, for each fixed $j$ there are $\alpha_j$ pairs, i.e.
$(j,k\alpha_j),\ldots,(j,(k+1)\alpha_j-1)\in \Lambda^{-1}(\scr R_k)$.
 This implies that the size of $\tilde{\scr R}_k$ is $|\fk a|$. We proved (1).

Now we prove (2).
We first show that
$$
0\leq \dim_\cplane(\scr M_R)-D_t< |\fk a|,\qq \forall\, R\in \scr R_0.
$$
First of all $\Lambda(j,0)=\frac{\beta_j}{\alpha_jr}\in\scr R_0$
for $1\leq j\leq n$. Suppose
$$
\frac{\beta_1}{\alpha_1r}=\ldots= \frac{\beta_m}{\alpha_mr}<
\frac{\beta_{m+1}}{\alpha_{m+1}r}\leq \ldots\leq \frac{\beta_n}{\alpha_nr}.
$$
Set $R_{min}=\frac{\beta_1}{\alpha_1r}\in\scr R_0$. Then $\trk(R_{min})=1$ and $\brk(R_{min})=m$. Hence
\begin{eqnarray*}
\tau(R_{min},u)=0, &\mbox{ when } &1\leq u\leq m;
\end{eqnarray*}
and
\begin{eqnarray*}
-1< \tau(R_{min},u)<0, &\mbox{ when } & m+1\leq u\leq n.
\end{eqnarray*}
By the formula \eqref{eq dimc}, we have
\begin{align}\label{E dim-M-R-min}
\dim_\cplane(\scr M_{R_{min}})-D_t&= m-1=\brk(R_{min})-1 \\
&\leq n-1<|\wa|\nonumber.
\end{align}
By (2) of Lemma \ref{lem 5.12},
$$
\dim_\cplane(\scr M_{R_{min}+1})-D_t=m-1+|\fk a|.
$$
Note that for any $R\in \scr R_0$ we have
\[
R<R_{min}+1.
\]
Therefore for $1\leq u\leq n$ we have
\[
\tau(R,u)<\tau(R_{min}+1,u).
\]
In particular for $1\leq u\leq m$ since $\tau(R_{min}+1,u)=\alpha_u$ we have
$$
[\tau(R,u)]\leq [\tau(R_{min}+1,u)]-1.
$$
We conclude that for any $R\in\scr R_0$
$$
\dim_\cplane(\scr M_R)-D_t\leq \dim_\cplane(\scr M_{R_{min}+1})-D_t-m
=|\wa|-1.
$$
For any $R$ in any $\scr R_k$, we may apply (2) of Lemma \ref{lem 5.12}
and induction on $[R]$ to complete the proof.

(3) is an easy consequence of (1) of Lemma \ref{lem 5.12} and (2).
In fact, $\dim_\cplane(\scr M_R)-D_t$ increases strictly in $R$ and maps $\scr R_0$ to $[0,|\fk a|-1]\cap\integer=\{0,1,2,\ldots,|\fk a|-1\}$,
but both sets have $|\fk a|$-elements. Then the map has no other choice.
Moreover, under the hypothesis that $\Lambda$ is injective
we know that $\trk=\brk=rk$ (cf. \S\ref{sec 5.1.5}) and $\dimc \scr M_{R_{min}}=0$.
\end{proof}
\begin{lemma}\label{lem 5.14}
We have $\brk(R+1)=\brk(R)+|\wa|$.
\end{lemma}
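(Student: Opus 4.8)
The statement is a purely combinatorial identity about the counting function $\brk$, and the plan is to read it off directly from the definition $\brk(R)=\sharp\{(j,a)\in[1,n]_{\mathbb Z}\times\mathbb Z_{\geq 0}\mid\Lambda(j,a)\leq R\}$. Since
$\brk(R+1)-\brk(R)=\sum_{j=1}^{n}\sharp\{a\in\mathbb Z_{\geq 0}\mid R<\Lambda(j,a)\leq R+1\}$,
it suffices to show that each summand equals $\alpha_j$; summing then produces $\sum_j\alpha_j=|\wa|$. First I would rewrite the condition $R<\Lambda(j,a)\leq R+1$, using $\Lambda(j,a)=(\beta_j+ar)/(\alpha_jr)$, as $\tau(R,j)<a\leq\tau(R,j)+\alpha_j$, where $\tau(R,j)=\alpha_jR-\beta_j/r$ is the quantity from \eqref{eq def-tau} (note $\tau(R+1,j)=\tau(R,j)+\alpha_j$, already observed in the proof of Lemma \ref{lem 5.12}). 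A half-open real interval of integer length $\alpha_j$ contains exactly $\alpha_j$ integers, so the only remaining point is to check that all of them are admissible, i.e. $\geq 0$; this follows from $\tau(R,j)>-1$, which holds because $R>0$ for $R\in\scr R$ (indeed $\Lambda>0$ since $\beta_j\geq 1$) while $0<\beta_j/r\leq 1$.

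Alternatively, and most quickly if one is willing to invoke the results just proved, the identity is immediate from Theorem \ref{thm 5.10} together with part (2) of Lemma \ref{lem 5.12}: the former gives $\dim_\cplane(\scr M_R)=\brk(R)-1+D_t$ with $D_t$ independent of $R$, the latter gives $\dim_\cplane(\scr M_{R+1})=\dim_\cplane(\scr M_R)+|\wa|$, and subtracting and cancelling $-1+D_t$ yields $\brk(R+1)=\brk(R)+|\wa|$. Here one uses that $R+1\in\scr R$ whenever $R\in\scr R$, which holds since $\Lambda(j,a+\alpha_j)=\Lambda(j,a)+1$, so both $\scr M_R$ and $\scr M_{R+1}$ belong to the family under discussion.

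There is no genuine obstacle in either route; the one point deserving a line of care is the non-negativity constraint $a\geq 0$ built into the domain of $\Lambda$ (a priori a naive interval count risks picking up negative integers), and as noted this is controlled by the elementary bound $\tau(R,j)>-1$. The bookkeeping with fractional parts is of exactly the same flavour as in the proofs of Proposition \ref{prop dimc} and Lemma \ref{lem 5.13}.
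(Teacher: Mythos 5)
Your first argument is correct and is essentially the paper's proof: both rest on the shift identity $\Lambda(j,a+\alpha_j)=\Lambda(j,a)+1$ (equivalently $\tau(R+1,j)=\tau(R,j)+\alpha_j$) and count the pairs $(j,a)$ with $\Lambda(j,a)\in(R,R+1]$ — the paper via the bijection $\scr R_k\rto\scr R_{k+1}$ together with Lemma \ref{lem 5.13}(1), you directly fiber-by-fiber in $j$, with the additional (welcome) check $\tau(R,j)>-1$ guaranteeing that no negative $a$ is picked up. One caveat: discard the ``alternative'' route, since in the paper Theorem \ref{thm 5.10} is itself proved using Lemma \ref{lem 5.14} (to pass from $\scr R_0$ to all of $\scr R$), so deducing the lemma from Theorem \ref{thm 5.10} and Lemma \ref{lem 5.12}(2) would be circular in this logical order.
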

\begin{proof}
In fact $\Lambda(j,a+\alpha_j)=\Lambda(j,a)+1$ implies that the map $\scr R_k\rto\scr R_{k+1},\, R\mapsto R+1$ is bijective.
Then for $R\in\scr R_k$ we have
\[\begin{split}
\brk(R+1)
=&\brk(R)+\sharp\{R'+1 |R'<R, R'\in \scr R_k\}+\sharp\{R'\in\scr R_k|R'>R\}\\
=&\brk(R)+|\scr R_k|\\
=&\brk(R)+|\wa|.
\end{split}\]
\end{proof}
We next come to the proof of Theorem \ref{thm 5.10}.
\begin{proof}[Proof of Theorem \ref{thm 5.10}]
Suppose that $R_1<R_2$ are two elements in $\scr R$. We claim that
\begin{align}\label{eq dim<dim-|lambda|}
\dim_\cplane(\scr M_{R_1})\leq \dim_\cplane(\scr M_{R_2})-\sharp\Lambda^{-1}(R_2).
\end{align}
In fact, suppose that
$$
\Lambda^{-1}(R_2)=\{(j_1,a_1),\ldots, (j_m,a_m)\}.
$$
Then
$[\tau(R_1,u)]\leq [\tau(R_2,u)]$ and for $u=j_1,\ldots,j_m$
$$
[\tau(R_1,u)]\leq [\tau(R_2,u)]-1.
$$
Then the claim \eqref{eq dim<dim-|lambda|} follows from the definition of $\tau(R,u)$ in the formula \eqref{eq degree-shift-2} (see also \eqref{eq def-tau}).

Now suppose that
$$
\scr R_0=\{R_{min}=R_1<R_2<\ldots<R_{|\wa|}\}.
$$
We claim that formula \eqref{eq dimc-simplified} is true for $R\in \scr R_0$, i.e,
$$
\dim_\cplane(\scr M_{R_k})-D_t=\brk(R_k)-1,\,\,\,\, \forall\, R_k\in\scr R_0.
$$
This consists of several steps.

\v\n
{\em Step (i)} . Formula \eqref{eq dimc-simplified} is true for $R_{min}=R_1$. This is already proved in the proof of Lemma \ref{lem 5.13}, see \eqref{E dim-M-R-min}. 
\v\n
{\em Step(ii)}. For any $R_k\in \scr R_0, 2\leq k\leq |\wa|$ we have
$$
\dim_\cplane(\scr M_{R_k})-D_t\geq \brk(R_k)-1.
$$
\n
{\em Proof. }We show for $k=2$.   By \eqref{eq dim<dim-|lambda|} and {\em Step (i)},
\begin{eqnarray*}
\dim_\cplane(\scr M_{R_2})-D_t &\geq& \dim_\cplane
(\scr M_{R_{min}})-D_t+|\Lambda^{-1}(R_2)|\\
&=& \brk(R_{min})-1+|\Lambda^{-1}(R_2)|\\
&=&|\Lambda^{-1}\{R_{min},R_2\}|-1\\
&=&\brk(R_2)-1.
\end{eqnarray*}
The argument can be used inductively to derive the claim for any $3\leq k\leq |\wa|$. \qed 

\v\n
{\em Step (iii)}. For any $R_k\in \scr R_0, 2\leq k\leq |\wa|$ we have
$$
\dim_\cplane(\scr M_{R_k})-D_t\leq \brk(R_k)-1.
$$
\n
{\em Proof. }By (2) of Lemma \ref{lem 5.13}, we have already proved that
$$
\dim_\cplane (\scr M_{R_{|\wa|}})-D_t\leq |\wa|-1=\brk(R_{|\wa|})-1.
$$
This verifies the claim for $k=|\wa|$. By \eqref{eq dim<dim-|lambda|}, we show the claim is true for $k=|\wa|-1$. In fact
\begin{eqnarray*}
\dim_\cplane(\scr M_{R_{|\wa|-1}})-D_t
&\leq& \dim_\cplane(\scr M_{R_{|\wa|}})-D_t-|\Lambda^{-1}(R_{|\wa|})|\\
&\leq& \brk(R_{|\wa|})-1-|\Lambda^{-1}(R_{|\wa|})|\\
&=&|\Lambda^{-1}(\scr R_0\setminus\{R_{|\wa|}\})|-1\\
&=&\brk(R_{|\wa|-1})-1.
\end{eqnarray*}
The argument can be used inductively to derive the claim for all $2\leq k\leq |\wa|$. \qed 
\v
Combining these three steps, we show that formula \eqref{eq dimc-simplified} is true for $R\in \scr R_0$. Then by (2) of Lemma \ref{lem 5.12} and Lemma \ref{lem 5.14} the formula \eqref{eq dimc-simplified} is easy to derive for all $R\in \scr R$.
\end{proof}

\subsection{Computation of invariants}\label{sec 5.2}
In this subsection, we compute the relative invariant
$$
\langle{\bf R}(\tau_c(\Theta_{(t)})|H_{(s)}^d)\rangle
=\int_{\scr M_R}\msf{ev}_0^\ast\Theta_{(t)}\cup \psi^c\cup
\msf{rev}_\infty^\ast H_{(s)}^d
$$
and prove (3) in Theorem \ref{thm c-dertmin-all}, i.e.
\begin{theorem}
$\langle{\bf R}(\tau_c(\Theta_{(t)})|H_{(s)}^d)\rangle$ is non-zero.
\end{theorem}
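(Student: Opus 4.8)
The plan is to evaluate the integral $\int_{\scr M_R}\msf{ev}_0^\ast\Theta_{(t)}\cup\psi^c\cup\msf{rev}_\infty^\ast H_{(s)}^d$ directly, using the explicit description of $\scr M_R$ built in \S\ref{sec MR}, and to recognise the answer as a product of reciprocals of covering multiplicities $m=\beta_j+ar$ — in particular a nonzero rational number, leaving no room for cancellation.

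First I would make Lemma~\ref{lem 5.5} fully concrete. Write $\Lambda^{-1}(R)=\{(j_1,a_1),\ldots,(j_k,a_k)\}$ and $m_i=\beta_{j_i}+a_ir$, so $k-1=D_s$; a morphism in $\Hol^0_{\bf R}(\msf C_0,\mathbb Z_r^\fk b\ltimes\cplane^n)$ is a power-series map whose leading part is $(c_{j_i}x^{m_i})_i$ with $(c_{j_1},\ldots,c_{j_k})\neq 0$, together with the extra deformation directions coming from $H^0$ of the normal orbi-line-bundles of nonnegative orbifold degree, and the reparametrisation $\cplane^\ast_o$ acts with weight $m_i$ on $c_{j_i}$ and with nonnegative weights on the deformation parameters, the weight-zero ones being exactly the constant directions $\beta_u=r$. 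I would then pin down, in terms of the pulled-back hyperplane class $\xi$: the descendant class $\psi=c_1(\scr L)=r\,c_1(\tilde{\scr L})$, which the explicit $\tilde{\scr L}$ of \S\ref{sec MR} identifies with a nonzero integral multiple of $\xi$; the relative insertion $\msf{rev}_\infty^\ast H_{(s)}$, pulled back along the ``leading coefficient'' evaluation $\scr M_R\to\msf{PE}_\fk a(s)=\msf P^\fk b_\fk a(s)$, which by Lemma~\ref{lem 5.8} and Remark~\ref{rmk 5.7} is a morphism of weighted projective spaces and so gives a nonzero rational multiple of $\xi$; and the Thom insertion $\msf{ev}_0^\ast\Theta_{(t)}$, which, since $\msf{ev}_0$ records the value of the map at $x=0$ and only the $\beta_u=r$ directions move it, is the Thom class of that rank-$D_t$ sub-bundle, so that integrating against it restricts everything to the locus where those $D_t$ coordinates vanish — a genuine weighted projective space $\msf P$, on which the $\cplane^\ast_o$-action has only positive weights.

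Next I would compute the obstruction bundle. Pulling the relative tangent sheaf of $(\overline{\msf E}_\fk a|\msf{PE}_\fk a)$ back along the universal map and splitting it over the universal orbifold sphere into orbi-line-bundles — whose orbifold degrees, read off at $\msf q_0$ and $\msf q_\infty$, are the quantities $\tau(R,u)$ of \eqref{eq def-tau} up to integer shifts — presents $R^1\pi_\ast$ as a direct sum of line bundles over $\scr M_R$, one for each normal direction $u\notin\{j_1,\ldots,j_k\}$ of negative degree. The decisive point is that each such summand has \emph{strictly} negative orbifold degree, because $\tau(R,j_i)\in\integer$ whereas $\{\tau(R,u)\}\neq0$ for $u\notin\{j_1,\ldots,j_k\}$; hence its first Chern class is a nonzero rational multiple of $\xi$. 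With $[\scr M_R]^{\mathrm{vir}}=[\scr M_R]\cap e(\mathrm{Ob})$, and the Thom insertion absorbing the $D_t$ constant directions, the integral collapses to $q$ times the top self-intersection of $\xi$ on $\msf P$, where $q$ is the product of the $\xi$-coefficients of $e(\mathrm{Ob})$, of $\psi^c$ and of $(\msf{rev}_\infty^\ast H_{(s)})^d$, divided by the residual isotropy order; admissibility of the data makes the bidegrees match. Since every one of these coefficients is nonzero and the top self-intersection of the hyperplane class on a weighted projective space is a nonzero rational, $\langle{\bf R}(\tau_c(\Theta_{(t)})|H_{(s)}^d)\rangle\neq0$.

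The main obstacle is the obstruction-bundle computation: splitting $\msf u^\ast T\overline{\msf E}_\fk a$ over the orbi-sphere with the correct twist at both orbifold points, and — this is precisely what forces the invariant to be nonzero — checking that no line bundle entering the integrand ($e(\mathrm{Ob})$, the $\psi$-line, $\msf{rev}_\infty^\ast H_{(s)}$) is trivial, so that $q$ cannot vanish; this is controlled entirely by the arithmetic of $\tau(R,\cdot)$ and $\Lambda$ recorded in \S\ref{sec 5.1}. An alternative would be an induction on $c$ via the trivial-weight degeneration formula~\eqref{eq dege-formula-rel}, peeling off a manifestly nonzero leading term, but the explicit model for $\scr M_R$ makes the direct computation the natural route.
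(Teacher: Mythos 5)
Your outline contains one step that is simply false and one that hides the real difficulty. The false step is the obstruction-bundle analysis. Since $1\le\beta_u\le r$ and $R>0$, every exponent $\tau(R,u)=-\tfrac{\beta_u}{r}+\alpha_u R$ of \eqref{eq def-tau} satisfies $\tau(R,u)>-1$, and in each normal direction the space of admissible coefficients has dimension $[\tau(R,u)]+1\ge 0$ with no higher cohomology: the total parameter count already reproduces the virtual dimension of Proposition \ref{prop dimc} and Theorem \ref{thm 5.10}, which is exactly why the paper may treat $(\Hol^\circ)^0_R$ as regular. So there is no summand of $R^1\pi_\ast$ ``for each normal direction of negative degree'' — negative orbifold degree in the range $(-1,0)$ kills $H^0$ without creating $H^1$ — and your ``decisive point'' evaporates; worse, if a positive-rank Euler class were really present, your own bookkeeping (``admissibility makes the bidegrees match'') would fail, because admissibility matches the virtual, i.e.\ the actual, dimension. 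The second gap is the collapse of the invariant to ``$q$ times the top self-intersection of $\xi$ on $\msf P$''. The locus of maps with the prescribed class and contact order is only the complement, inside a weighted projective space of coefficient vectors, of the stratum where the leading coefficients vanish; the integral is taken over the compactified relative moduli space, whose boundary consists of relative stable maps (rubber, broken configurations), not of that naive projective closure, and neither $\psi$ nor $\msf{rev}_\infty^\ast H_{(s)}$ is globally a multiple of one hyperplane class there. A positivity argument on the open stratum therefore does not yet produce a well-defined nonzero number; controlling exactly these boundary contributions is the substance of the proof.

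The paper's own argument is built to avoid both issues: after using $\Theta_{(t)}$ to cut down to $\scr M_R^\circ$, each $\psi$-insertion is traded for a contact-order condition at $\msf p_0$ through the identification $\scr L_{\fk c_j}\cong\scr L^{\otimes c_j}$ and a transversal section whose zero locus is the deeper stratum $\scr M_R(\fk c_j)$ (the device of \cite{HLR08}); this reduces the integral to the compact space $\scr M_R(\fk c_{\max})$, where the residual integral is evaluated by a direct count when $d=0$ and by localization together with a Vandermonde identity when $d\ge 1$, giving the explicit nonzero value $\scr H=\tfrac1r\,R^d\prod_\ell\frac{1}{c_{\max,\ell}!}$ of Theorem \ref{thm H}. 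To make your direct route work you would need a substitute for that reduction step, i.e.\ a justification that the classes you integrate extend over the actual compactification with no boundary corrections.
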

Let $\msf p_o$ be the origin of $\mathbb Z_r^\fk b\ltimes \cplane^n$.
Let ${\scr M}_R^\circ$ consist of morphisms that maps
$\msf q_0$ to $\msf p_o$. Recall that
$$
\scr M_R=\frac{\Hol^0_R(\msf C_o, \mathbb Z_r^\fk b\ltimes\cplane^n)}{\cplane_o^\ast}.
$$
Similarly, set $(\Hol^\circ)^0_R(\msf C_o,\mathbb Z_r^\fk b\ltimes\cplane^n)$ to consists of
morphisms that maps $\msf q_0$ to $\msf p_o$. Then
$$
\scr M_R^\circ=\frac{(\Hol^\circ)^0_R(\msf C_o,\mathbb Z_r^\fk b\ltimes\cplane^n)}{\cplane_o^\ast}.
$$
\begin{lemma}
$\langle{\bf R}(\tau_c(\Theta_{(t)})|H_{(s)}^d)\rangle$ can be reduced to be
\begin{equation}\label{eq H}
\scr H:=\int_{\scr M^\circ_R}\psi^c\cup\msf{rev}_\infty^\ast H_{(s)}^d.
\end{equation}
\end{lemma}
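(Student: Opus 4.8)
The plan is to identify the insertion at $\msf q_0$ with the Poincar\'e dual of the sub-orbifold $\scr M_R^\circ\subseteq\scr M_R$ and then apply the projection formula. First I would unwind the geometry of $\Theta_{(t)}$. Since $\msf E=\integer_r^\wb\ltimes\cplane^n$ is a vector bundle over $\msf B_r=B\integer_r$, its twisted sector at $t=\zeta_r$ is the orbifold vector bundle $\msf E(t)=\integer_r\ltimes(\cplane^n)^{\zeta_r}\rto\msf B(t)$ of complex rank $D_t=\sharp\{u\mid\beta_u=r\}$, whose base $\msf B(t)$ sits inside $\msf E(t)$ as the zero section, namely the origin $\msf p_o$. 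By definition $\Theta_{(t)}$ is the Thom class of this bundle, equivalently the orbifold Poincar\'e dual of $\{\msf p_o\}$ in $\msf E(t)$. Every morphism in the moduli space maps $\msf C_0$ into $\msf E$ (cf.\ the Remark in \S\ref{sec 5}), so $\msf{ev}_0$ factors through $\msf E(t)\subseteq\oC(t)$, and by the very definition of $\scr M_R^\circ$ one has $\msf{ev}_0^{-1}(\msf p_o)=\scr M_R^\circ$.

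Next I would verify that $\msf{ev}_0:\scr M_R\rto\msf E(t)$ is a submersion, so that $\scr M_R^\circ$ is a sub-orbifold of complex codimension $D_t$ and $\msf{ev}_0^\ast\Theta_{(t)}$ represents its Poincar\'e dual in $\scr M_R$. Using the identification $\scr M_R\cong\cplane_o^\ast\ltimes\Hol^0_{\bf R}(\msf C_0,\integer_r^\wb\ltimes\cplane^n)$ of Lemma \ref{lem 5.5}, a point of $\Hol^0_{\bf R}$ is a holomorphic map $\cplane_o\rto\cplane^n$ whose component in each of the $D_t$ directions with $\beta_u=r$ is an ordinary holomorphic function on $\cplane$ (these are exactly the coordinates on which the domain monodromy acts trivially), whose value at $x=0$ is an unconstrained parameter; adding arbitrary constants in these directions produces deformations realizing every tangent vector of $\msf E(t)$ at the image point while preserving the homology class $R[F]$. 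Hence $\msf{ev}_0$ is a submersion, the moduli problem is unobstructed transverse to $\scr M_R^\circ$, and no modification of the virtual class is needed; moreover the $\cplane_o^\ast$-action fixes this constant term, so $\scr M_R^\circ=(\Hol^\circ)^0_R(\msf C_0,\integer_r^\wb\ltimes\cplane^n)/\cplane_o^\ast$ carries the same orbifold normalization as $\scr M_R$.

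Then the projection formula --- equivalently, the defining property of integration against a Thom class --- yields
\[
\int_{\scr M_R}\msf{ev}_0^\ast\Theta_{(t)}\cup\psi^c\cup\msf{rev}_\infty^\ast H_{(s)}^d=\int_{\scr M_R^\circ}\psi^c\cup\msf{rev}_\infty^\ast H_{(s)}^d=\scr H,
\]
where on the right $\psi$ and $\msf{rev}_\infty^\ast H_{(s)}$ denote the restrictions to $\scr M_R^\circ$ of the corresponding classes pulled back from $\scr M_R$. Dimension bookkeeping is consistent: $\dimc\scr M_R^\circ=\dimc\scr M_R-D_t=\brk(R)-1$ by Theorem \ref{thm 5.10}, which matches the total complex degree $c+d$ of the integrand exactly when ${\bf R}(\tau_c(\Theta_{(t)})|H_{(s)}^d)$ is admissible.

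The main obstacle is the submersivity/transversality statement in the second step: one must show that imposing the point condition at $\msf q_0$ is unobstructed and cuts $\scr M_R$ down to a clean sub-orbifold of the expected codimension. This cannot be handled by abstract genericity, since the moduli spaces here are fixed; instead one invokes the explicit, essentially linear, structure of $\Hol^0_{\bf R}$, namely that the $D_t$ components indexed by $\beta_u=r$ are free holomorphic functions on $\cplane_o$. Once this is in place, the remaining steps are the standard Thom-class manipulation.
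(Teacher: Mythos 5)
Your proof is correct and is essentially the paper's argument: the paper likewise reduces to $\scr M_R^\circ$ by noting that regularity makes $\msf{ev}_0$ a (locally trivial) fibration near $\msf p_o$, so that $\scr M_R|_U\cong U\times\scr M_R^\circ$ and $\int_U\Theta_{(t)}=1$ integrates out the fiber, which is the same Thom-class/Poincar\'e-duality mechanism you use. Your explicit justification of submersivity via the unconstrained constant terms in the $\beta_u=r$ directions simply spells out the regularity statement the paper asserts.
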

\begin{proof} The twisted sector of $t$ is
$$
\mathbb Z_r\ltimes(\cplane^n)^{\zeta_r},
$$
where the action is trivial.
 The evaluation map is
$$
ev_0: \Hol^0_R(\msf C_0,\mathbb Z_r^\fk b\ltimes\cplane^n)\rto (\cplane^n)^{\zeta_r},\;\;\;
\msf u\mapsto u^0(\msf q_0).
$$
Since $(\Hol^\circ)^0_R$ is regular, this is a trivial fibration over the neighborhood of $\msf p_o$. Namely,
$$
\Hol^0_R(\msf C_0,\mathbb Z_r^\fk b\ltimes\cplane^n)|_{U}
\cong U\times (\Hol^\circ)^0_R(\msf C_0,\mathbb Z_r^\fk b\ltimes\cplane^n),
$$
where $U$ is a small neighborhood of $\msf p_o$ in $(\cplane^n)^{\zeta_r}$. Therefore
$$
\scr M_R|_U\cong U\times \scr M^\circ_R.
$$
Since
$
\int_U\Theta_{(t)}=1,
$
we have
$$
\int_{\scr M_R}\msf{ev}_0^\ast\Theta_{(t)}\cup \psi^c\cup
\msf{rev}_\infty^\ast H_{(s)}^d=\int_{\scr M^\circ_R}\psi^c\cup
\msf{rev}_\infty^\ast H_{(s)}^d.
$$
This proves \eqref{eq H}.
\end{proof}

In this rest of subsection, we compute the integration
 \eqref{eq H}.

\subsubsection{Contact orders at $\msf p_0\in\msf P^\wb_\wa$ of stable maps}

Suppose
$
rR=\frac{q}{p}$, where  $(p,q)=1$ (cf. Lemma \ref{lem 5.6}). We introduce
{\bf contact orders} at $\msf p_0\in\msf P^\wb_\wa$ (relative to $\msf q_0\in \msf S^2$) of morphisms in $\scr M^\circ_R$.

Let
$$
\msf u\in (\Hol^\circ)^0_R(\msf C_o,\mathbb Z_r^\fk b\ltimes\cplane^n).
$$
Suppose that $u^0: \cplane_0\rto \cplane^n$ is
$$
u^0(x)= (u^0_1(x),\ldots, u^0_n(x)).
$$
Roughly speaking, we define the $j$-th contact order of $\msf u$ at $\msf p_0$
to be the vanishing order of $u^0_j(x)$ at $x=0$.

Let us first consider the case that $u^0_j(x)\not\equiv 0$. $u^0_j$ defines a morphism
$$
\msf u_j:[\overline{\cplane}]^1_p\rto[\overline{\cplane_j}]
^{\beta_j}
_{\alpha_j}.
$$
The $j$-th contact order of $\msf u$ at $\msf p_o$ is defined to be the contact order of $\msf u_j$ at $\msf p_0$. We denote it by $ord_j(\msf u,\msf p_0)$.
To be explicit,
$$
\msf u_j[x,y]=
\left[
\sum_{\ell=0}^{[\tau(R,j)]} c_\ell y^{q\alpha_j-p(\beta_j+\ell r)}
x^{\beta_j+\ell r},
y^q\right],
$$
or
$$
u^0_j(x)=\sum_{\ell=0}^{[\tau(R,j)]} c_\ell x^{\beta_j+\ell r}.
$$
\begin{defn}
For $\msf u$ given as above, we define the
$j$-th contact order of $\msf u$ at $\msf p_0$, denoted by
$ord_{j}(\msf u,\msf p_0)$, to be:
 \begin{itemize}
 \item if $\msf u_j(x)\equiv0$
$$
ord_j(\msf u,\msf p_0):=\frac{\beta_j}{r}+[\tau(R,j)]+1;
$$
\item if $\msf u_j(x)\not\equiv0$
$$
ord_j(\msf u,\msf p_0):=\frac{\beta_j}{r}+k,
$$
when $c_k\not=0$ and $c_i=0$ for $i<k$.
\end{itemize}
\end{defn}
\begin{remark}
For each $j$, the $j$-th contact order $c_j$ satisfies
$$
c_{\min,j}:=\frac{\beta_j}{r}\leq c_j
\leq \frac{\beta_j}{r}+[\tau(R,j)]+1=:c_{\max,j}+1
$$
In particular, when $ord_j(\msf u)=c_{\max,j}+1$, $\msf u_j\equiv0$.
\end{remark}
Since we assume that $\msf u$ maps $\msf q_0$ to $\msf p_0$, we find that
$$
ord_j(\msf u,\msf p_0)\geq \frac{\beta_{j}}{r}>0.
$$
Given $\fk c=(c_1,\ldots,c_n)$ we define
$$
\scr M_{R}(\fk c)=\{
[\msf u]\in\scr M_R^\circ\,|\,
ord_j(\msf u,\msf p_0)\geq c_j, 1\leq j\leq n\}.
$$
(Here, we require that $c_u-\frac{\beta_u}{r}$ is a nonnegative integer).
Let
$$
\fk c_{\min}=(c_{\min,1},\ldots,c_{\min,n}).
$$
By the definitions, the following lemma is obvious.
\begin{lemma}
$
\scr M_R(c_{\min})=\scr M_R^\circ.$
\end{lemma}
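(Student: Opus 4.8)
The plan is to establish the two set-theoretic inclusions separately. The inclusion $\scr M_R(\fk c_{\min})\subseteq \scr M_R^\circ$ requires nothing to check: for every tuple $\fk c$ the space $\scr M_R(\fk c)$ is \emph{defined} as a subset of $\scr M_R^\circ$, so this direction is already built into the notation.

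For the reverse inclusion $\scr M_R^\circ\subseteq \scr M_R(\fk c_{\min})$ I would take an arbitrary class $[\msf u]\in\scr M_R^\circ$ and verify that $ord_j(\msf u,\msf p_0)\geq c_{\min,j}=\tfrac{\beta_j}{r}$ for every $1\leq j\leq n$; this is exactly the inequality recorded in the remark preceding the lemma, so the task reduces to spelling that remark out. I would argue case by case, following the definition of $ord_j(\msf u,\msf p_0)$. If $\msf u_j\equiv 0$, then $ord_j(\msf u,\msf p_0)=\tfrac{\beta_j}{r}+[\tau(R,j)]+1$, and the desired bound amounts to $[\tau(R,j)]\geq -1$; this holds because $\tau(R,j)=\alpha_j R-\tfrac{\beta_j}{r}>-1$, using $R>0$ (every element of $\scr R$ is positive, being a value of $\Lambda$), $\alpha_j\geq 1$, and $\beta_j\leq r$. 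If $\msf u_j\not\equiv 0$, then, since $\msf u$ sends $\msf q_0$ to $\msf p_0$, the component $u^0_j$ has the expansion $u^0_j(x)=\sum_{\ell=0}^{[\tau(R,j)]}c_\ell x^{\beta_j+\ell r}$ whose exponents are all $\geq \beta_j\geq 1$; hence $ord_j(\msf u,\msf p_0)=\tfrac{\beta_j}{r}+k$ with $k\geq 0$ the least index of a non-vanishing $c_\ell$, and again the bound follows. Combining the two cases shows $[\msf u]\in\scr M_R(\fk c_{\min})$, completing the reverse inclusion.

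I do not expect a genuine obstacle here: the statement is a direct unwinding of the definitions of $\scr M_R^\circ$, $\scr M_R(\fk c)$ and $ord_j$, together with the elementary estimate $\tau(R,j)>-1$. The only point one should be mildly careful about is that the conditions $ord_j(\msf u,\msf p_0)\geq c_j$ descend to the quotient defining $\scr M_R^\circ$ (independence of the chosen representative of $[\msf u]$ under the $\cplane^\ast_o$- and $\mathbb Z_r$-actions), but this is already implicit in the way $\scr M_R(\fk c)$ was introduced and needs no separate verification.
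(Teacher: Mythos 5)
Your proposal is correct and matches the paper, which simply states that the lemma is obvious from the definitions: the inclusion $\scr M_R(\fk c)\subseteq\scr M_R^\circ$ is built into the definition, and the reverse inclusion for $\fk c_{\min}$ is exactly the inequality $ord_j(\msf u,\msf p_0)\geq \frac{\beta_j}{r}$ recorded in the text immediately before the lemma. Your case analysis (including the check $\tau(R,j)>-1$ when $\msf u_j\equiv 0$) is just a careful unwinding of the same definitional facts.
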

On the other hand, suppose that
$$
\Lambda^{-1}(R)=\{(j_1,a_1),\ldots, (j_m,a_m)\}.
$$
Let $J=\{j_1,\ldots,j_m\}$.
Define $\fk c_{\max}=(c_1,\ldots, c_n)$ to be
\begin{equation}
c_u=\left\{
\begin{array}{lll}
c_{\max,u}+1, & \mbox{when} & u\not\in J,\\
c_{\max,u}, &\mbox{when} & u\in J.
\end{array}
\right.
\end{equation}
Then
$
\scr M_R(\fk c_{\max})$ consists of morphisms similar to
$\msf u[j_k,\beta_{j_k}+a_kr]$. To be precise, the projection of $\msf u$
 to $[\overline{\cplane_{j_k}}]^ {\beta_{j_k}}_{\alpha_{j_k}}$ is either trivial or
$\msf u[j_k,\beta_{j_k}+a_kr]$.

\subsubsection{Reduce the integration on $\scr M_{R}(\fk c_{\min})$ to $\scr M_R(\fk c_{\max})$}

For $\fk c=(c_1,\ldots,c_n)$, define
$$
\fk c_j=(c_1,\ldots,c_{j-1},c_j+1,c_{j+1}\ldots, c_n).
$$
When $c_j\leq c_{\max,j},$
$
\scr M_{R}(\fk c_j)
\subseteq \scr M_{R}(\fk c)
$
is of codimension 1. As in \cite{HLR08}, we interpret
$\scr M_{R}(\fk c_j)$ as a zero locus of a transversal
section $\sigma$ of a line bundle, denoted by $\scr L_{\fk c_j}$, over $\scr M_R(\fk c)$. We separate the construction into several steps.

\vskip 0.1in
\noindent
{\em Step 1. line bundle $\tilde{\scr L}^0_{\fk c_j}$ over $\tilde{\scr M}^0_R(\fk c):=\Hol^0_R(\fk c)$.}
\vskip 0.1in
Let $\msf u\in \tilde{\scr M}^0_R(\fk c)$. Suppose
its restriction on $\msf C_0$ is
$$
\msf u=(u^0,u^1):\mathbb Z_r\ltimes \cplane_o
\rto\mathbb Z_r\ltimes\cplane^n,
$$
where $u^0(0)=0$.
Let us focus on
$
u_j^0: \cplane_o\to \cplane_j.$ Suppose
$$
c_j=\frac{\beta_j}{r}+k.
$$
Then
$$
u_j^0(s)=a_{j,k}x^{\beta_j+kr}+a_{j,k+1}x^{\beta_j+(k+1)r}+\ldots
$$
and $a_{j,k}\in (\check\cplane_o)^{\otimes (\beta_j+kr)}$.
Define $\tilde{\scr L}^0_{\fk c_j}$ such that
$$
\tilde{\scr L}^0_{\fk c_j}|_{\msf u}=(\check\cplane_o)^{\otimes (\beta_j+kr)}.
$$
Mean while,
$\tilde\sigma^0(\msf u)=a_{j,k}$ defines a section of this line bundle such that
$$
(\tilde\sigma^0)^{-1}(0)=\tilde{\scr M}^0_R(\fk c_j).
$$
Obviously,  $\tilde\sigma^0$ is transversal on $\tilde{\scr M}^0_{R}(\fk c_j)$ since $a_{j,k}$ can be arbitrary.

\vskip 0.1in
\noindent
{\em Step 2. Line bundle ${\scr L}_{\fk c_j}$ over ${\scr M}_R(\fk c)=\cplane^\ast_o\ltimes\tilde{\scr M}_R(\fk c)$.}
\vskip 0.1in
$\cplane_o^\ast$-action on $\tilde{\scr M}_R^0(\fk c)$
naturally acts on $\tilde{\scr L}_{\fk c_j}^0$ and
$\tilde\sigma^0$ is a $\cplane^\ast_o$-equivariant section. This leads
to a line bundle
$$
 {\scr L}_{\fk c_j}=\cplane^\ast_o\ltimes\tilde{\scr L}^0_{\fk c_j}\rto \cplane^\ast_o\ltimes\tilde{\scr M}_R(\fk c)=\scr M_R(\fk c)
$$
and a transversal section $\sigma$ such that
$$
\sigma^{-1}(0)=\scr M_R(\fk c_j).
$$
We conclude that
\def \om{\scr M}
\begin{lemma}
$\om_R(\fk c_j)\subseteq \om_R(\fk c)$ is
the Euler class of orbifold line bundle ${\scr L}_{\fk c_j}$. Hence
$$
\int_{\om_R(\fk c_j)}=\int_{\om_R(\fk c)}c_1({\scr L}_{\fk c_j}).
$$
\end{lemma}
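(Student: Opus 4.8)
The plan is to assemble the line bundle and section built in Step~1 and Step~2 into a single transversal section over $\scr M_R(\fk c)$, and then to quote the standard principle that the zero locus of a transversal section of a complex (orbifold) line bundle represents its Euler class. I would carry this out in three steps, all under the standing assumption $c_j\le c_{\max,j}$ (so that $\scr M_R(\fk c_j)$ has codimension $1$ in $\scr M_R(\fk c)$).

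\textbf{Step A: transversality upstairs.} First I would record why $\tilde\sigma^0$ is transversal to the zero section of $\tilde{\scr L}^0_{\fk c_j}$ along $\tilde{\scr M}^0_R(\fk c_j)$. For $\msf u\in\tilde{\scr M}^0_R(\fk c)$ with $c_j=\tfrac{\beta_j}{r}+k$, the quantity $\tilde\sigma^0(\msf u)=a_{j,k}\in(\check\cplane_o)^{\otimes(\beta_j+kr)}$ is the coefficient of $x^{\beta_j+kr}$ in $u^0_j$, and this coefficient is an unconstrained parameter of $\Hol^0_R(\fk c)$: adding $\epsilon\,x^{\beta_j+kr}$ to $u^0_j$ yields another morphism of the prescribed type, so the derivative of $\tilde\sigma^0$ at a zero surjects onto the one-dimensional fibre. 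Hence $\tilde\sigma^0$ is transversal and $(\tilde\sigma^0)^{-1}(0)=\tilde{\scr M}^0_R(\fk c_j)$. Since the integrals in \eqref{eq H} are taken against the virtual cycle, this has to be phrased so as to involve the obstruction data as well; following \cite{HLR08}, the pair $(\tilde{\scr L}^0_{\fk c_j},\tilde\sigma^0)$ is compatible with the Kuranishi model of $\scr M_R$, so the virtual cycle of $\scr M_R(\fk c_j)$ is the zero locus of $\tilde\sigma^0$ inside that of $\scr M_R(\fk c)$.

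\textbf{Step B: descent.} Next I would descend along the $\cplane_o^\ast$-action exactly as in Step~2: this action on $\tilde{\scr M}^0_R(\fk c)$ lifts to $\tilde{\scr L}^0_{\fk c_j}$ (it rescales $x$, hence acts on $(\check\cplane_o)^{\otimes(\beta_j+kr)}$ through a character), $\tilde\sigma^0$ is equivariant, and therefore $(\tilde{\scr L}^0_{\fk c_j},\tilde\sigma^0)$ descends to a transversal section $\sigma$ of the orbifold line bundle $\scr L_{\fk c_j}=\cplane_o^\ast\ltimes\tilde{\scr L}^0_{\fk c_j}$ over $\scr M_R(\fk c)=\cplane_o^\ast\ltimes\tilde{\scr M}^0_R(\fk c)$, with $\sigma^{-1}(0)=\scr M_R(\fk c_j)$.

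\textbf{Step C: Euler class.} Finally I would apply the Euler/Chern class principle for orbifold line bundles: if $\scr L\rto\scr M$ is a complex orbifold line bundle carrying a section transversal to the zero section, then the zero locus, with its natural orbifold structure and co-orientation, is Poincar\'e dual to $e(\scr L)=c_1(\scr L)$, with no extra multiplicity because the fibre is one-dimensional and $\sigma$ is transversal. Applying this to $\scr L_{\fk c_j}$ over $\scr M_R(\fk c)$ and capping (equivalently wedging and integrating) with the closed form $\psi^c\cup\msf{rev}_\infty^\ast H_{(s)}^d$ restricted to $\scr M_R(\fk c)$ gives
\[
\int_{\scr M_R(\fk c_j)}\psi^c\cup\msf{rev}_\infty^\ast H_{(s)}^d=\int_{\scr M_R(\fk c)}\psi^c\cup\msf{rev}_\infty^\ast H_{(s)}^d\cup c_1(\scr L_{\fk c_j}),
\]
which is the asserted identity.

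The main obstacle is Step~A at the virtual level: one has to know that the contact-order stratification of the moduli space is cut out by the sections $\tilde\sigma^0$ in a manner compatible with the virtual fundamental cycle, not merely set-theoretically on the top stratum. This is exactly the technical content imported from \cite{HLR08}; granting it, Steps~B and~C are purely formal.
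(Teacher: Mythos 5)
Your proof is correct and takes essentially the same approach as the paper: the paper's argument is precisely the construction preceding the lemma, namely the line bundle $\tilde{\scr L}^0_{\fk c_j}$ with the section $\tilde\sigma^0(\msf u)=a_{j,k}$, transversality because $a_{j,k}$ is an unconstrained coefficient, $\cplane_o^\ast$-equivariant descent to $({\scr L}_{\fk c_j},\sigma)$, and the standard identification of the zero locus of a transversal section with the Euler class, citing \cite{HLR08}. The virtual-cycle compatibility you flag in Step A is not a genuine extra obstacle here, since in this special case the spaces $\Hol^0_R$ are regular, so the moduli spaces carry honest fundamental classes.
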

\n
{\em Step 3. ${\scr L}_{\fk c_j}$ vs $\scr L$.}
\vskip 0.1in
By definitions, we have
$$
{\scr L}_{\fk c_j}=\tilde{\scr L}^{\otimes rc_j}|_{\scr M_R(\fk c)}={\scr L}^{\otimes c_j}|_{\scr M_R(\fk c)}$$
Therefore,
$$
\int_{\om_R(\fk c)}\psi
=\frac{1}{c_j}\int_{\om_R(\fk c)}c_1({\scr L}_{\fk c_j})
=\frac{1}{c_j}\int_{\om_R(\fk c_j)}.
$$
\begin{proposition}
Suppose
$$
\Lambda^{-1}(R)=\{(j_1,a_1),\ldots,(j_m,a_m)\}.
$$
Let $J=\{j_1,\ldots,j_m\}$.
The integration $\scr H$ can be reduced as
$$
\scr H
=\prod_{\ell=1}^n\frac{1}{c_{\max,\ell}!}\prod_{\ell\in J}
c_{\max,\ell}\int_{\om_{R}(\fk c_{\max})}
\psi^{|J|-1-d}\msf{rev}^\ast_\infty H^d.
$$
Here
$$
c_{\max,\ell}!=\prod_{k=0}^{[\tau(R,\ell)]}(c_{\max,\ell}-k).
$$
In fact, when $c_{\max,\ell}$ is an integer, it coincides the standard definition, when $c_{\max,\ell}$ is not an integer,
$$
c_{\max,\ell}!=c_{\max,\ell}(c_{\max,\ell}-1)\ldots(c_{\max,\ell}-[c_{\max,\ell}]).
$$
\end{proposition}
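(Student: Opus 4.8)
The plan is to propagate the integral $\scr H=\int_{\scr M_R^\circ}\psi^c\cup\msf{rev}_\infty^\ast H_{(s)}^d$ from $\scr M_R^\circ=\scr M_R(\fk c_{\min})$ down to $\scr M_R(\fk c_{\max})$ by iterating the two facts just established: that $\scr M_R(\fk c_j)\subseteq\scr M_R(\fk c)$ is the Euler class of the orbifold line bundle $\scr L_{\fk c_j}=\scr L^{\otimes c_j}|_{\scr M_R(\fk c)}$, and hence that $\int_{\scr M_R(\fk c)}\psi\cup\omega=\frac{1}{c_j}\int_{\scr M_R(\fk c_j)}\omega$ for every class $\omega$ pulled back from $\scr M_R(\fk c)$, where $c_j$ denotes the $j$-th entry of $\fk c$. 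Concretely, I would interpolate between $\fk c_{\min}$ and $\fk c_{\max}$ as follows: for each index $\ell$, raise the $\ell$-th contact order one unit at a time starting from $c_{\min,\ell}=\frac{\beta_\ell}{r}$, stopping at $c_{\max,\ell}+1=\frac{\beta_\ell}{r}+[\tau(R,\ell)]+1$ when $\ell\notin J$ (that is, $[\tau(R,\ell)]+1$ raisings) and at $c_{\max,\ell}=\frac{\beta_\ell}{r}+[\tau(R,\ell)]$ when $\ell\in J$ (that is, $[\tau(R,\ell)]$ raisings).

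The order in which these single-index raisings are carried out is immaterial, because for a fixed morphism the transverse sections $\tilde\sigma^0$ of Step~1 read off \emph{distinct} Taylor coefficients $a_{j,k}$ of $u^0$, which are unconstrained; hence the successive zero loci are mutually transverse and the class $[\scr M_R(\fk c_{\max})]\in H^\ast(\scr M_R(\fk c_{\min}))$ equals the cup product of all the Euler classes $c_1(\scr L_\bullet)$ encountered along the way, independently of the path chosen. A single raising of the $\ell$-th entry from a value $v$ to $v+1$ turns $\int\psi\cup(\cdot)$ into $\frac1v\int_{(\text{smaller moduli space})}(\cdot)$; multiplying these scalars over all steps produces
$$\prod_{\ell\notin J}\prod_{k=0}^{[\tau(R,\ell)]}\Bigl(\tfrac{\beta_\ell}{r}+k\Bigr)^{-1}\cdot\prod_{\ell\in J}\prod_{k=0}^{[\tau(R,\ell)]-1}\Bigl(\tfrac{\beta_\ell}{r}+k\Bigr)^{-1}=\Bigl(\prod_{\ell=1}^n\tfrac{1}{c_{\max,\ell}!}\Bigr)\cdot\prod_{\ell\in J}c_{\max,\ell},$$
the last equality following at once from the definition of $c_{\max,\ell}!$ after reindexing.

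Finally I would account for the surviving powers of $\psi$. Exactly one power of $\psi$ is consumed at each raising, so the total consumed is $N=\sum_{\ell=1}^{n}[\tau(R,\ell)]+(n-|J|)$. Combining $\dimc\scr M_R=\sum_{u}[\tau(R,u)]+n-1+D_t$ from Proposition~\ref{prop dimc} with the product splitting $\scr M_R|_U\cong U\times\scr M_R^\circ$ (so that $\dimc\scr M_R^\circ=\dimc\scr M_R-D_t$) and the admissibility condition $c+d=\dimc\scr M_R^\circ$, one gets $c=\sum_{u}[\tau(R,u)]+n-1-d$, hence $c-N=|J|-1-d$. Substituting the scalar factor above together with the leftover $\psi^{c-N}=\psi^{|J|-1-d}$ into the chain of identities, and using that $\msf{rev}_\infty^\ast H_{(s)}$ restricts compatibly along all the inclusions, yields the asserted formula for $\scr H$.

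The main obstacle is the transversality/independence step underpinning the ``product of Euler classes'' claim: one must verify that cutting down successively by the Taylor-coefficient sections for the various pairs $(\ell,k)$ preserves transversality, so that $[\scr M_R(\fk c_{\max})]$ genuinely factors as the product of the individual Euler classes and $\scr M_R(\fk c_{\max})$ carries the expected (virtual) fundamental class; this is handled exactly as the analogous degeneration argument in \cite{HLR08}, exploiting that $\Hol^0_R$ is a space of polynomial maps with free Taylor coefficients. A harmless bookkeeping caveat: the formula only makes sense when $|J|-1-d\ge0$, but $H_{(s)}^d$ vanishes once $d>D_s=|J|-1$, so outside that range both sides are zero and there is nothing to prove.
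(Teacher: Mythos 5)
Your proposal is correct and follows essentially the same route as the paper: iterate the Euler-class reduction $\int_{\scr M_R(\fk c)}\psi=\frac{1}{c_\ell}\int_{\scr M_R(\fk c_\ell)}$ from $\fk c_{\min}$ to $\fk c_{\max}$, with $[\tau(R,\ell)]+1$ raisings for $\ell\notin J$ and $[\tau(R,\ell)]$ for $\ell\in J$, collect the scalar factors into $\prod_\ell\frac{1}{c_{\max,\ell}!}\prod_{\ell\in J}c_{\max,\ell}$, and use the dimension formula plus admissibility to get the leftover $\psi^{|J|-1-d}$. Your extra attention to the order-independence/transversality of the successive Taylor-coefficient cuts is a welcome elaboration of a point the paper leaves implicit, but it does not change the argument.
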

\begin{proof}
For each $\ell\not\in J$, from $\fk c_{\min}$ to $\fk
c_{\max}$ we repeat the reduction $[\tau(R,\ell)]+1$-steps. It exhausts $[\tau(R,\ell)]+1$'s
$\psi$ and contributes
$c_{\max}!$.

For $\ell\in J$, from $\fk c_{\min}$ to $\fk
c_{\max}$ we repeat the reduction $[\tau(R,\ell)]$-steps. It exhausts $[\tau(R,\ell)]$'s
$\psi$ and contributes
$(c_{\max}-1)!$.

The number of $\psi$ left is
\begin{align*}
&c-\sum_{\ell}([\tau(R,\ell)]+1)+|J|\\
=&(\dim_\cplane(\scr M_R)
-D_t-d)-\sum_{\ell}[\tau(R,\ell)]-n+|J|\\
=&|J|-1-d.
\end{align*}

\end{proof}

\subsubsection{Computation of invariants}\label{sec comput-inv}
\begin{theorem}\label{thm H}
The formula for $\scr H$ is
$$
\scr H=\frac{1}{r}\, R^d\prod_{\ell=1}^n\frac{1}{c_{\max}!}
=\frac{1}{r}\, R^d\prod_{\ell=1}^n\frac{1}{
(\frac{\beta_\ell}{r}+[\tau(R,\ell)])!}.
$$
\end{theorem}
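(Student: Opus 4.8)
The plan is to compute $\scr H$ starting from the reduction already established in the preceding proposition, namely
$$
\scr H=\prod_{\ell=1}^n\frac{1}{c_{\max,\ell}!}\,\prod_{\ell\in J}c_{\max,\ell}\int_{\scr M_R(\fk c_{\max})}\psi^{|J|-1-d}\,\msf{rev}_\infty^\ast H_{(s)}^d ,
$$
so the entire task reduces to understanding the extremal moduli space $\scr M_R(\fk c_{\max})$ together with the two classes $\psi$ and $\msf{rev}_\infty^\ast H_{(s)}$ restricted to it. First I would record the arithmetic of the prefactor: if $(j_k,a_k)\in\Lambda^{-1}(R)$ then $\Lambda(j_k,a_k)=R$ gives $\beta_{j_k}+a_kr=\alpha_{j_k}rR$, hence $\tau(R,j_k)=a_k\in\integer$ and $c_{\max,j_k}=\tfrac{\beta_{j_k}}{r}+a_k=\alpha_{j_k}R$. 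Therefore $\prod_{\ell\in J}c_{\max,\ell}=R^{|J|}\prod_{k}\alpha_{j_k}$, and it suffices to prove
$$
\int_{\scr M_R(\fk c_{\max})}\psi^{|J|-1-d}\,\msf{rev}_\infty^\ast H_{(s)}^d=\frac{1}{r\,R^{|J|-d}\prod_{k}\alpha_{j_k}} .
$$

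Next I would give an explicit model of $\scr M_R(\fk c_{\max})$. By the discussion preceding the proposition, a morphism in $\Hol^0_R(\fk c_{\max})$ has $j$-th component $u^0_j(x)=c_jx^{\beta_j+a_jr}$ for $j\in J=\{j_1,\dots,j_m\}$ and $u^0_j\equiv 0$ otherwise, so it is recorded by a tuple $(c_{j_1},\dots,c_{j_m})\in\cplane^m\setminus\{0\}$; under $\lambda\in\cplane^\ast_o$ (acting by $x\mapsto\lambda x$) one has $c_{j_k}\mapsto\lambda^{\beta_{j_k}+a_kr}c_{j_k}=\lambda^{\alpha_{j_k}rR}c_{j_k}$. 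Hence $\scr M_R(\fk c_{\max})=[(\cplane^m\setminus\{0\})/\cplane^\ast_o]$ is the weighted projective space with weights $(\alpha_{j_1}rR,\dots,\alpha_{j_m}rR)$, and under $\msf{rev}_\infty$ the point $[c_{j_1}:\dots:c_{j_m}]$ goes to the corresponding point of the twisted sector $\msf P^\wb_\wa(s)$, which by \S\ref{sec wght-prj-sps} is the weighted projective space on $\msf p_{j_1},\dots,\msf p_{j_m}$ (with its residual $\integer_r$). From the standard intersection theory of weighted projective stacks, $\int_{\scr M_R(\fk c_{\max})}\xi^{|J|-1}=\prod_k(\alpha_{j_k}rR)^{-1}=\dfrac{1}{r^{|J|}R^{|J|}\prod_k\alpha_{j_k}}$, where $\xi$ denotes the positive generator of $H^2(\scr M_R(\fk c_{\max}))$.

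The heart of the computation is then to express $\psi$ and $\msf{rev}_\infty^\ast H_{(s)}$ in terms of $\xi$. For $\psi=c_1(\scr L)$, I would use $\scr L=\tilde{\scr L}^{\otimes r}$ together with $\tilde{\scr L}|_{\msf u}=\check\cplane_o$ and the $\cplane^\ast_o$-weight of $\check\cplane_o$: on the quotient, $\tilde{\scr L}$ is the weight-one character line bundle, so $\psi=r\,\xi$. For $\msf{rev}_\infty^\ast H_{(s)}$, tracing through the homogenization of $\msf u$ near $\msf q_\infty$ shows that the coordinate section $z_{j_k}$ of $\mathcal O_{\msf P^\wb_\wa(s)}(\alpha_{j_k})$ pulls back to the section of $\mathcal O_{\scr M_R(\fk c_{\max})}(\alpha_{j_k}rR)$, whence $\msf{rev}_\infty^\ast H_{(s)}=rR\,\xi$ (in particular $\msf{rev}_\infty^\ast H_{(s)}=R\psi$ on the extremal locus, though this proportionality fails on all of $\scr M_R^\circ$, which is why the reduction to $\fk c_{\max}$ is needed). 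Substituting,
$$
\int_{\scr M_R(\fk c_{\max})}\psi^{|J|-1-d}\msf{rev}_\infty^\ast H_{(s)}^d=r^{|J|-1-d}(rR)^d\int_{\scr M_R(\fk c_{\max})}\xi^{|J|-1}=\frac{R^d}{r\,R^{|J|}\prod_k\alpha_{j_k}},
$$
which combined with $\prod_{\ell\in J}c_{\max,\ell}=R^{|J|}\prod_k\alpha_{j_k}$ yields $\scr H=\tfrac1r R^d\prod_{\ell=1}^n\tfrac{1}{c_{\max,\ell}!}$, as claimed.

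The main obstacle is exactly this last piece of bookkeeping: pinning down the two constants — that $\psi$ is $r$ times the positive generator and that $\msf{rev}_\infty$ multiplies $H_{(s)}$ by $rR$ — requires carefully tracking fractional orbifold weights through the chain $\cplane_o\rightsquigarrow\tilde{\scr L}\rightsquigarrow\scr L$, the identification of $\scr M_R(\fk c_{\max})$ with a weighted projective space, the $\integer_r$-gerbe coming from the automorphisms $\integer_r\subset\cplane^\ast_o$ of the orbifold marked point $\msf q_0$, and the exact degree of $\msf{rev}_\infty$ onto $\msf P^\wb_\wa(s)$ (and, with care, the sign/orientation conventions of the virtual class). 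This is the orbifold counterpart of the analogous constant-chasing in \cite{HLR08}; once the weight conventions are fixed consistently, the computation above is a direct substitution.
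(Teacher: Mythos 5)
Your computation is correct and reproduces the paper's answer, but the final evaluation is done by a genuinely different route. The paper, after the same reduction to $\scr M_R(\fk c_{\max})$, splits into two cases: for $d=0$ it performs one more reduction so that the moduli space becomes the single standard map $\msf u[1,a]$, whose automorphism factor $\frac{1}{\beta_1+ar}$ gives $\scr H'=\frac1r$; for $d\geq 1$ it computes $\scr H''=\int_{\scr M_R(\fk c_{\max})}\psi^{|J|-1-d}\msf{rev}_\infty^\ast H^d$ by equivariant localization at the fixed points $\msf u[k,\beta_k+a_kr]$, with the weights $\lambda_j-\lambda_k\alpha_j/\alpha_k$, the $\psi$- and $H$-restrictions $\frac{r}{\beta_k+a_kr}(\lambda_k-\alpha_k\lambda_0)$ and $\lambda_k/\alpha_k$, and a Vandermonde identity to sum the residues, arriving at $r\scr H''=\frac{1}{\alpha_1\cdots\alpha_m}R^{-(|J|-d)}$. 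You instead identify $\scr M_R(\fk c_{\max})$ globally as the weighted projective stack $[(\cplane^{|J|}\setminus 0)/\cplane^\ast_o]$ with weights $\beta_{j_k}+a_kr=\alpha_{j_k}rR$ (which is consistent with the paper's description of $\scr M_R(\fk c_{\max})$ and with Lemma \ref{lem 5.5}), and then evaluate non-equivariantly via $\psi=r\xi$, $\msf{rev}_\infty^\ast H_{(s)}=rR\,\xi$ and $\int\xi^{|J|-1}=\prod_k(\beta_{j_k}+a_kr)^{-1}$; this treats $d=0$ and $d\geq1$ uniformly and avoids localization and the Vandermonde manipulation, at the cost of having to pin down the two normalization constants, which you rightly flag as the delicate point. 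Your constants are in fact forced to be the right ones: $\tilde{\scr L}$ is the weight-one character bundle because the coefficient $a_{j,k}$ is equivariantly an element of $(\check\cplane_o)^{\otimes(\beta_j+kr)}$ (this is exactly the identification $\scr L_{\fk c_j}=\tilde{\scr L}^{\otimes rc_j}$ used in the paper's reduction), and $\msf{rev}_\infty^\ast H=rR\,\xi$ follows from matching the section $c_{j_k}$ of $\msf{rev}_\infty^\ast\mathcal O(\alpha_{j_k})$ with a quasi-homogeneous function of degree $\alpha_{j_k}rR$, since $H^2$ of the moduli is one-dimensional over $\ration$; your answers agree with the paper's fixed-point data up to the choice of equivariant lift and give $\scr H''=\frac{1}{r\,R^{|J|-d}\prod_k\alpha_{j_k}}$ and $\scr H'=\frac1rR^d$, as required. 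So the proposal is a valid, somewhat more elementary and uniform alternative to the paper's localization argument, provided the convention bookkeeping you outline is written out.
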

This is equivalent to show that
\begin{equation}
\scr H':=\prod_{\ell\in J}
c_{\max,\ell}\int_{\scr M_{R}(\fk c_{\max})}
\psi^{|J|-1-d}\msf{rev}^\ast_\infty H^d=\frac{1}{r}\,R^d.
\end{equation}

For simplicity, we consider the two cases: $d=0$  and $1\leq d\leq \sharp\Lambda\inv(R)=\sharp J$ separately.
\vskip 0.1in
\noindent
{\em Case 1: $d=0$. }
\vskip 0.1in
For simplicity, assume that $(1,a)\in \Lambda^{-1}(R)$. Set
$$
\fk c=(c_{\max,1}, c_{\max,2}+1,\ldots,c_{\max,n}+1).
$$
Then
$$
\scr H'=c_{\max,1}\int_{\scr M_R(\fk c)}1.
$$
Now
$\scr M_R(\fk c)$ consists of only one element: the standard map $\msf u[1, a]$.  
We know that
$$
\int_{\scr M_R(\fk c)} 1= \frac{1}{\beta_1+ar}=\frac{1}{r}\frac{1}{c_{\max,1}}.
$$
Hence, combine the computation together we have $\scr H'=1/r$.

\vskip 0.1in\noindent
{\em Case 2: $d\neq0$.}
\vskip 0.1in
We now compute
\begin{equation}
\scr H''=\int_{\scr M_{R}(\fk c_{\max})} \psi^{|J|-1-d}\msf{rev}_\infty^\ast H^d.
\end{equation}
We apply the localization technique. For simplicity, assume that
$$
\Lambda^{-1}(R)=\{(1,a_1)\ldots,(m,a_m)\}.
$$
 The fixed point are $\msf u[k, \beta_k+a_kr], 1\leq k\leq m$.
For each $k$ we have the contributions to the localization formula:
\begin{itemize}
\item the action weights on the normal direction corresponding to $\msf p_j, 1\leq j\leq m, j\neq k$ are:
$$
(\lambda_j-\lambda_0\alpha_j)-(\lambda_k-\lambda_0\alpha_k)\frac{\beta_j+a_jr}{\beta_k+a_kr}
=(\lambda_j-\lambda_0\alpha_j)-(\lambda_k-\lambda_0\alpha_k)\frac{\alpha_j}{\alpha_k}
=\lambda_j-\lambda_k\frac{\alpha_j}{\alpha_k}.
$$
\item contribution from $\psi$:
$$
\frac{r}{\beta_k+a_kr}(\lambda_k-\alpha_k\lambda_0),
$$
\item contribution from $H$: $\lambda_k/\alpha_k$.
\item automorphism group: $\frac{1}{\beta_k+a_kr}$.
\end{itemize}
Then we have
$$
r\scr H''=\sum_{k=1}^m\left(\frac{r}{\beta_k+a_kr}\right)^{m-d}\cdot \frac{(\lambda_k-\alpha_k\lambda_0)^{m-1-d}
\left(\frac{\lambda_k}{\alpha_k}\right)^d }{\prod_{j\not=k}(\lambda_j-\lambda_k\frac{\alpha_j}{\alpha_k})  }.
$$
By simplification, and set $\lambda'_j= \lambda_j/\alpha_j$ we have
$$
r\scr H''=\frac{1}{\alpha_1\ldots\alpha_m}\sum_{k=1}^m\left(\frac{r \alpha_k}{\beta_k+a_kr}\right)^{m-d}
\cdot \frac{(\lambda'_k-\lambda_0)^{m-1-d}(\lambda_k')^d}
{\prod_{j\not=k}(\lambda'_j-\lambda'_k)  }.
$$
Note that
$$
\frac{r \alpha_k}{\beta_k+a_kr}=\frac{1}{R}.
$$
By using the fact that
$$
\sum_{k=1}^m\frac{(\lambda'_k-\lambda_0)^{m-1-d}(\lambda_k')^d}
{\prod_{j\not=k}(\lambda'_j-\lambda'_k)  }=\sum_{k=1}^m\frac{(\lambda_k')^{m-1}}
{\prod_{j\not=k}(\lambda'_j-\lambda'_k)  }=1,
$$
which is a consequence of
\[
\sum_{k=1}^m\frac{(\lambda_k')^d}
{\prod_{j\not=k}(\lambda'_j-\lambda'_k)  }=0,\,\,\,\forall\, 0\leq d\leq m-2
\]
and the expansion of Vandermonde determinant,
we have
$$
r\scr H''= \frac{1}{\alpha_1\ldots\alpha_m}\left(\frac{1}{R}\right)^{|J|-d}.
$$
Then
$$
\scr H'=\frac{1}{r}\prod_{\ell\in J}\frac{c_{\max,\ell}}{\alpha_\ell}\left(\frac{1}{R}\right)^{|J|-d}=\frac{1}{r}\, R^d,
$$
here we use the fact that
$$
\frac{c_{\max,\ell}}{\alpha_\ell}=R,\,\,\,\,\,
\forall \ell\in J.
$$

\begin{remark}
This completes the proof of (3) in Theorem \ref{thm c-dertmin-all}. In fact we show that
$$
\langle{\bf R}(\tau_c(\Theta_{(t)})|H_{(s)}^d)\rangle
=\frac{1}{r}\, R^d\prod_{\ell=1}^n\frac{1}{
(\frac{\beta_\ell}{r}+[\tau(R,\ell)])!}.
$$
We would like to emphasize that $R$ and $d$ are determined by $c$.
\end{remark}

\subsection{Special case: $\msf E=\Gamma^\mu\ltimes \cplane^n$}\label{sec 5.3}
Suppose that $t$ in ${\bf R}$ is $(\gamma), \gamma\in \Gamma$ with order $r$. Consider a morphism
$$\msf u\in \scr M_{\bf R}([\overline{\cplane^n}]^\mu_\fk a|
\msf{P}^\mu_\fk a).$$
Then for
$
\msf u: \msf C_0\rto \Gamma^\mu\ltimes \cplane^n,
$
\begin{equation}\label{eqn_u}
\pi\circ\msf u: \msf q_{0}\rto\langle \gamma\rangle\ltimes\{O\}
\end{equation}
Suppose that $\gamma$ acts on $\cplane^n$ as weight $\fk b=(\beta_1,\ldots,\beta_n)$.
We find that $\msf u|_{\msf C_0}$ is a composition of the following morphisms:
$$
\msf u:\msf C_0\xrightarrow{\msf v}\mathbb Z_r^\fk b\ltimes\cplane^n\hookrightarrow\Gamma^\mu\ltimes\cplane^n
$$
where we identify $\langle\gamma\rangle$ with $\mathbb Z_r$ via $\gamma\leftrightarrow \zeta_r$, and therefore
$$
\msf u:\msf S^2\xrightarrow{\bar{\msf v}} \oC
\hookrightarrow[\overline{\cplane^n}]^\mu_\fk a.
$$
Repeat the arguments in \S\ref{sec MR}, we get the following facts
\begin{eqnarray*}
\msf{Hol}_{\bf R}(\msf S^2,[\overline{\cplane^n}]^\mu_\fk a)
&\cong& \msf{Hol}_{{\bf R}}(\msf C_0,\Gamma^\mu\ltimes\cplane^n),\\
\msf{Hol}_{{\bf R}}(\msf C_0,\Gamma^\mu\ltimes\cplane^n)
&\cong&C_{\Gamma}(\gamma)
\ltimes\Hol_{\bf R}^0(\msf C_0,\langle\gamma\rangle^\fk b\ltimes\cplane^n).
\end{eqnarray*}
Next, note that we have a central extension
$$
1\rto \langle\gamma\rangle\cong \mathbb Z_r
\rto C_\Gamma(\gamma)\rto C'_\Gamma(\gamma)\rto 1,
$$
and
$$
1\rto \langle\gamma\rangle\cong\mathbb Z_r\rto \cplane^\ast_o\rto \cplane^\ast\rto 1.
$$
The first extension then canonically induces the following extension
$$
1\rto \cplane_o^\ast\rto \cplane_o^\ast\times_{\langle\gamma\rangle}C_\Gamma(\gamma)
\rto C'_\Gamma(\gamma)\rto 1.
$$
We conclude that
\begin{equation}
\scr M_{\bf R}(\msf S^2,[\overline{\cplane^n}]^\mu_\fk a)=
\frac{\Hol_{\bf R}(\msf C_0, \cplane^n\rtimes \Gamma^\mu)}
{\text{Aut}(\msf S^2\slash\{\msf q_0,\msf q_\infty\})}
\cong
\frac{\Hol^0_{\bf R}(\msf C_0, \cplane^n\rtimes \mathbb Z_r^\fk b)}{
\cplane_o^\ast\times_{\langle\gamma\rangle} C_\Gamma(\gamma)}
.
\end{equation}
We conclude that
\begin{proposition}\label{prop MR}
$\scr M_{\bf R}([\overline{\cplane^n}]^\mu_\fk a)$
is a fibration over $BH$ with an orbifold fiber $\scr M_{\bf R}(\oC)$, where $H=C'_\Gamma(\gamma)=C_\Gamma(\gamma)/\langle\gamma\rangle$.
\end{proposition}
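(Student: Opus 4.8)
The plan is to read the statement off from the groupoid presentation of $\scr M_{\bf R}$ obtained in the paragraph immediately preceding the proposition, namely the isomorphism
\[
\scr M_{\bf R}([\overline{\cplane^n}]^\mu_{\fk a})\cong
\frac{\Hol^0_{\bf R}(\msf C_0,\mathbb Z_r^{\fk b}\ltimes\cplane^n)}{\cplane^\ast_o\times_{\langle\gamma\rangle}C_\Gamma(\gamma)},
\]
together with the central extension already recorded above,
\[
1\rto \cplane^\ast_o\rto \cplane^\ast_o\times_{\langle\gamma\rangle}C_\Gamma(\gamma)\xrightarrow{\ p\ }H\rto 1,\qquad H=C'_\Gamma(\gamma),
\]
in which $p$ is induced by $\cplane^\ast_o\rto\{e\}$ on the first factor and by the quotient $C_\Gamma(\gamma)\rto C'_\Gamma(\gamma)$ on the second, and $H$ is a finite group.

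First I would carry out this quotient in two stages, using that for a group $G$ with normal subgroup $N$ one has $[X/G]\cong[[X/N]/(G/N)]$ as orbifold groupoids; apply it with $N=\cplane^\ast_o$, the image of $\cplane^\ast_o\times\{e\}$, which is exactly $\ker p$. On $\Hol^0_{\bf R}(\msf C_0,\mathbb Z_r^{\fk b}\ltimes\cplane^n)$ this subgroup acts purely by reparametrizations of the domain $\msf C_0=\mathbb Z_r\ltimes\cplane_o$, precisely as in \S\ref{sec 5.1}: inside the fibered product, reparametrizing a morphism by $\zeta_r$ agrees with postcomposing it by $\gamma$, because a morphism in $\Hol^0_{\bf R}(\msf C_0,\mathbb Z_r^{\fk b}\ltimes\cplane^n)$ is a groupoid morphism intertwining the $\mathbb Z_r$-action on the source with the $\langle\gamma\rangle$-action on the target --- which is the amalgamation forced in the fibered product. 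Here one also checks that the ${\bf R}$-type condition for the problem over $[\overline{\cplane^n}]^\mu_{\fk a}$ restricts, along $\mathbb Z_r^{\fk b}\ltimes\cplane^n\hrto\Gamma^\mu\ltimes\cplane^n$, to the ${\bf R}$-type condition of the special case of \S\ref{sec 5.1}: the twisted sector of $t=(\gamma)$ restricts to that of $\zeta_r$, and the homology class and the contact order at $\msf q_\infty$ are detected inside the sub-bundle. Hence Lemma \ref{lem 5.5} gives $[\Hol^0_{\bf R}(\msf C_0,\mathbb Z_r^{\fk b}\ltimes\cplane^n)/\cplane^\ast_o]\cong\scr M_{\bf R}(\oC)$.

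It then remains to quotient by the residual action of $H=C'_\Gamma(\gamma)$. This action is induced by $C_\Gamma(\gamma)$ acting on $\Hol^0_{\bf R}(\msf C_0,\mathbb Z_r^{\fk b}\ltimes\cplane^n)$ through postcomposition with the target automorphisms $\mu(h)$; since $h$ centralizes $\gamma$ and, by hypothesis, $\mu(h)$ commutes with the $S^1_{\fk a}$-action, $\mu(h)$ preserves $\mathbb Z_r^{\fk b}\ltimes\cplane^n$ together with the ${\bf R}$-type condition, so the action descends to $\scr M_{\bf R}(\oC)$. Collapsing $\Hol^0_{\bf R}$ to a point and applying $p$ produces a natural morphism $\scr M_{\bf R}([\overline{\cplane^n}]^\mu_{\fk a})\rto BH$, and restricting over $\ker p=\cplane^\ast_o$ identifies the fiber of this morphism over the object point of $BH$ with $\scr M_{\bf R}(\oC)$. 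Equivalently, writing $\msf P_H$ for the canonical $H$-principal bundle over $BH$, one obtains $\scr M_{\bf R}([\overline{\cplane^n}]^\mu_{\fk a})\cong\scr M_{\bf R}(\oC)\times_H\msf P_H$ in the sense of the associated-bundle construction with groupoid fiber recalled in Section 2, which is exactly the assertion that $\scr M_{\bf R}([\overline{\cplane^n}]^\mu_{\fk a})$ is a fibration over $BH$ with orbifold fiber $\scr M_{\bf R}(\oC)$.

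The genuinely delicate point is the bookkeeping rather than anything conceptual: one has to be sure that the copy of $\cplane^\ast_o$ divided out in the first stage is the \emph{same} reparametrization group appearing in Lemma \ref{lem 5.5} (so that the first quotient really is $\scr M_{\bf R}(\oC)$ and not a twisted form of it), which in turn rests on identifying compatibly the two incarnations of $\langle\gamma\rangle\cong\mathbb Z_r$ --- one as domain automorphisms of $\msf C_0$, one as the cyclic subgroup of $C_\Gamma(\gamma)$ generated by $\gamma$ --- which is precisely the amalgamation giving the clean extension by $H$. Once that is pinned down, the two-stage quotient and the associated-bundle description are formal.
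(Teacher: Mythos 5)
Your proposal is correct and follows essentially the same route as the paper's (one-line) proof: starting from the presentation $\scr M_{\bf R}\cong \Hol^0_{\bf R}(\msf C_0,\mathbb Z_r^{\fk b}\ltimes\cplane^n)\big/\bigl(\cplane^\ast_o\times_{\langle\gamma\rangle}C_\Gamma(\gamma)\bigr)$, quotient in two stages along the extension $1\to\cplane^\ast_o\to\cplane^\ast_o\times_{\langle\gamma\rangle}C_\Gamma(\gamma)\to C'_\Gamma(\gamma)\to 1$, identify the first quotient with $\scr M_{\bf R}(\oC)$ via Lemma \ref{lem 5.5}, and read off the residual finite $C'_\Gamma(\gamma)$-quotient as a fibration over $BH$. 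Your extra bookkeeping (compatibility of the two copies of $\mathbb Z_r\cong\langle\gamma\rangle$ and of the ${\bf R}$-type conditions) only makes explicit what the paper leaves implicit.
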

\begin{proof}
In fact, we have
$$
\frac{\Hol^0_{\bf R}(\msf C_0, \cplane^n\rtimes \mathbb Z_r^\fk b)}{
\cplane_o^\ast\times_{\langle\gamma\rangle} C_\Gamma(\gamma)}\cong \frac{\Hol^0_{\bf R}(\msf C_0, \cplane^n\rtimes \mathbb Z_r^\fk b)/\cplane_o^\ast}{
 C'_\Gamma(\gamma)}\cong \frac{\scr M_{\bf R}(\oC)}{C'_\Gamma(\gamma)}.
$$
\end{proof}

\subsection{General cases: $\E\rto\msf B$}\label{sec 5.4}
\subsubsection{Moduli space}
Let $\msf u\in \Hol_{\bf R}(\msf S^2,\overline{\msf E}_\fk a)$. Suppose that
$$
\pi\circ\msf u: \msf q_0\rto\langle\gamma\rangle\ltimes \{x\}
$$
where $\gamma\in \Gamma=G_x$. Let $\msf B(t)$
be the twisted sector containg $\langle\gamma\rangle\ltimes \{x\}$.
Let $
\msf U:=\Gamma\ltimes U_x$ be
a local orbifold coordinate chart for $\msf B$.
Bundle $\msf E$ locally is
$$
\msf E|_{\msf U}\cong \Gamma\ltimes (U_x\times \cplane^n).
$$
 Let $\mu$ be the representation of $\Gamma$ action on $\cplane^n$.
Then the morphism $\msf u|_{\msf C_0}$ is a composition of the following morphisms:
$$
\msf u:\msf C_0\xrightarrow{\msf v}
\{x\}\times (\langle \gamma\rangle^\fk b\ltimes \cplane^n )\rightarrow\{x\}\times (\Gamma^\mu\ltimes\cplane^n ),
$$
where  $\fk b$
is the action weight of $\gamma$ on $\cplane^n$. Hence, we have
\begin{equation}
\msf u:\msf S^2\xrightarrow{\bar{\msf v}} \{x\}\times [\overline{\cplane^n}]^\fk b_\fk a \rightarrow\{x\}\times [\overline{\cplane^n}]^\mu_\fk a
\hookrightarrow{\overline{\msf E}_\fk a}.
\end{equation}
Apply the similar arguments as before, we have
\begin{equation}
\msf{Hol}_{\bf R}(\msf S^2,\overline{\msf E}_\fk a)
\cong \msf{Hol}_{\bf R}(\msf C_0, \msf E).
\end{equation}
Let $\msf B(t)=(B^1(t)\rightrightarrows B^0(t))$ be the corresponding twisted sector.
\begin{lemma}
Let $\msf{Hol}^0_{{\bf R}}(\msf C_0,\msf E)$ be as above.
Then
\begin{enumerate}
\item $\Hol^0_{\bf R}(\msf C_0,{\msf E})$
is a fibration
$$
\phi^0:\Hol^0_{\bf R}(\msf C_0,\msf E)
\rto B^0(t)
$$
whose fiber is $\Hol^0_{{\bf R'}}(\msf C_0, \mathbb Z^\fk b_r\ltimes \cplane^n)$, here ${\bf R'}$ is naturally determined by ${\bf R}$;
\item $\msf B(t)$ acts on $\Hol^0_{\bf R}(\msf C_0,\msf E)$ naturally, whose based map is $\phi^0$ ;
\item $\msf{Hol}_{{\bf R}}(\msf C_0,\msf E)
=\msf B(t)\ltimes\Hol^0_{\bf R}(\msf C_0,\msf E)$.
\end{enumerate}
\end{lemma}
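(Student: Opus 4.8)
The plan is to describe $\msf{Hol}_{\bf R}(\msf C_0,\msf E)$ by localising over the twisted sector $\msf B(t)$: first I would set up the base map $\phi^0$, then identify its fibres via the fibrewise reduction already carried out in \S\ref{sec 5.3}, and finally read off the $\msf B(t)$-action from the natural transformations between holomorphic morphisms. To define $\phi^0$: since any $\msf u$ under consideration covers a fibre, $|\pi\circ\msf u|$ is constant, and because $\cplane_o$ is connected while a $\pi^0$-fibre meets each $\msf B$-orbit in a finite set, $\pi^0\circ u^0$ is in fact constant, equal to some $x\in B^0$; thus $\msf u$ factors through the fibre groupoid $\msf E_x$ over $x$. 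Restricting $\pi\circ\msf u$ to $\msf q_0=\integer_r\ltimes\{0\}$ then yields a representable morphism $\msf B_r\rto\msf B$, $pt\mapsto x$, $\zeta_r\mapsto\gamma$, i.e.\ an object of $B^0(t)$, and I would take $\phi^0(\msf u)$ to be this object.

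For part (1), I would fix $\f=(f^0,f^1)\in B^0(t)$ with $x=f^0(pt)$, $\gamma=f^1(\zeta_r)$, choose a local orbifold chart $\msf U=G_x\ltimes U_x$ of $\msf B$ around $x$ trivialising $\msf E$ as $G_x\ltimes(U_x\times\cplane^n)$ with isotropy representation $\mu$, and observe that a morphism $\msf u$ with $\phi^0(\msf u)$ near $\f$ takes values in a fibre $\cong G_x^\mu\ltimes\cplane^n$ and, being representable with monodromy a conjugate of $\gamma$ at $\msf q_0$, factors through $\langle\gamma\rangle^\fk b\ltimes\cplane^n\hookrightarrow G_x^\mu\ltimes\cplane^n$ exactly as in \S\ref{sec 5.3}, where $\fk b$ is the weight of $\gamma$. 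Identifying $\langle\gamma\rangle\cong\integer_r$ via $\gamma\lerto\zeta_r$, this presents $\msf u$ as the pair consisting of $\phi^0(\msf u)$ and a morphism in $\Hol^0_{\bf R'}(\msf C_0,\integer_r^\fk b\ltimes\cplane^n)$, where ${\bf R'}$ records the induced fibre class, the genus, and the contact data at $\msf q_\infty$. Since the chart trivialises $\msf E$, this identification depends continuously on the base point, so $\phi^0$ is locally trivial with fibre $\Hol^0_{\bf R'}(\msf C_0,\integer_r^\fk b\ltimes\cplane^n)$.

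For parts (2) and (3) I would use that $\msf E=\msf B\ltimes E^0$, so $E^1=E^0\times_{B^0}B^1$ and an arrow of $\msf{Hol}_{\bf R}(\msf C_0,\msf E)$ is a natural transformation $\sigma\colon\msf u_1\Rto\msf u_2$, i.e.\ a map $\sigma\colon\cplane_o\rto E^1$ with $s\circ\sigma=u_1^0$ and the naturality square; such $\sigma$ is then determined by its $\msf B$-part $b:=\pi^1\circ\sigma$, and $s_{\msf B}\circ b=\pi^0\circ u_1^0$ forces $b$ into the discrete set $s_{\msf B}^{-1}(x)=G_x$, hence $b$ is constant $=:b_0$; naturality over the $\zeta_r$-arrow of $\msf C_0$ at $0$ gives $b_0^{-1}\gamma_1 b_0=\gamma_2$ with $\gamma_i=\pi^1u_i^1(\zeta_r)$, so $b_0$ is precisely an arrow $\phi^0(\msf u_1)\rto\phi^0(\msf u_2)$ of $\msf B(t)$ and $\sigma$ is recovered from $(\msf u_1,b_0)$. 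Conversely, given $\msf u_1\in\Hol^0_{\bf R}(\msf C_0,\msf E)$ and an arrow $\tilde\sigma$ of $\msf B(t)$ with source $\phi^0(\msf u_1)$, the constant assignment $z\mapsto(u_1^0(z),\tilde\sigma(pt))$ is a natural transformation out of $\msf u_1$ whose target $\msf u_1\cdot\tilde\sigma$ is again holomorphic (the $\msf B$-action on $\msf E$ acts fibrewise by $\mu$), has the same genus, homology class and marked-point data, and satisfies $\phi^0(\msf u_1\cdot\tilde\sigma)=t(\tilde\sigma)$. This produces the $\msf B(t)$-action with base map $\phi^0$ (part (2)), and the displayed bijection between natural transformations and composable pairs, being compatible with composition, gives $\msf{Hol}_{\bf R}(\msf C_0,\msf E)\cong\msf B(t)\ltimes\Hol^0_{\bf R}(\msf C_0,\msf E)$ (part (3)).

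The hard part will be (1): one must check that the fibrewise reduction of \S\ref{sec 5.3} can be performed coherently as the base point varies over $B^0(t)$, i.e.\ that $\phi^0$ is genuinely locally trivial and that the induced data ${\bf R'}$ (and the weight $\fk b$) is locally constant — this rests on the local triviality of the bundle $\msf E$ and on $\msf B(t)$ being a single connected twisted sector, so that the isotropy representation $\mu$ and the monodromy $\gamma$ are constant on a chart up to canonical identification. Once this is in place, (2) and (3) are essentially formal, the only input being that a natural transformation between fibre-covering morphisms has its $\msf B$-part constant on the connected object space $\cplane_o$.
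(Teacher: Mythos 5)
Your proposal is correct and follows essentially the same route as the paper: the paper defines the same base map $\phi^0(\msf u)=\pi\circ\msf u|_{\msf q_0}$, identifies the fibre over $\langle\gamma\rangle\ltimes\{x\}$ by the reduction of \S5.3 (citing the proposition on $\scr M_{\bf R}([\overline{\cplane^n}]^\mu_\fk a)$), and dismisses (2)--(3) as definitional, which is exactly the unwinding of natural transformations you carry out. One tiny slip: $s_{\msf B}^{-1}(x)$ is the set of all arrows out of $x$, not $G_x$; but your argument only uses that this set is discrete (so the $\msf B$-part of a natural transformation is constant), and you correctly treat $b_0$ afterwards as a general arrow of $\msf B(t)$ from $\phi^0(\msf u_1)$ to $\phi^0(\msf u_2)$, so nothing is affected.
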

\begin{proof}
Define $\phi^0$ to be
$$
\msf u\mapsto \pi\circ\msf u|_{\msf q_0}\in  B^0(t),
$$
then for $\langle \gamma\rangle\ltimes\{x\}\in B^0(t)$,
by Proposition \ref{prop MR} we know that
 its preimage is $\Hol^0_{\bf R'}(\msf S^2,[\overline{\cplane^n}]^\fk b_\fk a)$. This shows (1).

(2) and (3) are basically followed by the definitions. We skip the proof.
\end{proof}
\begin{remark}
Let $\msf B(t)$ be a twisted sector. $B^0(t)$
 consists of elements in the format $\langle \gamma\rangle\ltimes\{x\}$. Such an element
 $\langle \gamma\rangle\ltimes\{x\}$ carries
 a canonical group of arrows $\langle\gamma\rangle\subseteq B^1(t)$.
Denote the group by $\ker(\langle \gamma\rangle\ltimes\{x\})$. It forms a trivial
$\mathbb Z_r$-bundle $\ker(t)\rto B^0(t)$. $\ker(t)\subseteq B^1(t)$
is a canonical kernel of $\msf B(t)$. Let
$$
\msf B'(t)=\left(\frac{B^1(t)}{\ker(t)}\rightrightarrows
B^0(t)\right).
$$
It is known that $\msf B(t)$ is a $\mathbb Z_r$-gerbe over
$\msf B'(t)$.
\end{remark}

\begin{proposition}\label{prp moduli-E-B}
$\scr M_{\bf R}(\overline{\msf E}_\fk a|\msf{PE}_\fk a)$ is a fibration over $\msf B'(t)$ with fiber
$\scr M_{\bf R'}(\overline{[\cplane^n]}^\fk b_\fk a|
\msf P^\fk b_\fk a)$.
\end{proposition}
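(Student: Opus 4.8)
The plan is to deduce this by globalizing Proposition~\ref{prop MR} over the twisted sector $\msf B(t)$, using the structural lemma on $\Hol_{\bf R}(\msf C_0,\msf E)$ established just above. First I would combine the equivalence $\msf{Hol}_{\bf R}(\msf S^2,\overline{\msf E}_\fk a)\cong\msf{Hol}_{\bf R}(\msf C_0,\msf E)$ with item~(3) of that lemma, $\msf{Hol}_{\bf R}(\msf C_0,\msf E)=\msf B(t)\ltimes\Hol^0_{\bf R}(\msf C_0,\msf E)$, and the definition of the top stratum as a quotient by $\Aut(\msf S^2\setminus\{\msf q_0,\msf q_\infty\})\cong\cplane^\ast$. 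Since reparametrizations fix the marked point $\msf q_0$, the $\cplane^\ast$-action commutes with the $\msf B(t)$-action of item~(2) (whose base map is $\phi^0\colon\msf u\mapsto\pi\circ\msf u|_{\msf q_0}$), so one obtains
\[
\scr M_{\bf R}(\overline{\msf E}_\fk a|\msf{PE}_\fk a)\cong\big(\msf B(t)\ltimes\Hol^0_{\bf R}(\msf C_0,\msf E)\big)\big/\cplane^\ast,
\]
the quotient of the action groupoid by this commuting $\cplane^\ast$-action.

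Next I would push the fibration $\phi^0\colon\Hol^0_{\bf R}(\msf C_0,\msf E)\to B^0(t)$ of item~(1) through the quotients. As $\cplane^\ast$ acts fiberwise over $B^0(t)$ and $\phi^0$ is $\msf B(t)$-equivariant, $\phi^0$ descends to the quotient groupoid. Over a point $\langle\gamma\rangle\ltimes\{x\}$ the fiber of $\phi^0$ is $\Hol^0_{\bf R'}(\msf C_0,\mathbb Z_r^\fk b\ltimes\cplane^n)$, on which the residual $\cplane^\ast$-action is exactly the reparametrization action of \S\ref{sec 5.1}; moreover this object carries its canonical kernel $\ker(\langle\gamma\rangle\ltimes\{x\})=\langle\gamma\rangle\cong\mathbb Z_r\subseteq B^1(t)$, which acts on that fiber as the constant-arrow $\mathbb Z_r$-action. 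Combining these two commuting actions via the central extension $1\to\mathbb Z_r\to\cplane^\ast_o\to\cplane^\ast\to1$ of \S\ref{sec 5.1}, exactly as in the passage from $C_\Gamma(\gamma)$ to $C'_\Gamma(\gamma)$ in the proof of Proposition~\ref{prop MR}, gives a single fiberwise $\cplane^\ast_o$-action whose quotient is $\scr M_{\bf R'}(\oC|\msf P^\fk b_\fk a)$ by Lemma~\ref{lem 5.5}. Dividing out the trivial $\mathbb Z_r$-bundle $\ker(t)\to B^0(t)$ replaces $\msf B(t)$ by $\msf B'(t)$, so one is left with
\[
\scr M_{\bf R}(\overline{\msf E}_\fk a|\msf{PE}_\fk a)\cong\msf B'(t)\ltimes\big(\Hol^0_{\bf R}(\msf C_0,\msf E)/\cplane^\ast_o\big),
\]
where $\Hol^0_{\bf R}(\msf C_0,\msf E)/\cplane^\ast_o\to B^0(t)$ is a fibration with fiber $\scr M_{\bf R'}(\oC|\msf P^\fk b_\fk a)$; composing exhibits $\scr M_{\bf R}(\overline{\msf E}_\fk a|\msf{PE}_\fk a)$ as a fibration over $\msf B'(t)$ with the asserted fiber.

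The main obstacle is the gerbe bookkeeping in the previous step: one must check carefully that the constant-arrow $\mathbb Z_r$-action coming from the $\mathbb Z_r$-gerbe structure of $\msf B(t)$ over $\msf B'(t)$ is literally the same $\mathbb Z_r$ sitting inside $\cplane^\ast_o$ in \S\ref{sec 5.1}, so that the two actions genuinely assemble into a $\cplane^\ast_o$-action compatible with $\phi^0$, and that passing to the fiberwise $\cplane^\ast_o$-quotient commutes with the residual $\msf B'(t)$-action. One should also verify, as in the lemma quoted above, that the reduced data ${\bf R'}$ (genus, fiber homology class, contact order, and twisted sectors at $\msf q_0,\msf q_\infty$) is unambiguously determined by ${\bf R}$ together with the component $\msf B(t)$ containing $\pi\circ\msf u|_{\msf q_0}$, so that the fiber is independent of the chosen point of $B^0(t)$.
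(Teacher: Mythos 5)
Your proposal is correct and follows essentially the same route as the paper: you invoke the equivalence $\msf{Hol}_{\bf R}(\msf S^2,\overline{\msf E}_\fk a)\cong\msf{Hol}_{\bf R}(\msf C_0,\msf E)$ and the decomposition $\msf B(t)\ltimes\Hol^0_{\bf R}(\msf C_0,\msf E)$, then assemble the constant-arrow $\mathbb Z_r$ and the reparametrization $\cplane^\ast$ into a $\cplane^\ast_o$-action via the central extension and quotient fiberwise using Lemma \ref{lem 5.5}, which is exactly what the paper does by forming the $\cplane^\ast_o$-gerbe $\tilde{\msf B}(t)$ with $\tilde{\msf B}^1(t)=\msf B^1(t)\times_{\mathbb Z_r}\cplane^\ast_o$ over $\msf B'(t)$. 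The ``gerbe bookkeeping'' you flag as the main obstacle is precisely what this explicit construction of $\tilde{\msf B}(t)$ in the paper's proof takes care of.
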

\begin{proof}
In fact,
$$
\scr M_{\bf R}(\overline{\msf E}_\fk a|\msf{PE}_\fk a)=\frac{\msf{Hol}_{\bf R}(\msf S^2,\overline{\msf E}_\fk a)}
{\Aut(\msf S^2\setminus\{\msf q_0,\msf q_\infty\})}
=\frac{\msf B(t)\ltimes \Hol^0_{\bf R}(\msf C_0,\msf E)}
{\Aut(\msf S^2\setminus\{\msf q_0,\msf q_\infty\})}
$$
$\msf B(t)$ is a $\mathbb Z_r$-gerbe over $\msf B'(t)$,
we may extend it with respect to ${\Aut(\msf S^2\setminus\{\msf q_0,\msf q_\infty\})}\cong \cplane^\ast
$ to get a $\cplane_o^\ast$-gerbe over $\msf B'(t)$, which we denote it by $\tilde{\msf B}(t)$. In fact,
$\tilde{\msf B}^1(t)$ can be obtained by $\msf B^1(t)\times_{\mathbb Z_r}\cplane^\ast_o$. Then
$$
\scr M_{\bf R}=\frac{\msf B(t)\ltimes \Hol^0_{\bf R}(\msf C_0,\msf E)}
{\Aut(\msf S^2\setminus\{\msf q_0,\msf q_\infty\})}
=\tilde{\msf B}(t)\ltimes \Hol^0_{\bf R}(\msf C_0,\msf E)
=\msf B'(t)\ltimes ( \cplane^\ast_o\ltimes \Hol^0_{\bf R}(\msf C_0,\msf E))
$$
Since
$
\Hol^0_{\bf R}(\msf C_0,\msf E)\rto B^0(t)
$
is a fibration with fiber $\Hol^0_{\bf R'}(\msf C_0,
[\overline{\cplane^n}]^\fk b_\fk a)$, we know that
$$
\cplane^\ast_o\ltimes \Hol^0_{\bf R}(\msf C_0,\msf E)
\rto B^0(t)
$$
is a fibration with fiber $\scr M_{\bf R'}([\overline{\cplane^n}]^\fk b_\fk a|\msf P^\fk b_\fk a)$. With $\msf B'(t)$ action, we know that
$\scr M_{\bf R}(\overline{\msf E}_\fk a|\msf{PE}_\fk a)$ is a fibration over $\msf B'(t)$ with fiber
$\scr M_{\bf R'}([\overline{\cplane^n}]^\fk b_\fk a|
\msf P^\fk b_\fk a)$.

\end{proof}

\subsubsection{Certain relative Gromov--Witten invariants}\label{sec rel-inv-E-B}

We consider certain relative Gromov--Witten invariants
associated to $\scr M_{\bf R}(\overline{\msf E}_\fk a|
\msf {PE}_\fk a)$.

To describe the insertions, we make some preparations.
\vskip 0.1in
\noindent
(1) {\em Basis of $H^\ast_{CR}(\msf B)$.}
\vskip 0.1in
Recall that
$$
H^\ast_{CR}(\msf B)=H^\ast(\msf{IB})=\bigoplus_{t\in \scr T^{\msf B}} H^\ast(\msf B(t)).
$$
For each $t$ let $\sigma_{(t)}$ be a basis of
$H^\ast(\msf B(t))$:
$$
\sigma_{(t)}=\{\theta_{(t)}^1,\ldots, \theta_{(t)}^{k(t)}\}.
$$
Set
$$
\sigma_\star= \bigsqcup_{t\in\scr T^{\msf B}} \sigma_{(t)},
$$
which forms a basis of $H^\ast_{CR}(\msf B)$.

Let $\sigma^\star$ be the dual basis of $\sigma_\star$. Then
$$
\sigma^\star=\bigsqcup_{t\in \scr T^{\msf B}}\sigma^{(t)},
$$
where $\sigma^{(t)}$ is the dual of $\sigma_{(\bar t)}$.
Suppose that
$$
\sigma^{(t)}=\{\theta^{(t)}_1,\ldots, \theta^{(t)}_{k(t)}\},
$$
where $\theta^{(t)}_i$ is the dual of $\theta_{(\bar t)}^i$.
\vskip 0.1in
\noindent
(2) {\em Basis  of $H^\ast_{CR}(\msf {PE}_\fk a)$}
\vskip 0.1in
Let $s\in \scr T^{\msf{PE}_\fk a}$ and $t=\pi_{\scr T}(s)$ (cf. Lemma \ref{lem pit-twist-sec}). Then $\msf {PE}_\fk a(s)$ is
a projectivization ${\msf {PE}^{(s)}_{\fk a(s)}}$ of a rank $r(s)$-vector bundle $\msf E^{(s)}$
over $\msf B(t)$ with respect to certain $S^1_{\fk a(s)}$ action. Set (cf. \eqref{E cohomo-of-PE})
$$
\Sigma^{(s)}
=\{\theta_j^{(t)}\cup H_{(s)}^\ell|\theta_j^{(t)}\in \sigma^{(t)},
0\leq \ell\leq r(s)-1 \},
$$
which forms a basis of $H^\ast(\msf{PE}_\fk a(s))$.
Set
$
\Sigma_\star$ to be the union of $\Sigma^{(s)}$. Similarly, let $\Sigma^\star$ be the dual of $\Sigma_\star$.

\vskip 0.1in
\noindent
(3) {\em Insertions.}
\vskip 0.1in
We are ready to describe the insertions for the relative Gromov--Witten invariants:
\begin{itemize}
\item insertion for marked point $\msf q_0$: recall that the evaluation map is
$$
\msf{ev}_0: \scr M_{\bf R}(\overline{\msf E}_\fk a|
\msf {PE}_\fk a)\rto \msf E(t);
$$
let $\Theta_{(t)}$ be the Thom form of the bundle $\msf E(t)\rto \msf B(t)$, the insertion is taken to be
$$
\msf {ev}_0^\ast(\theta_{(t)}^i\wedge\Theta_{(t)})\cup \psi^c.
$$
Hence, the insertion is supported at $\msf B$.
\item insertion for the relative marked point $\msf q_\infty$:
the evaluation map is
$$
\msf{rev}_\infty: \scr M_{\bf R}(\overline{\msf E}_\fk a|\msf{PE}_\fk a)\rto \msf{PE}_\fk a(s);
$$
it is crucial to note that $\pi_{\scr T}(s)=\bar t$ (cf. Remark \ref{rmk 5.7}).
We take the insertion to be
$$
\msf{rev}^\ast_\infty(\theta_j^{(\bar t)}\cup H_{(s)}^d).
$$
\end{itemize}
\begin{defn}
The pair of insertions
$(\tau_c(\theta^i_{(t)}\cup
\Theta_{(t)}),\theta_j^{(\bar t)}\cup H_{(s)}^d)$ is called
{\bf proper}  if
\begin{equation*}
2c+2d+\deg(\Theta_{(t)})=\dim(\scr M_{\bf R}(\overline{\msf E}_\fk a|\msf {PE}_\fk a))-\dim(\msf B(t))
=\dim(\scr M_{\bf R'}([\overline{\cplane^n}]^\fk b_\fk a|\msf{P}^\fk b_\fk a)).
\end{equation*}
The last ``$=$'' follows from Proposition \ref{prp moduli-E-B}.
\end{defn}

\vskip 0.1in
\noindent
(4) {\em Invariant $\langle\tau_c(\theta^i_{(t)}\cup
\Theta_{(t)})|\theta_j^{(\bar t)}\cup H_{(s)}^d \rangle$
for a pair of proper insertions.}
\vskip 0.1in
We compute the integration.
\begin{eqnarray*}
&&\langle(\tau_c(\theta^i_{(t)}
\cup\Theta_{(t)})
|\theta_j^{(\bar t)}\cup H_{(s)}^d\rangle\\
&=&\int_{\scr M_{\bf R}}\psi^c\cup\msf{ev}_0^\ast(\theta^i_{(t)})
\cup \msf{ev}_0^\ast \Theta_{(t)}\cup\msf{rev}_\infty^\ast(\theta_j^{(\bar t)})\cup
\msf{rev}^\ast_\infty(H_{(s)}^d)\\
&=&\int_{\msf B'(t)}\msf{ev}_0^\ast(\theta^i_{(t)})
\cup\msf{rev}_\infty^\ast(\theta_j^{(\bar t)})
\cdot \int_{\scr M_{\bf R'}}
 \psi^c\cup\msf {ev}_0^\ast \Theta_{(t)}\cup
\msf{rev}_\infty^\ast H_{(s)}^d\\
&=&r\delta^i_j\cdot \int_{\scr M_{\bf R'}}
\psi ^c\cup \msf {ev}_0^\ast \Theta_{(t)}\cup \msf{rev}_\infty^\ast(H_{(s)}^d).
\end{eqnarray*}
Here for the first ``$=$'' we use Proposition \ref{prp moduli-E-B};
for the second ``$=$'' we use the definition of properness
of the insertions; and $\delta^i_j$ is the Kronecker number.

We conclude that
\begin{theorem}\label{thm cpt-rel-inv}
Let $(\tau_c(\theta^i_{(t)}\cup
\Theta_{(t)}),\theta_j^{(\bar t)}\cup H_{(s)}^d)$ be a proper insertion for $\scr M_{\bf R}$. The invariant
$$
\langle{\bf R}(\tau_c(\theta^i_{(t)}\cup
\Theta_{(t)})|\theta_j^{(\bar t)}\cup H_{(s)}^d)\rangle
$$
is non-zero if
\begin{itemize}
\item $\theta^i_{(t)}$ and $\theta_j^{(\bar t)}$ are dual; and
\item $\langle{\bf R'}(\tau_c(\Theta_{(t)})|H_{(s)}^d)\rangle$
is the invariant computed in \S\ref{sec 5.2}, i.e, $\bf R'$
and $d$ are uniquely determined  by $c$.
\end{itemize}
\end{theorem}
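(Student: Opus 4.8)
The plan is to push the invariant forward along the fibration of Proposition~\ref{prp moduli-E-B} and reduce it to the fibre invariant computed in \S\ref{sec 5.2}. Recall that $\pi\colon\scr M_{\bf R}(\overline{\msf E}_\fk a|\msf{PE}_\fk a)\rto\msf B'(t)$ is a fibration with fibre $\scr M_{\bf R'}([\overline{\cplane^n}]^\fk b_\fk a|\msf P^\fk b_\fk a)$, where ${\bf R'}$ is the data naturally determined by ${\bf R}$. The first step is to analyse how the four factors of the integrand
\[
\psi^c\cup\msf{ev}_0^\ast(\theta^i_{(t)})\cup\msf{ev}_0^\ast\Theta_{(t)}\cup\msf{rev}_\infty^\ast(\theta_j^{(\bar t)})\cup\msf{rev}^\ast_\infty(H_{(s)}^d)
\]
interact with $\pi$. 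Using that $\pi_{\scr T}(s)=\bar t$ (Remark~\ref{rmk 5.7}), so that $\msf{PE}_\fk a(s)$ is fibrewise the weighted projective space $\msf P^\fk b_\fk a$ and $H_{(s)}$ restricts to its generator, one sees that $\psi^c$, $\msf{ev}_0^\ast\Theta_{(t)}$ and $\msf{rev}^\ast_\infty H_{(s)}^d$ restrict on each fibre to exactly the classes entering $\langle{\bf R'}(\tau_c(\Theta_{(t)})|H_{(s)}^d)\rangle$, whereas $\msf{ev}_0^\ast\theta^i_{(t)}$ and $\msf{rev}_\infty^\ast\theta_j^{(\bar t)}$ are pulled back from the base $\msf B'(t)$ along $\pi$ (through the retraction $\msf E(t)\rto\msf B(t)$, resp.\ the projection $\msf{PE}_\fk a(s)\rto\msf B(\bar t)$, followed by the gerbe projections onto $\msf B'(t)$).

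Next I would integrate along $\pi$ by the projection formula; the integral splits as
\[
\int_{\msf B'(t)}\theta^i_{(t)}\cup\theta_j^{(\bar t)}\;\cdot\;\int_{\scr M_{\bf R'}}\psi^c\cup\msf{ev}_0^\ast\Theta_{(t)}\cup\msf{rev}_\infty^\ast H_{(s)}^d\,.
\]
Since $\msf B(t)$ is a $\mathbb Z_r$-gerbe over $\msf B'(t)$, orbifold integration over $\msf B'(t)$ differs from that over $\msf B(t)$ by a factor $r$; combined with the definition of the dual basis $\Sigma^\star$ — that $\theta_j^{(\bar t)}$ is the Chen--Ruan Poincar\'e dual of $\theta^i_{(t)}$ precisely when the labels $i,j$ are dual — this yields $\int_{\msf B'(t)}\theta^i_{(t)}\cup\theta_j^{(\bar t)}=r\,\delta^i_j$. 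The properness hypothesis is exactly what forces $\deg(\psi^c\cup\Theta_{(t)}\cup H_{(s)}^d)=\dimc\scr M_{\bf R'}$ and $\deg(\theta^i_{(t)}\cup\theta_j^{(\bar t)})=\dimc\msf B'(t)$, so that both integrals are over spaces of the correct dimension. This reproduces the computation displayed just before the statement,
\[
\langle{\bf R}(\tau_c(\theta^i_{(t)}\cup\Theta_{(t)})|\theta_j^{(\bar t)}\cup H_{(s)}^d)\rangle=r\,\delta^i_j\cdot\langle{\bf R'}(\tau_c(\Theta_{(t)})|H_{(s)}^d)\rangle\,.
\]

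The final step is non-vanishing. If $\theta^i_{(t)}$ and $\theta_j^{(\bar t)}$ are dual then $\delta^i_j=1$; and by Theorem~\ref{thm c-dertmin-all}, with the explicit value from Theorem~\ref{thm H}, the data ${\bf R'}$ and $d$ are uniquely determined by $c$ and
\[
\langle{\bf R'}(\tau_c(\Theta_{(t)})|H_{(s)}^d)\rangle=\frac{1}{r}\,R^d\prod_{\ell=1}^n\frac{1}{(\frac{\beta_\ell}{r}+[\tau(R,\ell)])!}\neq 0\,,
\]
so the whole invariant is non-zero, as claimed. The step I expect to take the most care is the gerbe bookkeeping: checking that the pushforward along $\scr M_{\bf R}\rto\msf B'(t)$ contributes exactly the factor $r$ (and not $1$ or $1/r$), and that the integrand genuinely separates into a class pulled back from $\msf B'(t)$ times a universal class on the fibre $\scr M_{\bf R'}$ with no cross terms. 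The remaining points follow routinely from Proposition~\ref{prp moduli-E-B} and the dimension count encoded in properness.
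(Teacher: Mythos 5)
Your proposal is correct and follows essentially the same route as the paper: the paper's proof is exactly the displayed computation preceding the statement, which uses the fibration of Proposition \ref{prp moduli-E-B} to split the integral into the base pairing $\int_{\msf B'(t)}\theta^i_{(t)}\cup\theta_j^{(\bar t)}=r\,\delta^i_j$ times the fibre invariant of \S\ref{sec 5.2}, whose non-vanishing is Theorem \ref{thm c-dertmin-all}(3) (with the explicit value of Theorem \ref{thm H}). Your extra care with the $\integer_r$-gerbe factor and with $\pi_{\scr T}(s)=\bar t$ simply makes explicit the bookkeeping the paper leaves implicit.
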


\section{Weighted-blowup correspondence for orbifold Gromov--Witten invariants}

In this section we first consider admissible relative data $\scr R_{\Sigma_\star}^\bullet(\X|\Z)$ of a general relative pair
$(\X|\Z)$ with relative insertions coming from a choosing basis $\Sigma_\star$ of $H^*_{CR}(\Z)$. We give a partial order to
$\scr R_{\Sigma_\star}^\bullet(\X|\Z)$. Then we consider the pair $(\Xa|\Z)$ obtained by blowing up $\X$ along a symplectic sub-orbifold groupoid $\oS$ with weight $\wa$. There is a subset $\scr R_{\Sigma_\star,\K}^\bullet(\Xa|\Z)$ of
$\scr R_{\Sigma_\star}^\bullet(\Xa|\Z)$ in which all relative datum have absolute insertions coming from $\K$, which is the image of the Chen--Ruan cohomology of $\X$ via the natural projection $\kappa:\Xa\rto\X$ (cf. \S \ref{subsec blp-X-S} and \S  \ref{sec correspondence}).

\subsection{Partial order on $\scr R_{\Sigma_\star}^\bullet(\X|\Z)$}
\label{S partial-order}
Consider a relative pair $(\X|\Z)$ and take a basis $\Sigma_\star$ of the Chen--Ruan cohomology $H^*_{CR}(\Z)$.
We assume that elements in $\Sigma_\star$ are homogenous with respect to the degree
and the decomposition of the inertia sapce $\sf IZ$. Denote the dual basis by $\Sigma^\star$.
In this section we give a partial order on the set $\scr R_{\Sigma_\star}^\bullet(\X|\Z)$ of
admissible relative $\Sigma_\star$-data of $(\X|\Z)$.

Given the relative pair $(\X|\Z)$, by blowing up $\X$ along $\Z$ with trivial weight $\wa=(1)$ we degenerate $(\X|\Z)$ into
\begin{align}\label{eq dege-X|Z-weit(1)}
(\X|\Z)\xrightarrow{\text{degenerate}} (\X^-|\Z^-)\wedge_\Z (\Z^+|\X^+|\Z)=(\X|\Z)\wedge_\Z(\Z_\infty|\lN|\Z_0)
\end{align}
here $\lN$ is the weight $\wa=(1)$ projectification of the normal bundle $\N_{\Z|\X}$ of $\Z$ in $\X$. To distinguish different divisors in $\lN$, we write the original $\Z$ in $\X$ by $\Z_0\in \lN$, which is the zero section of $\lN$. We write the infinity section of $\lN$ by $\Z_\infty$. By gluing $\Z_\infty$ with the $\Z\in\X$ we get the original $(\X|\Z)$. Denote the fiber class of $\lN\rto\Z$ by $[F_\Z]$.

In the next we also consider relative $(\Sigma^\star,\Sigma_\star)$-data of $(\Z_\infty|\lN|\Z_0)$. A relative $(\Sigma^\star,\Sigma_\star)$-data
\[
{\bf R}^{+,\bullet}(J_\infty|I|J_0)
\]
consists of
\begin{itemize}
\item topological data
       \[
       {\bf R}^{+,\bullet}=\big(g,A,\fk t=(t_1,\ldots,t_k)\big|\fk r_\infty
       ,\fk r_0
       \big)
       \]
       with
       \[
       \fk r_\infty=((s^1_\infty,u^1_\infty),\ldots, (s^{h_\infty}_\infty,u^{h_\infty}_\infty)), \qq
       \fk r_0=((s^1_0,u^1_0),\ldots,(s^{h_0}_0,u^{h_0}_0)),
       \]
\item absolute insertions $I=(\tau_{d_1}\alpha_1,\ldots,\tau_{d_k}\alpha_k)$ with $\alpha_i\in H^*(\lN_{(t_i)})$,
\item relative insertions $\fk i(J_\0)=(\beta^1_\infty,\ldots,\beta^{h_\infty}_\infty)$ with $\beta^i_\infty\in \Sigma^\star$ and $\fk i(J_0)=(\beta^1_0,\ldots,\beta^{h_0}_0)$ with $\beta^i_0\in \Sigma_\star$.
\end{itemize}
Then $J_\0=(\fk r_\0, \fk i(J_\0))$ and $J_0=(\fk r_0, \fk i(J_0))$.

As in Section \ref{sec 4}, we call a relative $(\Sigma^\star,\Sigma_\star)$-data {\bf admissible} if the total degree of insertions matches the virtual dimension of the corresponding relative moduli space.
Denote the set of admissible relative $(\Sigma^\star,\Sigma_\star)$-data of $(\Z_\infty|\lN|\Z_0)$ by
$\scr R_{\Sigma^\star,\Sigma_\star}^\bullet(\Z_\infty|\lN|\Z_0)$.

There is a special kind of admissible $(\Sigma^\star,\Sigma_\star)$-data of $(\Z_\infty|\lN|\Z_0)$.
\begin{defn}\label{def N-mini-data}
A {\bf connected pre-$\lN$-minimal data} in $\scr R_{\Sigma^\star,\Sigma_\star}^\bullet(\Z_\infty|\lN|\Z_0)$
is a connected admissible data ${\bf R}^+(J_\infty|\varnothing|J_0)$
whose topological data is of the form
\[
{\bf R}^+=\big(g=0,A=u\cdot[F_\Z],\fk t=\varnothing|\fk r_\infty=(s_\infty, u),\fk r_0=(\bar s_\infty,u)\big).
\]
We call a connected pre-$\lN$-minimal data an {\bf $\lN$-minimal data} if
$\fk i(\check J_\0)=\fk i(J_0)$, hence $\check J_\infty= J_0$.

A {\bf disconnected pre-$\lN$-minimal ($\lN$-minimal resp.) data} of $(\Z_\infty|\lN|\Z_0)$ is a disjoint union of finite connected pre-$\lN$-minimal ($\lN$-minimal resp.) datum.
\end{defn}

The computation in \S \ref{sec 5} and Theorem \ref{thm cpt-rel-inv} implies that

\begin{proposition}\label{prop inv-minimal}
Let ${\bf R}^+(J_\infty|\varnothing|J_0)$
be a connected pre-$\lN$-minimal data.
Then the invariant
\[
\langle {\bf R}^+(J_\infty|\varnothing|J_0)\rangle^{\Z_\0|\lN|\Z_0}\not= 0
\]
if and only if
${\bf R}^+(J_\infty|\varnothing|J_0)$ is $\lN$-minimal,
i.e. $J_\infty=\check J_0$.
\end{proposition}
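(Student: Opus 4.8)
The plan is to make the relative moduli space underlying $\langle{\bf R}^+(J_\infty|\varnothing|J_0)\rangle^{\Z_\infty|\lN|\Z_0}$ explicit and then recognise the resulting number as a Chen--Ruan Poincar\'e pairing on $\Z$. First I would analyse the moduli space $\overline{\scr M}_{{\bf R}^+}(\Z_\infty|\lN|\Z_0)$. Since $A=u\cdot[F_\Z]$ is a fibre class with $A\cdot\Z_0=A\cdot\Z_\infty=u$, while $\fk r_\infty=(s_\infty,u)$ and $\fk r_0=(\bar s_\infty,u)$ each consist of a single relative marked point of contact order $u$, the entire intersection of the curve with each of $\Z_0,\Z_\infty$ is forced onto the single corresponding marked point. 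Hence no component of a stable map may lie in $\Z_0$ or $\Z_\infty$, no bubbling into the expanded targets occurs, the domain is a smooth genus-zero orbisphere whose image lies in a single fibre, which is an orbifold $\P^1$ with orbifold points at $0$ and $\infty$ prescribed by $s_\infty$ and $\bar s_\infty$, and the map is the degree-$u$ cover totally ramified over $0$ and $\infty$, i.e. $z\mapsto z^u$ up to automorphism. This is precisely the rank-one, trivial-weight specialisation of the moduli spaces treated in \S\ref{sec 5.1}--\S\ref{sec 5.4}, so exactly as there one obtains: $\msf{rev}_\infty\colon\overline{\scr M}_{{\bf R}^+}(\Z_\infty|\lN|\Z_0)\to\Z(s_\infty)$ is a finite orbifold covering of positive rational degree, of complex dimension $\dim_\cplane\Z(s_\infty)$; the fibre-direction obstruction vanishes, since it is the $H^1$ of a line bundle of non-negative degree on a genus-zero orbicurve, so the virtual class is the fundamental class; and $\msf{rev}_0=\overline{(\cdot)}\circ\msf{rev}_\infty$ under this identification, the last equality being the orbifold version of the fact that the monodromies of a $\P^1$ around $0$ and around $\infty$ are mutually inverse, which in particular forces the twisted sector at $\Z_0$ to be $\bar s_\infty$ once the one at $\Z_\infty$ is $s_\infty$.

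Granting this, the computation of the invariant is immediate. Using $\msf{rev}_0^\ast\beta_0=\msf{rev}_\infty^\ast(\overline{(\cdot)}^\ast\beta_0)$ and the projection formula for the finite covering $\msf{rev}_\infty$,
\[
\langle{\bf R}^+(J_\infty|\varnothing|J_0)\rangle^{\Z_\infty|\lN|\Z_0}
=\int^{\mathrm{vir}}_{\overline{\scr M}_{{\bf R}^+}}\msf{rev}_\infty^\ast\beta_\infty\cup\msf{rev}_0^\ast\beta_0
=c\int_{\Z(s_\infty)}\beta_\infty\wedge\overline{(\cdot)}^\ast\beta_0
=c\,\langle\beta_\infty,\beta_0\rangle,
\]
where $c\neq 0$ is the degree of $\msf{rev}_\infty$ and $\langle\cdot,\cdot\rangle$ is the Chen--Ruan Poincar\'e pairing on $H^\ast_{CR}(\Z)$; by the admissibility of ${\bf R}^+(J_\infty|\varnothing|J_0)$ together with the dimension count above, the degrees of $\beta_\infty$ and $\beta_0$ add up to $\dim_\rone\Z(s_\infty)$, so the middle integral is a genuine pairing of top-complementary classes. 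The value of $c$, and in particular $c\neq 0$, is exactly what Theorem \ref{thm H} and Theorem \ref{thm cpt-rel-inv} provide in this special case; alternatively one may reach the same conclusion by degenerating $\lN$ once more along $\Z_0$ with trivial weight and applying the degeneration formula \eqref{eq dege-formula-rel} to reduce to the single-relative invariant of \S\ref{sec 5}. Finally, since $\Sigma^\star$ is by construction the $\langle\cdot,\cdot\rangle$-dual basis of $\Sigma_\star$ and $\beta_\infty\in\Sigma^\star$, $\beta_0\in\Sigma_\star$, one has $\langle\beta_\infty,\beta_0\rangle\neq 0$ if and only if $\beta_\infty=(\beta_0)^\ast$, i.e. $\fk i(\check J_\infty)=\fk i(J_0)$; combined with the already-matching topological data $\check{\fk r}_\infty=(\bar s_\infty,u)=\fk r_0$, this is precisely $\check J_\infty=J_0$, i.e. that ${\bf R}^+(J_\infty|\varnothing|J_0)$ is $\lN$-minimal. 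Hence the invariant is non-zero exactly when the data is $\lN$-minimal.

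The main obstacle is the first paragraph: rigorously establishing the structure of the double-relative moduli space $\overline{\scr M}_{{\bf R}^+}(\Z_\infty|\lN|\Z_0)$ --- its irreducibility, the absence of bubbling into the expanded degenerations, its identification with a finite covering of $\Z(s_\infty)$, the compatibility $\msf{rev}_0=\overline{(\cdot)}\circ\msf{rev}_\infty$, and the vanishing of the fibre-direction obstruction, so that the virtual fundamental class coincides with the ordinary one. All of this is a faithful rerun, for a rank-one normal bundle with trivial weight, of the covering-space and automorphism analysis carried out in \S\ref{sec 5}; I would organise the write-up so as to cite Theorems \ref{thm H} and \ref{thm cpt-rel-inv} for the quantitative non-vanishing rather than redo that bookkeeping, after which the pairing argument of the second paragraph completes the proof.
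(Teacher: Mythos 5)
Your proposal is correct, and it is in fact more of a proof than the paper itself offers: the paper disposes of Proposition \ref{prop inv-minimal} with the single remark that it ``follows from the computation in \S\ref{sec 5} and Theorem \ref{thm cpt-rel-inv}'', leaving implicit exactly the bridge you construct. Your route --- showing that for fiber class $u[F_\Z]$ with full tangency at one point on each of $\Z_0,\Z_\infty$ the double-relative moduli space consists of unobstructed covers of fibers, fibred over $\Z(s_\infty)$ with $\msf{rev}_0=\bar{\cdot}\circ\msf{rev}_\infty$, so that the invariant equals a non-zero multiple of the Chen--Ruan pairing $\langle\beta_\infty,\beta_0\rangle$, which for $\beta_\infty\in\Sigma^\star$, $\beta_0\in\Sigma_\star$ is non-zero exactly when the classes are dual --- is the natural orbifold analogue of the fiber-class computations of \cite{MauP06,HLR08} and is the same dual-basis/pairing mechanism that underlies Theorem \ref{thm cpt-rel-inv}; your dimension count (virtual dimension $\dim_\cplane\Z(s_\infty)$ after degree shifting) also checks out. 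The one point to state carefully in a write-up is that Theorems \ref{thm H} and \ref{thm cpt-rel-inv} literally compute a \emph{single}-relative invariant of $(\overline{\msf E}_\wa|\msf{PE}_\wa)$ with an absolute marking carrying a Thom-class insertion, not the double-relative invariant of $(\Z_\infty|\lN|\Z_0)$ appearing here, so they cannot be quoted verbatim for the value of the constant $c$; either your own covering-degree argument or the degeneration argument you sketch (splitting off the $\lN$-piece as in the proof of Theorem \ref{thm lower-traingl}) closes this gap, and also keep in mind that in this paper's conventions the contact order $u$ may be rational, so ``degree-$u$ cover totally ramified over $0$ and $\infty$'' should be phrased in terms of the orbifold covering data rather than an honest integer degree.
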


\begin{remark}\label{rmk empty-N-minimal}
For completeness we also call empty data
\[
\big(g=0,A=0,\fk t=\varnothing|\fk r_\infty=\varnothing,\fk r_0=\varnothing\big).
\]
$\lN$-minimal, and set its invariant to be $1$.
Note that when $A=0$, $\fk r_\0$ and $\fk r_0$ are both empty.
\end{remark}

\subsubsection{Definition of partial order}

Now we define the partial order on $\scr R_{\Sigma_\star}^\bullet(\X|\Z)$.
\begin{defn}\label{def partial ord}
Take two admissible $\Sigma_\star$-datum ${\bf R}^\bullet_i(I_i|J_i)\in\scr R_{\Sigma_\star}^\bullet(\X|\Z), i=1,2$.
We say that
\begin{align}\label{eq def-prec}
{\bf R}^\bullet_1(I_1|J_1)\prec {\bf R}^\bullet_2(I_2|J_2)
\end{align}
if when we degenerate $(\X|\Z)$ as \eqref{eq dege-X|Z-weit(1)},
there is an admissible $(\Sigma^\star,\Sigma_\star)$-data
${\bf R}^{+,\bullet}(J_\infty|I|J_0)\in\scr R_{\Sigma^\star,\Sigma_\star}^\bullet(\Z_\infty|\lN|\Z_0)$ such that
\begin{itemize}
\item[(P1)] the pair
    \[
    \big({\bf R}^\bullet_1(I_1|J_1),\,{\bf R}^{+,\bullet}(J_\0|I|J_0)\big)
    \]
    is a matched pair, i.e. $J_1=\check J_\0$ and
    \[
    {\bf R}^\bullet_2(I_2|J_2)={\bf R}^\bullet_1(I_1|J_1)\ast {\bf R}^{+,\bullet}(J_\infty|I|J_0),
    \]
    hence $J_2=J_0$, and moreover
\item[(P2)] when ${\bf R}^{+,\bullet}(J_\infty|I|J_0)$ is pre-$\lN$-minimal, it must be $\lN$-minimal. Then $J_2=J_0=\check J_\0=J_1$ and
    ${\bf R}^\bullet_2(I_2|J_2)={\bf R}^\bullet_1(I_1|J_1)$.
    (Here we allow empty $\lN$-minimal data).
\end{itemize}
\end{defn}

The main theorem in this subsection is the following result.
\begin{theorem}\label{thm partial-ord}
$(\scr R^\bullet_{\Sigma_\star}(\X|\Z),\prec)$ is a partial ordered set.
\end{theorem}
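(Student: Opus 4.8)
The plan is to check the three defining properties of a partial order—reflexivity, antisymmetry, transitivity—directly from Definition~\ref{def partial ord}, the whole argument being organized around two non‑negative quantities attached to a datum ${\bf R}^\bullet(I|J)$ with ${\bf R}^\bullet=(g,A,\fk t|\fk r)$: its symplectic area $\langle\omega,A\rangle$ (read in $H_2(|\X|)$, so $\geq 0$ because classes are represented by $J$‑holomorphic curves) and its combinatorial complexity $\nu({\bf R}^\bullet):=2g-2c+k+h$, where $c$ is the number of connected components, $k=|\fk t|$, $h=|\fk r|$. The elementary identity $\nu({\bf R}^\bullet)=\sum_v(2g_v-2+n_v)$—sum over irreducible components of the domain, $n_v$ counting all special points on $v$—gives, by stability, $\nu\geq c\geq 1$, and shows that $\nu$ is unchanged when a marked point is promoted to a node. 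Consequently, in any gluing ${\bf R}^\bullet_2={\bf R}^\bullet_1\ast{\bf R}^{+,\bullet}$ along $\Z$ the component data are merely redistributed, whence $\nu({\bf R}^\bullet_2)=\nu({\bf R}^\bullet_1)+\nu({\bf R}^{+,\bullet})$, while $\langle\omega,A_2\rangle=\langle\omega,A_1\rangle+\langle\omega,\bar A\rangle$ where $\bar A$ is the image in $\Z\subseteq\X$ of the $\lN$‑part (the contraction of the $\lN$‑component kills $[F_\Z]$ and is the identity on $\Z$), so $\langle\omega,\bar A\rangle\geq 0$. Set $\Phi:=\langle\omega,A\rangle+\nu$. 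The point I will extract is: for a datum ${\bf R}^{+,\bullet}$ of $(\Z_\infty|\lN|\Z_0)$ one has $\nu({\bf R}^{+,\bullet})=0$ and $\langle\omega,\bar A\rangle=0$ if and only if ${\bf R}^{+,\bullet}$ is pre‑$\lN$‑minimal; indeed $\nu=0$ forces every connected component to be genus $0$ with exactly two markings, verticality forces each component's class to be a multiple of $[F_\Z]$, and such a component necessarily meets both $\Z_\infty$ and $\Z_0$ once with contact equal to its fiber degree, with the two boundary sectors automatically being $s$ and $\bar s$ (read off from the local structure of $\lN=\P(\N\oplus\mathcal O)$)—precisely Definition~\ref{def N-mini-data}.

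For reflexivity, given ${\bf R}^\bullet_1(I_1|J_1)$ with $J_1=((s_1,\ell_1,\beta_1),\dots,(s_h,\ell_h,\beta_h))$, $\beta_j\in\Sigma_\star$, take ${\bf R}^{+,\bullet}$ to be the disjoint union over $j$ of the datum $(g=0,A=\ell_j[F_\Z],\fk t=\varnothing\,|\,\fk r_\infty=(\bar s_j,\ell_j),\fk r_0=(s_j,\ell_j))$ with relative insertion $\beta_j^\ast\in\Sigma^\star$ at $\Z_\infty$ and $\beta_j\in\Sigma_\star$ at $\Z_0$ (choosing $\Sigma_\star$ self‑dual so that $(\beta_j^\ast)^\ast=\beta_j$; admissibility of these "tube" datum is the content underlying Proposition~\ref{prop inv-minimal}). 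It is $\lN$‑minimal, hence satisfies (P2); gluing each tube onto the corresponding relative marking of ${\bf R}^\bullet_1$ changes neither the genus, nor the class in $H_2(|\X|)$, nor the absolute/relative insertions, so ${\bf R}^\bullet_1\ast{\bf R}^{+,\bullet}={\bf R}^\bullet_1$ and (P1) holds. For antisymmetry, suppose ${\bf R}^\bullet_1\prec{\bf R}^\bullet_2$ and ${\bf R}^\bullet_2\prec{\bf R}^\bullet_1$. Applying $\Phi$ to both witnessing gluings and using $\Phi({\bf R}^\bullet_1\ast{\bf R}^{+,\bullet})\geq\Phi({\bf R}^\bullet_1)$ gives $\Phi({\bf R}^\bullet_1)=\Phi({\bf R}^\bullet_2)$; hence the witness ${\bf R}^{+,\bullet}$ for ${\bf R}^\bullet_1\prec{\bf R}^\bullet_2$ has $\nu=0$ and vertical class, so it is pre‑$\lN$‑minimal, so by (P2) it is $\lN$‑minimal, and the last clause of (P2) forces ${\bf R}^\bullet_1={\bf R}^\bullet_2$.

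For transitivity, let ${\bf R}^{+,\bullet}_{12}$ and ${\bf R}^{+,\bullet}_{23}$ witness ${\bf R}^\bullet_1\prec{\bf R}^\bullet_2$ and ${\bf R}^\bullet_2\prec{\bf R}^\bullet_3$. By (P1) the $\Z_0$‑relative data of ${\bf R}^{+,\bullet}_{12}$ equals $J_2$ and the $\Z_\infty$‑relative data of ${\bf R}^{+,\bullet}_{23}$ equals $\check J_2$, so $({\bf R}^{+,\bullet}_{12},{\bf R}^{+,\bullet}_{23})$ is a matched pair; since degenerating $\lN$ along $\Z_0$ with trivial weight reproduces $(\Z_\infty|\lN|\Z_0)$, the glued datum ${\bf R}^{+,\bullet}_{13}:={\bf R}^{+,\bullet}_{12}\ast{\bf R}^{+,\bullet}_{23}$ is again an admissible $(\Sigma^\star,\Sigma_\star)$‑datum of $(\Z_\infty|\lN|\Z_0)$, and associativity of $\ast$ on topological data gives ${\bf R}^\bullet_1\ast{\bf R}^{+,\bullet}_{13}=({\bf R}^\bullet_1\ast{\bf R}^{+,\bullet}_{12})\ast{\bf R}^{+,\bullet}_{23}={\bf R}^\bullet_3$, i.e.\ (P1) for $({\bf R}^\bullet_1,{\bf R}^\bullet_3)$. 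For (P2): if ${\bf R}^{+,\bullet}_{13}$ is pre‑$\lN$‑minimal then $\nu$ and the $\Z$‑image area both vanish on it; both are additive under $\ast$ and non‑negative on ${\bf R}^{+,\bullet}_{12},{\bf R}^{+,\bullet}_{23}$, so both factors are pre‑$\lN$‑minimal, hence (by (P2) for the two given relations) $\lN$‑minimal; then ${\bf R}^\bullet_1={\bf R}^\bullet_2={\bf R}^\bullet_3$, and chaining the duality labels through the self‑dual basis shows ${\bf R}^{+,\bullet}_{13}$ is $\lN$‑minimal as well, so (P2) holds. Thus ${\bf R}^\bullet_1\prec{\bf R}^\bullet_3$, and $(\scr R^\bullet_{\Sigma_\star}(\X|\Z),\prec)$ is a partial order.

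The part I expect to be the real obstacle is the characterization that $\nu=0$ together with homologically vertical class is equivalent to being pre‑$\lN$‑minimal in the sense of Definition~\ref{def N-mini-data}: identifying the complexity‑zero, fiber‑supported relative configurations of $(\Z_\infty|\lN|\Z_0)$ as exactly the rigid two‑pointed fiber tubes, including the verification that the two boundary twisted sectors of such a tube are forced to be $s$ and $\bar s$. Everything else—additivity of $\nu$ and of the $\Z$‑image area under $\ast$, associativity of gluing of topological data, and the idempotence of the trivial‑weight degeneration of $\lN$—is formal once that geometric description is in place, and then reflexivity, antisymmetry and transitivity follow as above.
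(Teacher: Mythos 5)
Your skeleton is the same as the paper's (reflexivity via the explicit tube datum, antisymmetry and transitivity by forcing the witnessing data to be pre-$\lN$-minimal and then invoking (P2)), but the step you rest everything on --- the potential $\Phi=\langle\omega,A\rangle+\nu$ with both terms non-negative on data of $(\Z_\infty|\lN|\Z_0)$ --- has a genuine gap. Elements of $\scr R^\bullet_{\Sigma^\star,\Sigma_\star}(\Z_\infty|\lN|\Z_0)$ are defined by a purely numerical condition (total insertion degree equals virtual dimension); nothing guarantees that their moduli spaces are non-empty. So you may not argue that the pushed-forward class has non-negative symplectic area ``because classes are represented by $J$-holomorphic curves'', and $\nu\geq 0$ does not follow ``by stability'': your stated inequality $\nu\geq c\geq 1$ even contradicts your own key example, since a tube has $\nu=0$, and the inequality you actually need, $2g_i-2+n_i\geq 0$ for \emph{every} connected component, can fail --- a genus-zero component of zero fiber degree with at most one marking is not excluded by admissibility. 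Without componentwise non-negativity of both terms, $\Phi'=0$ on a witness does not force it to be pre-$\lN$-minimal (negative and positive contributions could cancel), and ``$\nu=0$ plus vertical class $\Rightarrow$ disjoint union of two-pointed fiber tubes'' --- the step you yourself flag as the crux --- does not close.

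The paper avoids positivity altogether and works by bookkeeping. For antisymmetry it uses the self-gluing identity ${\bf R}^\bullet_1={\bf R}^\bullet_1\ast\big({\bf R}_b^{+,\bullet}\ast{\bf R}_a^{+,\bullet}\big)$: since the glued datum must reproduce the same genus, homology class, absolute insertions and relative markings, and since contact orders at $\Z_0$ and $\Z_\infty$ must sum to the intersection numbers of the (necessarily fiber) class, the composite is forced to be a disjoint union of genus-zero two-pointed fiber tubes (the converse part of Lemma \ref{lem 6.5-rel-to-minimal}); Lemma \ref{lem 6.6-dege-minimal} then passes pre-$\lN$-minimality to each factor, (P2) upgrades both factors to $\lN$-minimal, and its second part gives ${\bf R}^\bullet_1={\bf R}^\bullet_2$. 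The same two lemmas give transitivity; your reflexivity construction coincides with the first part of Lemma \ref{lem 6.5-rel-to-minimal}. If you want to keep the potential-function formulation you must first establish the componentwise inequality for the witnesses that actually occur (e.g.\ by showing, as in the paper, that the composite witness glues back to ${\bf R}^\bullet_1$ and hence consists only of tube components), at which point the potential itself becomes superfluous.
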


\subsubsection{Proof of Theorem \ref{thm partial-ord}}

Before giving the proof of Theorem \ref{thm partial-ord} we give some lemmas.

\begin{lemma}\label{lem 6.5-rel-to-minimal}
Let ${\bf R}^{\bullet}(I|J)\in \scr R_{\Sigma_\star}^\bullet(\X|\Z)$. Then there exists a unique $\lN$-minimal
data ${\bf R}^{+,\bullet}(\check J|\varnothing|J)\in \scr R_{\Sigma^\star,\Sigma_\star}^\bullet(\Z_\infty|\lN|\Z_0)$
such that
\begin{align}\label{eq R+lN-minimal}
{\bf R}^{\bullet}(I|J)\ast {\bf R}^{+,\bullet}(\check J|\varnothing|J)={\bf R}^{\bullet}(I|J).
\end{align}

Conversely, if there is another data
${\bf R}^{+,\bullet}(J_\infty|I^+|J_0)\in \scr R_{\Sigma^\star,\Sigma_\star}^\bullet(\Z_\infty|\lN|\Z_0)$
satisfying \eqref{eq R+lN-minimal} and the associated invariant is nonzero, then it must be the $\lN$-minimal data
${\bf R}^{+,\bullet}(\check J|\varnothing|J)$ in \eqref{eq R+lN-minimal}.
\end{lemma}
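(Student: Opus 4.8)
The plan is to prove the three assertions in turn: the existence of an $\lN$-minimal data satisfying \eqref{eq R+lN-minimal}, its uniqueness among $\lN$-minimal data, and the sharper statement that it is the only admissible data with non-vanishing invariant satisfying \eqref{eq R+lN-minimal}. For existence, write $\fk r(J)=((s_1,\ell_1),\dots,(s_h,\ell_h))$ and $\fk i(J)=(\beta_1,\dots,\beta_h)$; if $h=0$ take the empty $\lN$-minimal data of Remark \ref{rmk empty-N-minimal}, and otherwise take ${\bf R}^{+,\bullet}(\check J|\varnothing|J)$ to be the disjoint union over $1\le i\le h$ of the connected datum
\[
\big(g=0,\ A=\ell_i[F_\Z],\ \fk t=\varnothing\ \big|\ \fk r_\infty=(\bar s_i,\ell_i),\ \fk r_0=(s_i,\ell_i)\big)
\]
carrying the relative insertion $\beta_i^\ast\in\Sigma^\star$ at $\Z_\infty$ and $\beta_i\in\Sigma_\star$ at $\Z_0$. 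By inspection this matches Definition \ref{def N-mini-data} (with $J_\infty=\check J$ and $J_0=J$), and it is admissible because ${\bf R}^\bullet(I|J)$ is and the virtual dimensions add up under the gluing $\ast$; a direct check then shows that gluing these $h$ ``trivial cylinders'' onto the relative markings of ${\bf R}^\bullet(I|J)$ changes neither the genus, nor the homology class (the fibre class $[F_\Z]$ of $\lN\to\Z$ pushes forward to $0$ in $\X$), nor the absolute insertions, while the surviving relative markings carry exactly $J$ at $\Z_0$; hence \eqref{eq R+lN-minimal} holds.

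For uniqueness among $\lN$-minimal data, I would use that, by Definition \ref{def N-mini-data} and Remark \ref{rmk empty-N-minimal}, any $\lN$-minimal data is a disjoint union of connected $\lN$-minimal components (each with exactly one relative marking at $\Z_\infty$ and one at $\Z_0$) together with possibly some empty components, which do not affect $\ast$. Since the gluing $\ast$ matches every relative marking at the degeneration divisor, all $h$ relative markings of ${\bf R}^\bullet(I|J)$ must be paired with $\Z_\infty$-markings of ${\bf R}^{+,\bullet}$; by the matched-pair condition (Definition \ref{def matched-pair}) the component paired with $(s_i,\ell_i,\beta_i)$ has $\Z_\infty$-data $(\bar s_i,\ell_i,\beta_i^\ast)$, and then the remaining constraints in Definition \ref{def N-mini-data} determine its $\Z_0$-data to be $(s_i,\ell_i,\beta_i)$, its class to be $\ell_i[F_\Z]$, its genus to be $0$, and its absolute markings to be empty. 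Hence ${\bf R}^{+,\bullet}$ is exactly the data constructed above.

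For the converse, let ${\bf R}^{+,\bullet}(J_\infty|I^+|J_0)$ satisfy \eqref{eq R+lN-minimal} with $\langle{\bf R}^{+,\bullet}(J_\infty|I^+|J_0)\rangle^{\Z_\infty|\lN|\Z_0}\neq 0$. Because \eqref{eq R+lN-minimal} reproduces ${\bf R}^\bullet(I|J)$ on the nose, comparing the two sides marking by marking and class by class forces $I^+=\varnothing$ (hence $\fk t^+=\varnothing$), forces the $\lN$-piece to have genus $0$ with no cycle created in the gluing graph, forces $J_\infty=\check J$ and $J_0=J$, and forces the $\lN$-piece to have exactly $h$ connected components, each attached to ${\bf R}^\bullet(I|J)$ at a single node; thus each connected component of ${\bf R}^{+,\bullet}$ is a \emph{connected pre-$\lN$-minimal data} in the sense of Definition \ref{def N-mini-data}. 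Writing $\langle{\bf R}^{+,\bullet}\rangle^{\Z_\infty|\lN|\Z_0}$ as the product of the invariants of these components, the non-vanishing hypothesis forces each factor to be non-zero, and Proposition \ref{prop inv-minimal} then forces every component to be $\lN$-minimal. Therefore ${\bf R}^{+,\bullet}$ is $\lN$-minimal, and by the uniqueness just established it coincides with the data of \eqref{eq R+lN-minimal}.

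The step I expect to be the main obstacle is the structural reduction inside the converse: extracting, from the single identity ${\bf R}^\bullet(I|J)\ast{\bf R}^{+,\bullet}={\bf R}^\bullet(I|J)$, that the $\lN$-piece can carry no absolute marking, no extra genus, no loop, and no ``floating'' or multiply-attached connected component --- equivalently, that each of its connected components is already a connected pre-$\lN$-minimal data, so that Proposition \ref{prop inv-minimal} becomes applicable. This amounts to a careful bookkeeping of the behaviour of genus, homology class, the numbers of absolute and relative markings, and the number of connected components under the gluing $\ast$; everything else in the proof is a direct comparison of combinatorial data.
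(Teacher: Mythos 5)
Your proposal is correct and follows essentially the same route as the paper: the same construction of the $h$ trivial-cylinder $\lN$-minimal data for existence, and the same converse argument (fiber class, genus-zero components with one relative marking on each of $\Z_0,\Z_\infty$, hence pre-$\lN$-minimal; then factor the invariant and apply Proposition \ref{prop inv-minimal}). The only difference is organizational — you isolate uniqueness among $\lN$-minimal data as a separate step, whereas the paper obtains uniqueness as a byproduct of the converse — which does not change the substance.
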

\begin{proof}
Suppose that the relative data of ${\bf R}^\bullet (I|J)$ is
\[
{\bf R}^\bullet=\big(g,A,\fk t=(t_1,\ldots,t_m)|\fk r=((s_1,u_1),\ldots,(s_h,u_h))\big),
\]
and the relative insertions are $\fk i(J)=(\beta_1,\ldots,\beta_h)$ with $\beta_k\in\Sigma_\star$.

The $\lN$-minimal data ${\bf R}^{+,\bullet}(\check J|\varnothing|J)$ is constructed as follow.
It consists of of $h=\sharp J$ connected components with $i$-th component ${\bf R}^+_i(\check J_i|\varnothing|J_i)$
being of the following form:
\begin{itemize}
\item topological data
\[
{\bf R}^+_i=\big(g=0,A^+_i=u_i\cdot [F_\Z],\fk t=\varnothing|\fk r_0=(s_i,u_i), \fk r_\infty=((\bar s_i,u_i))\big),
\]
\item the relative insertions are $\check\beta_i$
     for $\Z_\0$ and $\beta_i$ for $\Z_0$.
\end{itemize}
When $\fk r=\varnothing$, all ${\bf R}^+_i(\check J_i|\varnothing|J_i)$ are the empty $\lN$-minimal data.

Obviously, ${\bf R}^+_i(\check J_i|\varnothing|J_i)$ is $\lN$-minimal.
Hence
\[
{\bf R}^{+,\bullet}(\check J|\varnothing|J)=\bigsqcup_{i=1}^{h} {\bf R}^+_i(\check J_i|\varnothing|J_i)
\]
is $\lN$-minimal. Moreover, one can directly see that
\[
{\bf R}^\bullet (I|J)={\bf R}^\bullet (I|J)\ast {\bf R}^{+,\bullet}(\check J|\varnothing|J).
\]

Conversely, suppose there is another data
${\bf R}^{+,\bullet}(J_\infty|I^+|J_0)\in \scr R_{\Sigma^\star,\Sigma_\star}^\bullet(\Z_\infty|\lN|\Z_0)$
such that
\[
{\bf R}^{\bullet}(I|J)\ast {\bf R}^{+,\bullet}(J_\infty|I^+|J_0)={\bf R}^{\bullet}(I|J).
\]
We first consider the case that ${\bf R}^{+,\bullet}(J_\infty|I^+|J_0)$ is not empty. Then
\begin{itemize}
\item the homology class of ${\bf R}^{+,\bullet}(J_\infty|I^+|J_0)$ must be a fiber class $A^+=\sum_i u_i\cdot [F_\Z]$;
\item the genus of each connected component is zero;
\item $J_0=J, \, \check J_\infty=J$, $I^+=\varnothing$.
\end{itemize}
Since the genus of each component is zero, the homology class is a fiber class, every connected component of
${\bf R}^{+,\bullet}(J_\infty|I^+|J_0)$ would have exactly one relative marking mapped to each one of $\Z_0,\Z_\infty$.
So ${\bf R}^{+,\bullet}(J_\infty|I^+|J_0)$ has exactly $\sharp J$ components and every component is pre-$\lN$ minimal.
Then the invariant of ${\bf R}^{+,\bullet}(J_\infty|I^+|J_0)$ is the product of the invariant of each component.
Therefore by $J=J_0=\check J_\infty$,
Proposition \ref{prop inv-minimal} and the assumption
$\langle {\bf R}^{+,\bullet}(J_\infty|I^+|J_0)\rangle\neq 0$,
we see that each component of ${\bf R}^{+,\bullet}(J_\infty|I^+|J_0)$ is $\lN$-minimal.
Hence ${\bf R}^{+,\bullet}(J_\infty|I^+|J_0)$ is $\lN$-minimal.
One can then easily see that ${\bf R}^{+,\bullet}(J_\infty|I^+|J_0)={\bf R}^{+,\bullet}(\check J|\varnothing|J)$.
This also proves the uniqueness of ${\bf R}^{+,\bullet}(\check J|\varnothing|J)$.

When $J=\varnothing$, we take ${\bf R}^{+,\bullet}(\check J|\varnothing|J)$ to be the empty data in Remark \ref{rmk empty-N-minimal}.
\end{proof}

We can also degenerate $(\Z_\infty|\lN|\Z_0)$ by blowing up $\lN$ along $\Z_0$ with trivial weight. This procedure degenerates $(\Z_\infty|\lN|\Z_0)$ into two copies of itself
\begin{align}\label{eq dege-lN}
(\Z_\infty|\lN|\Z_0)\xrightarrow{\text{degenerate}} (\Z_\infty|\lN|\Z_0)\wedge_\Z (\Z_\infty|\lN|\Z_0).
\end{align}
By gluing $\Z_0$ in the first copy and $\Z_\infty$ in the second copy we get the original $(\Z_\infty|\lN|\Z_0)$.
\begin{lemma}\label{lem 6.6-dege-minimal}
Let ${\bf R}^\bullet(J_\infty|\varnothing|J_0)$ be a pre-$\lN$-minimal data. Suppose when we degenerate $(\Z_\infty|\lN|\Z_0)$ as \eqref{eq dege-lN} there holds
\[
{\bf R}^\bullet(J_\0|\varnothing|J_0)=
{\bf R}^{-,\bullet}(J_\0|\varnothing|J_0^-)\ast
{\bf R}^{+,\bullet}(J_\0^+|\varnothing|J_0)
\]
for some $(\Sigma^\star,\Sigma_\star)$-datum of $(\Z_\infty|\lN|\Z_0)$. Then both datum
${\bf R}^{-,\bullet}(J^-_\infty|\varnothing|J_0^-)$ and
${\bf R}^{+,\bullet}(J_\infty^+|\varnothing|J_0^+)$ are pre-$\lN$-minimal.

Suppose further more that
${\bf R}^\bullet(J_\infty|\varnothing|J_0)$ is $\lN$-minimal and, one of ${\bf R}^{-,\bullet}(J^-_\infty|\varnothing|J_0^-)$ and
${\bf R}^{+,\bullet}(J_\infty^+|\varnothing|J_0^+)$ is also $\lN$-minimal. Then
\[
{\bf R}^\bullet(J_\0|\varnothing|J_0)=
{\bf R}^{-,\bullet}(J_\0|\varnothing|J_0^-)=
{\bf R}^{+,\bullet}(J_\0^+|\varnothing|J_0)
\]
and are all $\lN$-minimal.
\end{lemma}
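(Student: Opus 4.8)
The plan is to read the topological data of the two pieces ${\bf R}^{-,\bullet}$ and ${\bf R}^{+,\bullet}$ off the geometry of a degenerate relative curve, and then to carry out a formal bookkeeping with the duality $\beta\mapsto\beta^\ast$ on relative insertions for the second assertion. Since the degeneration \eqref{eq dege-lN} and the gluing operation $\ast$ act componentwise on disjoint unions, and since the empty $\lN$-minimal datum (Remark \ref{rmk empty-N-minimal}) degenerates trivially, it suffices to treat a connected pre-$\lN$-minimal datum
\[
{\bf R}^\bullet=\big(g=0,\ A=u\cdot[F_\Z],\ \fk t=\varnothing\ \big|\ \fk r_\infty=(s_\infty,u),\ \fk r_0=(\bar s_\infty,u)\big),\qquad u>0,
\]
together with a decomposition $C=C^-\cup C^+$ glued along the middle copy of $\Z$ in $(\Z_\infty|\lN|\Z_0)\wedge_\Z(\Z_\infty|\lN|\Z_0)$, where $C^-$ lives in the copy carrying the original $\Z_\infty$ and $C^+$ in the copy carrying the original $\Z_0$.

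First I would determine the classes and relative markings of the two pieces. Because $\pi_\ast A=0$ and the projection $\pi:\lN\rto\Z$ is compatible with the degeneration, $\pi_\ast A^\pm=0$, so $A^\pm$ are fibre classes $u^\pm\cdot[F_\Z]$; the unique relative marking of ${\bf R}^\bullet$ at the un-degenerated $\Z_\infty$ is inherited by $C^-$ and the one at $\Z_0$ by $C^+$, whence $u^-=A^-\cdot\Z_\infty=u$ and $u^+=A^+\cdot\Z_0=u$. Next, at most one connected component of $C^-$ can have positive fibre degree: any such component meets $\Z_\infty$ and thus carries a relative marking there, and there is only one; the remaining components are ghosts, and a ghost cannot map into the relative divisor, so its only special points are nodes to other components of $C^-$, forcing it (through a tree of ghosts) to be attached to the unique positive-degree component, so $C^-$ is connected, and likewise $C^+$. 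The positive-degree component of $C^-$ has degree $u$ and is totally ramified over $\Z_\infty$ (its sole marking there has contact order $u$), hence is a $z\mapsto z^u$-type cover of a fibre, so it meets the middle $\Z$ at exactly one point, with contact order $u$; thus the gluing is along a single node and ${\bf R}^{-,\bullet}$ has exactly one relative marking of contact order $u$ over each divisor. Finally, $C^-$ is a genus-zero orbifold curve whose two relative markings are its relevant orbifold points, and the genus-zero constraint makes the monodromies at these two points mutually inverse (cf. Remark \ref{rmk 5.7}), so the twisted sector of the new marking of ${\bf R}^{-,\bullet}$ is $\bar s_\infty$. Hence ${\bf R}^{-,\bullet}=(g=0,A=u[F_\Z],\fk t=\varnothing\,|\,(s_\infty,u),(\bar s_\infty,u))$ is pre-$\lN$-minimal, and by the matching condition $J^-_0=\check J^+_\infty$ the datum ${\bf R}^{+,\bullet}$ has the same form; this proves the first assertion.

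For the second assertion I would track the relative insertions. Let $\beta_\infty$ be the relative insertion of ${\bf R}^\bullet$ at $\Z_\infty$; $\lN$-minimality of ${\bf R}^\bullet$ forces its insertion at $\Z_0$ to be $\beta_\infty^\ast$. By the first part ${\bf R}^{-,\bullet}$ carries $\beta_\infty$ at $\Z_\infty$ and some $\gamma\in\Sigma_\star$ at the middle $\Z$, while $J^-_0=\check J^+_\infty$ forces ${\bf R}^{+,\bullet}$ to carry $\gamma^\ast$ at the middle $\Z$ and $\beta_\infty^\ast$ at $\Z_0$. If ${\bf R}^{-,\bullet}$ is $\lN$-minimal then $\gamma=\beta_\infty^\ast$; substituting, ${\bf R}^{+,\bullet}$ then carries $(\beta_\infty^\ast)^\ast=\beta_\infty$ at the middle $\Z$ and $\beta_\infty^\ast$ at $\Z_0$, so it is $\lN$-minimal, and all three coincide with $(g=0,A=u[F_\Z],\fk t=\varnothing\,|\,(s_\infty,u,\beta_\infty),(\bar s_\infty,u,\beta_\infty^\ast))$. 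The case in which ${\bf R}^{+,\bullet}$ is the $\lN$-minimal piece is symmetric, and the disconnected case follows componentwise.

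I expect the main obstacle to be the rigidity step in the second paragraph — ruling out extra positive-degree components and extra nodes over the middle divisor — since this is the one place that genuinely uses the geometry of the degenerate curve (stability of ghost components, and the additivity $p_a(C^-\cup C^+)=p_a(C^-)+p_a(C^+)+\#\{\text{nodes}\}-1$ for connected pieces) rather than a formal manipulation of topological data. Once that rigidity is in hand, identifying the twisted sectors and doing the insertion bookkeeping is routine.
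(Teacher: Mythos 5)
Your overall route is the one the paper takes: reduce to a connected, nonempty piece of ${\bf R}^\bullet$, pin down the topological data of the two halves (genus zero, fibre class $u[F_\Z]$, a single relative marking of contact order $u$ over each of the two divisors of its copy, with the sector at the middle divisor dual to the one it is matched with), and then finish the second assertion by the duality bookkeeping $J_\infty=\check J_0$, $J_0^-=\check J^+_\infty$, which is verbatim the paper's argument. The difference is that the paper extracts the shape of ${\bf R}^{\pm,\bullet}$ purely combinatorially from the definition of the gluing operation $\ast$ (additivity of the class, additivity of genus through the gluing nodes, and the matching of middle markings are built into the data-level operation), whereas you derive it from the geometry of an actual degenerate stable map; since the lemma is a statement about formal admissible data, non-emptiness of the relative moduli, stability of ghost components, etc., are not actually needed, and this geometric detour is where your one real error enters.

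The gap is the step ``totally ramified over $\Z_\infty$, hence a $z\mapsto z^u$-type cover of the fibre, hence it meets the middle $\Z$ at exactly one point with contact order $u$.'' That implication is false: a degree-$u$ genus-zero cover of the fibre that is totally ramified over one point need not be totally ramified over the other; for instance $z\mapsto z^2(z-1)$ has a single pole of order $3$ but two preimages of $0$, of contact orders $2$ and $1$, and the analogous phenomenon persists for the orbifold fibres here. So ramification at $\Z_\infty$ alone does not exclude $\sharp\,\fk r(J_0^-)\geq 2$. What excludes it is precisely the genus count you only mention in your closing remark: with both halves connected (your ghost argument) and glued along $k$ nodes, the arithmetic genus of the glued curve is $g^-+g^++k-1$, and connectedness and genus $0$ of ${\bf R}^\bullet$ force $k=1$ and $g^\pm=0$; then the unique middle contact order is $A^\pm\cdot\Z=u$ and, the source being a genus-zero curve with its two relative points as the only relevant orbifold points, the middle sector is $\bar s_\infty$ as you say. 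So the conclusion stands, but the genus-additivity argument must replace, not merely accompany, the ramification claim — this is exactly the constraint the paper's terse list of identities ($A=A^\pm$, $g^\pm=0$, matching of sectors and contact orders) encodes.
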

\begin{proof}
We only have to consider the connected and nonempty case. By the assumption we write the topological datum of
${\bf R}(J_\0|\varnothing|J_0)$,
${\bf R}^+(J_\0|\varnothing|J_0^+)$, and
${\bf R}^-(J^-_\0|\varnothing|J_0)$ as
\[\begin{split}
&{\bf R}=\big(g=0,A=u\cdot [F_\Z],
\fk t=\varnothing|
\fk r_\0=(s_\infty, u),
\fk r_0=(\bar s_\0,u)\big),\\
&{\bf R}^+=\big(g^+,A^+,\fk t^+=\varnothing|
\fk r_\0=(s_\0, u),
\fk r^+_0=(s^+_0,u^+_0)\big),\\
&{\bf R}^-=\big(g^-,A^-,\fk t^+=\varnothing|
\fk r^-_\infty=(s^-_\0, u^-_\0),
\fk r_0=(\bar s_\0,u)\big).
\end{split}\]
Then we have
\begin{itemize}
\item $A=A^+=A^-$,\,\, $g^+=g^-=g=0$;
\item $s_\infty=\bar s_0^+$, $s_0^+=\bar s_\0^-$,\,$s_\0^-=s_\0$;
\item $u=u^+_0=u^-_\0$.
\end{itemize}
Hence ${\bf R}^{-,\bullet}(J_\0|\varnothing|J_0^-)$ and
${\bf R}^{+,\bullet}(J_\0^+|\varnothing|J_0)$
both are pre-$\lN$-minimal. This proves the first part.

We next prove the second assertion.
Without loss of generality we assume that
${\bf R}^\bullet(J_\0|\varnothing|J_0)$,
and ${\bf R}^{-,\bullet}(J_\0|\varnothing|J_0^-)$
are $\lN$-minimal. Therefore
\[
J_\0=\check J_0,\qq J_\0=\check J_0^-.
\]
Combining with $J_0^-=\check J^+_\0$ we get
\[
J_\0=J_\0^+=\check J_0=\check J_0^-.
\]
So ${\bf R}^-(J_\0|\varnothing|J_0^-)
={\bf R}^+(J_\infty^+|\varnothing|J_0)
={\bf R}(J_\infty|\varnothing|J_0)$ and are all $\lN$-minimal.
\end{proof}

This lemma says that if we consider the partial order, similar as the one given in Definition \ref{def partial ord},
on $\scr R_{\Sigma^\star,\Sigma_\star}^\bullet(\Z_\infty|\lN|\Z_0)$, $\lN$-minimal datum are minimal elements.

Now we give the proof of Theorem \ref{thm partial-ord}.
\begin{proof}[Proof of Theorem \ref{thm partial-ord}]
The proof consists of several steps.

\n
{\em Step 1: Reflexivity.} This is the first part of Lemma \ref{lem 6.5-rel-to-minimal}. Hence ``$\prec$'' is reflexive.

\n
{\em Step 2: Transitivity.} Let ${\bf R}_i={\bf R}_i(I_i|J_i)\in \scr R_{\Sigma_\star}^\bullet(\X|\Z), i=1,2,3$ be three admissible relative datum of $(\X|\Z)$, ${\bf R}_2\prec {\bf R}_1$, ${\bf R}_3\prec {\bf R}_2$,
and
\[
{\bf R}_1={\bf R}_2\ast {\bf R}_a^{+,\bullet},\,\,\,\text{and}\,\,\,
{\bf R}_2={\bf R}_3\ast {\bf R}_b^{+,\bullet}
\]
with
\[
{\bf R}_a^{+,\bullet}={\bf R}_a^{+,\bullet}(J_a^+|I_a|J_1),\,\,\,
{\bf R}_b^{+,\bullet}={\bf R}_b^{+,\bullet}(J_b^+|I_b|J_2) \in\scr R_{\Sigma^\star,\Sigma_\star}^\bullet(\Z_\infty|\lN|\Z).
\]
Here $I_a,I_b$ are proper extensions of parts of $I_1$ and $I_2$,
and $\fk i(J^+_a), \fk i(J^+_b)\in \Sigma^\star$.
Then since $({\bf R}_2, {\bf R}_a^{+,\bullet})$ and $({\bf R}_3, {\bf R}_b^{+,\bullet})$ are matched pairs we have 
\[
J_2=\check{J}_a^+,\,\,\,\ J_3=\check J_b^+.
\]
Hence ${{\bf R}_b}^{+,\bullet}\ast {{\bf R}_a}^{+,\bullet}$ is an admissible relative $(\Sigma^\star,\Sigma_\star)$-data of $(\Z_\infty|\lN|\Z_0))$ and
\[
{\bf R}_3\ast ({\bf R}_b^{+,\bullet}\ast {\bf R}_a^{+,\bullet})={\bf R}_1.
\]

Now if ${\bf R}_b^{+,\bullet}\ast{\bf R}_a^{+,\bullet}$ is pre-$\lN$-minimal,
then Lemma \ref{lem 6.6-dege-minimal} implies that
${\bf R}_b^{+,\bullet}, {\bf R}_a^{+,\bullet}$ are both pre-$\lN$-minimal.
Then by (P2) in Definition \ref{def partial ord},
both ${\bf R}_b^{+,\bullet}, {\bf R}_a^{+,\bullet}$ are $\lN$-minimal.
Therefore by Lemma \ref{lem 6.6-dege-minimal}
\[
{\bf R}_b^{+,\bullet}\ast{\bf R}_a^{+,\bullet}={\bf R}_b^{+,\bullet}={\bf R}_a^{+,\bullet}
\]
is $\lN$-minimal. Consequently ${\bf R}_3\prec {\bf R}_1$.


\n
{\em Step 3: Antisymmetry.} Suppose now we have two admissible relative datum
${\bf R}^\bullet_1(I_1|J_1)$ and ${\bf R}^\bullet_2(I_2|J_2)$ of $(\X|\Z)$.
Suppose also that
\begin{align}
&{\bf R}^\bullet_1(I_1|J_1)\prec {\bf R}^\bullet_2(I_2|J_2)\label{eq 1<2},\\
&{\bf R}^\bullet_2(I_1|J_1)\prec {\bf R}^\bullet_1(I_2|J_2)\label{eq 2<1}.
\end{align}
Then as above we have
\[
{\bf R}^\bullet_1={\bf R}^\bullet_2\ast {\bf R}_a^{+,\bullet},\,\,\,\text{and}\,\,\,
{\bf R}^\bullet_2={\bf R}^\bullet_1\ast {\bf R}_b^{+,\bullet}
\]
for some
\[
{\bf R}_a^{+,\bullet}={\bf R}_a^{+,\bullet}(J_a^+|I_a|J_1),\,\,\,
{\bf R}_b^{+,\bullet}={\bf R}_b^{+,\bullet}(J_b^+|I_b|J_2) \in\scr R_{\Sigma^\star,\Sigma_\star}^\bullet(\Z_\infty|\lN|\Z).
\]
When one of homology classes of
${\bf R}_a^{+,\bullet}$ and ${\bf R}_b^{+,\bullet}$
is zero, the corresponding $(\Sigma^\star,\Sigma_\star)$-data
of $(\Z_\0|\lN|\Z)$ is empty,
then we have ${\bf R}^\bullet_1={\bf R}^\bullet_2$.
Therefore in the following we assume that both the homology classes of
${\bf R}_a^{+,\bullet}$ and ${\bf R}_b^{+,\bullet}$
are nonzero.
Hence $J_1\neq\varnothing$.
Then we get
\[
{\bf R}^\bullet_1={\bf R}^\bullet_1\ast ({\bf R}_b^{+,\bullet}\ast {\bf R}_a^{+,\bullet}).
\]
Since $J_1\neq\varnothing$, by the second part of Lemma \ref{lem 6.5-rel-to-minimal}, ${\bf R}_b^{+,\bullet}\ast {\bf R}_a^{+,\bullet}$ is the $\lN$-minimal datum determined by ${\bf R}^\bullet_1$ by the construction in the proof of Lemma \ref{lem 6.5-rel-to-minimal}. Therefore by the first part of Lemma \ref{lem 6.6-dege-minimal} both ${\bf R}_b^{+,\bullet}$ and ${\bf R}_a^{+,\bullet}$ are pre-$\lN$-minimal. Then by (P2) in Definition \ref{def partial ord} both ${\bf R}_b^{+,\bullet}$ and ${\bf R}_a^{+,\bullet}$ must be $\lN$-minimal. So the second part of Lemma \ref{lem 6.6-dege-minimal} implies that
\[
{\bf R}_a^{+,\bullet}={\bf R}_b^{+,\bullet}={\bf R}_b^{+,\bullet}\ast {\bf R}_a^{+,\bullet}
\]
So we have
\[
{\bf R}^\bullet_2={\bf R}_1^\bullet\ast {\bf R}_b^{+,\bullet} ={\bf R}_1^\bullet\ast ({\bf R}_b^{+,\bullet}\ast {\bf R}_a^{+,\bullet})={\bf R}_1^\bullet.
\]
Hence ``$\prec$'' is also anti-symmetry.

Therefore ``$\prec$'' is a partial order on $\scr R_{\Sigma_\star}^\bullet(\X|\Z)$.
\end{proof}

As a direct consequence of the finiteness of $\scr D_{{\bf R}^\bullet(I|J)}(I^+,I^-)$ with $I=I^+\cup I^-$ we have
\begin{lemma}\label{lem 6.7 finite-less}
Given an admissible data
${\bf R}^\bullet(I|J)\in\scr R_{\Sigma_\star}^\bullet(\X|\Z)$,
there are only finite admissible datum
${\bf R'}^\bullet(I'|J')\in\scr R_{\Sigma_\star}^\bullet(\X|\Z)$
such that
\[
{\bf R'}^\bullet(I'|J')\prec {\bf R}^\bullet(I|J).
\]
\end{lemma}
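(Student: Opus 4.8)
The plan is to deduce this finiteness directly from the finiteness of the sets $\scr D_{{\bf R}^\bullet(I|J)}(I^+,I^-)$ recorded at the end of \S\ref{sec 4.3-dege-formula} (a consequence of Gromov compactness), using that the relation $\prec$ is defined in Definition \ref{def partial ord} through exactly the trivial-weight degeneration \eqref{eq dege-formula-rel} of $(\X|\Z)$ along $\Z$, which is \eqref{eq dege-X|Z-weit(1)}. Concretely, I would first unwind the definition: if ${\bf R'}^\bullet(I'|J')\prec {\bf R}^\bullet(I|J)$, then condition (P1) of Definition \ref{def partial ord} provides an admissible $(\Sigma^\star,\Sigma_\star)$-data ${\bf R}^{+,\bullet}(J_\infty|I^+|J_0)\in\scr R_{\Sigma^\star,\Sigma_\star}^\bullet(\Z_\infty|\lN|\Z_0)$ with
\[
{\bf R}^\bullet(I|J)={\bf R'}^\bullet(I'|J')\ast {\bf R}^{+,\bullet}(J_\infty|I^+|J_0),
\]
where $I=I'\sqcup I^+$ is the induced partition of the absolute insertions, $J'=\check J_\infty$, and $J=J_0$. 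Comparing with the definition of $\scr D_{{\bf R}^\bullet(I|J)}(I^+,I^-)$ in \S\ref{sec 4.3-dege-formula}, this says precisely that the matched pair $\big({\bf R'}^\bullet(I'|J'),\,{\bf R}^{+,\bullet}(J_\infty|I^+|J_0)\big)$ lies in $\scr D_{{\bf R}^\bullet(I|J)}(I^+,I')$; condition (P2) only shrinks this set, so it may be ignored for an upper bound.

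The counting step is then immediate. The absolute insertion tuple $I$ of the fixed data ${\bf R}^\bullet(I|J)$ is finite, so it admits only finitely many partitions $I=I^+\sqcup I'$, and each $\scr D_{{\bf R}^\bullet(I|J)}(I^+,I')$ is finite by the Gromov compactness statement quoted above. Hence the set
\[
\{\,{\bf R'}^\bullet(I'|J')\in\scr R_{\Sigma_\star}^\bullet(\X|\Z)\ :\ {\bf R'}^\bullet(I'|J')\prec {\bf R}^\bullet(I|J)\,\}
\]
is contained in the image of the finite set $\bigsqcup_{I=I^+\sqcup I'}\scr D_{{\bf R}^\bullet(I|J)}(I^+,I')$ under the first projection $({\bf R}^\bullet_-,{\bf R}^\bullet_+)\mapsto{\bf R}^\bullet_-$, and is therefore finite.

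I do not expect a genuine obstacle; the only point requiring care is the bookkeeping identifying the data in Definition \ref{def partial ord} with the data counted by $\scr D_{{\bf R}^\bullet(I|J)}(I^+,I^-)$ — in particular that the auxiliary relative data ${\bf R}^{+,\bullet}$ appearing in the partial order lives on $(\Z_\infty|\lN|\Z_0)$ with $\lN$ the trivial-weight projectification of the normal bundle $\N_{\Z|\X}$, which is exactly the new component produced by the degeneration \eqref{eq dege-formula-rel}, so that ``matched pair gluing to ${\bf R}^\bullet(I|J)$'' has the same meaning in both contexts.
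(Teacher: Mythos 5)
Your proposal is correct and follows the paper's own route: the paper derives this lemma precisely as a ``direct consequence of the finiteness of $\scr D_{{\bf R}^\bullet(I|J)}(I^+,I^-)$'' coming from Gromov compactness, and your argument just makes explicit the bookkeeping (unwinding Definition \ref{def partial ord} into a matched pair for the trivial-weight degeneration \eqref{eq dege-X|Z-weit(1)}, summing over the finitely many splittings of $I$, and noting that (P2) only restricts the set further). No gaps.
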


\subsection{Correspondence}\label{sec correspondence}

Suppose $\oS$ is a compact symplectic sub-orbifold groupoid of $\X$.
We blow up $\X$ along $\oS$ with weight $\wa$ to degenerate $(\X,\oS)$
into (cf. Section \ref{subsec blp-X-S} and Section \ref{subsec blp-X-S-inv})
\[
(\X,\oS)\xrightarrow{\text{degenerate}} (\X^-|\Z^-)\wedge_\Z(\X^+|\Z^+,\oS)=
(\Xa|\Z)\wedge_\Z (\lN_\wa|\Z,\oS)
\]
where $\lN_\wa$ and $\Z=\msf {PN}_\wa$ are the weight-$\wa$ projectification and weight-$\wa$ projectivization of the normal bundle of $\oS$ respectively. Then we have a canonical morphism $\kappa:\Xa\rto \X$ (cf. \S \ref{subsec blp-X-S}).
It induces a morphism on inertia spaces
\[
\msf I \kappa=\coprod_{(h)\in\scr T^\Xa} \kappa_{(h)}:\coprod_{(h)\in\scr T^\Xa}\Xa(h)\rto
\coprod_{(h)\in\scr T^\Xa}\X(\kappa_{\scr T}(h))
\]
and homomorphism
\[
\msf I \kappa^*=\coprod_{(h)\in\scr T^\Xa}\kappa_{(h)}^*:
\coprod_{(h)\in\scr T^\Xa}H^*(\X(\kappa_{\scr T}(h)))\rto
\coprod_{(h)\in\scr T^\Xa}H^*(\Xa(h)).
\]
As the manifold case, $\msf I\kappa^*$ is injective.
Denote the image of $\msf I\kappa^*$ by $\K$.

As we have done in \S\ref{sec rel-inv-E-B},
we fix a basis $\sigma_\star$ and its dual $\sigma^\star$ for $H^*_{CR}(\oS)$.
Then we get a basis $\Sigma_\star$ and its dual $\Sigma^\star$ for $H^*_{CR}(\Z)$.
Let $\scr R_{\Sigma_\star}^\bullet(\Xa|\Z)$ be the set of admissible relative
$\Sigma_\star$-data of $(\Xa|\Z)$.
We denote by $\scr R_{\Sigma_\star,\K}^\bullet(\Xa|\Z)$ the subset
which consists of admissible $\Sigma_\star$-data ${\bf R}^{\bullet}(I|J)$
such that $I\subseteq \K$. We call such a relative data a $(\Sigma_\star,\K)$-data.

In this section, we give a one-to-one correspondence between $\scr R_{\Sigma_\star,\K}^\bullet(\Xa|\Z)$ and
$\scr A_{\sigma_\star}^\bullet(\X,\oS)$.
For the relative pair $(\Xa|\Z)$ we have the partial ordered set
$(\scr R_{\Sigma_\star}^\bullet(\Xa|\Z),\prec)$ by the construction in previous subsection.
The partial order ``$\prec$'' also gives a partial order on its subset
$\scr R_{\Sigma_\star,\K}^\bullet(\Xa|\Z)$.
We transfer this partial order to $\scr A_{\sigma_\star}^\bullet(\X,\oS)$ via the correspondence we will construct.

As we have done in previous subsection, we need to consider
relative $(\Sigma^\star,\Sigma_\star)$-data of $(\Z_\0|\lN|\Z)$
with $\lN$ being the trivial weight projectification of the normal bundle of
$\Z$ in $\Xa$. In the following we will write $\lN$ as $\lN_\Z$ for emphasis.
We also need to consider relative data of $(\lN_\wa|\Z)$. We denote relative data of it by ${\bf R}_{\lN_\wa}$.
\subsubsection{Correspondence on data}

We give a map from
$\scr R_{\Sigma_\star,\K}^\bullet(\Xa|\Z)$ to $\scr A_{\sigma_\star}^\bullet(\X,\oS)$.
Let
$
{\bf R}^\bullet(I|J)$ be an admissible relative $(\Sigma_\star,\K)$-data where the relative data is
\[
J=(\fk r(J),\fk i(J)),
\]
with
\[
\fk r(J)=((s_1,u_1),\ldots,(s_h,u_h)),\;\;\;
\fk i(J)=(\beta_1,\ldots, \beta_h)
\]
and $\beta_k\in \Sigma_\star$. Hence $\check\beta_k$ are of the form
$\theta^{(\bar t_k)}_{j_k}\cup H_{(\bar s_k)}^{\ell_k}\in \Sigma^{\bar s_k}\subseteq \Sigma^\star$ with $\bar s_k$ the inverse of $s_k$ and $\bar t_k=\pi_{\scr T}(\bar s_k)$.

For the $k$-th relative marking with relative data $(s_k,u_k)$ and insertion $\beta_k$ we associate it an admissible relative data ${\bf R}^+_{\lN_\wa,k}(I_{\msf S,k}|J^+_k)$ of $(\lN_\wa|\Z, \oS)$ like the one discussed in \S\ref{sec 5}:
\begin{enumerate}
\item
     The relative marking $\msf q_\infty$ is associated to $\bar s_k$ with contact order $u_k$ hence the homology class $A_k=R_k[F]$ is fixed with $R_k=u_k$ (cf. Lemma \ref{lem 5.8});
\item
      The relative insertion is
      $\check\beta_k=\theta^{(\bar t_k)}_{j_k}\cup H_{(\bar s_k)}^{\ell_k}$;
\item The absolute marking is mapped to the twisted sector of $\oS_{(t_k)}$ with $t_k=\pi_{\scr T}(s_k)$;
\item
      The absolute insertion is
      $(\theta_{(t_k)}^{j_k}\cup \Theta_{(t_k)})\psi^{c_k}$
      and $c_k$ is determined by ${\fk r}_k=(s_k,u_k)$ and $\ell_k$, i.e. by dimension constraint,
      where $\Theta_{(t_k)}$ is the Thom form of
      $\msf S_{(t_k)}$ in $\msf X_{(t_k)}$ and $\theta_{(t_k)}^{j_k}$
      is the orbifold Poincar\'e dual of
      $\theta^{(\bar t_k)}_{j_k}$ in $H^*_{CR}(\oS)$.
\end{enumerate}
Therefore
$$
{\bf R}^+_{\lN_\wa,k}=\big( g=0, A=A_k,\fk t=(t_k)|\fk r=(\bar s_k, u_k) \big)$$
and
\[
I_{\msf S,k}=((\theta_{(t_k)}^{j_k}\cup \Theta_{(t_k)})\psi^{c_k}),\qq
\fk i(J^+_{k})=(\theta^{(\bar t_k)}_{j_k}\cup H_{(\bar s_k)}^{\ell_k}).
\]
Let ${\bf R}_{\lN_\wa}^{+,\bullet}(I_S|J^+)$ be the disjoint union of
${\bf R}^+_{\lN_\wa,k}(I_{\msf S,k}|J^+_k)$. One can directly see that the pair
\[
({\bf R}^\bullet(I|J),\,\, {\bf R}_{\lN_\wa}^{+,\bullet}(I_\msf S|J^+))
\]
is a matched pair. Since $I\in\K$, we have
\[
{\bf A}^\bullet(I_\kappa;I_S):=
{\bf R}^\bullet(I|J)\ast {\bf R}_{\lN_\wa}^{+,\bullet} (I_S|J^+)
\]
with $I_\kappa=(\msf I\kappa^\ast)\inv(I)\in H^*_{CR}(\X)$.
Moreover ${\bf A}^\bullet(I_\kappa;I_S)\in\scr A_{\sigma_\star}(\X,\oS)$.
We define
\begin{align}\label{eq def-Psi-Jnonempty}
\Psi({\bf R}^\bullet(I|J)):={\bf A}^\bullet(I_\kappa;I_S).
\end{align}

When the relative data ${\bf R}^\bullet(I|J)$ has empty relative insertions, i.e.
\[
\fk r=\varnothing,\qq J=\varnothing,
\]
we must have $A\cdot [\Z]=0$.
We define the corresponding absolute data to be
\begin{align}\label{eq def-Psi-Jempty}
\Psi({\bf R}^\bullet(I|\varnothing)):={\bf A}^\bullet(I_\kappa;\varnothing)
={\bf A}^\bullet(I_\kappa)
\end{align}
with ${\bf A}^\bullet=(g,\kappa_*A,\fk t)$.

The above construction gives us a map
$$
\Psi: \scr R_{\Sigma_\star,\K}^\bullet(\Xa|\Z)\rto \scr A_{\sigma_\star}^\bullet(\X,\oS).
$$

We next construct a left inverse map of $\Psi$.

\begin{lemma}\label{lem rel-outof-IS}
Given an absolute insertion relative to $\sf S$ of the form
$I_\oS=(\theta^j_{(t)}\cup \Theta_{(t)})\psi^{c}$,
there exists a unique admissible relative data
${\bf R}^+_{\lN_\wa}(I_\oS|J^+)={\bf R}^+((\theta^j_{(t)}\cup \Theta_{(t)})\psi^{c}|\theta^{(\bar t)}_j\cup H_{(s)}^{\ell})$
of $(\lN_\wa|\Z,\oS)$, whose invariant is nonzero and homology class is
a fiber class of $\lN_\wa\rto\oS$.
\end{lemma}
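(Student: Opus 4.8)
The plan is to recognise that the relative pair $(\lN_\wa|\Z,\oS)$ is precisely the bundle situation analysed in Section~\ref{sec 5}, so that the lemma follows from Theorems~\ref{thm c-dertmin-all} and~\ref{thm cpt-rel-inv}. Indeed $\lN_\wa$ is by definition the weight-$\wa$ projectification of the normal bundle $\N$ of $\oS$, with infinity divisor $\Z=\msf{PN}_\wa$ and with $\oS$ sitting inside as the zero section; setting $\msf E=\N$ and $\msf B=\oS$ identifies $(\lN_\wa|\Z,\oS)$ with the triple $(\overline{\msf E}_\wa|\msf{PE}_\wa,\msf B)$ of \S\ref{sec 5.4}. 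The prescribed absolute insertion $I_\oS=(\theta^j_{(t)}\cup\Theta_{(t)})\psi^c$ is exactly of the type used there for the marked point $\msf q_0$: it is supported on $\oS$, its cohomological factor $\theta^j_{(t)}$ lies in the chosen basis $\sigma_\star$ of $H^*_{CR}(\oS)$, and it carries the descendant $\psi^c$, while $\Theta_{(t)}$ is the Thom form of $\N(t)\to\oS(t)$.

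First I would reduce to fibre classes. A relative stable map of $(\lN_\wa|\Z)$ with a single absolute marking, constrained to the zero section $\oS$ by the Thom factor $\Theta_{(t)}$, and a single relative marking on $\Z$ can contribute a nonzero invariant only if it is of the rational, two-pointed, fibrewise type treated in Section~\ref{sec 5}: a dimension count — weighing the virtual dimension of the moduli space, which increases along the $\oS$-component of $A$ once that component is effective, against the fixed degree of the prescribed insertions — forces $\pi_*A=0$ and $g=0$. For fibre-class data, Remark~\ref{rmk 5.7} and Lemma~\ref{lem 5.8} then show that the topological type ${\bf R}^+$ is completely determined by the homology class $A=R[F]$, $R\in\scr R$: the relative twisted sector is $s=(\zeta_r^{-1},e^{2\pi i R})$ with $\pi_{\scr T}(s)=\bar t$, and the contact order at the relative marking equals $R$.

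Next I would invoke the computations of Section~\ref{sec 5}. By Theorem~\ref{thm c-dertmin-all}, together with the reduction of the general bundle case to the model $\integer_r^\wb\ltimes\cplane^n$ carried out in \S\ref{sec 5.3}--\S\ref{sec 5.4} and the fibration picture of Proposition~\ref{prp moduli-E-B}, the admissibility (the matching of the total insertion degree with the virtual dimension) of a datum ${\bf R}^+((\theta^j_{(t)}\cup\Theta_{(t)})\psi^c\,|\,\theta^{(\bar t)}_j\cup H_{(s)}^\ell)$ forces ${\bf R}^+$ — equivalently $R$, and hence $s$ and the contact order — together with the power $\ell$ to be uniquely determined by $c$, and every $c\ge 0$ occurs. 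Theorem~\ref{thm cpt-rel-inv} then yields that the invariant of this datum is nonzero: the factors $\theta^j_{(t)}$ and $\theta^{(\bar t)}_j$ are orbifold Poincar\'e dual in $H^*_{CR}(\oS)$ by the construction of the dual basis in \S\ref{sec rel-inv-E-B}, so the invariant equals $r$ times the nonvanishing reduced invariant $\langle{\bf R'}(\tau_c(\Theta_{(t)})\,|\,H_{(s)}^\ell)\rangle$ computed in Theorem~\ref{thm H}. This settles existence.

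Uniqueness is the same two theorems read in reverse: for any fibre-class datum of the prescribed shape, Theorem~\ref{thm cpt-rel-inv} says the invariant can be nonzero only when the cohomological factor of the relative insertion is the dual $\theta^{(\bar t)}_j$ of $\theta^j_{(t)}$ and only for the ${\bf R}^+$ and $\ell$ attached to $c$ by Theorem~\ref{thm c-dertmin-all}; combined with the fibre-class reduction this pins the datum down completely. The one point demanding genuine care — and the only place where the argument goes beyond transcribing Section~\ref{sec 5} — is the fibre-class reduction of the first step: one must verify that a relative curve carrying a single $\oS$-supported absolute insertion and a single relative marking on $\Z$ cannot spread over a curve inside $\oS$ and still contribute, and the virtual-dimension estimate above, together with the moduli-space structure established in Section~\ref{sec 5}, is exactly what secures this.
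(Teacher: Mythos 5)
Your second and third paragraphs reproduce the paper's own proof: the number $c$ and the twisted sector $t$ uniquely determine the topological datum ${\bf R}^+$ (genus zero, $A=R[F]$, the relative sector $s$ with $\pi_{\scr T}(s)=\bar t$, contact order $u=R$) via Theorem \ref{thm c-dertmin-all} and Lemma \ref{lem 5.8}; the exponent $\ell$ is then fixed by the dimension constraint; and nonvanishing comes from Theorem \ref{thm cpt-rel-inv} (through Proposition \ref{prp moduli-E-B} and the computation of \S\ref{sec 5.2}), with uniqueness being the same theorems read in reverse. On that core your argument and the paper's coincide.

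The problem is your first step, the ``fibre-class reduction,'' which you yourself single out as the only load-bearing addition. It is not needed: in the lemma, and in the way the lemma is used in Theorems \ref{thm crepdns-on-datum} and \ref{thm lower-traingl}, the requirement that the homology class be a fiber class of $\lN_\wa\rto\oS$ is part of the prescribed shape of the datum (just like the single $\oS$-supported absolute marking and the single relative marking on $\Z$); the paper's proof simply sets $A=R[F]$ and never has to exclude non-fiber classes, because contributions of more general relative data to the degeneration formula are handled later by the partial order and the lower-triangularity argument, not here. More seriously, the justification you offer would not survive scrutiny if it were needed: the virtual dimension is not monotone in the ``$\oS$-component'' of $A$ --- adding a class $B$ coming from $\oS$ changes it through $c_1\cdot B$ (together with degree-shift and contact-order terms), and for a general compact symplectic $\oS$ with general normal bundle nothing forces this to be positive, nor does degree matching pin down the genus. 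So a dimension count alone cannot force $\pi_*A=0$ and $g=0$; excluding admissible non-fiber data with nonzero invariant would require a genuine vanishing argument, which neither your sketch nor the paper supplies (and the paper's setup is arranged precisely so that no such exclusion is ever required at this point). If you drop that step and take the fiber-class condition as part of the datum, your proof is the paper's proof.
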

\begin{proof}
By Theorem \ref{thm c-dertmin-all} and Lemma \ref{lem 5.8}, the number $c$ and the twisted sector $t$ uniquely determine
the topological data
\[\begin{split}
&{\bf R}^+_{\lN_\wa}=\big(g=0,A=R[F],\fk t=(t)|\fk r=(s,u)\big),\\
\end{split}\]
with $u=R$. We assign the insertions
\[
I_{\msf S}=(\theta^j_{(t)}\cup \Theta_{(t)})\psi^{c},\,\,\,\,\text{and}\,\,\,\,
\fk i(J^+)=\theta^{(\bar t)}_j\cup H_{(s)}^{\ell},
\]
with $\ell$ being determined by the dimension constraint.
So ${\bf R}_{\lN_\wa}^+(I_\oS|J^+)$ is an admissible relative data.
Its invariant is nonzero follows from Theorem \ref{thm cpt-rel-inv}.
\end{proof}

By using this lemma we could get a left inverse of $\Psi$. Actually

\begin{theorem}\label{thm crepdns-on-datum}
When $\codim\oS\geq 4$, $\Psi$ is a bijection. When $\codim\oS=2$, i.e. $\oS$ is a symplectic divisor, $\Psi$ is injective.
\end{theorem}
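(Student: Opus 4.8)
The plan is to construct an explicit candidate inverse $\Phi\colon\scr A_{\sigma_\star}^\bullet(\X,\oS)\rto\scr R_{\Sigma_\star,\K}^\bullet(\Xa|\Z)$ by ``cutting'' an absolute datum relative to $\oS$ along the exceptional divisor $\Z$ into a relative datum of $(\Xa|\Z)$ and a canonically determined collection of rubber data over $\oS$, and then to verify $\Phi\circ\Psi=\mathrm{id}$ (giving injectivity in both cases) and $\Psi\circ\Phi=\mathrm{id}$ when $\codim\oS\geq 4$ (upgrading to a bijection).

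To define $\Phi$, take ${\bf A}^\bullet(I_\kappa;I_\oS)$ and write its $\oS$-supported insertions as $I_\oS=(I_{\oS,1},\ldots,I_{\oS,h})$ with $I_{\oS,k}=(\theta^{j_k}_{(t_k)}\cup\Theta_{(t_k)})\psi^{c_k}$. By Lemma \ref{lem rel-outof-IS} (which rests on Theorem \ref{thm c-dertmin-all}), the pair $(c_k,t_k)$ determines a \emph{unique} admissible relative data ${\bf R}^+_{\lN_\wa,k}(I_{\oS,k}|J^+_k)$ of $(\lN_\wa|\Z,\oS)$ whose class is a fibre class and whose invariant is non-zero; this also pins down the contact datum $(s_k,u_k)$ and the relative insertion attached to the $k$-th rubber piece. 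I then set $\Phi({\bf A}^\bullet(I_\kappa;I_\oS))={\bf R}^\bullet(I|J)$ with $I=\msf I\kappa^\ast(I_\kappa)$ (well defined since $\msf I\kappa^\ast$ is injective), with $J$ the relative datum matched to $\bigsqcup_k J^+_k$, so that $\fk r(J)=((s_1,u_1),\ldots,(s_h,u_h))$ and the relative insertions $\beta_k$ lie in $\Sigma_\star$, with the absolute markings, their twisted sectors (lifted through the bijection $\kappa_{\scr T}\colon\scr T^\Xa_1\xrightarrow{\sim}\scr T^\X$) and the genus inherited from ${\bf A}$, and with the homology class $A_{\Xa}$ fixed as in the next paragraph.

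The codimension hypothesis enters only through the homology class. For a weight-$\wa$ blowup, $\kappa$ induces a surjection $\kappa_\ast\colon H_2(|\Xa|;\integer)\rto H_2(|\X|;\integer)$; when $\codim\oS\geq 4$ its kernel is generated by the class $[F]$ of a line in the weighted projective fibre $\msf P_\wa$ of $\Z\rto\oS$ (which then has complex dimension $\geq 1$) and $[F]\cdot[\Z]\neq 0$, whereas when $\codim\oS=2$ the fibre is the gerbe $B\integer_{\alpha}$, $|\Xa|\cong|\X|$, and $\kappa_\ast$ is an isomorphism. In either case $A_{\Xa}\mapsto(\kappa_\ast A_{\Xa},\,A_{\Xa}\cdot[\Z])$ is injective, so there is at most one class $A_{\Xa}$ with $\kappa_\ast A_{\Xa}$ equal to the class of ${\bf A}$ and $A_{\Xa}\cdot[\Z]=\sum_k u_k$; when $\codim\oS\geq 4$ such a class moreover exists, because the lifts of a given class differ exactly by multiples of $[F]$ and $\sum_k u_k$ is the value realized by the degeneration $(\X;\oS)\xrightarrow{\text{degenerate}}\Xa\wedge_\Z(\lN_\wa,\oS)$ that defines $\Psi$. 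One then checks that ${\bf R}^\bullet(I|J)$ so obtained is admissible — the virtual dimension constraint following from that of ${\bf A}$, from admissibility of each rubber piece, and from additivity of virtual dimension under $\ast$ — and that $I\subseteq\K$, so $\Phi$ indeed lands in $\scr R_{\Sigma_\star,\K}^\bullet(\Xa|\Z)$; when $\codim\oS=2$ the sole failure is that $A_{\Xa}\cdot[\Z]=\sum_k u_k$ may be unsolvable (as $A_{\Xa}\cdot[\Z]$ is already pinned down by the class of ${\bf A}$), which is precisely why $\Psi$ is then merely injective.

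Finally I would verify the two compositions. For $\Phi\circ\Psi=\mathrm{id}$: applying $\Psi$ to ${\bf R}^\bullet(I|J)$ re-produces exactly the $\oS$-supported insertions $(\theta^{j_k}_{(t_k)}\cup\Theta_{(t_k)})\psi^{c_k}$, so by the uniqueness in Lemma \ref{lem rel-outof-IS} the rubber pieces recovered by $\Phi$ coincide with those glued on by $\Psi$; the absolute insertions are recovered from injectivity of $\msf I\kappa^\ast$, the twisted sectors from bijectivity of $\kappa_{\scr T}$ on $\scr T^\Xa_1$, the genus is untouched, and $A_{\Xa}$ is recovered since $\kappa_\ast A_{\Xa}$ and $A_{\Xa}\cdot[\Z]=\sum_k u_k$ already hold for ${\bf R}^\bullet(I|J)$ and determine it. For $\Psi\circ\Phi=\mathrm{id}$ when $\codim\oS\geq 4$ the argument is symmetric: $\Psi$ re-attaches the same uniquely determined rubber pieces along $\Z$, agreeing with ${\bf A}^\bullet(I_\kappa;I_\oS)$ in insertions, genus and twisted sectors, and in homology class by the existence clause above. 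The main obstacle is the homology-class bookkeeping: making precise, for weighted blowups of orbifold groupoids, the structure of $\kappa_\ast\colon H_2(|\Xa|)\rto H_2(|\X|)$ and the intersection pairing with $[\Z]$ in the presence of fractional contact orders; once that is in place, what remains is the essentially formal (if fiddly) assertion that an absolute datum relative to $\oS$ decomposes uniquely into its $(\Xa|\Z)$-part and a canonically determined $\oS$-rubber part.
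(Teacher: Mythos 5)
Your proposal follows essentially the same route as the paper: the inverse is built from the uniqueness of the rubber pieces supplied by Lemma \ref{lem rel-outof-IS}, injectivity of $\msf I\kappa^*$ on absolute insertions, and the fact that a class in $H_2(|\Xa|)$ is pinned down by its pushforward under $\kappa$ together with its intersection number with $[\Z]$, with the codimension-two failure located, exactly as in the paper, in the homology class being over-determined. The only packaging difference is that the paper writes the class of $\Psi^{-1}({\bf A})$ explicitly as $\kappa^!A-\sum_i R_i[F]$ (the lift with zero intersection with $[\Z]$ minus the fibre classes of the rubber pieces), which disposes of the existence question that your characterization by the two conditions leaves slightly informal.
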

\begin{proof}
{\em Injectivity.}
First consider the case with $\fk r\neq\varnothing$.
Then injectivity follows from the construction
${\bf R}^+_k(I_{\msf S,k}|J^+_k)$ and Lemma \ref{lem rel-outof-IS}.
Secondly, for the case with $\fk r=\varnothing$, by the definition \eqref{eq def-Psi-Jempty} above, that $\Psi$ is injective is equivalent
to that if $\kappa_*A_1=\kappa_*A_2$ and
$A_i\cdot [\Z]=0$ then $A_1=A_2$.
The equality $\kappa_*A_1=\kappa_*A_2$ implies
that $A_1-A_2$ is a fiber class of $\Z\rto \oS$.
Then by $A_i\cdot [\Z]=0$ we conclude $A_1=A_2$.
This proves the injectivity of $\Psi$.

\v\n
{\em Surjectivity}. We consider the case with codimension greater than 2.
Let ${\bf A}^\bullet(I;I_\oS)\in \scr A_{\sigma_\star}^\bullet(\sf X,S)$.
First consider the case with $I_\oS\neq \varnothing$.
Suppose that
$$
I_\oS=((\theta_{(t_1)}^{j_1}\cup \Theta_{(t_1)})\psi^{c_1},
\ldots, (\theta_{(t_h)}^{j_h}\cup\Theta_{(t_h)})\psi^{c_h})).
$$
Then for each $I_{\oS,k}:=(\theta_{(t_k)}^{j_k}\cup \Theta_{(t_k)})\psi^{c_k}$ we may get a unique ${\bf R}^+_{\lN_\wa,k}(I_{\msf S,k}|J^+_k)$ of $(\lN_\wa|\Z,\oS)$
(with homology class $R_i[F]$, where $R_i$ is determined by $c_i$ and $t_i$)
by Lemma \ref{lem rel-outof-IS}.
Let ${\bf R}^{+,\bullet}_{\lN_\wa}(\tilde I_\oS|J^+)$ being the disjoint union of ${\bf R}^+_{\lN_\wa,k}(I_{\oS,k}|J^+_k)$.

There is an admissible relative data
${\bf R}^\bullet(I^\kappa|J)$ of $(\Xa|\Z)$ with $I^\kappa=\kappa^\ast(I)$
such that
\begin{align}\label{eq needed-in-coresp-1}
{\bf R}^\bullet(I^\kappa|J)\ast {\bf R}^{+,\bullet}_{\lN_\wa}(\tilde I_\msf S|J^+)={\bf A}^\bullet(I;I_\msf S).
\end{align}
We next construct it.

Suppose the homology class of ${\bf A}^\bullet$ is $A$.
Let $\kappa^!A$ being the class in $\Xa$ such that $\kappa_*(\kappa^!A)=A$ and
$\kappa^!A\cdot[\Z]=0$.
Then the homology class of ${\bf R}^\bullet$ is
\[
\kappa^!A-\sum_i R_i[F].
\]
The genus of ${\bf R}^\bullet$ is the same as the genus of ${\bf A}^{\bullet}$.
The absolute insertions are $I^\kappa:=\kappa^*(I)$;
the relative insertions are
\[
J=(\check J^+_1,\ldots \check J^+_h),
\]
i.e. the dual of $J^+$ in ${\bf R}_{\lN_\wa}^{+,\bullet}( I_{\msf S}|J^+)$.
This gives us the ${\bf R}^\bullet (I^\kappa|J)$.
We define ${\bf R}^\bullet(I^\kappa|J)$ to be
$\Psi^{-1}({\bf A}^\bullet(I;I_\msf S))$.

If $I_\oS=\varnothing$, then we set
$\Psi\inv({\bf A}^{\bullet}(I;\varnothing))
={\bf R}^\bullet(I^\kappa|\varnothing)$ with
${\bf R}^{\bullet}=(g,p^!A,\fk t|\fk r=\varnothing)$.

This $\Psi\inv$ is the inverse of $\Psi$.
\end{proof}

\begin{remark}
The $\Psi$ is not surjective in codimension 2 case is because of the following fact. For this case $\Z\rto\oS$ is not a fiber bundle with generic
fiber being a weighted space (or finite quotient of weighted space), but a $\integer_{a_1}$-gerbe, where $\wa=(\alpha_1)$ is the blowup weight.
Consider an admissible relative data
${\bf R}(I|J)$ with ${\bf R}=(g=0,A,\fk t|(s,u))$
and $\fk i(J)=\beta=\kappa_{(s)}^*(\theta_{\kappa_{\scr T}(s)})$.
Then we have $A\cdot [\Z]=u$, and
\[
\Psi({\bf R}(I|J))={\bf A}(I_\kappa;I_\oS)
\]
with ${\bf A}=(g=0,\kappa_*A,(\kappa_{\scr T}(\fk t),\kappa_{\scr T}(s)))$,
$I_\kappa=(\msf I\kappa^*)\inv(I)$ and
$I_\oS=(\theta_{\kappa_{\scr T}(s)}\cup \Theta_{\kappa_{\scr T}(s)})\psi^{c}$ with $c=[u\cdot \alpha_1]$
by dimension constraint (cf. Proposition \ref{prop dimc}).

Suppose $\deg \theta_{\kappa_{\scr T}(s)}>0$.
Take a $\theta_{\kappa_{\scr T}(s)}'$ such that
$\deg \theta_{\kappa_{\scr T}(s)}'=\deg\theta_{\kappa_{\scr T}(s)}-1$.
Now we construct another admissible absolute data ${\bf A}(I_\kappa;I_\oS')$
out of the absolute data ${\bf A}(I_\kappa;I_\oS)$.
The topological datum are the same except that
the absolute insertion is
\[
I_\oS'=(\theta_{\kappa_{\scr T}(s)}'\cup \Theta_{\kappa_{\scr T}(s)} )\psi^{c+1}\neq I_\oS.
\]
Then ${\bf A}(I_\kappa;I_\oS')$ has no inverse image under $\Psi$.
Otherwise suppose $\Psi({\bf R}'(I'|J'))={\bf A}(I_\kappa;I_\oS')$.
Then the homology class $A'$ of ${\bf R}'$ satisfies
\[
\kappa_*A'=\kappa_*A.
\]
Since now $\Z\rto \oS$ is just a $\integer_{\alpha_1}$-gerbe,
we have $A'=A$. Then $A'\cdot[\Z]=A\cdot[\Z]=u$.
But as above since $\Psi({\bf R}'(I'|J'))={\bf A}(I_\kappa;I_\oS')$, by the dimension constraint
we must have
\[
[\alpha_1 \cdot (A'\cdot[\Z])]=[\alpha_1\cdot u]=c+1.
\]
A contradiction.
\end{remark}

\subsubsection{Correspondence on invariants}
By using $\scr R_{\Sigma_\star,\K}^\bullet(\Xa|\Z)$ we get a real linear space
\begin{align}
\rone_{\Sigma_\star,\K}=\rone\{\scr R_{\Sigma_\star,\K}^\bullet(\Xa|\Z)\}.
\end{align}
Denote by
\[
v_{\Sigma_\star,\K}\in\rone_{\Sigma_\star,\K}
\]
the vector with coordinate over ${\bf R}^{\bullet}(I|J)$ being the invariant $\langle{\bf R}^{\bullet}(I|J)\rangle^{\Xa|\Z}$, and the vector
\[
v_{\sigma_\star}\in\rone_{\Sigma_\star,\K}
\]
with coordinate over ${\bf R}^{\bullet}(I|J)$ being the invariant $\langle\Psi({\bf R}^{\bullet}(I|J))\rangle^\X$.

\begin{theorem}\label{thm lower-traingl}
There is a linear map $L:\rone_{\Sigma_\star,\K}\rto \rone_{\Sigma_\star,\K}$ having the following properties.
\begin{itemize}
\item $L(v_{\Sigma_\star,\K})=v_{\sigma_\star}$;
\item the matrix of $L$ with respect to the basis $\scr R_{\Sigma_\star,\K}^\bullet(\Xa|\Z)$
      is lower triangle and the diagonals are all nonzero.
\end{itemize}
Hence by Lemma \ref{lem 6.7 finite-less} $L$ is invertible.
\end{theorem}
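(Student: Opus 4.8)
The plan is to define the map $L$ directly from the degeneration formula of \S\ref{subsec blp-X-S-inv} attached to the weight-$\wa$ degeneration $(\X,\oS)\xrightarrow{\text{degenerate}}\Xa\wedge_\Z(\lN_\wa,\oS)$, and then read off the three properties from the shape of that formula together with the computations of \S\ref{sec 5}. Concretely, for ${\bf R}^\bullet(I|J)\in\scr R_{\Sigma_\star,\K}^\bullet(\Xa|\Z)$ set ${\bf A}^\bullet(I_\kappa;I_{\msf S}):=\Psi({\bf R}^\bullet(I|J))$ and expand $\langle{\bf A}^\bullet(I_\kappa;I_{\msf S})\rangle^\X$ by the degeneration formula, using the basis $\Sigma_\star$ on the $\Xa$-side of the cut $\Z$ and its dual $\Sigma^\star$ on the $\lN_\wa$-side:
\[
\langle{\bf A}^\bullet(I_\kappa;I_{\msf S})\rangle^\X=\sum_\Omega c_\Omega\,\langle{\bf R}^-_\Omega(I^-|J^-_\Z)\rangle^{\Xa|\Z}\,\langle{\bf R}^+_\Omega(I^+;I_{\msf S}|J^+_\Z)\rangle^{\lN_\wa|\Z},
\]
a finite sum by Gromov compactness. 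Each factor ${\bf R}^-_\Omega(I^-|J^-_\Z)$ belongs to $\scr R_{\Sigma_\star,\K}^\bullet(\Xa|\Z)$: its relative insertions lie in $\Sigma_\star$ by the choice of basis, and its absolute insertions are $\kappa^\ast$-restrictions of a subcollection of the $\X$-classes $I_\kappa$ — the $\oS$-supported, $\psi$-twisted insertions $I_{\msf S}$ are entirely carried by the $\lN_\wa$-factor, as the displayed formula of \S\ref{subsec blp-X-S-inv} already records — hence they lie in $\K=\msf I\kappa^\ast(H^\ast_{CR}(\X))$. Collecting the sum by its $\Xa$-factor and setting $L({\bf R}^\bullet(I|J)):=\sum_{{\bf R}_1}\ell({\bf R}_1,{\bf R}^\bullet(I|J))\,{\bf R}_1$ with $\ell({\bf R}_1,{\bf R}^\bullet(I|J))=\sum_{\Omega:\,{\bf R}^-_\Omega={\bf R}_1}c_\Omega\langle{\bf R}^+_\Omega\rangle^{\lN_\wa|\Z}$ defines a linear map on $\rone_{\Sigma_\star,\K}$; the ${\bf R}^\bullet(I|J)$-coordinate of $L(v_{\Sigma_\star,\K})$ is, by construction, the right-hand side above, i.e. $\langle\Psi({\bf R}^\bullet(I|J))\rangle^\X$, so $L(v_{\Sigma_\star,\K})=v_{\sigma_\star}$.

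For triangularity I would prove that $\ell({\bf R}_1,{\bf R}_2)\neq0$ forces ${\bf R}_1\preceq{\bf R}_2$. Given a contributing $\Omega$ with ${\bf R}^-_\Omega={\bf R}_1$ and $\langle{\bf R}^+_\Omega\rangle^{\lN_\wa|\Z}\neq0$, the idea is to further degenerate the $\lN_\wa$-factor by blowing up $\lN_\wa$ along $\Z$ with trivial weight; since the normal bundles $\N_{\Z|\Xa}$ and $\N_{\Z|\lN_\wa}$ are mutually dual (the divisor $\Z$ has trivial normal bundle in the total space of the weight-$\wa$ degeneration), the resulting bubble is canonically identified with $\lN_\Z$, the trivial-weight projectification of $\N_{\Z|\Xa}$ appearing in Definition \ref{def partial ord}. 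Splitting off the $\lN_\Z$-piece and iterating down to the $(\lN_\wa|\Z,\oS)$-indecomposable blocks (the minimal blocks used to build $\Psi$), one writes ${\bf R}^+_\Omega={\bf R}_{\lN_\wa}^{+,\bullet}\ast{\bf R}^{+,\bullet}$ with ${\bf R}^{+,\bullet}$ an admissible $(\Sigma^\star,\Sigma_\star)$-data of $(\Z_\infty|\lN_\Z|\Z_0)$; comparing with ${\bf A}^\bullet(I_\kappa;I_{\msf S})={\bf R}_2\ast{\bf R}_{\lN_\wa}^{+,\bullet}$ (the identity built into the definition of $\Psi$) and using uniqueness of the minimal-block factorization on the $\lN_\wa$-side yields ${\bf R}_2={\bf R}_1\ast{\bf R}^{+,\bullet}$, which is (P1). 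Condition (P2) follows because a pre-$\lN_\Z$-minimal but not $\lN_\Z$-minimal ${\bf R}^{+,\bullet}$ has vanishing invariant by Proposition \ref{prop inv-minimal}, which — tracking invariants through the degeneration of $\langle{\bf R}^+_\Omega\rangle^{\lN_\wa|\Z}$ and the nonvanishing results Theorems \ref{thm c-dertmin-all} and \ref{thm cpt-rel-inv} — contradicts $\langle{\bf R}^+_\Omega\rangle^{\lN_\wa|\Z}\neq0$. Making all of this matching precise (the homology-class and contact-order bookkeeping between the weight-$\wa$ and the trivial-weight degenerations, and the organization of the $\lN_\wa$-side factorization) is the main obstacle; it is the orbifold counterpart of the core argument of \cite{HLR08}.

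For the diagonal, a term $\Omega$ with ${\bf R}^-_\Omega={\bf R}_2$ must, by the matched-pair conditions, have its $\lN_\wa$-factor ${\bf R}^+_\Omega$ carry precisely the insertions $I_{\msf S}$, the dual relative markings $J^+=\check J$, and a fiber-class homology class; by Theorem \ref{thm c-dertmin-all} together with Lemma \ref{lem 5.8} such a relative data of $(\lN_\wa|\Z,\oS)$ is unique — it is exactly the block ${\bf R}_{\lN_\wa}^{+,\bullet}(I_{\msf S}|J^+)$ of the $\Psi$-construction — and $\langle{\bf R}_{\lN_\wa}^{+,\bullet}(I_{\msf S}|J^+)\rangle^{\lN_\wa|\Z}\neq0$ by Theorem \ref{thm cpt-rel-inv}, while $c_\Omega>0$; hence $\ell({\bf R}_2,{\bf R}_2)\neq0$. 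Ordering $\scr R_{\Sigma_\star,\K}^\bullet(\Xa|\Z)$ by a linear extension of the order opposite to $\prec$ then exhibits the matrix of $L$ as lower triangular with nonvanishing diagonal. Invertibility is immediate: by Lemma \ref{lem 6.7 finite-less} every ${\bf R}_2$ has only finitely many $\prec$-predecessors, so each column of the matrix of $L$ has only finitely many nonzero entries, all at indices $\preceq{\bf R}_2$, and the triangular system can be inverted recursively, coordinate by coordinate, giving a (triangular) inverse of $L$.
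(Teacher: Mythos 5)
Your proposal is correct and follows essentially the same route as the paper: you define $L$ by the degeneration formula for $\Psi({\bf R}^\bullet(I|J))$ with insertions distributed so that the $\Xa$-side factors lie in $\scr R^\bullet_{\Sigma_\star,\K}(\Xa|\Z)$, insert an $\lN_\Z$-neck between $\Xa$ and $\lN_\wa$ to compare any contributing ${\bf R}^-_\Omega$ with ${\bf R}^\bullet(I|J)$, kill the pre-$\lN$-minimal but not $\lN$-minimal middles via Proposition \ref{prop inv-minimal} and Theorem \ref{thm cpt-rel-inv}, identify the unique diagonal contribution with the $\Psi$-block ${\bf R}^{\bullet}_{\lN_\wa}(I_\oS|\check J)$ (nonzero by Theorem \ref{thm cpt-rel-inv}), and invert using Lemma \ref{lem 6.7 finite-less}. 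The only cosmetic difference is that you produce the neck by the trivial-weight degeneration of $\lN_\wa$ along $\Z$ (with the zero/infinity swap you note), whereas the paper uses the weight-$\wa$ degeneration of $\lN_\wa$ along $\oS$ as in \eqref{eq dege-lNa} and builds the intermediate $(\Z_\infty|\lN_\Z|\Z_0)$-data explicitly following the proof of Theorem \ref{thm crepdns-on-datum}; this is the same splitting and the same bookkeeping you flag as the remaining obstacle.
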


\begin{proof}
Take a relative data ${\bf R}^{\bullet}(I|J)$, and its correspondence absolute data $\Psi({\bf R}^{\bullet}(I|J))={\bf A}^\bullet(I_\kappa;I_\oS)$ of $(\X,\oS)$. We degenerate $\X$ via blowup along $\oS$ with weight $\fk a$. Then the degeneration formula expresses the absolute invariant
\begin{align}\label{eq def-L}
\langle{\bf A}^\bullet(I_\kappa;I_\oS)\rangle^\X
=\sum_{\Omega\in\scr D_{{\bf A}^\bullet(I_\kappa;I_\oS)}(I^+,I^-)}
c_\Omega
\cdot
\langle {\bf R}^\bullet_-(I^-|J_\msf Z^-)\rangle^{\Xa|\Z}
\cdot
\langle{\bf R}^\bullet_{\lN_\wa,+}(I^+|J_{\msf Z}^+)\rangle^{\lN_\wa|\Z}.
\end{align}
We require that $J^+_\Z$ and $J^-_\Z$ belong to $\Sigma^\star$ and $\Sigma_\star$ respectively, and $I^-$ belong to $\K$. Hence all ${\bf R}^\bullet_-(I^-|J_\msf Z^-)$ in the RHS belongs to $\scr R_{\Sigma_\star,\K}^\bullet(\Xa|\Z)$. Therefore all of them are mapped to absolute datum by $\Psi$.

We define
\[
L(\ldots,\langle {\bf R}^{\bullet}(I|J)\rangle^{\Xa|\Z},\ldots):=
(\ldots,\langle{\bf A}^\bullet(I_\kappa;I_\oS)\rangle^\X,\ldots)
.
\]
Then we have $L(v_{\Sigma_\star,\K})=v_{\sigma_\star}$.
From \eqref{eq def-L} we see that
$c_\Omega\langle{\bf R}^\bullet_{\lN_\wa,+}(I^+|J_{\msf Z}^+)\rangle^{\lN_\wa|\Z}$ is the coefficient of the matrix of $L$ with respect to the basis $\scr R_{\Sigma_\star,\K}^\bullet(\Xa|\Z)$ of $\rone_{\Sigma_\star,\K}$.

We next prove that the matrix of $L$ is lower triangle and has nonzero diagonal. This consists of 2 steps.
\v
\n
{\em Step 1: Lower triangle.} This is equivalent to the statement that all relative datum ${\bf R}^\bullet_-(I^-|J_\msf Z^-)$ on the RHS of \eqref{eq def-L} are lower than ${\bf R}^{\bullet}(I|J)$.

Consider a pair $({\bf R}^\bullet_-(I^-|J_\msf Z^-),{\bf R}^\bullet_{\lN_\wa,+}(I^+|J_{\msf Z}^+))$ appearing in the RHS of \eqref{eq def-L}. We next show that either $({\bf R}^\bullet_-(I^-|J_\msf Z^-)\prec ({\bf R}(I|J)$ or $\langle{\bf R}^\bullet_{\lN_\wa,+}(I^+|J_{\msf Z}^+))\rangle^{\lN_\wa|\Z}=0$.

${\bf R}^\bullet_{\lN_\wa,+}(I^+|J_{\msf Z}^+)$ is a relative data of $(\lN_\wa|\Z,\oS)$. We degenerate $\lN_\fk a$ along $\oS$ by blowing up $\lN_\wa$ along $\oS$ with weight $\fk a$. Then we get
\begin{align}\label{eq dege-lNa}
(\lN_\fk a|\Z,\oS)\xrightarrow{\text{degenerate}} (\Z|\lN_\fk a^-|\Z_0)\wedge (\lN_\fk a^+|\Z,\oS)=(\Z_\infty|\lN_\Z|\Z_0)\wedge (\lN_\fk a|\Z,\oS)
\end{align}
where
\begin{itemize}
\item $\lN_\Z$ is the trivial weight projectification of the normal bundle of $\Z$ in $\lN_\wa$,
\item $\Z_\infty$ is the infinite section of $\lN_\Z$, and corresponds to the original $\Z$ in $\lN_\fk a$ after we glue back,
\item by gluing $\Z_0$ with the last $\Z$ in $\lN_\wa^+$ we get the original $(\lN_\wa|\Z,\oS)$.
\end{itemize}
Note that $I_\oS$ is contained in $I^+$. Now we apply Lemma \ref{lem rel-outof-IS} to $I_\msf S\in I^+$ and get an admissible relative data
\begin{align}\label{eq relative-R-S}
{\bf R}^{\bullet}_{\lN_\wa}(I_\msf S|J^+)
=\bigsqcup_{k=1}^{\sharp I_{\oS}}
{\bf R}^+_{\lN_\wa,k}( I_{\oS,k}|J^+_k)
\end{align}
of $(\lN_\wa^+|\sf Z,S)=(\lN_\wa|\Z,\oS)$. By the construction of $\Psi$ we see that the relative data
$$J^+=(J^+_1,\ldots, J^+_{\sharp I_{\oS}})=\check J,$$
is the dual of $J$ in the original ${\bf R}^{\bullet}(I|J)$.

There is an admissible relative data
\begin{align}
{\bf R}^{\bullet}(J^+_\Z|I^+\setminus I_\msf S|J)
\end{align}
of $(\Z_\infty|\lN_\Z|\Z_0)$ such that
\begin{align}
{\bf R}^{\bullet}(J^+_\Z|I^+\setminus I_\oS|J)
\ast
{\bf R}^{\bullet}_{\lN_\wa}( I_\msf S|\check J)
={\bf R}^\bullet_{\lN_\wa,+}(I^+|J_{\msf Z}^+).
\end{align}
In fact, when $\codimc\oS\geq 2$, following  the same proof in Theorem \ref{thm crepdns-on-datum} we get this relative data;
when $\codimc\oS=1$, because that the homology class in ${\bf A}^\bullet$ is obtained from ${\bf R}^\bullet$ by $\Psi$, we can also get this relative data.

Hence
\begin{align}
{\bf R}^\bullet_-(I^-|J_\msf Z^-)
\ast
{\bf R}^{\bullet}(J^+_\Z|I^+\setminus I_\oS|J)
\ast
{\bf R}^{\bullet}_{\lN_\wa}( I_\msf S|\check J)
={\bf A}^\bullet(I_\kappa;I_\msf S).
\end{align}
which implies
\begin{align}
{\bf R}^\bullet_-(I^-|J_\msf Z^-)
\ast
{\bf R}^{\bullet}(J^+_\Z|I^+\setminus I_\oS|J)
={\bf R}^\bullet(I|J)
\end{align}
by the construction and injectivity of $\Psi$.
Hence when ${\bf R}^{\bullet}(J^+_\Z|I^+\setminus I_\oS| J)$ is not pre-$\lN$-minimal\footnote{Here $\lN$ is the projectification of the normal bundle of $\Z$ in $\Xa$, hence the $\lN_\Z$ in \eqref{eq dege-lNa}.}, ${\bf R}^\bullet_-(I^-|J_\msf Z^-)\prec {\bf R}(I|J)$.

Now suppose that ${\bf R}^{\bullet}(J^+_\Z|I^+\setminus I_\oS| J)$ is pre-$\lN$-minimal but not $\lN$-minimal. Then
\[
I^+=I_\oS,                        \qq
\fk r(J^+_\Z)=\fk r (\check J),            \qq
\fk i(J^+_\Z)\neq  \fk i(\check J),
\]
and
\[
{\bf R}^{\bullet}(J^+_\Z|I^+\setminus I_\oS|J)=
{\bf R}^{\bullet}(J^+_\Z|\varnothing|J)=
\bigsqcup_{k=1}^{\sharp I_{\oS}}{\bf R}_k(J_{\Z,k}^+|\varnothing|J_k)
\]
are disjoint union of pre-$\lN$-minimal datum of $(\Z_\infty|\lN_\Z|\Z_0)$. Then at least one of them are not $\lN$-minimal. Without loss of generality suppose the first one ${\bf R}^{\bullet}_1(J_{\Z,1}^+|\varnothing|J_1)$ is not $\lN$-minimal, i.e.
$\fk i(J^+_{\Z,1})\neq \fk i(\check J_1)$.

On the other hand, now
\[
{\bf R}^\bullet_{\lN_\wa,+}(I^+|J_\Z^+)=
{\bf R}^{\bullet}(J_\Z^+|\varnothing|J)
\ast {\bf R}^{\bullet}_{\lN_\wa}(I_\oS|\check J)=
\bigsqcup_{k=1}^{\sharp I_{\oS}}
\Big({\bf R}_k(J^+_{\Z,k}|\varnothing|J_k)\ast
{\bf R}^+_{\lN_\wa,k}( I_{\oS,k}|\check J_k)\Big).
\]
Consider the first component
${\bf R}_1(J^+_{\Z,1}|\varnothing|J_1)
\ast
{\bf R}^+_{\lN_\wa,1}( I_{\oS,1}|\check J_1)$.
The topological data of this component is the following:
\begin{itemize}
\item the genus is zero,
\item the homology class is a fiber class, and
\item there is one relative marking mapped to the infinity section $\Z_\0$ of $\lN_\wa$ and one absolute marking mapped to the zero section $\oS$ of $\lN_\wa$.
\end{itemize}
We degenerate $(\lN_\wa|\Z;\oS)$ as in \eqref{eq dege-lNa} and then the degeneration formula gives us
\[
\langle {\bf R}^{\bullet}_1(J^+_{\Z,1}|\varnothing|J_1)
\ast
{\bf R}^+_{\lN_\wa,1}( I_{\oS,1}|\check J_1) \rangle^{\lN_\wa|\Z}
=\sum_{J'\in\Sigma^\star}
c_{J'}
\cdot
\langle{\bf R}^{\bullet}_1(J^+_{\Z,1}|\varnothing|J') \rangle^{\Z_\0|\lN_\Z|\Z_0}
\cdot \langle {\bf R}^+_{\lN_\wa,1}(I_{\oS,1}|\check J')\rangle^{\lN_\wa|\Z}
\]
Then by Theorem \ref{thm cpt-rel-inv} we get
\[
\langle {\bf R}^{\bullet}_1(J^+_{\Z,1}|\varnothing|J_1)
\ast
{\bf R}^+_{\lN_\wa,1}(I_{\oS,1}|\check J_1) \rangle^{\lN_\wa|\Z}
=c_{J_1}
\cdot
\langle {\bf R}^{\bullet}_1(J^+_{\Z,1}|\varnothing|J_1)\rangle^{\Z_\0|\lN_\Z|\Z_0}
\cdot
\langle {\bf R}^+_{\lN_\wa,1}(I_{\oS,1}|\check J_1)\rangle^{\lN_\wa|\Z}.
\]
Here $c_{J_1}$ is a nonzero constant.
However since $\fk i(J^+_{\Z,1})\neq \fk i(\check J_1)$, by Proposition \ref{prop inv-minimal}
\[
\langle {\bf R}^{\bullet}_1(J^+_{\Z,1}|\varnothing|J_1)\rangle^{\Z_\infty|\lN_\Z|\Z_0}=0
\]
which implies that the invariant
\[\begin{split}
\langle{\bf R}^+_{\lN_\wa}(I^+|J_{\msf Z}^+)\rangle^{\lN_\wa|\Z}&=\prod_k\langle {\bf R }^{\bullet}_k(J^+_{\Z,k}|\varnothing|J_k)\ast {\bf R}^+_{\lN_\wa,k}(I_{\oS,k}|\check J_k) \rangle^{\lN_\wa|\Z} 
=0.
\end{split}\]

Finally, suppose ${\bf R}^{\bullet}(J^+_\Z|I^+\setminus I_\oS| J)$ is $\lN$-minimal. Then \[
I^+=I_\oS,                        \qq
\fk r(J^+_\Z)=\fk r (\check J),            \qq
\fk i(J^+_\Z)=  \fk i(\check J),
\]
and
\[
{\bf R}^{\bullet}(J^+_\Z|I^+\setminus I_\oS|J)=
{\bf R}^{\bullet}(J^+_\Z|\varnothing|J)=
\bigsqcup_{k=1}^{\sharp I_{\oS}}{\bf R}_k(J_{\Z,k}^+|\varnothing|J_k)
\]
are disjoint union of $\lN$-minimal datum of $(\Z_\infty|\lN_\Z|\Z_0)$. Therefore
\[
{\bf R}^\bullet_{\lN_\wa,+}(I^+|J_\Z^+)=
{\bf R}^{\bullet}(J_\Z^+|\varnothing|J)
\ast {\bf R}^{\bullet}_{\lN_\wa}(I_\oS|\check J)={\bf R}^{\bullet}_{\lN_\wa}(I_\oS|\check J).
\]
Consequently, ${\bf R}^\bullet_{-}(I^-|J_\Z^-)={\bf R}^\bullet(I|J)$.

Therefore every ${\bf R}^\bullet_-(I^-|J_\msf Z^-)$ in the RHS of \eqref{eq def-L} is either lower than or equal to ${\bf R}^\bullet(I|J)$ or has zero coefficient. Therefore the matrix of $L$ is lower triangle.
\v\n
{\em Step 2: Non-vanishing diagonal.} Obviously
\begin{align}
{\bf R}^\bullet(I|J)\ast
{\bf R}^{\bullet}_{\lN_\wa}( I_\oS|J^+)
={\bf A}^\bullet(I_\kappa;I_\msf S)
\end{align}
for the admissible relative data ${\bf R}^\bullet_{\lN_\wa}(I_\oS|J^+)$ in
\eqref{eq relative-R-S}, hence $J^+=J$.
The data ${\bf R}^{\bullet}_{\lN_\wa}( I_\oS|J^+)$ is exactly the proper data we discussed in \S\ref{sec 5}. By Theorem \ref{thm cpt-rel-inv} we see that the invariant
\begin{align}
\langle {\bf R}_{\lN_\wa}^{\bullet}( I_\oS|J^+)\rangle^{\lN_\wa|\Z}\neq 0.
\end{align}
Hence the pair $({\bf R}^\bullet(I|J),{\bf R}^{\bullet}_{\lN_\wa}(I_\oS|J^+))$ appears in the RHS of \eqref{eq def-L}.

When $J=\varnothing$, then $I_\oS=\varnothing$. Then in the degeneration formula for ${\bf A}^\bullet(I_\kappa;\varnothing)$, the coefficient of ${\bf R}^\bullet(I|\varnothing)$ is $1$.

This shows that the diagonal are all non-zero.
The proof is accomplished.
\end{proof}

\begin{remark}
Our theorem generalizes the results of \cite{MauP06,HLR08} on the blowup correspondence of ordinary blowup along symplectic submanifold to the case of weighted blowup along symplectic manifold, and moreover, to the orbifold case.
\end{remark}
\begin{remark}
Our results hold for the following more general cases:
\begin{itemize}
\item the absolute insertions $I$ in ${\bf A}^{\bullet}(I;I_\msf S)$ contain psi-classes, i.e. descendent insertions,
\item  $\oS$ is a disjoint union $\oS=\coprod_{1\leq i\leq I}\oS_i$ of symplectic sub-orbifolds of $\X$
provided that for $\oS_i, 1\leq i\leq I$, there exist symplectic neighborhoods $\U_i$
which do not intersect with each other. Then after blowing up, the exceptional divisors are disjoint from each other and have pairwise disjoint tubular neighborhoods.
\end{itemize}
\end{remark}

We may restrict the correspondence on a subset of $\scr R_{\Sigma_\star,\K}^\bullet(\Xa|\Z)$. As in \S \ref{subsec 1.2}, let $[pt]_{\sf ker}\in H^{\dim \X}(\msf{ker\,X})$ denote the point class of a connected component of $\msf{ker\,X}$. Then $\msf I\kappa^*[pt]_{\sf ker}$ is the point class of the corresponding connected component of $\msf{ker\,\underline{X}}_{\wa}$.

We denote by $\scr R_{\Sigma_\star,\K,0,[pt]_{\sf ker}}^\bullet(\Xa|\Z)$ the subset of $\scr R_{\Sigma_\star,\K}^\bullet(\Xa|\Z)$ which consists of genus zero relative datum with $\msf I\kappa^*[pt]_{\sf ker}$ being the first absolute insertion. Then we have the corresponding subset
$
\scr A_{\sigma_\star,0,[pt]_{\sf ker}}^\bullet(\X,\oS)
$ of
$
\scr A_{\sigma_\star}^\bullet(\X,\oS)
$
which consists of genus zero absolute datum with $[pt]_{\sf ker}$ being the first insertion.
Let
\[
\Psi_{0,[pt]_{\sf ker}}:\scr R_{\Sigma_\star,\K,0,[pt]_{\sf ker}}^\bullet(\Xa|\Z)\rto \scr A_{\sigma_\star,0,[pt]_{\sf ker}}^\bullet(\X,\oS)
\]
be the restriction of
$
\Psi:\scr R_{\Sigma_\star,\K}^\bullet(\Xa|\Z)\rto \scr A_{\sigma_\star}^\bullet(\X,\oS)$. It is a bijection when $\codim\oS\geq 4$ and an injection when $\codim\oS=2$.
We also have the corresponding linear space $\rone_{\Sigma_\star,\K,0,[pt]_{\sf ker}}$ and vectors
\[
v_{\Sigma_\star,\K,0,[pt]_{\sf ker}},\,\, v_{\sigma_\star,0,[pt]_{\sf ker}}\in\rone_{\Sigma_\star,\K,0,[pt]_{\sf ker}}
\]
The linear map $L$ also restricts on $\rone_{\Sigma_\star,\K,0,[pt]_{\sf ker}}$, which we denote by $L_{0,[pt]_{\sf ker}}$.
\begin{theorem}\label{thm correspds-pt}
The linear transformation $L:\rone_{\Sigma_\star,\K}\rto\rone_{\Sigma_\star,\K}$
restricts to a linear transformation
$L_{0,[pt]_{\sf ker}}:\rone_{\Sigma_\star,\K,0,[pt]_{\sf ker}}\rto \rone_{\Sigma_\star,\K,0,[pt]_{\sf ker}}$ and
$$
L_{0,[pt]_{\sf ker}}(v_{\Sigma_\star,\K,0,[pt]_{\sf ker}}) =v_{\sigma_\star,0,[pt]_{\sf ker}}.
$$
Moreover, the matrix of $L_{0,[pt]_{\sf ker}}$ with respect to the basis
$\scr R_{\Sigma_\star,\K,0,[pt]_{\sf ker}}^\bullet(\Xa|\Z)$ is a lower triangle matrix with non-vanishing diagonals.
\end{theorem}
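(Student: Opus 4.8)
The plan is to deduce the statement from Theorem~\ref{thm lower-traingl} by showing that the coordinate subspace $\rone_{\Sigma_\star,\K,0,[pt]_{\sf ker}}$, spanned by the subset $\scr R_{\Sigma_\star,\K,0,[pt]_{\sf ker}}^\bullet(\Xa|\Z)$ of the basis $\scr R_{\Sigma_\star,\K}^\bullet(\Xa|\Z)$, is invariant both under the correspondence $\Psi$ and under the degeneration formula \eqref{eq def-L} defining $L$. Once this is established, $L_{0,[pt]_{\sf ker}}$ is represented by a principal submatrix of the matrix of $L$ (with respect to the restriction of the partial order $\prec$), so lower triangularity, non-vanishing of the diagonal, and the identity on the distinguished vectors all follow at once. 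The first thing I would check is that $\Psi$ and $\Psi\inv$ preserve the two decorations ``genus zero'' and ``first absolute insertion equal to $\msf I\kappa^\ast[pt]_{\sf ker}$'' (resp. ``$[pt]_{\sf ker}$''): for the genus this is immediate since $\Psi({\bf R}^\bullet(I|J))={\bf A}^\bullet(I_\kappa;I_\oS)$ is built by gluing ${\bf R}^\bullet(I|J)$ to the data ${\bf R}^+_{\lN_\wa,k}$ of $(\lN_\wa|\Z,\oS)$, each of genus $0$ and of fiber-class homology, and gluing genus-zero pieces along relative markings keeps the total genus equal to that of ${\bf R}^\bullet(I|J)$; for the marked point, recall $I_\kappa=(\msf I\kappa^\ast)\inv(I)$ and that $\msf I\kappa^\ast[pt]_{\sf ker}$ is the point class of the corresponding component of $\msf{ker\,\underline{X}}_{\wa}$ (as stated just before the theorem), so the first entry of $I$ is $\msf I\kappa^\ast[pt]_{\sf ker}$ iff the first entry of $I_\kappa$ is $[pt]_{\sf ker}$. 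This reproves that $\Psi$ restricts to $\Psi_{0,[pt]_{\sf ker}}$, a bijection when $\codim\oS\geq 4$ and an injection when $\codim\oS=2$.

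Next I would verify that $L$ maps $\rone_{\Sigma_\star,\K,0,[pt]_{\sf ker}}$ into itself, i.e. that every ${\bf R}^\bullet_-(I^-|J_\msf Z^-)$ occurring on the right-hand side of \eqref{eq def-L} for ${\bf A}^\bullet(I_\kappa;I_\oS)=\Psi({\bf R}^\bullet(I|J))$ with ${\bf R}^\bullet(I|J)\in\scr R_{\Sigma_\star,\K,0,[pt]_{\sf ker}}^\bullet(\Xa|\Z)$ again lies in that set. The degeneration is that of $(\X;\oS)$ into $\X^-=\Xa$ and $\X^+=\lN_\wa$. Since ${\bf A}^\bullet$ has genus $0$, each component on the right-hand side, in particular ${\bf R}^\bullet_-(I^-|J_\msf Z^-)$, has genus $0$. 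For the point insertion: $\msf{ker\,X}$ is a union of top-dimensional twisted sectors, so the part $\msf{ker\,X}\cap\msf{IS}$ lying over $\oS$ has dimension at most $\dim\oS<\dim\X=\dim\msf{ker\,X}$; hence a cycle representing $[pt]_{\sf ker}$ can be chosen at a generic point of its component of $\msf{ker\,X}$, disjoint from $\oS$. As $\kappa$ is an isomorphism off neighbourhoods of $\Z$ and of $\oS$, this point lies in the $\X^-=\Xa$ component of the degeneration, away from $\Z$, so the marked point carrying $[pt]_{\sf ker}$ is distributed to that component, i.e. $[pt]_{\sf ker}$ (identified with $\msf I\kappa^\ast[pt]_{\sf ker}$ via $\kappa$) is the first entry of $I^-$. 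Thus ${\bf R}^\bullet_-(I^-|J_\msf Z^-)\in\scr R_{\Sigma_\star,\K,0,[pt]_{\sf ker}}^\bullet(\Xa|\Z)$, and $L$ restricts to $L_{0,[pt]_{\sf ker}}$.

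With these two observations the rest is formal. The restriction of $v_{\Sigma_\star,\K}$ to the coordinates indexed by $\scr R_{\Sigma_\star,\K,0,[pt]_{\sf ker}}^\bullet(\Xa|\Z)$ is $v_{\Sigma_\star,\K,0,[pt]_{\sf ker}}$, and similarly for $v_{\sigma_\star}$, so $L(v_{\Sigma_\star,\K})=v_{\sigma_\star}$ restricts to $L_{0,[pt]_{\sf ker}}(v_{\Sigma_\star,\K,0,[pt]_{\sf ker}})=v_{\sigma_\star,0,[pt]_{\sf ker}}$. Because $L$ preserves this coordinate subspace, the matrix of $L_{0,[pt]_{\sf ker}}$ is the corresponding principal submatrix, hence lower triangular for the restricted $\prec$, and its diagonal entries coincide with the nonzero numbers $\langle{\bf R}^\bullet_{\lN_\wa}(I_\oS|J^+)\rangle^{\lN_\wa|\Z}$ (equal to $1$ when $J=\varnothing$) identified in the proof of Theorem~\ref{thm lower-traingl}. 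The step I expect to be the main obstacle is the geometric claim in the second paragraph that the point-class marked point always lands on the $\Xa$ component of the degeneration; this is the orbifold analogue of the corresponding step in \cite{HLR08}, and it rests on the genericity of the representative of $[pt]_{\sf ker}$ within its top-dimensional twisted sector together with the positive codimension of $\oS$, so the dimension count $\dim(\msf{ker\,X}\cap\msf{IS})<\dim\msf{ker\,X}$ needs to be made precise using the local description of twisted sectors from Remark~\ref{rmk notation-of-twisted-sector} and the structure of $\sf IS\subseteq\sf IX$.
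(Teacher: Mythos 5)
Your proposal is correct and follows the same route the paper intends: the paper states this theorem without a separate proof, treating it as the restriction of Theorem \ref{thm lower-traingl} to the coordinate subspace spanned by $\scr R^\bullet_{\Sigma_\star,\K,0,[pt]_{\sf ker}}(\Xa|\Z)$, which is exactly what you verify (genus preservation under $\Psi$ and under degeneration, and the point insertion landing on the $\Xa$ side by choosing $([pt]_{\sf ker})^-=\msf I\kappa^*[pt]_{\sf ker}$, $([pt]_{\sf ker})^+=0$, the same choice the paper makes in the proof of Theorem \ref{thm inv-sym-unirule}). Your filled-in details, including the dimension count showing a representative of $[pt]_{\sf ker}$ can be taken away from $\oS$, are consistent with the paper and close the gap it leaves implicit.
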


In this manner, when $[pt]_{\sf ker}$ runs through the generators of all components of $\msf{ker\,X}$ we get $\# \scr T^\X_{\sf ker}$ linear transformations.

\begin{remark}\label{rmk other-restriction-of-L}
There are also other restrictions of $L$ by taking different subset of
$\scr R_{\Sigma_\star,\scr K}(\Xa|\Z)$. For instance,
we could replace 
$[pt]_{\sf ker}$
by the point class of a general twisted sector of $\X$ not only $\msf{ker\,X}$.

    Let $\X(t)$ be a twisted sector of $\X$. After blowing up $\X$ along $\oS$ with weight $\wa$, we get its direct transformation $\Xa(t)$ in $\Xa$ (cf. \S \ref{subsec blp-X-S}). So
    $\msf I\kappa:\Xa(t)\rto \X(t)$. Then the pull back $\msf I\kappa^*([pt]_{(t)})$ of the point class $[pt]_{(t)}$ of $\X(t)$ is the point class of $\Xa(t)$.

    We say that $\X(t)$ is {\bf away from $\oS$} if $\X(t)\nsubseteq\sf IS$. Obviously, when $\X(t)$ is away from $\oS$, $\Xa(t)$ is away from the exceptional divisor $\Z$.

Now we replace the point class $[pt]_{\sf ker}$
    in the construction of $L_{0,[pt]_{\ker}}$
    by the point class $[pt]_{(t)}$ of a twisted sector $\X(t)$ that is away from $\oS$. Then we also get a linear transformation $L_{0,[pt]_{(t)}}:\rone_{\Sigma_\star,\K,0,[pt]_{(t)}} \rto\rone_{\Sigma_\star,\K,0,[pt]_{(t)}}$, and two vectors
    $
    v_{\Sigma_\star,\K,0,[pt]_{(t)}},v_{\sigma_\star,0,[pt]_{(t)}}\in \rone_{\Sigma_\star,\K,0,[pt]_{(t)}}.
    $

    As Theorem \ref{thm correspds-pt}, $L_{0,[pt]_{(t)}}$ satisfies
    $$
    L_{0,[pt]_{(t)}}(v_{\Sigma_\star,\K,0,[pt]_{(t)}}) =v_{\sigma_\star,0,[pt]_{(t)}},
    $$
    and the matrix of $L_{0,[pt]_{(t)}}$ with respect to the basis $\scr R_{\Sigma_\star,\K,0,[pt]_{(t)}}^\bullet(\Xa|\Z)$ is a lower triangle matrix with non-vanishing diagonals.
\end{remark}
R. Wang and the first two authors will study other kinds of restrictions of $L$ in \cite{CDW17a}.

\section{Symplectic uniruledness}

\subsection{Symplectic uniruledness of orbifold groupoids}

Let$(\X,\omega)$ be a compact symplectic orbifold groupoid. As in \S \ref{subsec 1.2}, for $\msf{ker\,X}=\msf{IX}^{\text{top}}$, we denote by $[pt]_{\sf ker}\in H^{\dim\X}(\msf{ker\,X})$ the point class of a connected component of $\msf{ker\,X}$.

\begin{defn}\label{def uniruled-orbifold}
We say that $(\X,\omega)$ is {\bf symplectic uniruled} if there is a nonzero genus zero Gromov--Witten invariant of the form
\begin{align}\label{E def-uniruled}
\langle[pt]_{\sf ker},\alpha_2,\ldots,\alpha_k\rangle^\X_{0,A}\neq 0
\end{align}
with $k\geq 1$ and $0\neq A\in H_2(|\X|;\integer)$.
\end{defn}

\begin{remark}\label{rmk 7.2}
When $\X$ is ineffective, $\msf{ker\,X}$ has several connected components. Then the above definition means that \eqref{E def-uniruled} holds for the point class of one of the components of $\msf{ker\,X}$.
\end{remark}


We also have a characterization of symplectic uniruledness by using descendent invariants
as the manifold case (cf. \cite[Theorem 4.10]{HLR08}).
\begin{theorem}\label{thm condi unirl}
$(\X,\omega)$ is symplectic uniruled if and only if there is
a nonzero, possibly disconnected genus zero descendent orbifold Gromov--Witten invariant of the form
\[
\langle[pt]_{\sf ker},\tau_{d_2}\alpha_2,\ldots,\tau_{d_k}\alpha_k \rangle^\X_{0,A}\neq 0
\]
with $k\geq 1$ such that the component with the $[pt]_{\sf ker}$ insertion has nonzero homology class. Here when $\X$ is ineffective, Remark \ref{rmk 7.2} also applies.
\end{theorem}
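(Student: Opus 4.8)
The plan is to first dispose of the trivial implication and then reduce the statement to a purely combinatorial induction on descendent degrees. If $(\X,\omega)$ is symplectic uniruled in the sense of Definition \ref{def uniruled-orbifold}, then the nonzero primary invariant $\langle[pt]_{\sf ker},\alpha_2,\ldots,\alpha_k\rangle^\X_{0,A}$ with $0\neq A$ is itself a connected genus zero descendent invariant of the required form (all $d_i=0$, and the point-class component, being the whole curve, has homology class $A\neq0$). For the converse, start from a nonzero, possibly disconnected, genus zero descendent invariant $\langle[pt]_{\sf ker},\tau_{d_2}\alpha_2,\ldots,\tau_{d_k}\alpha_k\rangle^\X_{0,A}$ whose $[pt]_{\sf ker}$-component has nonzero homology class. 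A disconnected invariant factors as the product of the invariants of its connected components, so each component is nonzero; keeping only the one containing $[pt]_{\sf ker}$ yields a nonzero connected genus zero descendent invariant
\[
\langle[pt]_{\sf ker},\tau_{e_2}\gamma_2,\ldots,\tau_{e_m}\gamma_m\rangle^\X_{0,A'},\qquad 0\neq A',
\]
in which the point class still carries no $\psi$-class. It thus suffices to show that any nonzero connected genus zero invariant of this shape forces $\X$ to be symplectic uniruled.

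I would prove this last assertion by induction on the total descendent degree $\sum_{i\geq2}e_i$. If it is zero the invariant is already primary and we are done; and if $m=3$ then $\overline M_{0,3}$ is a point, so all $e_i$ vanish. Hence we may assume $m\geq4$ and $e_j\geq1$ for some $j\geq2$. Applying the genus zero topological recursion relation at the $j$-th marked point — equivalently, writing $\psi_j$ on the moduli of stable maps as a sum of boundary divisors by choosing as the two auxiliary marked points the one carrying $[pt]_{\sf ker}$ and one further point — and then invoking the splitting axiom with a fixed self-dual homogeneous basis $\{T_e\}$ of $H^*_{CR}(\X)$ inserted at the node, expresses the invariant as a finite sum of products of two connected genus zero invariants. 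In each term the $[pt]_{\sf ker}$ insertion, still carrying no $\psi$-class, lies on the twig \emph{not} containing the $j$-th point, the node insertion $T_e$ also carries no $\psi$-class, and the total descendent degree over the two twigs is strictly smaller than $\sum_{i\geq2}e_i$. Since the whole sum is nonzero, some term is nonzero; let $C$ be its $[pt]_{\sf ker}$-component, with homology class $A_C$, so that $A'=A_C+(\text{class of the other twig})$. If $A_C\neq0$, the invariant of $C$ satisfies the inductive hypothesis (connected, genus zero, $[pt]_{\sf ker}$ without descendent, nonzero homology class, strictly smaller descendent degree), so $\X$ is symplectic uniruled.

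The one remaining possibility, and the main obstacle, is that $A_C=0$ in every nonzero term: then $C$ contributes a genus zero degree zero invariant $\langle[pt]_{\sf ker},\ldots,T_e\rangle^\X_{0,0}$, and one must show such a term vanishes. Here I would use the orbifold analogues of the fundamental-class and string equations together with a dimension count: a genus zero degree zero orbifold Gromov--Witten invariant reduces to an integral over the relevant component of $\msf{IX}$ of the cup product of its non-descendent insertions against a class pulled back from $\overline M_{0,m}$; since $[pt]_{\sf ker}$ is the top-degree class of its component of $\msf{ker\,X}=\msf{IX}^{\text{top}}$, this integral forces all the other insertions on $C$ to be of degree zero, and then admissibility of the split term, combined with $A'\neq0$ being carried entirely by the complementary twig, produces a dimension contradiction, so no such nonzero term occurs. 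Carrying out this vanishing argument carefully in the presence of twisted sectors and inertia-space bookkeeping is the technical core; it is the exact orbifold counterpart of \cite[Theorem 4.10]{HLR08}. Once it is in place the induction closes and the theorem follows.
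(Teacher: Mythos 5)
Your overall shape of argument (trivial forward direction, factor a disconnected invariant into its components, keep the $[pt]_{\sf ker}$-component, then induct on total descendent degree using the genus-zero TRR plus the splitting axiom) is indeed the argument the paper has in mind — the paper itself gives no details and simply observes that the proof of \cite[Theorem 4.10]{HLR08} only uses GW axioms, $\psi$-class boundary relations and properties of genus-zero degree-zero invariants, all of which hold in the orbifold setting. The genuine gap is in your treatment of the terms whose $[pt]_{\sf ker}$-twig has homology class $A_C=0$: you claim they are killed by a dimension contradiction, but they are not, and in fact they are the terms one must use. Already in the manifold case, take $\langle[pt],\gamma\rangle_{0,A}\neq0$ and the admissible descendent invariant $\langle[pt],\tau_1\gamma,1\rangle_{0,A}=\langle[pt],\gamma\rangle_{0,A}\neq0$ (string equation). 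Applying TRR at the $\tau_1\gamma$-point with the $[pt]$-point and the $1$-point as auxiliary points, every term with nonzero class on the $[pt]$-twig vanishes by the string equation, and the unique surviving term is $\langle 1,[pt],1\rangle_{0,0}\cdot\langle \gamma,[pt]\rangle_{0,A}$: the $[pt]$-twig has class zero, both factors are admissible, and no dimension contradiction occurs. The correct mechanism (and what the \cite{HLR08} argument exploits) is that nonvanishing of the degree-zero $[pt]_{\sf ker}$-factor forces the node insertion on that side to have degree zero, so its dual on the twig carrying the nonzero class is a top-degree, point-type class, and the induction must continue with that factor. In the orbifold case this is exactly where care is needed and where your sketch is silent: the node class may be a degree-zero class of a lower-dimensional twisted sector, so its dual is a top class of $\X(\bar h)$ rather than $[pt]_{\sf ker}$, and genus-zero degree-zero orbifold invariants are not just $\int_{\overline{M}_{0,m}}\psi$-monomials times cup-product integrals — they carry the Chen--Ruan obstruction bundle. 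One must argue, using that $[pt]_{\sf ker}$ sits on a top-dimensional sector (where the evaluation map is a local diffeomorphism, i.e.\ the monodromy is a kernel element), that nonvanishing forces the dual node class to again be a point class of a top-dimensional sector; this is the ``properties of genus-zero invariants with zero curve class'' the paper alludes to, and your dimension-count shortcut does not substitute for it.

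Two smaller problems: your base-case claim that $m=3$ forces all $e_i=0$ is false — the descendent $\psi$-classes live on the moduli of stable maps and are not pulled back from $\overline{M}_{0,3}$ when the class is nonzero (the example above is a nonzero three-point descendent invariant); TRR does apply for $m=3$, so that case should simply be included rather than excluded. And the case $m=2$ with a descendent on the second insertion is not covered by TRR at all and needs a separate device (string/divisor-type manipulations) before your induction can start.
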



\begin{proof}

In the manifold case, the proof of \cite[Theorem 4.10]{HLR08} used only axioms of Gromov--Witten invariants, the boundary relations of $\psi$-classes, and some properties on genus zero invariants with zero curve class. These axioms and boundary relations of $\psi$-classes are also true for orbifold Gromov--Witten invariants. The orbifold Gromov--Witten theory also has similar properties for genus zero invariants with zero curve class. So the proof of \cite[Theorem 4.10]{HLR08}  also proves this theorem.
\end{proof}

\subsection{Invariance of symplectic uniruledness under weighted blowups}

The general correspondence results in \S \ref{sec correspondence} implies that symplectic uniruledness is invariant under weighted blowups. Let $\oS$ be a compact symplectic sub-orbifold groupoid and $\Xa$ be the weighted-$\wa$ blowup of $\X$ along $\oS$.
\begin{theorem}\label{thm inv-sym-unirule}
$\X$ is symplectic uniruled if and only if $\Xa$ is symplectic uniruled.
\end{theorem}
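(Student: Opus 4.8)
The plan is to derive both implications from the invertible, lower--triangular correspondence of Theorem~\ref{thm correspds-pt} (and the variant in Remark~\ref{rmk other-restriction-of-L}, together with the generalized form allowing descendent insertions on the non--$\oS$--supported markings mentioned in the remark after Theorem~\ref{thm lower-traingl}), combined with the descendent characterization of symplectic uniruledness in Theorem~\ref{thm condi unirl}. The first point to record is a compatibility of point classes: as noted at the end of \S\ref{subsec blp-X-S}, $\msf I\kappa$ carries $\msf{ker\,\underline{X}}_{\wa}$ isomorphically onto $\msf{ker\,X}$ and matches their connected components, so for each component of $\msf{ker\,X}$ with point class $[pt]_{\sf ker}$, the pullback $\msf I\kappa^\ast[pt]_{\sf ker}$ is exactly the point class of the corresponding component of $\msf{ker\,\underline{X}}_{\wa}$; in particular $\msf I\kappa^\ast[pt]_{\sf ker}\in\K$, and ``$[pt]_{\sf ker}$--type'' invariants on $\X$ and on $\Xa$ are matched under $\kappa$.

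Next, fix a component of $\msf{ker\,X}$ together with $[pt]_{\sf ker}$ and a self--dual basis $\sigma_\star$ of $H^\ast_{CR}(\oS)$. By Theorem~\ref{thm correspds-pt} the restricted map $L_{0,[pt]_{\sf ker}}$ on $\rone_{\Sigma_\star,\K,0,[pt]_{\sf ker}}$ is invertible and satisfies $L_{0,[pt]_{\sf ker}}(v_{\Sigma_\star,\K,0,[pt]_{\sf ker}})=v_{\sigma_\star,0,[pt]_{\sf ker}}$, whence $v_{\sigma_\star,0,[pt]_{\sf ker}}=0$ if and only if $v_{\Sigma_\star,\K,0,[pt]_{\sf ker}}=0$. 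It then suffices to prove that (i) $\X$ is symplectic uniruled if and only if $v_{\sigma_\star,0,[pt]_{\sf ker}}\neq0$ for some choice of $[pt]_{\sf ker}$ and $\sigma_\star$, and (ii) $\Xa$ is symplectic uniruled if and only if $v_{\Sigma_\star,\K,0,[pt]_{\sf ker}}\neq0$ for some such choice; the theorem follows. Statement (i) is the more straightforward half: each coordinate of $v_{\sigma_\star,0,[pt]_{\sf ker}}$ is a genus--zero orbifold Gromov--Witten invariant of $\X$ whose first insertion is $[pt]_{\sf ker}$, and if it is nonzero then the component carrying $[pt]_{\sf ker}$ must have nonzero homology class (otherwise the evaluation at that marking pulls back the positive--degree class $[pt]_{\sf ker}$ along a contracted component and the invariant vanishes), so Theorem~\ref{thm condi unirl} shows $\X$ is uniruled; conversely, if $\X$ is uniruled, Theorem~\ref{thm condi unirl} provides such a nonzero disconnected descendent invariant, and expanding its remaining insertions in a basis of $H^\ast_{CR}(\X)$ adapted to the splitting into $\oS$--supported classes (spanned by $\sigma_\star\cup[\msf{IS}]$) and a complement, multilinearity of Gromov--Witten invariants yields a nonzero invariant indexed by the (generalized) set $\scr A_{\sigma_\star,0,[pt]_{\sf ker}}^\bullet(\X,\oS)$, hence a nonzero coordinate of $v_{\sigma_\star,0,[pt]_{\sf ker}}$ when $\codim\oS\geq4$ (where $\Psi$ is onto).

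The substance of the proof is statement (ii), and I expect its main obstacle to be the two--way translation between \emph{absolute} Gromov--Witten invariants of $\Xa$ and the \emph{relative} invariants of $(\Xa|\Z)$ recorded by $v_{\Sigma_\star,\K}$, carried out so that nonvanishing is not destroyed by cancellation in the degeneration formula. For the ``only if'' direction one degenerates $\Xa$ along $\Z$ as in the degeneration formula~\eqref{eq dege-formula-rel}, so that a nonzero absolute invariant of $\Xa$ forces a nonzero relative invariant of $(\Xa|\Z)$; the delicate point is to arrange, as in \cite{HLR08}, that the absolute insertions of the resulting relative datum lie in $\K$ (one replaces the point--class insertion by $\msf I\kappa^\ast[pt]_{\sf ker}$ at a generic point away from $\Z$ and uses the structure of $H^\ast_{CR}(\Xa)$ relative to $\K$). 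For the ``if'' direction one caps off the relative markings of a \emph{minimal} nonzero relative datum using $\lN_\Z$--minimal data and invokes the minimality and lower--triangular structure of \S\ref{S partial-order}, Theorem~\ref{thm partial-ord} and Proposition~\ref{prop inv-minimal} to ensure this minimal relative invariant survives, with nonzero coefficient, in the degeneration of the corresponding absolute invariant of $\Xa$, so that no cancellation can occur. Finally, in the divisor case $\codim\oS=2$ the map $\Psi$ is only injective and cannot be inverted at the level of data; here one still has $L$ lower--triangular and invertible, and one argues that the invariants witnessing uniruledness already lie in the image of $\Psi$ (or applies the generalized correspondence directly, cf.\ the remark after Theorem~\ref{thm lower-traingl}), reducing to the same bookkeeping. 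Granting these conversions, the invertibility of $L_{0,[pt]_{\sf ker}}$ closes the argument.
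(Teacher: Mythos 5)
Your reduction to the two equivalences (i) and (ii) is where the argument breaks. The vectors $v_{\sigma_\star,0,[pt]_{\sf ker}}$ and $v_{\Sigma_\star,\K,0,[pt]_{\sf ker}}$ are (essentially always) nonzero regardless of uniruledness, so ``$\X$ is uniruled iff $v_{\sigma_\star,0,[pt]_{\sf ker}}\neq0$'' is false, and your chain of equivalences would ``prove'' that every compact symplectic orbifold is uniruled. Concretely, the admissible datum with genus zero, curve class $A=0$, and insertions $([pt]_{\sf ker},1,1)$ lies in $\scr A_{\sigma_\star,0,[pt]_{\sf ker}}^\bullet(\X,\oS)$ (and its $\Psi$-preimage, with insertions pulled back by $\msf I\kappa$, lies in $\scr R_{\Sigma_\star,\K,0,[pt]_{\sf ker}}^\bullet(\Xa|\Z)$), and its invariant is $\int [pt]_{\sf ker}\cup 1\cup 1\neq0$. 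Your parenthetical justification --- that a contracted (class-zero) component carrying the point insertion forces vanishing --- is incorrect: for class zero the moduli is essentially $\X\times\M_{0,k}$ and such integrals are frequently nonzero. This is exactly why Theorem \ref{thm condi unirl} demands not merely a nonzero invariant with a $[pt]_{\sf ker}$ insertion but that \emph{the component carrying that insertion has nonzero homology class}; nonvanishing of the whole vectors does not record this condition, so invertibility of $L_{0,[pt]_{\sf ker}}$ alone cannot transport uniruledness.

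The paper's proof therefore does not argue at the level of ``$v\neq0$''. It tracks a \emph{specific} nonzero invariant together with the nonzero-class condition on the point component: in one direction it degenerates $\X$ along $\oS$ (choosing the lifts $\msf I\kappa^*\alpha_i\in\K$ and $\msf I\kappa^*[pt]_{\sf ker}$ on the $\Xa$-side) to produce a nonzero relative invariant of $(\Xa|\Z)$ whose point component has nonzero class, and then applies the correspondence of Theorem \ref{thm correspds-pt} a second time, to the pair $(\Xa,\Z)$ with trivial weight, to convert this into a nonzero absolute descendent invariant of $\Xa$ satisfying the hypothesis of Theorem \ref{thm condi unirl}; in the other direction it first uses linearity to split the insertions of the given invariant of $\Xa$ into $\K$-classes and $\Z$-supported classes, degenerates $\Xa$ along $\Z$ with the choices $([pt]_{\sf ker})^+=0$, $\beta_j^-=0$ for the $\Z$-supported classes, and only then applies $L_{0,[pt]_{\sf ker}}$ for $(\X,\oS)$. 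Your sketch of (ii) gestures at some of these ingredients (degeneration along $\Z$, capping with $\lN_\Z$-minimal data), but it is left as a plan, the $\codim\oS=2$ case is not actually handled, and in any case these steps cannot rescue the argument as organized, because the governing equivalences (i)--(ii) are false. To repair the proof you must carry the ``point component has nonzero class'' condition through each application of the degeneration formula and of $L_{0,[pt]_{\sf ker}}$, as the paper does.
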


\begin{proof}
Suppose $\X$ is symplectic uniruled, then there is a non-zero class $A\in H_2(|\X|;\integer)$ and a connected nonzero genus 0 orbifold Gromov--Witten invariant of the form
\[
\langle[pt]_{\sf ker},\alpha_2,\ldots,\alpha_k\rangle^\X_{0,A}.
\]
with $A\neq 0$. By blowing up $\X$ along $\oS$ with weight $\wa$ we degenerate $\X$ into $\X^+\wedge_\Z\X^-$
with $\X^-=\Xa$. Then by degeneration formula, we express the invariant $
\langle[pt]_{\sf ker},\alpha_2,\ldots,\alpha_k\rangle^\X_{0,A}
$
as a combination of possibly disconnected relative invariants of
$
(\X^-|\Z)=(\Xa|\Z)$
and $(\X^+|\Z)$. For those $\alpha_i$ supporting away from $\oS$, we can put
$\alpha_i^-=\msf I\kappa^*\alpha_i\in\scr K$. (Note that there would be no such $\alpha_i$.) For $[pt]_{\sf ker}$ we choose $([pt]_{\sf ker})^-$ to be $\msf I\kappa^*[pt]_{\sf ker}$, the point class of the corresponding connected component of $\msf{ker\,\underline{X}}_{\wa}$.
Then there must be a nonzero invariants of the form
\begin{align}\label{eqn non0inv}
\langle{\bf R}^\bullet(\msf I\kappa^*([pt]_{\sf ker}),\alpha^-_{i_1},\ldots, \alpha^-_{i_l}|J)\rangle^{\Xa|\Z},
\end{align}
and the component containing the $\msf I\kappa^*[pt]_{\sf ker}$ insertion has nonzero curve class. This nonzero invariant shows that $v_{\Sigma_\star,0,[pt]_{\sf ker}}\in \rone_{\Sigma_\star,\K,0,[pt]_{\sf ker}}(\Xa|\Z)$ is nonzero.
Now we apply Theorem \ref{thm correspds-pt} to the pair $(\Xa,\oS=\Z)$, i.e.
we blowup $\Xa$ along $\Z$ with weight $\wa=(1)$ to get $\Xa\wedge_\Z \lN_\Z $.
Then we get a nonzero absolute descendent invariant of $\Xa$ with $\msf I\kappa^*[pt]_{\sf ker}$ insertion and nonzero curve class. Since $\msf I\kappa^*[pt]_{\sf ker}$ is the point class of a component of ${\sf ker\,\underline{X}}_\wa$,
$\Xa$ is symplectic uniruled by Theorem \ref{thm condi unirl}.

Conversely, suppose that
\[
\langle [pt]_{\sf ker},\beta_2,\ldots,\beta_k\rangle_{0,A}^{\Xa}\neq 0
\]
with $A\neq 0$, where $[pt]_{\sf ker}$ is the point class of a component of ${\sf ker\,\underline{X}}_\wa$. By the linearity of orbifold Gromov--Witten invariants, we can assume that $\beta_j\in \K=\msf I\kappa^*(H^*_{CR}(\X)), 1\leq j\leq l$,
and for $l+1\leq j\leq k$, $\beta_j$ are of the form $\sigma_j\cup[\Z_{(s)}]$,
with $[\Z_{(s)}]$ the Thom class of $\Z_{(s)}$ in $(\Xa)_{(s)}$.
Then we apply degeneration formula to the degeneration of $\Xa$ given by blowup of $\Xa$ along $\Z$ with trivial weight $\wa=(1)$,
and set $([pt]_{\sf ker})^+=0$, $\beta_j^-=\beta$ for $1\leq j\leq l$, and $\beta^-_j=0$ for $l+1\leq j\leq k$.
There must be a nonzero relative invariant of $(\Xa|\Z)$ of the form
$\langle{\bf R}^\bullet([pt]_{\sf ker}, \varpi|J)\rangle^{\Xa|\Z}$
where $\varpi$ has at most $l$ insertions and all belong to $\msf I\kappa^*(H^*_{CR}(\X))$,
$J$ is a $\Sigma_\star$-relative data,
and the connected component containing the $[pt]_{\sf ker}$ insertion has nonzero curve class. This point class $[pt]_{\sf ker}$ is in fact $\msf I\kappa^*[pt]_{\sf ker}$, the pull back of the point class of the corresponding connected component of $\sf ker\,\X$. So we next apply $L_{0,[pt]_{\sf ker}}$ in Theorem \ref{thm correspds-pt} for the pair $(\X,\oS)$,
we get a nonzero descendent invariant with $[pt]_{\sf ker}$ (the point class of the corresponding connected component of ${\sf ker\,X}$) insertion and nonzero curve class.
So $\X$ is symplectic uniruled by Theorem \ref{thm condi unirl}.
\end{proof}

\subsection{$(t)$-symplectic uniruledness}\label{subsec general-symplec-uniruled}
We may also consider other kinds of symplectic uniruledness by involving point classes of general twisted sectors of $\X$ not only $\msf{ker\,X}$.  For instance, we could consider the following situation.

Let $(t)\in\scr T^\X$, and $\X(t)$ be the corresponding twisted sector of $\X$. We say that $\X$ is {\bf $(t)$-symplectic uniruled} if there is a nonzero (possibly descendent) orbifold Gromov--Witten invariant of the form
\[
\langle [pt]_{(t)},\tau_{d_2}\alpha_2,\ldots,\tau_{d_k}\alpha_k \rangle_{0,A}^\X\neq 0
\]
with $0\neq A\in H_2(|\X|;\integer)$.

Then by Theorem \ref{thm correspds-pt} and Remark \ref{rmk other-restriction-of-L}, the argument that proves Theorem \ref{thm inv-sym-unirule} will show the following result.
\begin{theorem}
Let $(t)\in\scr T^\X$. Suppose that $\X(t)$ is away from $\oS$. So its direct transformation $\Xa(t)$ is away from $\Z$. Then $\X$ is $(t)$-symplectic uniruled if and only if $\Xa$ is $(t)$-symplectic uniruled.
\end{theorem}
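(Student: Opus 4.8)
The plan is to transcribe the proof of Theorem~\ref{thm inv-sym-unirule} almost verbatim, replacing the point class $[pt]_{\sf ker}$ of a component of $\msf{ker\,X}$ throughout by the point class $[pt]_{(t)}$ of the twisted sector $\X(t)$, and replacing the use of Theorem~\ref{thm correspds-pt} by the restricted correspondence $L_{0,[pt]_{(t)}}$ of Remark~\ref{rmk other-restriction-of-L}. The hypothesis that $\X(t)$ is away from $\oS$, i.e.\ $\X(t)\nsubseteq\msf{IS}$, is exactly what legitimizes this substitution: it guarantees that $\msf I\kappa:\Xa(t)\rto\X(t)$ identifies $\Xa(t)$ with the direct transformation of $\X(t)$ and that $\msf I\kappa^*[pt]_{(t)}=[pt]_{\Xa(t)}$, and it is the standing assumption under which $L_{0,[pt]_{(t)}}$ is defined and has a lower-triangular matrix with non-vanishing diagonal.

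First I would record the descendent characterization analogous to Theorem~\ref{thm condi unirl}: $\X$ is $(t)$-symplectic uniruled if and only if there is a nonzero, possibly disconnected, genus zero descendent orbifold Gromov--Witten invariant $\langle[pt]_{(t)},\tau_{d_2}\alpha_2,\ldots,\tau_{d_k}\alpha_k\rangle^\X_{0,A}$ whose connected component carrying the $[pt]_{(t)}$ insertion has nonzero homology class. This is proved word for word as Theorem~\ref{thm condi unirl} (equivalently \cite[Theorem 4.10]{HLR08}), whose argument uses only the axioms of Gromov--Witten theory, the boundary relations among $\psi$-classes, and properties of genus zero invariants with zero curve class, none of which is sensitive to which point class occupies the first marked point.

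For the forward implication I would argue as follows. Assume $\X$ is $(t)$-symplectic uniruled; degenerate $\X$ by the weight-$\wa$ blowup along $\oS$ into $\Xa\wedge_\Z\lN_\wa$ and apply the degeneration formula to a nonzero invariant as above, choosing $\msf I\kappa^*[pt]_{(t)}=[pt]_{\Xa(t)}$ as the lift of $[pt]_{(t)}$ and appropriate lifts in $\K$ of the remaining insertions, exactly as in the proof of Theorem~\ref{thm inv-sym-unirule}. This produces a nonzero relative invariant in $\scr R_{\Sigma_\star,\K,0,[pt]_{(t)}}^\bullet(\Xa|\Z)$ whose component carrying $[pt]_{\Xa(t)}$ has nonzero curve class, so $v_{\Sigma_\star,\K,0,[pt]_{(t)}}\neq 0$. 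Applying the invertible $L_{0,[pt]_{(t)}}$ of Remark~\ref{rmk other-restriction-of-L} to the pair $(\Xa,\Z)$, i.e.\ to the trivial-weight blowup of $\Xa$ along $\Z$, then yields a nonzero genus zero descendent invariant of $\Xa$ with $[pt]_{\Xa(t)}=\msf I\kappa^*[pt]_{(t)}$ insertion and nonzero curve class on the relevant component, so $\Xa$ is $(t)$-symplectic uniruled by the descendent characterization. The converse is entirely symmetric: starting from a nonzero invariant $\langle[pt]_{\Xa(t)},\beta_2,\ldots,\beta_k\rangle^\Xa_{0,A}$ with $A\neq 0$, I would split the $\beta_j$ by linearity into classes in $\K$ and classes of Thom type $\sigma_j\cup[\Z_{(s)}]$, degenerate $\Xa$ along $\Z$ with trivial weight $\wa=(1)$, extract a nonzero relative invariant of $(\Xa|\Z)$ in $\scr R_{\Sigma_\star,\K,0,[pt]_{(t)}}^\bullet(\Xa|\Z)$ with $[pt]_{\Xa(t)}$ on a component of nonzero curve class, and apply $L_{0,[pt]_{(t)}}$ for $(\X,\oS)$ to obtain a nonzero descendent invariant of $\X$ with $[pt]_{(t)}$ insertion and nonzero curve class, whence $\X$ is $(t)$-symplectic uniruled.

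The step I expect to be the main obstacle is verifying carefully that Remark~\ref{rmk other-restriction-of-L} genuinely applies here, namely that the subset $\scr R_{\Sigma_\star,\K,0,[pt]_{(t)}}^\bullet(\Xa|\Z)$ is preserved by $\Psi$ and inherits the triangular structure of $L$. This comes down to the observation that a marked point carrying $[pt]_{(t)}$ is always mapped into $\X(t)\nsubseteq\msf{IS}$, so under $\Psi$ it is never converted into an $\oS$-supported insertion and hence plays no role in the construction of the auxiliary $(\lN_\wa|\Z,\oS)$-data; granting this, the proofs of Theorem~\ref{thm crepdns-on-datum} and Theorem~\ref{thm lower-traingl} carry over with this insertion merely transported along, and everything else reduces to a verbatim repetition of the proof of Theorem~\ref{thm inv-sym-unirule}.
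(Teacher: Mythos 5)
Your proposal is correct and follows essentially the same route as the paper: the authors likewise prove this by rerunning the proof of Theorem \ref{thm inv-sym-unirule} verbatim with $[pt]_{\sf ker}$ replaced by $[pt]_{(t)}$, invoking $L_{0,[pt]_{(t)}}$ from Remark \ref{rmk other-restriction-of-L}, with the away-from-$\oS$ hypothesis used exactly as you say to ensure $[pt]_{(t)}^-$ can be taken to be $\msf I\kappa^*[pt]_{(t)}$ in the degeneration formula. Your explicit recording of the disconnected descendent characterization and of why $\Psi$ preserves the subset $\scr R^\bullet_{\Sigma_\star,\K,0,[pt]_{(t)}}(\Xa|\Z)$ only makes explicit what the paper leaves implicit.
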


Here by $\X(t)$ being away from $\oS$ means that $\X(t)\nsubseteq \msf{IS}$, see Remark \ref{rmk other-restriction-of-L}. This assumption ensure that when we use the degeneration formula, we could choose $[pt]_{(t)}^-$ to be $[pt]_{(t)}$.

\end{document}